\documentclass{article}
\usepackage{psfrag}
\usepackage[parfill]{parskip}
\usepackage{float, graphicx}
\usepackage[]{epsfig}
\usepackage{amsmath, amsthm, amssymb}
\usepackage{epsfig}
\usepackage{verbatim}
\usepackage{multicol}
\usepackage{url}
\usepackage{latexsym}
\usepackage{mathrsfs}
\usepackage[colorlinks, bookmarks=true]{hyperref}
\usepackage{graphicx}
\usepackage{amsmath}
\usepackage{enumerate}
\usepackage[normalem]{ulem}
\usepackage{bm}
\usepackage{stmaryrd}
\usepackage{tikz}
\usepackage[T1]{fontenc}

\usepackage{cite}

\flushbottom
\usepackage{color}

\makeatletter
\def\thm@space@setup{%
  \thm@preskip=\parskip \thm@postskip=0pt
}
\makeatother

%%%%%%%%%%%%%%%%%%%%%%%%%%%%%%%%%%%%%%

\oddsidemargin=0in
\evensidemargin=0in
\textwidth=6.5in
\setlength{\unitlength}{1cm}
\setlength{\parindent}{0.6cm}

\numberwithin{equation}{section}

%%%%%%%%%%%%%%%%%% MISHA %%%%%%%%%%%%%%%%%%%%%%

\newcommand{\N}{\mathbb{N}}
\newcommand{\Y}{\mathbb{Y}}
\newcommand{\R}{\mathbb{R}}

\newcommand{\bflambda}{\pmb{\lambda}}
\newcommand{\bfmu}{\pmb{\mu}}

\newcommand{\bfx}{\bm{x}}
\newcommand{\bfy}{\bm{y}}

\renewcommand{\cal}{\mathcal}

\newcommand{\cC}{{\cal C}}
\newcommand{\cD}{\cal D}
\newcommand{\cE}{{\cal E}}
\newcommand{\cF}{{\cal F}}
\newcommand{\cG}{{\cal G}}

\newcommand{\cK}{{\cal K}}
\newcommand{\cL}{{\cal L}}

\newcommand{\cJ}{{\cal J}}

\newcommand{\cP}{{\cal P}}

\newcommand{\cS}{{\mathcal S}}

\newcommand\cT{{\mathcal T}}

\newcommand{\fb}{{\frak b}}
\newcommand{\fc}{{\frak c}}
\newcommand{\fd}{{\frak d}}

\newcommand{\fm}{{\frak m}}

\newcommand{\fp}{{\frak p}}
\newcommand{\fG}{{\frak G}}

\newcommand{\bml}{{\bm{l}}}
\newcommand{\bmp}{{\bm p}}

\newcommand{\bmx}{{\bm{x}}}
\newcommand{\bmy}{{\bm{y}}}

\newcommand{\bmeta}{{\bm \eta}}

\newcommand{\rd}{{\rm d}}

\newcommand{\ri}{\mathrm{i}}

\newcommand{\bC}{{\mathbb C}}

\newcommand{\bE}{\mathbb{E}}

\newcommand{\bH}{\mathbb{H}}
\newcommand{\bP}{\mathbb{P}}
\newcommand{\bQ}{\mathbb{Q}}
\newcommand{\bR}{{\mathbb R}}

\newcommand{\bT}{\mathbb T}

\newcommand{\bZ}{\mathbb{Z}}

\newcommand{\bY}{\mathbb{Y}}

\newcommand{\la}{\lambda}

\DeclareMathOperator{\supp}{supp}

\DeclareMathOperator{\sgn}{sgn}
\DeclareMathOperator{\OO}{O}
\DeclareMathOperator{\oo}{o}

\renewcommand{\Im}{\mathop{\mathrm{Im}}}

\newcommand{\deq}{\mathrel{\mathop:}=} %define :=
\newcommand{\eqd}{=\mathrel{\mathop:}} 
\renewcommand{\leq}{\leqslant}
\renewcommand{\geq}{\geqslant}

\newcommand{\del}{\partial}

 %underline
 %overline

\newcommand{\qq}[1]{[\![{#1}]\!]}

\newcommand{\beq}{\begin{equation}}
\newcommand{\eeq}{\end{equation}}

\theoremstyle{plain} %plain, definition, remark
\newtheorem{theorem}{Theorem}[section]
\newtheorem*{theorem*}{Theorem}
\newtheorem{lemma}[theorem]{Lemma}
\newtheorem*{lemma*}{Lemma}
\newtheorem{corollary}[theorem]{Corollary}
\newtheorem*{corollary*}{Corollary}
\newtheorem{proposition}[theorem]{Proposition}
\newtheorem*{proposition*}{Proposition}
\newtheorem{assumption}[theorem]{Assumption}
\newtheorem*{assumption*}{Assumption}
\newtheorem{claim}[theorem]{Claim}

\newtheorem{definition}[theorem]{Definition}
\newtheorem*{definition*}{Definition}

\newtheorem*{example*}{Example}
\newtheorem{remark}[theorem]{Remark}

\newtheorem*{remark*}{Remark}
\newtheorem*{remarks*}{Remarks}

%%%%%%%%%%%%%%%%%%%%%%%%%%%%%%%%%%%%%%%%%%%%%%%%%%%%
%Title format

\def\author#1{\par
    {\centering{\authorfont#1}\par\vspace*{0.05in}}
}

\def\titlefont{\fontsize{13}{15}\bfseries\boldmath\selectfont\centering{}}
\def\authorfont{\fontsize{13}{15}}

\let\affiliationfont\rhfont

\def\address#1{\par
    {\centering{\affiliationfont#1\par}}\par\vspace*{11pt}
}

\def\body{
\setcounter{footnote}{0}
\def\thefootnote{\alph{footnote}}
\def\@makefnmark{{$^{\rm \@thefnmark}$}}
}

\def\title#1{
    \thispagestyle{plain}
    \vspace*{-14pt}
    \vskip 79pt
    {\centering{\titlefont #1\par}}%
    \vskip 1em
}
%%%%%%%%%%%%%%%%%%%%%%%%%%%%%%%%%%%%%%%%%%%%%%%%%%%%%

%\newcommand{\int}{{\mathrm{int}}}

\newcommand{\bmla}{{\bm{\lambda}}}
\newcommand{\bmmu}{{\bm{\mu}}}

\newcommand{\bmnu}{{\bm \nu}}

\newcommand{\fcov}{{\mathfrak{cov}}}

\newcommand{\cov}{{\rm{cov}}}

\newcommand{\Sym}{\mathrm{Sym}}

%%%%%%%%%%%%%%%%%%%%%%%%%%%%%%%%%%%%%%%%%%%%%%%%%%%%%
%%%%%%Special Notations for this note%%%%%%%%
\newcommand{\Sp}{\mathrm{Sp}}

%%%%%%%%%%%%%%%%%%%%%%%%%%%%%%%%%

\begin{document}
\title{Law of Large Numbers and Central Limit Theorems by Jack Generating Functions}

\vspace{1.2cm}

 \author{Jiaoyang Huang}
\address{Harvard University\\
   E-mail: jiaoyang@math.harvard.edu}

~\vspace{0.3cm}

\begin{abstract}
In a series of papers \cite{MR3361772,BG1,BG2} by Bufetov and Gorin, Schur generating functions as the Fourier transforms on the unitary group $U(N)$, are introduced to study the asymptotic behaviors of random $N$-particle systems. We introduce and study the Jack generating functions of random $N$-particle systems. In special cases, this can be viewed as the Fourier transforms on the Gelfand pairs $(GL_N(\bR), O(N))$, $(GL_N(\bC), U(N))$ and $(GL_N(\bH), Sp(N))$. Our main results state that the law of large numbers  and the central limit theorems for such particle systems, is equivalent to certain conditions on the germ at unity of their Jack generating functions. Our main tool is the Nazarov-Sklyanin operators \cite{MR3141546}, which have Jack symmetric functions as their eigenfunctions. As applications, we derive asymptotics of Jack characters, prove law of large numbers and central limit theorems for the Littlewood-Richardson coefficients of zonal polynomials, and show that the fluctuations of the height functions of a general family of nonintersecting random walks are asymptotically equal to
those of the pullback of the Gaussian free field on the upper half plane.

%We study $\beta$ analogues of the discrete nonintersecting random walks, which, in a special case, degenerates to the $\beta$-Dyson Brownian motion. We prove that, under mild conditions of the initial data,  the fluctuations of the empirical particle density converge to the Gaussian free field.
%The central observation of the paper is that the fluctuations of multi-time linear statistics can be efficiently expressed in terms of the Jack generating function. As a consequence, a rich family of discrete Markov chains on Young diagrams defined by means of Jack symmetric polynomials converge to the Gaussian free field.  
\end{abstract}

%\keywords{} 
%\vspace{.5cm}

\begingroup% http://ctan.org/pkg/hyperref
\hypersetup{linkcolor=black}
 \tableofcontents
\endgroup

\date{\today}

\vspace{-0.7cm}

\section{Introduction}

\subsection{Overview}

During the eighteenth century, De Moivre was the first to show that the Gaussian (or Normal) distribution
describes the fluctuations of the sums of independent binomial variables. Soon after, Laplace generalized
his result to more general independent variables. Gauss advertised this central limit theorem by showing
that it allows the evaluation of the errors in a large class of systems characterized by independence. One classical approach for the proof of the central limit is via the Fourier transform. The \emph{inversion theorem}, which was first demonstrated by L{\'e}vy, states that the probability measure is uniquely determined by its Fourier transform. Gaussian distribution is characterized by the derivative at origin of the logarithm of its Fourier transform, or equivalently the cumulants.

The theory of Fourier transform extends naturally to Gelfand pairs $(G,K)$, where $G$ is a locally compact group, and $K$ is a compact subgroup of $G$, and the classical Fourier transform corresponds to the Gelfand pair $(\bR, \{0\})$. For Gelfand pairs $(G,K)$, the zonal spherical functions, or positive definite spherical functions, corresponding to irreducible unitary representations of $G$ of class one, play the role of the multiplicative characters in the classical Fourier transform. In this paper, we generalize the proof of the central limit via the Fourier transform, to the setting of Gelfand pairs $(GL_N(\bR), O(N))$, $(GL_N(\bC), U(N))$ and $(GL_N(\bH), Sp(N))$, corresponding to orthogonal, unitary and symplectic groups. In these settings, the dual space, i.e., the set of zonal spherical functions is parametrized by $N$-tuples of nonnegative integers. Probability measures on the dual space can be viewed as random $N$-particle systems on $\bZ_{>0}$. In other words, we study random $N$-particle systems on $\bZ_{>0}$ via the Fourier transform. Our main results state that for a measure on the dual space, the germ at unity of its Fourier transform contains complete information about the law of large numbers and the central limit theorems.

For the Gelfand pairs $(GL_N(\bR), O(N))$, $(GL_N(\bC), U(N))$ and $(GL_N(\bH), Sp(N))$, their zonal spherical functions correspond to the zonal polynomials, Schur polynomials, and symplectic zonal polynomials respectively. They are special cases of the Jack polynomials, which are a family of symmetric functions dependent on a positive parameter $\theta$, introduced by Henry Jack in \cite{MR0289462,MR0306166}. Up to multiplicative constants, for $\theta=1$, Jack polynomials coincide with Schur polynomials, for $\theta=1/2$, they coincide with zonal polynomials, for $\theta=2$, they coincide with symplectic zonal polynomials. The zonal spherical functions correspond to the zonal polynomials, Schur polynomials, and symplectic zonal polynomials are special cases Jack characters, as introduced in \cite{Cuenca}. For random $N$-particle systems, we introduce ``Fourier transforms'' using Jack characters, called the Jack generating functions. When $\theta=1/2, 1, 2$, they are the Fourier transforms on the Gelfand pairs $(GL_N(\bR), O(N))$, $(GL_N(\bC), U(N))$ and $(GL_N(\bH), Sp(N))$. %Our main results state that the law of large numbers  and the central limit theorems for such particle systems, is equivalent to certain conditions on the germ at unity of their Jack generating functions. 

%
%
%In a pair of papers \cite{MR0289462,MR0306166}, Jack introduced a family of
%symmetric functions dependent on a positive parameter $\theta$.
%These functions are now called Jack polynomials. For some special values of $\theta$, they coincide with some established families of symmetric functions. Namely, up to multiplicative constants, for $\theta=1$, Jack polynomials coincide with Schur polynomials, for
%$\theta=1/2$, they coincide with zonal polynomials, for $\theta=2$, they coincide with symplectic zonal polynomials. 
%Many basic properties of the Schur polynomials have been extended to the Jack polynomials by Stanley \cite{MR1014073} and Macdonald \cite{MR3443860}. Since their introduction, Jack polynomials have found applications in statistics, special function theory, representation theory, algebraic geometry, and theoretical physics.  In this paper, we study random $N$-particle systems on $\bZ_{>0}$ related to Jack polynomials. 
%

% which are a family of multivariate orthogonal polynomials dependent on a positive parameter $\theta$, play important roles in different branches of mathematics and mathematical physics. 

In a series of papers \cite{MR3361772,BG1,BG2} by Bufetov and Gorin, Schur generating functions are introduced to study the asymptotic behaviors of $N$-particle systems, which can be viewed as a version of Fourier transform on the unitary group, or the Gelfand pair $(GL_n(\bC), U(n))$. They prove that the law of large numbers and central limit Theorems of the $N$-particle systems hold if and only if partial derivatives of the logarithm of the Schur generating function at unity converge. In this paper, using the Jack generating functions, we prove that, the same as in \cite{MR3361772,BG1,BG2}, the law of large numbers and central limit theorems of the $N$-particle systems can be extracted from the asymptotic behavior of the Jack generating function at unity. 

One key idea in \cite{MR3361772,BG1,BG2} is to apply certain differential operators diagonalized by Schur polynomials to the Schur generating functions, which leads to polynomial formulas relating the expectations of the products of the moments, with partial derivatives of the logarithm of the Schur generating functions at unity. %However, the analysis in \cite{MR3361772,BG1,BG2} depends on the specific and relatively simple form of the differential operators. 
The idea of using differential operators on representation spaces for groups that act in an eigen-fashion on each of the irreducible components has a long history. In the context of Lie groups, a natural set of such differential operators is provided by the center of the universal enveloping algebra of the corresponding Lie algebra, the Casimir operators. The usage of Casimir operators in probabilistic problems first appeared in the pioneer works of Kerov and Olshanski \cite{MR1288389,MR1204294}. These ideas were further developed to derive Gaussian free field asymptotics in \cite{MR3263029,MR3349009}. The $q$-analog of the Casimir operators lead Macdonald to construct the famous Macdonald polynomials, which are eigenfunctions for an algebra of commuting self adjoint operators. The probabilistic pointview of the Macdonald version of Casimir operators was used by Borodin and his collaborators to construct and study the Macdonald processes in \cite{MR3152785,MR3449217}. Its degeneration leads to the derivation of the Gaussian free field asymptotics of multilevel $\beta$-Jacobi random matrix ensembles in \cite{MR3385342}.

%It is not clear if there exist such simple differential operators for the Jack polynomials.

%In our setting, we use the Nazarov-Sklyanin operators, originated in the work of Nazarov and Sklyanin \cite{MR3141546}, which have Jack symmetric functions (in infinitely many variables) as their eigenfunctions. As a consequence, our methods give  new proofs of some results in \cite{MR3361772,BG1,BG2}.  The Nazarov-Sklyanin operators have been previously used by Moll to prove the law of large numbers and the central limit theorems of the Jack measures in \cite{Moll}, and more generally, any quantum integrable system. The Jack measures in \cite{Moll} are different from the probability measures we study,  where the number of particles are fixed. 
%
%In our setting, the Nazarov-Sklyanin operators have Jack symmetric functions as their eigenfunctions, and the eigenvalues are given by the moments of the Perelomov-Popov measures, see Section \ref{s:PPM}. Fortunately, the law of large numbers and central limit theorems for the Perelomov-Popov measures are equivalent to those of the original probability measures. The law of large numbers and central limit theorems can be accessed by applying the Nazarov-Sklyanin operators to the Jack generating functions, which leads to polynomial formulas expressing the expectations of the products of the moments of the Perelomov-Popov measures in terms of partial derivatives of the logarithm of the Jack generating functions at unity. 

In our setting, we use the Nazarov-Sklyanin operators, originated in the work of Nazarov and Sklyanin \cite{MR3141546}, which have Jack symmetric functions (in infinitely many variables) as their eigenfunctions, and the eigenvalues are given by the moments of the Perelomov-Popov measures, see Section \ref{s:PPM}. Fortunately, the law of large numbers and central limit theorems for the Perelomov-Popov measures are equivalent to those of the original probability measures. The law of large numbers and central limit theorems can be accessed by applying the Nazarov-Sklyanin operators to the Jack generating functions, which leads to polynomial formulas expressing the expectations of the products of the moments of the Perelomov-Popov measures in terms of partial derivatives of the logarithm of the Jack generating functions at unity.  As a consequence, our methods give  new proofs of some results in \cite{MR3361772,BG1,BG2}.  The Nazarov-Sklyanin operators have been previously used by Moll to prove the law of large numbers and the central limit theorems of the Jack measures in \cite{Moll}, and more generally, any quantum integrable system. The Jack measures in \cite{Moll} are different from the probability measures we study,  where the number of particles are fixed.

%The idea that differential operators applied to symmetric functions can be used to analyze random particle systems can be found in \cite{MR3152785,MR3449217,MR3361772,BG1,BG2,MR3385342,GoZh}.

As applications, our theorems lead to the proofs of the law of large numbers and central limit theorems of a variety of $N$-particle systems, including  Littlewood-Richardson coefficients for the zonal polynomials, and a rich family of discrete nonintersecting random walks. The same as for continuous $\beta$-ensembles, in all our examples, the law of large numbers do not depend on the parameter $\theta$, and the covariance structure in the central limit theorems is also $\theta$ independent up to a factor $\theta$. As applications of our inverse theorems, we derive asymptotics of the the Jack characters at unity.  It is worth mentioning that the asymptotics of Jack characters at unity, are the same as those of schur characters up to a factor $\theta$.

\subsection{Formulation}

Random $N$-particle systems on $\bZ_{>0}$ can be identified with probability measures on Young diagrams with at most $N$ rows. We recall that Young diagrams are defined as infinite sequences $\bflambda = (\lambda_1, \lambda_2, \dots)$ of nonnegative, nonincreasing integers, $\la_1\geq \la_2\geq \la_3\geq\cdots\geq 0$, with only finitely many nonzero parts. The length of $\bflambda$, defined as its number of nonzero parts, is denoted by $\ell(\bflambda)$. We can write a Young diagram as a finite sequence $\bflambda = (\lambda_1, \ldots, \lambda_N)$ as long as $N \geq \ell(\bflambda)$. For each $N\in\N$, we denote the set of Young diagrams of length $\leq N$ by  $\Y(N)$, and the set of all Young diagrams by $\Y := \cup_{N\geq 0}{\Y(N)}$.

Let $N\in \bZ_{>0}$ and $M_N$ be a probability measure on $\bY(N)$.  For any $\bmla=(\la_1,\la_2,\cdots,\la_N)\in \bY(N)$, with $\la_1\geq \la_2\geq \cdots\geq \la_N$, we define
\begin{align}\label{e:defyi}
y_i=\frac{\la_i}{\theta N}-\frac{i-1}{N},\quad i=1,2,\cdots, N.
\end{align}
Then the probability measure $M_N$ induces a random measure on $\bR$,
\begin{align}\label{def:mula}
\mu[\bmla]=\frac{1}{N}\sum_{i=1}^N\delta_{y_i},
\end{align}
which is called the empirical density.
The goal of this paper is to study the law of large numbers and the central limit theorems of the random measure $\mu[\bmla]$ as $N$ goes to infinity. The law of large numbers refer to the phenomenon that the sequence of the empirical densities $\mu[\bmla]$ converges weakly to a deterministic measure.  The central limit theorems refer to the phenomenon that the fluctuations of the random measures around its mean are asymptotically Gaussian, i.e. with polynomial test functions
\begin{align*}
N\int x^k (\rd \mu[\bmla]-\bE[\rd \mu[\bmla] ) =\sum_{i=1}^{N} y_i^k-\bE\left[\sum_{i=1}^{N} y_i^k\right],
\end{align*}
converge weakly to a Gaussian random variable. The scaling is different from the classical central limit theorem, where the fluctuation would grow as $N^{1/2}$ in the system. This is because the interactions of the $N$-particle systems we study are non-local and of log-gas type, and for them, typically, the fluctuations do not grow with the size of the system.

The precise formulation of the law of large numbers and central limit theorems of the measure $M_N$ is given in the following sense. 
\begin{definition}\label{def:LLNCLT}
Let $\{M_N\}_{N\geq 1}$ be a sequence of probability measures on $\bY(N)$,  and $\mu[\bmla]$ as defined in \eqref{def:mula}. We say $M_N$ (or $\mu[\bmla]$) satisfies the law of large numbers, if there exists a collection of numbers $\fp_1,\fp_2,\fp_3,\cdots$, the collection of random variables 
\begin{align*}
\int x^k \rd \mu[\bmla],\quad k=1,2,3,\cdots,
\end{align*}
converges, as $N\rightarrow \infty$, in the sense of moments, 
\begin{align*}
\lim_{N\rightarrow \infty}\bE \left[\prod_{i=1}^r \int x^{k_r} \rd \mu[\bmla]\right] = \prod_{i=1}^r\fp_{k_i},
\end{align*}
for any index $r\geq1$ and $k_1,k_2,\cdots, k_r\geq 1$.

We say $M_N$ (or $\mu[\bmla]$) satisfies central limit theorems,
% if for any $m=1,2,3,\cdots$
%\begin{align}
%\left\{N\left(\int x^k \rd \mu[\bmla]-\bE\int x^k \rd \mu[\bmla]\right) \right\}_{1\leq k\leq m},
%\end{align}
%weakly converges, as $N\rightarrow \infty$, to a Gaussian vector.
if it satisfies law of large numbers, and the collection of random variables
\begin{align*}
\xi^{(k)}[\bmla]\deq N\left(\int x^k\rd \mu[\bmla]-\bE\int x^k\rd \mu[\bmla]\right),\quad k=1,2,3,\cdots
\end{align*} 
converges, as $N\rightarrow\infty$, in the sense of moments, to the Gaussian vector with zero mean, variance
\begin{align*}
\lim_{N\rightarrow \infty}\cov(\xi^{(k)}[\bmla],\xi^{(l)}[\bmla])=\fcov_{k,l},
\end{align*}
and for any index $r\geq 3$ and $k_1,k_2,\cdots, k_r\geq 1$, the $r$-th cumulant vanishes,
\begin{align*}
\lim_{N\rightarrow \infty}\kappa_{k_1,k_2,\cdots,k_r}(\xi^{(k)}[\bmla],k=1,2,3,\cdots)=0.
\end{align*}

\end{definition}

\subsection{Jack generating functions}

%We use lowercase, bold Greek letters to denote partitions, mainly $\bflambda, \bfmu$ and $\bfnu$.
%The size of $\bflambda$ is $|\bflambda| := \sum_{i\geq 1}{\lambda_i}$ or, equivalently, the number of boxes of its corresponding Young diagram. Let $\Y(0) := \{\bfemptyset\}$ be the singleton set containing only the empty partition.
% Clearly, $\Y(0) \subset \Y(1) \subset \Y(2) \subset \dots$, and we let $\Y := \cup_{N\geq 0}{\Y(N)}$ be the set of all partitions. 

Let $\bm x=(x_1, x_2,x_3,\cdots)$ be an infinite set of indeterminates, and $\Sym$ the graded algebra of symmetric functions in infinitely many variables over $\bR$. We use the following notations for certain symmetric functions indexed by partitions $\bmla$:
\begin{itemize}
\item We denote the monomial symmetric functions $m_{\bmla}(\bmx)$, which is defined as the sum of all monomials $\bm{x}^{{\bmla_\sigma}}$ where ${\bmla_\sigma}$ ranges over all distinct permutations of $\bmla$.
\item We denote the power sum symmetric functions $p_{\bmla}(\bmx)$, as
\begin{align*}
p_{\bmla}(\bmx)=\prod_{1\leq i\leq \ell({\bmla})}p_{\la_i}(\bmx),\quad p_n(\bmx)=x_1^n+x_2^n+x_3^n+\cdots.
\end{align*}
\end{itemize}
Both the set of monomial symmetric functions $\{m_\bmla(\bmx): \bmla\in \bY\}$, and the set of power sum symmetric functions $\{p_\bmla(\bmx): \bmla\in \bY\}$ form homogeneous bases of $\Sym$. The algebra $\Sym$ can be identified with the algebra in the variables $\bmp=(p_1,p_2,p_3,\cdots)$, i.e. $\Sym=\bR[p_1,p_2, p_3,\cdots]$. There exists non-negative coefficients $c_{\bmla\bmmu}$ such that 
\begin{align}\label{def:c}
p_\bmla(\bmx)=\sum_{\bmmu\in \bY}c_{\bmla\bmmu}m_{\bmmu}(\bmx).
\end{align}
The coefficients $c_{\bmla\bmmu}$ are nonzero only if $|\bmla|=|\bmmu|$ and $\ell(\bmla)\geq\ell(\bmmu)$. Similarly there exists coefficients $\tilde c_{\bmla\bmmu}$ such that \begin{align}\label{def:tc}
m_\bmla(\bmx)=\sum_{\bmmu\in \bY}\tilde c_{\bmla\bmmu}p_{\bmmu}(\bmx).
\end{align}
The coefficients $\tilde c_{\bmla\bmmu}$ are nonzero only if $|\bmla|=|\bmmu|$ and $\ell(\bmla)\geq\ell(\bmmu)$.
If $\bmla$ has $m_i=m_i(\bmla)$ parts equal to $i$, then we write 
\begin{align*}
z_{\bmla}=\prod_{1\leq i\leq \la_1} i^{m_i(\bmla)}\prod_{1\leq i\leq \la_1}m_i(\bmla)!.
\end{align*}
Let $\theta>0$ be a parameter (indeterminante), and let $\bQ(\theta)$ denote the field of all rational functions of $\theta$ with rational coefficients. Define a scalar product $\langle\cdot, \cdot\rangle$ on the vector space $\Sym \otimes \bQ(\theta)$ of all symmetric functions over the field $\bQ(\theta)$ by the condition
\begin{align}\label{e:scalarP}
\langle p_{\bmla}(\bmx), p_{\bmmu}(\bmx)\rangle =\delta_{\bmla\bmmu}z_{\bmla} \theta^{-\ell(\bmla)}.
\end{align}

We recall that a partial order on Young diagrams is obtained by declaring $\bflambda \preceq \bfmu$ if $|\bflambda| = |\bfmu|$ and $\la_1+\la_2+\cdots+\la_i\leq \mu_1+\mu_2+\cdots+\mu_i$ for all $i$. The following fundamental result is due to Macdonald, which characterizes Jack symmetric functions.
\begin{theorem}
There are unique symmetric functions $J_{\bmla}(\bmx; \theta)\in \Sym\otimes \bQ(\theta)$, where $\bmla$ ranges over all Young diagrams, satisfying the following three conditions:
\begin{enumerate}
\item (orthogonality) $\langle J_{\bmla}(\bmx; \theta), J_{\bmmu}(\bmx;\theta)\rangle=0$ if $\bmla\neq \bmmu$, where the scalar product is given by \eqref{e:scalarP}.
\item (triangularity) We have
\begin{align*}
J_{\bmla}(\bmx; \theta)=\sum_{\bmmu}v_{\bmla \bmmu}(\theta)m_{\bmmu}(\bmx),
\end{align*} 
where $v_{\bmla\bmmu}(\theta)=0$ unless $\bmmu\preceq \bmla$.
\item (normalization) The leading term of $J_{\bmla}(\bmx;\theta)$ is given by $x_1^{\la_1}x_2^{\la_2}\cdots x_{\ell(\bmla)}^{\la_{\ell(\bmla)}}$.
\end{enumerate}
\end{theorem}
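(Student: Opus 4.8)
The plan is to treat uniqueness and existence separately. Uniqueness is a formal consequence of the non-degeneracy of the scalar product \eqref{e:scalarP}; for existence I would diagonalize a self-adjoint operator on $\Sym$ that is triangular with respect to the dominance order $\preceq$, and read off all three conditions from its eigenfunctions.

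\emph{Uniqueness.} Fix $n\geq 1$ and work in the finite-dimensional space $\Sym_n\otimes\bQ(\theta)$ of homogeneous symmetric functions of degree $n$, with basis $\{m_{\bmmu}\}_{|\bmmu|=n}$. The form \eqref{e:scalarP} is non-degenerate there, since $\{p_{\bmmu}\}_{|\bmmu|=n}$ is an orthogonal basis with $\langle p_{\bmmu},p_{\bmmu}\rangle=z_{\bmmu}\theta^{-\ell(\bmmu)}\neq 0$ in $\bQ(\theta)$. Because $\bmmu\preceq\bmla$ forces the monomial $x_1^{\mu_1}x_2^{\mu_2}\cdots$ to be lexicographically smaller than $x_1^{\lambda_1}x_2^{\lambda_2}\cdots$, condition (3) pins the coefficient $v_{\bmla\bmla}(\theta)$ of $m_{\bmla}$ in (2) to be $1$; hence any solution has $J_{\bmla}=m_{\bmla}+\sum_{\bmmu\prec\bmla}v_{\bmla\bmmu}(\theta)m_{\bmmu}$, so $\{J_{\bmmu}\}_{|\bmmu|=n}$ is a basis, its Gram matrix is (by (1)) diagonal and invertible, and every $\langle J_{\bmmu},J_{\bmmu}\rangle\neq0$. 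Now let $\{J_{\bmla}\}$ and $\{J'_{\bmla}\}$ both satisfy (1)--(3) and induct on $\preceq$, the base case $\bmla=(1^n)$ giving $J_{(1^n)}=J'_{(1^n)}=m_{(1^n)}$. Assuming $J_{\bmmu}=J'_{\bmmu}$ for all $\bmmu\prec\bmla$, the difference $J_{\bmla}-J'_{\bmla}$ lies in $\mathrm{span}\{m_{\bmmu}:\bmmu\prec\bmla\}$; since $\{\bmmu:\bmmu\prec\bmla\}$ is a down-set for $\preceq$, this span equals $\mathrm{span}\{J_{\bmmu}:\bmmu\prec\bmla\}$, so $J_{\bmla}-J'_{\bmla}=\sum_{\bmmu\prec\bmla}c_{\bmmu}J_{\bmmu}$. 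Pairing with $J_{\bmnu}$ for $\bmnu\prec\bmla$ and using (1) for $\{J_\bmmu\}$ together with $J_{\bmnu}=J'_{\bmnu}$ and (1) for $\{J'_\bmmu\}$ gives $0=c_{\bmnu}\langle J_{\bmnu},J_{\bmnu}\rangle$, so every $c_{\bmnu}=0$ and $J_{\bmla}=J'_{\bmla}$.

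\emph{Existence.} I would introduce the Laplace--Beltrami / Sekiguchi--Debiard operator
\[
D^{(\theta)}=\sum_{i}x_i^2\frac{\partial^2}{\partial x_i^2}+\frac{2}{\theta}\sum_{i\neq j}\frac{x_i^2}{x_i-x_j}\frac{\partial}{\partial x_i},
\]
interpreted through its stable restrictions to finitely many variables, and establish the standard facts behind Macdonald's theorem: (a) $D^{(\theta)}$ preserves $\Sym$, acts on each $\Sym_n\otimes\bQ(\theta)$, and is triangular, $D^{(\theta)}m_{\bmla}=d_{\bmla}(\theta)\,m_{\bmla}+\sum_{\bmmu\prec\bmla}(\ast)\,m_{\bmmu}$, with $d_{\bmla}(\theta)=\sum_i\lambda_i\bigl(\lambda_i-1-\tfrac{2}{\theta}(i-1)\bigr)$; (b) $D^{(\theta)}$ is self-adjoint for \eqref{e:scalarP}; (c) $d_{\bmla}(\theta)\neq d_{\bmmu}(\theta)$ in $\bQ(\theta)$ whenever $\bmmu\prec\bmla$, because the coefficient $-2\sum_i(i-1)\lambda_i$ of $1/\theta$ is strictly monotone along the covering relations of $\preceq$. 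From (a) and (c), the restriction of $D^{(\theta)}$ to the invariant subspace $\mathrm{span}\{m_{\bmmu}:\bmmu\preceq\bmla\}$ is triangular with the eigenvalue $d_{\bmla}(\theta)$ occurring with multiplicity one, so there is a unique eigenvector $P_{\bmla}=m_{\bmla}+\sum_{\bmmu\prec\bmla}(\ast)m_{\bmmu}$ of eigenvalue $d_{\bmla}(\theta)$, and this $P_{\bmla}$ already satisfies (2) and (3). From (b), $d_{\bmla}\langle P_{\bmla},P_{\bmmu}\rangle=\langle D^{(\theta)}P_{\bmla},P_{\bmmu}\rangle=\langle P_{\bmla},D^{(\theta)}P_{\bmmu}\rangle=d_{\bmmu}\langle P_{\bmla},P_{\bmmu}\rangle$, so $\langle P_{\bmla},P_{\bmmu}\rangle=0$ whenever $d_{\bmla}(\theta)\neq d_{\bmmu}(\theta)$. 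The main obstacle is the residual case of orthogonality, namely $\preceq$-incomparable pairs $\bmla\neq\bmmu$ with $d_{\bmla}(\theta)=d_{\bmmu}(\theta)$ identically in $\theta$ (these genuinely occur, e.g. $(4,1,1)$ and $(3,3)$), which the single operator $D^{(\theta)}$ does not separate. I would resolve it by passing to the full commuting family of Sekiguchi--Debiard operators $\{D^{(\theta)}_r\}_{r\geq1}$, each self-adjoint and $\preceq$-triangular in the above sense: the argument of (a)--(c) applied simultaneously makes every $P_{\bmla}$ a joint eigenfunction, and the joint eigenvalue separates all partitions of a fixed size, so for $\bmla\neq\bmmu$ some $D^{(\theta)}_r$ has distinct eigenvalues on $P_{\bmla}$ and $P_{\bmmu}$ and self-adjointness forces $\langle P_{\bmla},P_{\bmmu}\rangle=0$. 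This establishes (1); non-degeneracy of \eqref{e:scalarP} then gives $\langle P_{\bmla},P_{\bmla}\rangle\neq0$, so $\{P_{\bmla}\}$ satisfies (1)--(3), and by the uniqueness part $J_{\bmla}(\bmx;\theta)=P_{\bmla}$.
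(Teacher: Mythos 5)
The paper does not prove this theorem: it is stated as Macdonald's result and cited without proof, so there is no in-text argument to compare against. Your proof is therefore a reconstruction, and it is a correct one.

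Your uniqueness argument is clean and self-contained: non-degeneracy of \eqref{e:scalarP} on each homogeneous component follows from orthogonality of the $p_{\bmmu}$, condition (3) pins $v_{\bmla\bmla}=1$, and the induction over a linear extension of $\preceq$ closes properly. The existence argument is the standard operator-theoretic one, and it correctly notices a genuine subtlety that many informal accounts glide past: a single Sekiguchi--Debiard operator does \emph{not} have simple spectrum. Your example is right --- for both $(4,1,1)$ and $(3,3)$ one has $\sum\la_i(\la_i-1)=12$ and $\sum(i-1)\la_i=3$, so both eigenvalues equal $12-6/\theta$ --- so the self-adjointness argument alone does not give full orthogonality. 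Passing to the full commuting family, whose joint spectrum recovers the strictly decreasing sequence $\bigl(\la_i-\theta(i-1)\bigr)_i$ and hence $\bmla$, is the correct fix. Two things you would still need to make explicit when writing this out: (i) the $\preceq$-triangularity, self-adjointness for \eqref{e:scalarP}, and mutual commutativity of all the $D^{(\theta)}_r$ (including their stability under increasing the number of variables, so they lift to $\Sym$) each require proof; Stanley's 1989 Jack-function paper or Macdonald's book are the standard sources. (ii) Upgrading $P_{\bmla}$ from a $D^{(\theta)}$-eigenfunction to a joint eigenfunction uses (i): $D^{(\theta)}_r P_{\bmla}$ lies in $\mathrm{span}\{m_{\bmmu}:\bmmu\preceq\bmla\}$ and commutes past $D^{(\theta)}$, so by simplicity of the $d_{\bmla}(\theta)$-eigenvalue on that span it is a multiple of $P_{\bmla}$. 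With those points filled in, the proof is complete.
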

If we set all but finitely many variables equal to $0$ (say $x_{N+1}=x_{N+2}=\cdots=0$) in $J_\bmla(\bmx,\theta)$, then we obtain a polynomial $J_\bmla(x_1,x_2,\cdots, x_N;\theta)$.  %For $\theta=1$, the Jack polynomials are the well known Schur polynomials. 
Jack symmetric functions are the eigenfunctions of the Nazarov-Sklyanin operators, which are differential operators on $\bR[p_1,p_2,p_3,\cdots]$, see Section \ref{s:NSop}. From now on, we will write the Jack symmetric functions in terms of the variables $\bmp=(p_1,p_2,p_3,\cdots)$.

\begin{definition}
Let $\{M_N\}_{N\geq 1}$ be a sequence of probability measures on $\bY(N)$. The \textit{Jack generating function of $M_N$} is given by
\begin{equation}\label{e:gener}
F_{N}(\bmp; \theta) := \sum_{\bflambda\in\Y(N)}
M_N(\bflambda)\frac{J_{\bflambda}(\bmp; \theta)}{J_{\bflambda}(1^N; \theta)}.
\end{equation}
\end{definition}
For any $\bflambda\in\Y(N)$ we have $J_{\bflambda}(1^N; \theta) > 0$ by Theorem \ref{t:J1N}, and therefore $(\ref{e:gener})$ does not have vanishing denominators. The sum in \eqref{e:gener} may be infinite; thus $F_N(\bmp;\theta)$ may not be an element of $\Sym$. However, it can be justified as an element in the completion $\widehat \Sym$.  By an abuse of notation, we use $\bmp=1^N$ to mean that we set $x_1=x_2=\cdots=x_N=1$ and $x_{N+1}=x_{N+2}=\cdots=0$. With this notation, it is easy to see that $F_N(\bmp;\theta)|_{\bmp=1^N}=1$.

The main results of this paper show the law of large numbers and central limit theorems of the measure $M_N$ can be extracted from the
asymptotic behaviors of the Jack generating functions $F_N(\bmp;\theta)$ at unity.

\subsection{Related results}
The most natural and studied measure on Young diagrams is the
Plancherel measure for the symmetric group,  which is the push-forward of the uniform distribution
on the symmetric group by the Robinson-Schensted-Knuth (RSK) correspondence. The limit shape for
the Plancherel measure  was first obtained by Vershik and Kerov \cite{MR0480398} and Logan
and Shep \cite{MR1417317}. The central limit theorems for the Plancherel measure was announced in the short note \cite{MR1204294}, where Kerov outlined the scheme of the proof. The detailed exposition of Kerov's central limit theorem can be found in \cite{MR1708561,MR2059361}

A deformation of Plancherel measure,
linked to the celebrated Jack polynomials, is the Jack deformation of Plancherel measure. It has appeared recently in several research papers \cite{MR2143199,MR2098845,MR2607329,MR2227852}. 
The law of large numbers and the central limit theorems of Jack deformation of Plancherel measure were recently proven in \cite{MR3498866}, using certain polynomiality result for the structure constant of Jack characters. A more general family of Jack deformed probability measures on Young diagrams, arising from decomposing certain Jack characters into irreducible characters was studied in \cite{Mac}, for which the limit shape and global fluctuation were understood.

Random Young diagrams are studied as discrete analogues of eigenvalues of the Gaussian Unitary Ensemble (GUE).  The empirical eigenvalue distributions of the GUE, or more generally, the Wigner matrices, converge weakly to the Semi-circle distribution. This statement follows from computing the moments of the eigenvalues, and dates back to the original work of Wigner \cite{MR0077805,MR0083848}. The study of the central limit theorems  for the linear statistics of the eigenvalues of random matrices can be traced back to \cite {MR650926},  in the context of Wishart matrices. The proof is based on a careful analysis of the moment method, see \cite[Chapter 2]{MR2760897} for an exposition. Another approach to the central limit theorems of the linear statistics is via analyzing the resolvent \cite{MR1411619,MR2189081}. This approach is further generalized to study sample covariance matrices \cite{MR2040792}, sparse random matrices \cite{MR2605074,MR2953145}, heavy tailed random matrices \cite{MR3210147}, and band matrices \cite{MR3392508}.

A family of $N$-particle systems called the $\beta$-ensembles ($\beta$ log-gas) is intensively studied by the mathematical community, which is a probability distribution on $N$-tuples of reals $x_1<x_2<\cdots<x_N$ with density proportional to
\begin{align*}
\prod_{1\leq i< j\leq N}(x_j-x_i)^\beta \prod_{i=1}^N e^{-NV(x_i)}\rd x_i,
\end{align*}
where $V$ is the potential. 
When $\beta = 1, 2,4$ and the potential $V$ is quadratic, this is the joint eigenvalue density for the Gaussian Orthogonal, Unitary, or Symplectic ensembles. The law of large numbers of the empirical density of the $\beta$-ensembles with general potential $V$ was established in \cite{MR1327898,MR1465640}. In the breakthrough paper \cite{MR1487983} by Johansson, the loop equations were introduced to the mathematical community, and used to prove the central limit theorems of the liner statistics of the $\beta$-ensembles. Further developments have led to establishing central limit theorems in more general settings, see \cite{borot-guionnet2,MR3010191,MR3063494,MR3468249,KrSh}. The discrete beta ensembles are introduced and studied in \cite{MR3668648}. The law of large numbers for such systems can be established by the same methods as for the continuous case, see \cite{MR2483725}. The  proof of the Gaussian fluctuation in \cite{MR3668648} is more remarkable and relies on an appropriate discrete version of the loop equations,
which originates in the work of Nekrasov and his collaborators \cite{Nek_PS,Nek_Pes,Nekrasov}. For the two-dimensional $\beta$ log-gas, the law of large numbers is well-known, see \cite{MR1485778}. The central limit theorems have been recently proven in \cite{MR2361453,MR2817648} for $\beta=2$ and in \cite{LeSe,RoBo} for general $\beta>0$. 

The discrete Markov chains in Section \ref{s:nrw}, are $\beta$ analogues of the nonintersecting random walks \cite{MR1887625}, which corresponds to $\beta=2$. The central limit theorems for the global fluctuations of the nonintersecting random walks were obtained by various methods. For the fully-packed initial data, the central limit theorems were established in \cite{MR3148098} by the technique of determinantal point processes, in \cite{MR3552537,MR3263029} by computations in the universal enveloping algebra of
$U(N)$ and in \cite{Dui, MR3556288} by employing finite term recurrence relations of orthogonal polynomials. For general initial data, the law of large numbers and the central limit theorems were recently addressed in \cite{MR3361772,BG1} with Schur generating functions.
Our results give a new proof of these results based on the Jack generating functions, and we recognize the fluctuations of the height functions as
those of the pullback of the Gaussian free field on the upper half plane.

\subsection{Organization of the paper}

In Section \ref{s:main}, we formulate our main results linking the law of large numbers and central limit theorems to the asymptotic behaviors of the Jack generating functions.  In Section \ref{s:PPM}, we recall the definition of the Perelomov-Popov measures, and show that the the law of large numbers and central limit theorems of the empirical density is equivalent to those of the Perelomov-Popov measures. In Section \ref{subs:NSOP}, we recall the Nazarov-Sklyanin Operators from \cite{MR3141546}, which have Jack symmetric functions (in infinitely many variables) as their eigenfunctions, and eigenvalues are given by the moments of the Perelomov-Popov measures. Section \ref{s:proof} presents the proofs of our main results in Sectoin \ref{s:main}. In Section \ref{s:applications}, we present applications of our main results, including the Littlewood-Richardson coefficients for zonal polynomials, and ensembles of nonintersecting random walks. We collect some basic properties of the Jack polynomials and the skew Jack polynomials in Section \ref{s:Jack}. In Section \ref{s:finiteJGF}, we define the Jack generating functions using Jack polynomials in $N$ variables, and prove that our Assumptions on the asymptotic behaviors of the Jack generating functions are equivalent to their counterparts in \cite{BG1}. 

%\subsection{More notations}
%We use bold greek letters e.g., $\bmla, \bmmu,\bmeta$, to represent Young diagrams. We write $\bmla\subset\bmmu$, if each cell of $\bmla$ is also a cell of $\bmmu$. We denote $\qq{r}\deq \{1,2,\cdots,r\}$. Given any set $\Omega$, we denote $\cP(\Omega)$ the set of partitions of the set $\Omega$.

%We denote $\bR$ the set of real numbers, $\bC$ the set of complex numbers, $\bH=\bC_+$ the set of complex numbers with positive imaginary parts, $\bC_-$ the set of complex numbers with negative imaginary parts, $\bZ$ the set of integers, $\bZ_{>0}$ the set of positive integers, and $\bN=\bZ_{\geq 0}$ the set of non-negative integers. 

%For a sequence of numbers $\{a_N\}_{N
%\geq1}$, we say it has a $N$-degree (at most) $d$, if 
%\begin{align}
%\lim_{N\rightarrow \infty}\frac{a_N}{N^d}
%\end{align}
%exists.
%We say it has a $N$-degree less than $d$, if 
%\begin{align}
%\lim_{N\rightarrow \infty}\frac{a_N}{N^d} =0.
%\end{align}

\noindent\textbf{Acknowledgement.} I am thankful to Prof. Gorin for many enlightening discussions. I am indebted to Cesar Cuenca for introducing me to the discrete dynamics related to Jack symmetric functions which are studied in Section \ref{s:nrw}, and explaining to me his work on asymptotics of Jack characters. I also want to thank Prof. Borodin,  Alex Moll and Prof. Petrov for helpful remarks on the draft.

\section{Main Results}\label{s:main}
%\begin{assumption}\label{a:summable}
%For any $s\geq 1$ and $s$-variable polynomial $Q(i_1,i_2,\cdots, i_s)$, the following is absolutely summable 
%\begin{align}\label{e:summable}
%\sum_{i_1,i_2,\cdots, i_s\geq 1}Q(i_1,i_2,\cdots, i_s) \left.\frac{\del^s \ln F_N(\bmp;\theta)}{\del p_{i_1}\del p_{i_2}\cdots \del p_{i_s}}\right|_{\bmp=N\rho}<\infty.
%\end{align}
%\end{assumption}

Let $\{M_N\}_{N\geq 1}$ be a sequence of probability measures on $\bY(N)$, with Jack generating function $F_N(\bmp;\theta)$ as defined in \eqref{e:gener}. $F_N(\bmp;\theta)$ is an element in the completion $\widehat{\Sym}$. We can rewrite it in terms of the power sum symmetric functions, 
\begin{align}\label{e:gener2}
F_N(\bmp;\theta)=\sum_{\bmla\in \bY}a_{M_N}(\bmla)p_{\bmla}.
\end{align}
We make the following assumption on the analyticity of the Jack generating function. Roughly speaking, it implies that in a neighborhood of $x_1=x_2=\cdots=x_N=1$, $x_{N+1}=x_{N+2}=\cdots=0$, \eqref{e:gener2} converges absolutely and analytic.
\begin{assumption}\label{asup:infinite}
There exists a small positive real $\varepsilon > 0$ such that for any $N\in\bZ_{>0}$, the function
\begin{align*}
F_N(\bmp;\theta)=\sum_{\bmla\in \bY}a_{M_N}(\bmla)p_{\bmla},
\end{align*}
satisfies,
\begin{align*}
\sum_{\bmla\in \bY}| a_{M_N}(\bmla)|N^{\ell(\bmla)}|x|^{|\bmla|},
\end{align*}
converges absolutely on the disk $\{z\in \bC, |z|< 1+\varepsilon\}$.
\end{assumption}
Under Assumption \ref{asup:infinite}, the derivatives of the logarithm of the Jack generating functions are summable. We postpone the proof of the following proposition to Section \ref{s:finiteJGF}.

\begin{proposition}\label{p:summable}
Given $F_N(\bmp;\theta)$ satisfies Assumption \ref{asup:infinite}. Then for any fixed $r\geq 1$, the infinite series 
\begin{align}\label{e:summable}
\sum_{i_1,i_2,\cdots,i_r\geq 1}\left|\left.\frac{\del^{r}\ln F_N(\bmp;\theta)}{\del p_{i_1}\del p_{i_2}\cdots \del p_{i_{r}}}\right|_{\bmp=1^N}\right||x|^{i_1+i_2+\cdots+i_r},
\end{align}
converges absolutely on the disk $\{z\in\bC: |z|<1+\varepsilon\}$.

\end{proposition}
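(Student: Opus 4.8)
The plan is to reduce the summability of the derivatives of $\ln F_N$ to the absolute convergence assumption on $F_N$ itself, via a generating-function/combinatorial argument. First I would observe that Assumption \ref{asup:infinite} says precisely that the formal substitution $p_i \mapsto p_i^{(0)} := N + w_i$ (where $w_i$ is a new variable tracking the deviation of $p_i$ from its value $N$ at $\bmp = 1^N$, the $i$-th power sum of $N$ ones) turns $F_N$ into a power series in the $w_i$ that converges absolutely on the polydisk implied by $\sum_i |w_i| \, t^i$ small — concretely, the hypothesis on $\sum_{\bmla} |a_{M_N}(\bmla)| N^{\ell(\bmla)} |x|^{|\bmla|}$ for $|x| < 1 + \varepsilon$ gives that $G_N(w_1, w_2, \dots) := F_N(N + w_1, N + w_2, \dots; \theta)$, expanded in the $w_i$, has coefficients whose absolute sum, weighted by $|x|^{i_1 + \cdots + i_k}$ over a monomial $w_{i_1}\cdots w_{i_k}$, is finite for $|x| < 1 + \varepsilon$. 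The key point is that $F_N|_{\bmp = 1^N} = 1$, so $G_N = 1 + (\text{higher order in } w)$, hence $\ln F_N$, as a function of the $w_i$, is a genuine power series with no constant term, obtained by composing $G_N - 1$ with the analytic function $\log(1 + \cdot)$.

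Next I would make the composition estimate quantitative. Introduce the one-variable majorant series $\widehat{G}_N(t) := \sum_{k \geq 1} \left( \sum_{i_1, \dots, i_k \geq 1} |g_N(i_1, \dots, i_k)| \, t^{i_1 + \cdots + i_k} \right)$, where $g_N(i_1, \dots, i_k)$ is the coefficient of $w_{i_1} \cdots w_{i_k}$ in $G_N$ (symmetrized appropriately); Assumption \ref{asup:infinite} guarantees $\widehat{G}_N(t)$ converges for $t < 1 + \varepsilon$ and equals $0$ at $t = 0$. Since $\log(1 + s)$ is analytic near $s = 0$ with radius of convergence $1$, and $\widehat{G}_N(0) = 0$, by continuity there is $t_0 \in (0, 1 + \varepsilon)$ — possibly $N$-dependent a priori, but one checks uniformity from the hypothesis being uniform in $N$ — with $\widehat{G}_N(t) < 1$ for $t \le t_0$, so $\log(1 + \widehat{G}_N(t))$ converges there, and its Taylor coefficients dominate termwise the absolute values of the coefficients of $\ln F_N$ expanded in the $w_i$. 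This yields: for $t < t_0$,
\begin{align}\label{e:majln}
\sum_{k \geq 1} \sum_{i_1, \dots, i_k \geq 1} \left| \frac{\partial^k \ln F_N}{\partial p_{i_1} \cdots \partial p_{i_k}} \Big|_{\bmp = 1^N} \right| \frac{t^{i_1 + \cdots + i_k}}{(\text{multinomial factors})} < \infty,
\end{align}
where the multinomial factors come from converting partial derivatives at a point into Taylor coefficients. Since we only need the statement for each \emph{fixed} $r$, restricting the outer sum in \eqref{e:majln} to $k = r$ and absorbing the bounded combinatorial factors gives exactly \eqref{e:summable} convergent for $|x| < t_0$.

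The last point to address is that the proposition claims convergence on the full disk $|z| < 1 + \varepsilon$, not merely on $|z| < t_0$. Here I would argue that the radius $t_0$ above is an artifact of the crude majorant $\log(1 + \widehat G_N)$: the actual series $\ln F_N$ extends analytically as long as $F_N$ itself is nonvanishing, and since $F_N(\bmp;\theta) = 1$ at $\bmp = 1^N$ and $F_N$ is analytic (hence continuous) on the region described in Assumption \ref{asup:infinite}, one can either (a) cite that the relevant statement is really about the germ at unity and rescale, or (b) more carefully, run the majorant argument with the sharper observation that Assumption \ref{asup:infinite} controls $\sum_\bmla |a_{M_N}(\bmla)| N^{\ell(\bmla)} |x|^{|\bmla|}$ for \emph{all} $|x| < 1 + \varepsilon$ with a bound that degrades only at the boundary, and that the singularities of $\log$ are pushed to where $F_N$ vanishes, which the hypothesis precludes on the relevant polydisk. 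I expect the main obstacle to be precisely this bookkeeping: organizing the multi-index Taylor expansion of a \emph{composition} of two multivariable analytic functions so that the weighted absolute sums match up cleanly, and verifying that the radius one gets is genuinely $1 + \varepsilon$ rather than something smaller — this is where care with the definition of $\widehat{\Sym}$, the substitution $\bmp = 1^N$, and the uniformity in $N$ all have to be handled simultaneously. The actual analytic input (Cauchy estimates, composition of power series, $\log(1+s)$ analytic near $0$) is standard; the combinatorial-analytic reduction is the part requiring attention, and much of it is deferred to Section \ref{s:finiteJGF} where the finite-variable Jack generating function makes the substitution $\bmp = 1^N$ transparent.
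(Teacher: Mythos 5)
Your overall strategy — change variables to $\Delta p_i = p_i - N$, compose with $\log$, and estimate — is the same as the paper's (Proposition~\ref{p:changeDer} in the appendix). But your proposal has a genuine gap, which you yourself half-recognize but do not resolve. The naive majorant $\log\bigl(1 + \widehat G_N(t)\bigr)$ only converges where $\widehat G_N(t) < 1$, which gives some radius $t_0$ that may be strictly smaller than $1+\varepsilon$, and can in principle shrink with $N$. Your two suggested fixes do not close this: option (a), ``rescale and cite the germ,'' does not work because the proposition demands convergence on the full disk $\{|z|<1+\varepsilon\}$, not merely near $1$, and nothing in the setup lets you renormalize $\varepsilon$ away. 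Option (b) rests on non-vanishing of $F_N$, which is not part of Assumption~\ref{asup:infinite} — but more fundamentally, even where $F_N$ is analytic and nonzero, that says nothing about absolute summability of the weighted Taylor coefficients of $\log F_N$, which is a strictly stronger statement than pointwise analyticity; the singularity set of $\log F_N$ is not what constrains the radius of your majorant.

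The observation you are missing — and the crux of the paper's proof — is that one should fix $r$ \emph{before} expanding the logarithm. Because $F_N|_{\bmp=1^N} = 1$, the series $G_N - 1$ in the variables $\Delta p_i$ has no constant term, so $(G_N-1)^k$ contains no monomial $(\Delta p)_{\bmla}$ with $\ell(\bmla) < k$. Hence the coefficient $\tilde c(\bmla)$ of $(\Delta p)_{\bmla}$ in $\log F_N = \sum_{k\geq 1}\tfrac{(-1)^{k-1}}{k}(G_N-1)^k$, for $\ell(\bmla)=r$, receives contributions only from $k \leq r$ — a \emph{finite} sum. There is therefore no radius-of-convergence constraint coming from the $\log$ series at all; one simply bounds each of the at most $r$ terms $\prod_{i=1}^k |c(\bmla_i)| \, |x|^{|\bmla_i|}$ (with $\ell(\bmla_1)+\cdots+\ell(\bmla_k)=r$) directly from Assumption~\ref{asup:infinite}, picking up a harmless polynomial factor ${\ell(\bmmu)\choose r_i}$ from the binomial coefficients, and the convergence holds on the entire disk $|x|<1+\varepsilon$. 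Your index $k$ in your display~\eqref{e:majln} conflates the order of the derivative (which is $r$) with the exponent in the $\log$ expansion; separating these and noting that the latter is capped by the former is the step your argument needs.
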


%We also note that we need to specify $N$ in the definition because any probability measure on $\Y(N)$ is naturally also a probability measure on $\Y(L)$, for any $L\geq N$, which is supported on its subset $\Y(N)$.

\subsection{Law of large numbers}

Let $\{M_N\}_{N\geq 1}$ be a sequence of probability measures on $\bY(N)$, with Jack generating functions $F_N(\bmp;\theta)$ as defined in \eqref{e:gener}. 
We assume that the Jack generating functions $F_N(\bmp;\theta)$ satisfy Assumption \ref{asup:infinite} and the following assumption.
\begin{assumption}[Law of large numbers]\label{a:LLN}
We assume that the Jack generating functions $F_N(\bmp;\theta)$ satisfy Assumption \ref{asup:infinite}, and
\begin{enumerate}
\item For any $k\geq 1$,
\begin{align}\label{e:LLN1}
\lim_{N\rightarrow \infty}\left.\frac{1}{N}\sum_{i\geq 1}k!{i\choose k}\frac{\del \ln F_N(\bmp;\theta)}{\del p_i}\right|_{\bmp=1^N}= \fc_k.
\end{align}
\item For any $r\geq 2$ and indices $k_1,k_2,\cdots, k_r\geq 1$,
\begin{align}\label{e:LLN3}
\lim_{N\rightarrow \infty}\frac{1}{N}\sum_{i_1,i_2,\cdots, i_r\geq 1}{i_1\choose k_1}{i_2\choose k_2}\cdots {i_r\choose k_r} \left.\frac{\del^r \ln F_N(\bmp;\theta)}{\del p_{i_1}\del p_{i_2}\cdots \del p_{i_r}}\right|_{\bmp=1^N}=0.
\end{align}
\end{enumerate}
%\begin{enumerate}
%\item For any $i\geq 1$,
%\begin{align}\label{e:LLN1}
%\lim_{N\rightarrow \infty}\left.\frac{1}{N}\frac{\del \ln F_N(\bmp;\theta)}{\del p_i}\right|_{\bmp=1^N}= c_i.
%\end{align}
%\item For any single variable polynomial $Q(i)$,
%\begin{align}\label{e:LLN2}
%\lim_{N\rightarrow \infty}\frac{1}{N}\sum_{i\geq 1}Q(i) \left.\frac{\del \ln F_N(\bmp;\theta)}{\del p_{i}}\right|_{\bmp=1^N}=O(1),
%\end{align}
%where the implicit constant depends on $Q$. For any $s\geq 2$ and $s$-variable polynomial $Q(i_1,i_2,\cdots, i_s)$,
%\begin{align}\label{e:LLN3}
%\lim_{N\rightarrow \infty}\frac{1}{N}\sum_{i_1,i_2,\cdots, i_s\geq 1}Q(i_1,i_2,\cdots, i_s) \left.\frac{\del^s \ln F_N(\bmp;\theta)}{\del p_{i_1}\del p_{i_2}\cdots \del p_{i_s}}\right|_{\bmp=1^N}=0.
%\end{align}
%\end{enumerate}
\end{assumption}

Under Assumption \ref{a:LLN}, we define a function 
\begin{equation}\label{eqn:Frho}
U_{\mu}(z) := \sum_{k=1}^{\infty}{\frac{\fc_k}{(k-1)!}(z - 1)^{k-1}},
\end{equation}
where the coefficients $\{\fc_k\}_{k\geq 1}$ are from Assumption \ref{a:LLN}. $U_\mu(z)$ may not converge, and should be viewed as a formal power series of $(z-1)$.

\begin{theorem}\label{thm:LLN}
Let $\{M_N\}_{N\geq 1}$ be a sequence of probability measures on $\bY(N)$, with Jack generating functions $F_N(\bmp;\theta)$ as defined in \eqref{e:gener}. We assume the Jack generating functions $F_N(\bmp;\theta)$ satisfy Assumption \ref{asup:infinite} and \ref{a:LLN}. Then, the random measures $\mu[\bmla]$ as defined in \eqref{def:mula} satisfy the law of large numbers in the sense of Definition \ref{def:LLNCLT}, and the moments are characterized by
\begin{equation}\label{eqn:moments}
\lim_{N\rightarrow\infty}\int_{\bR}{x^k \rd\mu[\bmla](x)} =[w^{-1}]\frac{1}{(k+1)}\left(\frac{(w+1)}{\theta}U_{\mu}(w+1)+\frac{1}{w}\right)^{k+1}\sum_{a\geq 0}(-w)^a,
\end{equation}
where the formal power series $U_\mu(z)$ is defined in \eqref{eqn:Frho}.
\end{theorem}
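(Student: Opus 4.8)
The plan is to work through the Perelomov–Popov measures and the Nazarov–Sklyanin operators, extracting moment formulas for $\mu[\bmla]$ from the germ of $\ln F_N$ at unity. First I would recall from Section \ref{s:PPM} that the law of large numbers for the empirical density $\mu[\bmla]$ is equivalent to that of the Perelomov–Popov measures, so it suffices to control the moments $\int s^k\,\rd\nu[\bmla]$ of the latter. The key algebraic input is that the Nazarov–Sklyanin operators $\mathcal{B}_k$ act diagonally on Jack symmetric functions $J_{\bmla}(\bmp;\theta)$ with eigenvalue the $k$-th moment of the Perelomov–Popov measure attached to $\bmla$. Applying $\mathcal{B}_k$ to the Jack generating function $F_N(\bmp;\theta)=\sum_{\bmla}M_N(\bmla)J_{\bmla}(\bmp;\theta)/J_{\bmla}(1^N;\theta)$ therefore produces $\sum_{\bmla}M_N(\bmla)\,(\text{$k$-th PP moment of }\bmla)\,J_{\bmla}(\bmp;\theta)/J_{\bmla}(1^N;\theta)$; evaluating at $\bmp=1^N$ yields exactly $\bE[\int s^k\,\rd\nu[\bmla]]$. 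The structure of $\mathcal{B}_k$ as a differential operator in the $p_i$ then lets one rewrite $(\mathcal{B}_k F_N)/F_N\big|_{\bmp=1^N}$ in terms of the partial derivatives $\del^r\ln F_N/\del p_{i_1}\cdots\del p_{i_r}\big|_{\bmp=1^N}$ — this is the ``polynomial formula'' mentioned in the introduction.

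Next I would insert the scaling. The operator $\mathcal{B}_k$ has a natural filtration by ``number of derivatives,'' and under Assumption \ref{a:LLN} the leading (one-derivative) term, after dividing by $N$, converges to a quantity built from the $\fc_k$, while every higher (multi-derivative) term, again after the $1/N$ normalization, vanishes by \eqref{e:LLN3}; Proposition \ref{p:summable} guarantees that all the infinite sums over $i_1,\dots,i_r$ that appear converge absolutely on $\{|z|<1+\veps\}$, so the term-by-term manipulations and limits are justified. This gives $\lim_N \frac{1}{N}\bE[\int s^k\,\rd\nu[\bmla]]$ as an explicit functional of $U_\mu$. For the higher joint moments $\bE[\prod_j \int s^{k_j}\,\rd\nu[\bmla]]$ one applies a product $\mathcal{B}_{k_1}\cdots\mathcal{B}_{k_r}$; here the combinatorics of the Leibniz rule shows that the ``connected'' contributions are each $O(1/N)$ relative to the product of the individual expectations, so the joint moments factorize in the limit — this is precisely the content of the law of large numbers in Definition \ref{def:LLNCLT}, and transferring back from the Perelomov–Popov side to $\mu[\bmla]$ via the equivalence of Section \ref{s:PPM} completes that part.

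Finally I would identify the limiting single moments with the contour-integral / formal-residue expression on the right-hand side of \eqref{eqn:moments}. The relation between $\nu[\bmla]$ and $\mu[\bmla]$ (the Perelomov–Popov measure is a kind of ``quantized'' version of the empirical density, with the extra $1/w$ and the geometric series $\sum_{a\ge 0}(-w)^a$ encoding the shift by $(i-1)/N$ and the passage from $s$-variable to $x$-variable) is exactly what turns the generating identity for $\lim_N \frac1N\bE[\int s^k\,\rd\nu[\bmla]]$ into the stated $[w^{-1}]$-coefficient formula; this is a bookkeeping computation with the change of variables $s\mapsto w$ together with the definition \eqref{eqn:Frho} of $U_\mu$. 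I expect the main obstacle to be the second step: carefully tracking the filtration of the (rather intricate) Nazarov–Sklyanin operators and their products to show that exactly the right terms survive after the $1/N$ scaling, and that all the rearrangements of the resulting infinite series are legitimate — in other words, proving the precise ``polynomiality'' statement that reduces the moments to the derivatives of $\ln F_N$ at unity and controlling its asymptotics uniformly in $N$. The change of variables in the last step is routine by comparison.
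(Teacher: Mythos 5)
Your outline matches the paper's actual route: transfer to Perelomov--Popov moments (Lemmas \ref{l:change}, \ref{l:CLTtoCLT}), diagonalize with the Nazarov--Sklyanin operators (Theorem \ref{t:mainT} through Theorem \ref{t:moment}), express the resulting expectation via partial derivatives of $\ln F_N$ at unity, and show the joint moments factorize because the connected contributions are lower order in $N$. However, the two places you leave vague are exactly where the paper has to work hardest, and I would call them genuine gaps rather than bookkeeping.

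First, the ``polynomial formula.'' After normal-ordering $I^{(k)}=\sum L_{0i_1}\cdots L_{i_{k-1}0}$ and pushing the $\del/\del p_j$ past the multiplication operators, you are left summing over the indices of the surviving multiplications, each of which contributes a factor $N$ when you set $\bmp=1^N$. The nontrivial input that makes Assumption~\ref{a:LLN} usable at all is that this sum over the ``$+$'' indices, subject to the telescoping constraints imposed by the path $0\to i_1\to\cdots\to i_{k-1}\to 0$, collapses into a polynomial of degree exactly $k-1$ in the surviving ``$-$'' indices (Claim~\ref{c:existtQ}, resting on the elementary Lemma~\ref{lem:basic1}). Without that polynomiality, there is no reason the inner sum lands in the span of the $\binom{i_1}{k_1}\cdots\binom{i_r}{k_r}$ expressions whose limits Assumption~\ref{a:LLN} provides, and the $N$-degree counting you gesture at cannot even begin; you flag this as ``the main obstacle'' but do not supply the key idea. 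Second, passing from ``only singleton derivative-blocks survive,'' i.e.\ $\lim_N (I^{(k)}/N^k) F_N\big|_{1^N}=\lim_N ((T_{F_N})^k)_{00}$, to the closed $[w^{-1}]$-residue formula \eqref{eqn:moments} is not a change of variables. It requires the Wiener--Hopf factorization of the Toeplitz symbol, $(z-T)^{-1}_{00}=\exp\bigl(-\frac{1}{2\pi \ri}\oint\ln(z-T(w))\,\frac{\rd w}{w}\bigr)$ (Lemma~\ref{l:invertT}); this is what converts a power of an infinite matrix into a contour integral of a power of the scalar symbol, which is where the $(k+1)$-th power inside the residue comes from. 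Incidentally, the geometric series $\sum_{a\geq 0}(-w)^a$ in \eqref{eqn:moments} is the expansion of the factor $1/(w+1)$ arising from the contour shift $w\mapsto w+1$ in that residue computation, not a direct avatar of the $(i-1)/N$ staircase shift as you suggest.
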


\subsection{Central limit theorems for one level}

Let $\{M_N\}_{N\geq 1}$ be a sequence of probability measures on $\bY(N)$, with Jack generating functions $F_N(\bmp;\theta)$ as defined in \eqref{e:gener}. 
We assume that the Jack generating functions $F_N(\bmp;\theta)$ satisfy Assumption \ref{asup:infinite} and the following assumption.
\begin{assumption}[Central limit theorems]\label{a:CLT}
We assume that the Jack generating functions $F_N(\bmp;\theta)$ satisfy Assumption \ref{asup:infinite}, and
\begin{enumerate}
\item For any $k\geq 1$,
\begin{align}\label{e:CLT1}
\lim_{N\rightarrow \infty}\left.\frac{1}{N}\sum_{i\geq 1}k!{i\choose k}\frac{\del \ln F_N(\bmp;\theta)}{\del p_i}\right|_{\bmp=1^N}= \fc_k.
\end{align}
\item 
For any $k,l\geq 1$,
\begin{align}\label{e:CLT2}
\lim_{N\rightarrow \infty}\sum_{i,j\geq 1}k!l!{i\choose k}{j\choose l}\left.\frac{\del^2 \ln F_N(\bmp;\theta)}{\del p_i\del p_j}\right|_{\bmp=1^N}= \fd_{k,l}.
\end{align}
\item 
For any $r\geq 3$ and indices $k_1,k_2,\cdots,k_r\geq 1$,
\begin{align}\label{e:CLT5}
\lim_{N\rightarrow \infty}\sum_{i_1,i_2,\cdots, i_r\geq 1}{i_1\choose k_1}{i_2\choose k_2}\cdots {i_r\choose k_r} \left.\frac{\del^r \ln F_N(\bmp;\theta)}{\del p_{i_1}\del p_{i_2}\cdots \del p_{i_r}}\right|_{\bmp=1^N}=0.
\end{align}
\end{enumerate}

%\begin{enumerate}
%\item For any $i\geq 1$,
%\begin{align}\label{e:CLT1}
%\lim_{N\rightarrow \infty}\left.\frac{1}{N}\frac{\del \ln F_N(\bmp;\theta)}{\del p_i}\right|_{\bmp=1^N}= c_i.
%\end{align}
%\item 
%For any $i,j\geq 1$,
%\begin{align}\label{e:CLT2}
%\lim_{N\rightarrow \infty}\left.\frac{\del^2 \ln F_N(\bmp;\theta)}{\del p_i\del p_j}\right|_{\bmp=1^N}= d_{i,j}.
%\end{align}
%\item 
%For any single variable polynomial $Q(i)$,
%\begin{align}\label{e:CLT3}
%\lim_{N\rightarrow \infty}\frac{1}{N}\sum_{i\geq 1}Q(i) \left.\frac{\del \ln F_N(\bmp;\theta)}{\del p_{i}}\right|_{\bmp=1^N}=O(1),
%\end{align}
%where the implicit constant depends on $Q$. For any and two-variable polynomial $Q(i,j)$,
%\begin{align}\label{e:CLT4}
%\lim_{N\rightarrow \infty}\sum_{i,j\geq 1}Q(i,j) \left.\frac{\del^2 \ln F_N(\bmp;\theta)}{\del p_{i}\del p_{j}}\right|_{\bmp=1^N}=O(1),
%\end{align}
%where the implicit constant depends on $Q$. For any $s\geq 3$ and $s$-variable polynomial $Q(i_1,i_2,\cdots, i_s)$,
%\begin{align}\label{e:CLT5}
%\lim_{N\rightarrow \infty}\sum_{i_1,i_2,\cdots, i_s\geq 1}Q(i_1,i_2,\cdots, i_s) \left.\frac{\del^s \ln F_N(\bmp;\theta)}{\del p_{i_1}\del p_{i_2}\cdots \del p_{i_s}}\right|_{\bmp=1^N}=0.
%\end{align}
%\end{enumerate}

\end{assumption}

Under Assumption \ref{a:CLT}, we define functions 
\begin{equation}\label{eqn:UV}
U_{\mu}(z) \deq \sum_{k=1}^{\infty}{\frac{\fc_k}{(k-1)!}(z - 1)^{k-1}},\quad
V_\mu(z,w)\deq \sum_{k,l=1}^{\infty}\frac{\fd_{k,l}}{(k-1)!(l-1)!}(z-1)^{k-1}(w-1)^{l-1},
\end{equation}
where the coefficients $\{\fc_k\}_{k\geq 1}$, $\{\fd_{k,l}\}_{k,l\geq 1}$ are from Assumption \ref{a:CLT}. $U_\mu(z)$ and $V_\mu(z)$ may not converge, and should be viewed as formal power series of $(z-1)$ and $(w-1)$.

\begin{theorem}\label{t:clt}
Let $\{M_N\}_{N\geq 1}$ be a sequence of probability measures on $\bY(N)$, with Jack generating functions $F_N(\bmp;\theta)$ as defined in \eqref{e:gener}. 
We assume that the Jack generating functions $F_N(\bmp;\theta)$ satisfies Assumption \ref{asup:infinite} and \ref{a:CLT}.
Then, the random measures $\mu[\bmla]$ as defined in \eqref{def:mula} satisfy the central limit theorems in the sense of Definition \ref{def:LLNCLT}, and the covariant structure is characterized by
\begin{align}\begin{split}\label{e:cltcov}
\lim_{N\rightarrow\infty}\cov(\xi^{(k)}[\bmla], \xi^{(l)}[\bmla])
&=[z^{-1}w^{-1}]
\left(\sum_{a\geq 1}\frac{a}{\theta}z^{a-1}w^{-a-1} +\frac{V_\mu(z+1,w+1)}{\theta^2}\right)\\
&\phantom{{}={}}\left( \frac{1}{z} +\frac{ (1 + z)}{\theta}U_{\mu}(1 + z) \right)^{k}
\left( \frac{1}{w} +\frac{ (1 + w)}{\theta}U_{\mu}(1 + w) \right)^{l},
\end{split}\end{align}
where
\begin{align*}
\xi^{(k)}[\bmla]\deq N\left(\int x^k\rd \mu[\bmla]-\bE\int x^k\rd \mu[\bmla]\right),\quad k=1,2,3,\cdots,
\end{align*} 
and the formal power series $U_\mu(z)$ and $V_\mu(z,w)$ are defined in \eqref{eqn:UV}.
\end{theorem}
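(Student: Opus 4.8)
The strategy mirrors the proof of Theorem \ref{thm:LLN}, but now we must track second-order fluctuations. The key tool is the family of Nazarov--Sklyanin operators $\mathcal{D}_k$ (Section \ref{subs:NSOP}), which act diagonally on Jack symmetric functions with eigenvalue on $J_{\bmla}$ equal to the $k$-th moment of the Perelomov--Popov measure associated to $\bmla$. Since (by Section \ref{s:PPM}) the law of large numbers and central limit theorems for $\mu[\bmla]$ are equivalent to those for the Perelomov--Popov measures, it suffices to compute the asymptotics of the joint moments
\begin{align*}
\bE\left[\prod_{j=1}^{r}\left(\int x^{k_j}\rd\nu[\bmla]\right)\right]
\end{align*}
of the Perelomov--Popov measures $\nu[\bmla]$. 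The point is that this expectation equals $\left(\mathcal{D}_{k_1}\mathcal{D}_{k_2}\cdots\mathcal{D}_{k_r}F_N(\bmp;\theta)\right)\big|_{\bmp=1^N}$, because applying $\mathcal{D}_{k_j}$ to the generating function pulls down a factor equal to the $k_j$-th moment inside each term of \eqref{e:gener}, and evaluating at $\bmp=1^N$ sums against $M_N$.

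\textbf{Key steps.} First I would establish a combinatorial identity expressing $\mathcal{D}_k$, restricted to a neighborhood of $\bmp=1^N$, in terms of the differential operators $\del/\del p_i$ together with multiplication operators; the binomial weights ${i\choose k}$ appearing in Assumptions \ref{a:LLN} and \ref{a:CLT} arise precisely from the explicit formula for the Nazarov--Sklyanin operators expanded around unity (this is where the shift $p_i \mapsto p_i$ near $1^N$ produces the combinatorial factor $k!{i\choose k}$). Second, I would write $F_N = \exp(\ln F_N)$ and use the product/Leibniz rule to expand $\mathcal{D}_{k_1}\cdots\mathcal{D}_{k_r}F_N / F_N$ as a sum over set partitions of $\{1,\dots,r\}$, with each block contributing a derivative of $\ln F_N$ of order equal to the block size; this is the standard cumulant-to-moment expansion. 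Third, I would perform the $N\to\infty$ asymptotic analysis: Assumption \ref{a:CLT}(1) controls single-derivative (order-one) terms, which contribute at order $N$ each and combine into powers of $U_\mu$; Assumption \ref{a:CLT}(2) controls double-derivative terms, contributing at order $1$ and giving the $V_\mu$ term plus the explicit $\sum_{a\ge1}(a/\theta)z^{a-1}w^{-a-1}$ piece (the latter coming from the ``diagonal'' part of $\mathcal{D}_k\mathcal{D}_l$, i.e., the commutator / non-commuting correction when the two operators act on overlapping variables); Assumption \ref{a:CLT}(3) forces all higher-order cumulants to vanish, yielding Gaussianity. The contour-integral (formal-residue) form of \eqref{e:cltcov} emerges from repackaging the sums over $i,j$ weighted by binomial coefficients as coefficient extractions $[z^{-1}w^{-1}]$ after substituting the generating-function definitions \eqref{eqn:UV}, exactly as the sums in \eqref{e:LLN1} get repackaged into $U_\mu$ in Theorem \ref{thm:LLN}.

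\textbf{Main obstacle.} The hard part will be step three: carefully identifying the non-commutative correction term. When $r=2$, the product $\mathcal{D}_k\mathcal{D}_l$ is \emph{not} simply the ``expected'' product of first-order pieces; there is an extra contribution of order $1$ (the same order as the genuine covariance from $V_\mu$) coming from the fact that $\mathcal{D}_l$ acts on the moment-eigenvalue factor produced by $\mathcal{D}_k$, equivalently from the commutator structure of the Nazarov--Sklyanin operators. Isolating this correction, showing it produces exactly $[z^{-1}w^{-1}]\sum_{a\ge1}(a/\theta)z^{a-1}w^{-a-1}\left(z^{-1}+\theta^{-1}(1+z)U_\mu(1+z)\right)^k\left(w^{-1}+\theta^{-1}(1+w)U_\mu(1+w)\right)^l$, and verifying that analogous corrections at higher $r$ are subleading (so that only pair-contributions survive, giving a genuinely Gaussian limit) is the technical core. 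A secondary difficulty is justifying all interchanges of limits, sums, and the operator actions: this is where Proposition \ref{p:summable} and the analyticity in Assumption \ref{asup:infinite} are essential, guaranteeing that the relevant series converge on $\{|z|<1+\varepsilon\}$ uniformly enough to take $N\to\infty$ term by term.
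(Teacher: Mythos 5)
Your proposal matches the paper's strategy: use the Nazarov--Sklyanin operators to access joint moments of the Perelomov--Popov measures (Theorem \ref{t:moment}), expand the action on $F_N=\exp(\ln F_N)$ in normal order to get a sum over sign patterns, pairings, and partitions (Theorem \ref{t:decompose}), control $N$-degrees via Assumption \ref{a:CLT} so that only pair contributions survive for $r\geq 2$ (Claims \ref{c:CLTup}, \ref{c:edgeineq}, Proposition \ref{p:crossterm}), and repackage the surviving terms via the Toeplitz-symbol formula (Lemma \ref{l:invertT}). One small clarification: the order-one correction you describe arises from normal-ordering the Heisenberg-type relations $p_k^* p_k = p_k p_k^* + k/\theta$ inside the expansion of $I^{(k_1)}I^{(k_2)}$ — captured by the paper's pairing sets $\cD$ — not from any failure of the operators $I^{(k)}$ to commute, which they do (this commutativity is in fact used in Proposition \ref{p:totalsum}).
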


\subsection{Central limit theorems for several levels}\label{s:mulclt}
Let $\{M_N\}_{N\geq 1}$, $\{\fm_N^{(1)}\}_{N\geq 1}, \{\fm_N^{(2)}\}_{N\geq 1}, \{\fm_N^{(3)}\}_{N\geq 1},\cdots $ be sequences of probability measures, where $M_N$, $\fm_N^{(1)}, \fm_N^{(2)}, \fm_N^{(3)},\cdots$ are probability measures on $\Y(N)$. We denote the  Jack generating functions of $M_N$, $\fm_N^{(1)}, \fm_N^{(2)}, \fm_N^{(3)},\cdots $ by $F_N(\bmp;\theta)$, $g_N^{(1)}(\bmp;\theta), g_N^{(2)}(\bmp;\theta), g_N^{(3)}(\bmp;\theta),\cdots$ respectively.
For $t\geq 1$, we define the coefficients $\fp_N^{(t)}(\bmla,\bmmu)$, for $\bmla,\bmmu\in \Y(N)$ via the decomposition 
\begin{align}\label{e:deffp}
g_N^{(t)}(x_1,x_2,\cdots, x_N;\theta)\frac{J_\bmla(x_1,x_2,\cdots, x_N;\theta)}{J_\bmla(1^N;\theta)}
=\sum_{\bmmu\in \Y(N)}\fp_N^{(t)}(\bmla,\bmmu)
\frac{J_\bmmu(x_1,x_2,\cdots, x_N;\theta)}{J_\bmmu(1^N;\theta)}.
\end{align}
By the definition \eqref{e:deffp} of the coefficients $\fp_N^{(t)}(\bmla,\bmmu)$, we have for $\bmla,\bmmu\in \bY(N)$ and $t\geq1$,
\begin{align}\label{e:deffpinfinite}
g_N^{(t)}(\bmp;\theta)\frac{J_\bmla(\bmp;\theta)}{J_\bmla(1^N;\theta)}
=\sum_{\bmmu\in \Y(N)}\fp_N^{(t)}(\bmla,\bmmu)
\frac{J_\bmmu(\bmp;\theta)}{J_\bmmu(1^N;\theta)}+\Sp(J_\bmeta(\bmp;\theta): \bmeta\not\in \bY(N)),
\end{align}
where $\Sp(J_\bmeta(\bmp;\theta): \bmeta\not\in \bY(N))$  is the linear span of Jack symmetric functions $J_\bmeta(\bmp;\theta)$ with $\bmeta\not\in \bY(N)$ in $\Sym$. We define a Markov chain $\bmla^{(0)}, \bmla^{(1)},\bmla^{(2)},\cdots \in \bY(N)$, with initial distribution
\begin{align*}
\bP(\bmla^{(0)}=\bmla)=M_N(\bmla),
\end{align*} 
and transition probability,
\begin{align}\label{e:chain}
\bP(\bmla^{(t)}=\bmmu|\bmla^{(t-1)}=\bmla)=\fp_N^{(t)}(\bmla, \bmmu),\quad t=1,2,3,\cdots.
\end{align}
%
%
%
%We define a probability measure on the set
%\begin{align}
%(\bmla^{(0)},\bmla^{(1)}, \bmla^{(2)}, \cdots, \bmla^{(T)})\in{\Y(N)\times \Y(N)\times \cdots \times \Y(N)},
%\end{align}
%via 
%\begin{align}
%\bP(\bmla^{(0)},\bmla^{(1)}, \bmla^{(2)}, \cdots, \bmla^{(T)})\deq M_N(\bmla^{(0)})\prod_{t=1}^{T}\fp_{N}^{(t)}(\bmla^{(t-1)},\bmla^{(t)}).
%\end{align}
We define a sequence of functions by
\begin{align*}
H_N^{(t)}(\bmp;\theta)\deq F_N(\bmp;\theta)\prod_{s=1}^t g_N^{(s)}(\bmp;\theta), \quad t=0,1,2,\cdots.
\end{align*}
Thanks to \eqref{e:deffpinfinite}, we have 
\begin{align*}\begin{split}
&\phantom{{}={}}\sum_{\bmla\in \bY(N)}\bP(\bmla^{(t)}=\bmla)\frac{J_\bmla(\bmp;\theta)}{J_{\bmla}(1^N;\theta)}
=\sum_{\bmla,\bmmu\in \bY(N)}\bP(\bmla^{(t-1)}=\bmmu)\fp_{N}^{(t)}(\bmmu,\bmla)\frac{J_\bmla(\bmp;\theta)}{J_{\bmla}(1^N;\theta)}\\
&=g_N^{(t)}(\bmp;\theta)\sum_{\bmmu\in \bY(N)}\bP(\bmla^{(t-1)}=\bmmu)\frac{J_\bmmu(\bmp;\theta)}{J_\bmmu(1^N;\theta)}+\Sp(J_\bmeta(\bmp;\theta):\bmeta\notin \bY(N))
\end{split}\end{align*}
%By keeping 
%&=F_N(\bmp;\theta)\prod_{s=1}^t g_N^{(s)}(\bmp;\theta) +\Sp(J_\bmeta(\bmp;\theta):\bmeta\in \bY(N))=H_N^{(t)}(\bmp;\theta)+\Sp(J_\bmeta(\bmp;\theta):\bmeta\in \bY(N))
%\end{align}
We notice that the linear subspace $\Sp(J_\bmla(\bmp;\theta): \bmla\not\in \bY(N))$ is an ideal of $\Sym$, the algebra of symmetric functions in infinitely many variables. By repeating the above procedure, we get
\begin{align*}
\sum_{\bmla\in \bY(N)}\bP(\bmla^{(t)}=\bmla)\frac{J_\bmla(\bmp;\theta)}{J_{\bmla}(1^N;\theta)}=H_N^{(t)}(\bmp;\theta)+\Sp(J_\bmla(\bmp;\theta): \bmla\not\in \bY(N)).
\end{align*}
The Jack generating function of $\bP(\bmla^{(t)}=\bmla)$ and $H_N^{(t)}(\bmp;\theta)$ differ by an element in $\Sp(J_\bmla(\bmp;\theta): \bmla\not\in \bY(N))$. In the following, we will call $H_N^{(t)}(\bmp;\theta)$ Jack generating functions.

%By the definition \eqref{e:deffp} of the coefficients $\fp_N^{(t)}(\bmla,\bmmu)$, we have for $\bmla,\bmmu\in \bY(N)$, 
%\begin{align}
%g_N^{(t)}(\bmp;\theta)\frac{J_\bmla(\bmp;\theta)}{J_\bmla(1^N;\theta)}
%=\sum_{\bmmu\in \Y(N)}\fp_N^{(t)}(\bmla,\bmmu)
%\frac{J_\bmmu(\bmp;\theta)}{J_\bmmu(1^N;\theta)}+\Sp(J_\bmla(\bmp;\theta): \bmla\not\in \bY(N)).
%\end{align}
%The linear subspace $\Sp(J_\bmla(\bmp;\theta): \bmla\not\in \bY(N))$ is an idea of the algebra of symmetric functions (in infinitely many variables). Moreover, the Nazarov-Sklyanin Operators preserve the subspace $\Sp(J_\bmla(\bmp;\theta): \bmla\not\in \bY(N))$.
%It follows by induction that for any $1\leq u\leq r$,
%\begin{align}\begin{split}\label{e:mulmoment2}
%&\phantom{{}={}} \left(\frac{I^{(k_{u})}}{N^{k_{u}}}+\frac{I^{(k_{u}+1)}}{N^{k_{u}+1}}\right)
%\left(\prod_{s=t_{u-1}+1}^{t_{u}} g_N^{(s)}\right)\cdots
%\left(\frac{I^{(k_{1})}}{N^{k_{1}}}+\frac{I^{(k_{1}+1)}}{N^{k_{1}+1}}\right) \left(\prod_{s=1}^{t_{1}} g_N^{(s)}\right)
%F_N
%\\
%&=\sum_{\bmla_0,\bmla_1,\cdots,\bmla_{t_u}\in \bY(N)}M_{N}(\bmla_0)\prod_{t=1}^{t_u}\fp(\bmla_{t-1},\bmla_{t})\prod_{j=1}^u \int x^{k_j}\rd \mu_{PP}[\bmla^{(t_j)}](x)
%\frac{J_{\bmla_{t_u}}(\bmp;\theta)}{J_{\bmla_{t_u}}(1^N;\theta)}\\
%&+\Sp(J_\bmla(\bmp;\theta): \bmla\not\in \bY(N))
%\end{split}\end{align}
%The claim \eqref{e:mulmoment} follows by taking $\bmp=1^N$ in \eqref{e:mulmoment2} and $u=r$.
%
%
%and the limit functions \eqref{eqn:UV} corresponding to $H_N^{(t)}(\bmp;\theta)$ by $U_\mu^{(t)}(z)$ and $V_\mu^{(t)}(z,w)$.
%

\begin{theorem}\label{t:mulclt}
Let $\{M_N\}_{N\geq 1}$, $\{\fm_N^{(1)}\}_{N\geq 1}, \{\fm_N^{(2)}\}_{N\geq 1}, \{\fm_N^{(3)}\}_{N\geq 1},\cdots$ be a sequence of probability measures on $\bY(N)$, with Jack generating functions $F_N(\bmp;\theta)$,$g_N^{(1)}(\bmp;\theta), g_N^{(2)}(\bmp;\theta), g_N^{(3)}(\bmp;\theta), \cdots $. We assume that for any time $\tau\geq 0$, the Jack generating functions $H^{(\lfloor N\tau\rfloor)}_N(\bmp;\theta)$ satisfy Assumption \ref{asup:infinite} and \ref{a:CLT}, we denote the corresponding limit functions \eqref{eqn:UV} by $U_\mu^{(\tau)}(z)$ and $V_\mu^{(\tau)}(z,w)$. Then, the collection of random variables
\begin{align*}
\xi^{(k)}[\bmla^{(\lfloor N\tau\rfloor)}]\deq N\left(\int x^k\rd \mu[\bmla^{(\lfloor N\tau\rfloor)}]-\bE\int x^k\rd \mu[\bmla^{(\lfloor N\tau\rfloor)}]\right),  k=1,2,3,\cdots,\quad \tau\geq 0,
\end{align*} 
converges in the sense of moments, as $N\rightarrow\infty$, to the Gaussian vector with zero mean and covariance
\begin{align}\begin{split}\label{e:mulcltcov}
\lim_{N\rightarrow\infty}\cov(\xi^{(k)}[\bmla^{(\lfloor N\tau\rfloor)}], \xi^{(l)}[\bmla^{(\lfloor N\sigma\rfloor)}])
&=[z^{-1}w^{-1}]
\left(\sum_{a\geq 1}\frac{a}{\theta}z^{a-1}w^{-a-1} +\frac{V^{(\tau\wedge \sigma)}_\mu(z+1,w+1)}{\theta^2}\right)\\
&\times\left( \frac{1}{z} +\frac{ (1 + z)}{\theta}U^{(\tau)}_{\mu}(1 + z) \right)^{k}
\left( \frac{1}{w} +\frac{ (1 + w)}{\theta}U^{(\sigma)}_{\mu}(1 + w) \right)^{l}.
\end{split}\end{align}
\end{theorem}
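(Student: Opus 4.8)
The plan is to reduce the multilevel statement to an ordered-operator identity for the Nazarov-Sklyanin operators, and then to feed it into the same asymptotic analysis used for the single-level Theorem~\ref{t:clt}. First, by Section~\ref{s:PPM} the law of large numbers and central limit theorems for the empirical densities $\mu[\bmla^{(\lfloor N\tau\rfloor)}]$, jointly in $\tau$, are equivalent to the corresponding statements for the moments of the Perelomov-Popov measures of $\bmla^{(\lfloor N\tau\rfloor)}$; moreover the contour-extraction form \eqref{e:mulcltcov} is obtained from the Perelomov-Popov-level covariance by exactly the algebraic manipulation that turns the Perelomov-Popov covariance into \eqref{e:cltcov} in Theorem~\ref{t:clt}. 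Since the Jack generating function of the law of $\bmla^{(t)}$ coincides with $H_N^{(t)}(\bmp;\theta)$ modulo $\Sp(J_\bmeta(\bmp;\theta):\bmeta\notin\Y(N))$, each single-time marginal already follows by applying Theorem~\ref{t:clt} to $H_N^{(\lfloor N\tau\rfloor)}$; the genuinely new content is the cross-covariance at two different times and the vanishing of joint higher cumulants.

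Fix times $\tau_1\le\cdots\le\tau_r$, set $s_j=\lfloor N\tau_j\rfloor$, and let $\mathcal D^{(\bmp)}_{k}$ denote the Nazarov-Sklyanin operator(s) acting on the $p$-variables with the $k$-th Perelomov-Popov moment of $\bmla$ as eigenvalue on $J_\bmla(\bmp;\theta)$; these operators commute and are diagonal in the Jack basis, hence they preserve the subspace $\Sp(J_\bmeta(\bmp;\theta):\bmeta\notin\Y(N))$, every element of which vanishes at $\bmp=1^N$ by Theorem~\ref{t:J1N}. Using the Markov property of the chain \eqref{e:chain} together with the iterated form of \eqref{e:deffpinfinite}, I would establish the identity
\begin{align*}
\bE\!\left[\prod_{j=1}^r\big(\text{$k_j$-th Perelomov-Popov moment of }\bmla^{(s_j)}\big)\right]
=\left[\mathcal D^{(\bmp)}_{k_r}\,G_{r-1}\cdots\mathcal D^{(\bmp)}_{k_2}\,G_1\,\mathcal D^{(\bmp)}_{k_1}\,H_N^{(s_1)}(\bmp;\theta)\right]_{\bmp=1^N},
\end{align*}
where $G_i(\bmp)=\prod_{u=s_i+1}^{s_{i+1}}g_N^{(u)}(\bmp;\theta)$ is the propagator from time $s_i$ to $s_{i+1}$ and multiplication by $G_i$ is interleaved with the operators in the displayed order. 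The content is that $\mathcal D^{(\bmp)}_{k_j}$ reads off the configuration exactly at time $s_j$ because it is applied right after the generating function has been advanced to that time (using that $\Sp(J_\bmeta:\bmeta\notin\Y(N))$ is both an ideal and invariant under the $\mathcal D$'s), while the final evaluation at $\bmp=1^N$ discards the out-of-range Jack components.

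With this identity, I would analyse the right-hand side near $\bmp=1^N$ by expanding the operators via the Leibniz rule, tracking powers of $N^{-1}$, just as in Theorem~\ref{t:clt}. For the covariance ($r=2$, $k_1=k$, $k_2=l$, $\tau_1=\tau\le\tau_2=\sigma$, so $\tau\wedge\sigma=\tau$), the leading contribution splits into: a universal part coming from the intrinsic covariance structure of the Perelomov-Popov measures encoded in the Nazarov-Sklyanin operators, producing $\sum_{a\ge1}\tfrac a\theta z^{a-1}w^{-a-1}$; and a part in which a derivative produced by $\mathcal D^{(\bmp)}_l$ hits the factor $(\mathcal D^{(\bmp)}_k H_N^{(s_1)})/H_N^{(s_1)}$, which is governed by $\del^2\ln H_N^{(s_1)}|_{1^N}$ and hence, through \eqref{e:CLT2}, by $V^{(\tau)}_\mu=V^{(\tau\wedge\sigma)}_\mu$. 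The appearance of $\tau\wedge\sigma$ is precisely the effect of the ordering: the shared operator is applied to $H_N^{(s_1)}$, the generating function at the earlier time. The two factors $\big(\tfrac1z+\tfrac{1+z}\theta U^{(\tau)}_\mu(1+z)\big)^k$ and $\big(\tfrac1w+\tfrac{1+w}\theta U^{(\sigma)}_\mu(1+w)\big)^l$ come out of the change of variables \eqref{e:defyi} and the combinatorics of the Nazarov-Sklyanin operators at times $\tau$ and $\sigma$ respectively, the limiting profiles entering via the first derivatives of $\ln H_N^{(s_1)}$ and $\ln H_N^{(s_2)}$ through \eqref{e:CLT1} — exactly as in Theorem~\ref{thm:LLN} and \eqref{e:cltcov}. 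For $r\ge3$ the same expansion plus the power counting and the vanishing hypotheses \eqref{e:CLT5}, which hold at each time $\tau_j$ since $H_N^{(s_j)}$ satisfies Assumption~\ref{a:CLT}, show that the $N^{-r}$-coefficient of the joint $r$-th cumulant vanishes, giving asymptotic Gaussianity. All interchanges of the infinite sums over the $\del^r\ln H_N^{(\cdot)}$ with the limit are justified by Assumption~\ref{asup:infinite} and Proposition~\ref{p:summable}.

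The main obstacle is the last step: controlling the non-commutativity between the Nazarov-Sklyanin operators and multiplication by the propagators $G_i$, and showing that at leading order in $N^{-1}$ the mixed contributions collapse to precisely the stated formula — in particular that the cross term is governed by $V^{(\tau\wedge\sigma)}_\mu$ alone rather than by some combination of $V^{(\tau)}_\mu$ and $V^{(\sigma)}_\mu$, and that every $r$-point contribution with $r\ge3$ is of lower order. The remaining difficulty, repackaging the resulting Perelomov-Popov covariance into the coefficient-extraction form \eqref{e:mulcltcov}, is bookkeeping identical to the corresponding steps in Theorem~\ref{t:clt} and Theorem~\ref{thm:LLN}.
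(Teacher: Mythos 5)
Your proposal is correct and follows essentially the same route as the paper. The interleaved operator identity you describe is exactly Theorem~\ref{t:mulmoment} (proved via the ideal property of $\Sp(J_\bmeta(\bmp;\theta):\bmeta\notin\Y(N))$ and the Markov structure \eqref{e:deffpinfinite}), and your power-counting/Leibniz plan corresponds to Propositions~\ref{p:muldecompose}--\ref{p:mulcrossterm}. The one piece you flag as an obstacle — verifying that the shared second-derivative contribution is governed by $V^{(\tau\wedge\sigma)}_\mu$ alone — is precisely what the paper resolves by introducing the \emph{colored} partition set $\cP(\cK,\cS,\cD)$ in Proposition~\ref{p:muldecompose}: a block $V$ carries a label $\sigma_\cP(V)$ with $\sigma_\cK(t)\ge\sigma_\cP(V)$ for all $t\in V$, which forces any block straddling two time-slots to have its derivatives act on $\ln H_N^{(t_{\sigma_\cP(V)})}$ at the smallest involved time; this makes your ``ordering forces the shared derivative to hit the earlier generating function'' argument rigorous.
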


\subsection{Necessary conditions for central limit theorems}

The following theorems invert Theorem \ref{t:clt}. Roughly speaking, we show
that if the central limit theorems hold for the random measures $\{M_N\}_{N\geq 1}$, then the partial derivatives of the logarithm of their Jack generating functions at unity converge.

%\begin{theorem}\label{t:invlln}
%Let $\{M_N\}_{N\geq 1}$ be a sequence of probability measures on $\bY(N)$, with Jack generating function $F_N(\bmp;\theta)$ as defined in \eqref{e:gener}.
%We assume the  Jack generating functions $F_N(\bmp;\theta)$ satisfy Assumption \ref{asup:infinite}, and there exists a collection of numbers $\fp_1,\fp_2,\fp_3,\cdots$, the collection of random variables 
%\begin{align*}
%\int x^k \rd \mu[\bmla],\quad k=1,2,3,\cdots,
%\end{align*}
%converges, as $N\rightarrow \infty$, in the sense of moments, 
%\begin{align*}
%\lim_{N\rightarrow \infty}\bE \left[\prod_{i=1}^r \int x^{k_r} \rd \mu[\bmla]\right] = \prod_{i=1}^r\fp_{k_i},
%\end{align*}
%for any number $r\geq1$ and indices $k_1,k_2,\cdots, k_r\geq 1$. Then the Jack generating functions $F_N(\bmp;\theta)$ satisfy Assumption \ref{a:LLN}, with $\{\fc_{k}\}_{k\geq 1}$ characterized by
%\begin{align}\label{e:invllnc}
%\fp_k =[w^{-1}]\frac{1}{(k+1)}\left(\frac{(w+1)}{\theta}U_{\mu}(w+1)+\frac{1}{w}\right)^{k+1}\sum_{a\geq 0}(-w)^a,
%\end{align}
%for any $k\geq 1$.
%\end{theorem}

\begin{theorem}\label{t:invclt}
Let $\{M_N\}_{N\geq 1}$ be a sequence of probability measures on $\bY(N)$, with Jack generating function $F_N(\bmp;\theta)$ as defined in \eqref{e:gener}.
We assume the  Jack generating functions $F_N(\bmp;\theta)$ satisfy Assumption \ref{asup:infinite},
there exists a collection of numbers $\fp_1,\fp_2,\fp_3,\cdots$, the collection of random variables 
\begin{align*}
\int x^k \rd \mu[\bmla],\quad k=1,2,3,\cdots,
\end{align*}
converges, as $N\rightarrow \infty$, in the sense of moments, 
\begin{align*}
\lim_{N\rightarrow \infty}\bE \left[ \int x^{k} \rd \mu[\bmla]\right] = \fp_{k},
\end{align*}
and the collection of random variables
\begin{align*}
\xi^{(k)}[\bmla]\deq N\left(\int x^k\rd \mu[\bmla]-\bE\int x^k\rd \mu[\bmla]\right),\quad k=1,2,3,\cdots,
\end{align*} 
converges, as $N\rightarrow\infty$, in the sense of moments, to the Gaussian vector with zero mean, variance
\begin{align*}
\lim_{N\rightarrow \infty}\cov(\xi^{(k)}[\bmla],\xi^{(l)}[\bmla])=\fcov_{k,l},
\end{align*}
and for any $r\geq 3$, the $r$-th cumulants satisfy
\begin{align*}
\lim_{N\rightarrow \infty}\kappa_{k_1,k_2,\cdots,k_r}(\xi^{(k)}[\bmla],k=1,2,3,\cdots)=0.
\end{align*}
Then the Jack generating functions $F_N(\bmp;\theta)$ satisfy Assumption \ref{a:CLT}, with $\{\fc_{k}\}_{k\geq 1}$ and $\{\fd_{k,l}\}_{k,l\geq 1}$ characterized by
\begin{align}\label{e:invllnc}
\fp_k =[w^{-1}]\frac{1}{(k+1)}\left(\frac{(w+1)}{\theta}U_{\mu}(w+1)+\frac{1}{w}\right)^{k+1}\sum_{a\geq 0}(-w)^a,
\end{align}
for any $k\geq 1$, and 
\begin{align}\begin{split}\label{e:invcltcov}
\fcov_{k,l}
&=[z^{-1}w^{-1}]
\left(\sum_{a\geq 1}\frac{a}{\theta}z^{a-1}w^{-a-1} +\frac{V_\mu(z+1,w+1)}{\theta^2}\right)\\
&\times\left( \frac{1}{z} +\frac{ (1 + z)}{\theta}U_{\mu}(1 + z) \right)^{k}
\left( \frac{1}{w} +\frac{ (1 + w)}{\theta}U_{\mu}(1 + w) \right)^{l},
\end{split}\end{align}
for any $k,l\geq 1$, where the formal power series $U_\mu(z)$ and $V_{\mu}(z,w)$ are defined in \eqref{eqn:UV}.

\end{theorem}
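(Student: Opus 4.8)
The plan is to invert the forward direction (Theorem \ref{t:clt}) at the level of the Perelomov--Popov measures, exploiting the fact established in Section \ref{s:PPM} that the law of large numbers and central limit theorems for $\mu[\bmla]$ are equivalent to those for the Perelomov--Popov measures, together with the polynomial formulas (obtained in Section \ref{s:proof} by applying the Nazarov--Sklyanin operators to $F_N(\bmp;\theta)$) that express the joint moments of the Perelomov--Popov measures in terms of the quantities
\begin{align*}
A_N^{(r)}(k_1,\dots,k_r)\deq \frac{1}{N}\sum_{i_1,\dots,i_r\geq 1}\binom{i_1}{k_1}\cdots\binom{i_r}{k_r}\left.\frac{\del^r\ln F_N(\bmp;\theta)}{\del p_{i_1}\cdots\del p_{i_r}}\right|_{\bmp=1^N}
\end{align*}
(and their $r=1$, $r=2$ analogues without the $1/N$, for the variance and mean respectively). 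The hypotheses give us that the expectations $\bE\int x^k\rd\mu[\bmla]$, the covariances $\cov(\xi^{(k)},\xi^{(l)})$, and all higher cumulants of the $\xi^{(k)}$ converge; via the equivalence of Section \ref{s:PPM} this translates into convergence of the corresponding moments/cumulants of the Perelomov--Popov measures. What must be shown is that these convergences \emph{force} the individual quantities $\fc_k\deq\lim_N A_N^{(1)}(k)$, $\fd_{k,l}\deq\lim_N (\text{the }r=2\text{ sum})$ to exist, that the higher-order sums \eqref{e:CLT5} tend to $0$, and that the resulting limits are related to $\fp_k$, $\fcov_{k,l}$ by \eqref{e:invllnc}--\eqref{e:invcltcov}.

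The key steps, in order. \textbf{(1)} Record the forward polynomial identities from Section \ref{s:proof}: there are universal (in $N$) polynomial expressions $P_{k}$, $Q_{k,l}$, $R_{k_1,\dots,k_r}$ such that the first Perelomov--Popov moment equals $P_k$ applied to the family $\{A_N^{(1)}(\cdot)\}$, the truncated second moment (covariance) equals $Q_{k,l}$ applied to $\{A_N^{(1)}(\cdot),\ N\cdot(r{=}2\text{ sum})\}$, and the $r$-th cumulant ($r\geq 3$) equals $R_{k_1,\dots,k_r}$ applied to $\{A_N^{(1)}(\cdot),\ (r'{=}2\text{ sum}),\dots,(r'{=}r\text{ sum})\}$; crucially, in each case the highest-order term is \emph{linear} in the top sum with an invertible (nonzero, index-diagonal after the binomial change of variables) coefficient. \textbf{(2)} Induct on $r$. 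For $r=1$: the hypothesis $\bE\int x^k\rd\mu[\bmla]\to\fp_k$ for all $k$, fed through the triangular-in-$k$ system $P_k$, lets us solve recursively for $\lim_N A_N^{(1)}(k)=:\fc_k$ — one shows by induction on $k$ that $\fc_k$ exists and is determined, because $P_k$ expresses $\fp_k$ as ($\fc_k$ times a nonzero constant) plus a polynomial in $\fc_1,\dots,\fc_{k-1}$ already known to converge. This also yields \eqref{e:invllnc}, which is exactly the content of Theorem \ref{thm:LLN} read backwards. \textbf{(3)} For $r=2$: with the $\fc_k$ now known to converge, the identity $Q_{k,l}$ expresses $\cov(\xi^{(k)},\xi^{(l)})$ as (nonzero constant times the $r{=}2$ sum indexed by $(k,l)$) plus a convergent remainder; solving triangularly in $(k,l)$ gives existence of $\fd_{k,l}=\lim_N(r{=}2\text{ sum})$ and formula \eqref{e:invcltcov}. \textbf{(4)} For $r\geq 3$: proceed by induction on $r$; the hypothesis that the $r$-th cumulant $\to 0$, combined with the identity $R_{k_1,\dots,k_r}$ whose leading term is (nonzero constant times the $r$-th sum) plus a polynomial in the lower-order sums — all of which converge by the inductive hypothesis, with the $\geq 3$ ones converging to $0$ — forces the $r$-th sum $\to 0$, i.e. \eqref{e:CLT5}. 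Together steps (2)--(4) verify Assumption \ref{a:CLT}.

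The main obstacle is \textbf{step (1)}: pinning down the precise structure of the forward polynomial identities so that "invert triangularly" is rigorous rather than heuristic. Concretely, I need the statement that after the binomial change of variables $p_i\mapsto\binom{i}{k}$ the map from the family of derivative-sums to the family of (centered) moment-cumulants of the Perelomov--Popov measures is, order by order in $r$, unitriangular with respect to a suitable partial order on the multi-indices $(k_1,\dots,k_r)$ — in particular that the top sum appears linearly and never gets multiplied by a diverging power of $N$ that would obstruct solving for its limit. This should follow from carefully bookkeeping the Nazarov--Sklyanin computation of Section \ref{s:proof} (the same computation that proves Theorems \ref{thm:LLN} and \ref{t:clt} in the forward direction): the $1/N$ normalizations are exactly arranged so that an $r$-fold application of the operators produces a leading term of order $N^0$ linear in the $r$-th derivative-sum, plus terms built from $\leq (r-1)$-fold sums each carrying at least one extra factor $1/N$ at the moments that matter, so that the induction closes. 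Once that structure is in hand, steps (2)--(4) are routine triangular back-substitution and the identification of the limits with \eqref{e:invllnc}--\eqref{e:invcltcov} is immediate by comparison with Theorems \ref{thm:LLN} and \ref{t:clt}.
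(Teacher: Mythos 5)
Your proposal correctly identifies the overall strategy — invert the Nazarov--Sklyanin identities by exploiting triangularity — but the proposed induction scheme is the wrong one, and the proof does not close as stated.

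The gap is in step (2). You claim the first Perelomov--Popov moment is a polynomial $P_k$ only in the single-derivative sums $\{A_N^{(1)}(l)\}_{l\leq k}$, so that $\fp_k = (\text{const})\cdot\fc_k + (\text{polynomial in }\fc_1,\dots,\fc_{k-1})$, and more generally that the $r$-th cumulant identity closes within the family of derivative-sums of order $\leq r$. This is false. Theorem \ref{t:decompose} shows that $I^{(k+1)}F_N|_{\bmp=1^N}$ decomposes as a sum over $(\cS,\cD,\cP)$, where $\cP$ is an arbitrary set partition of the surviving minus-indices, and each block $V\in\cP$ contributes a factor $\bigl(\prod_{t\in V}\del p_{j_t}\bigr)(\ln F_N)|_{\bmp=1^N}$, i.e.\ a derivative sum of order $|V|$, which can be any size up to roughly $k/2$. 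So the identity for $\fp_k$ genuinely involves $\cL_{\bml}(F_N)$ for partitions $\bml$ of \emph{all lengths} with $|\bml|\leq k$ — including lengths $\geq 2$ that, in your ordering, have "not been done yet." These higher-order terms come with extra factors of $1/N$, but without an a priori bound on them (which is precisely what is being proved) there is no justification for dropping them in the limit, so the induction on $r$ cannot get off the ground even at $r=1$. The same obstruction reappears at $r=2$ and $r\geq 3$: the cumulant generating quantity $\cF_{(k_1,\dots,k_r)}$ contains derivative-sums of order \emph{larger} than $r$ (the connected-hypergraph contribution can have a block $V$ covering all $r$ indices and more).

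The paper's proof resolves this by inducting on $|\bml|$ (the total degree), not on $\ell(\bml)$. At each step $|\bml|=k$, with the inductive hypothesis controlling $\cL_{\bml'}(F_N)$ for \emph{all} $\bml'$ with $|\bml'|<k$ of every length, the collection of equations coming from $\cF_{(k+1)}$, from $\cF_{(k_1,k_2)}$ with $k_1+k_2=k+2$, ..., and from $\cF_{(k_1,\dots,k_r)}$ with $\sum_i k_i = k+r$, assembles into a single linear system $(A+B)v=o$ in the vector $v=(\cL_{\bml}(F_N))_{|\bml|=k}$ indexed by \emph{all} partitions of size $k$ regardless of length; here $A$ is unitriangular with respect to dominance order (Proposition \ref{p:decompQ} isolates the diagonal coefficient $\prod_i l_i/\theta^{\ell(\bml)}$), $B=\OO(1/N)$, and the right-hand side $o$ has the required $N$-degree by Proposition \ref{p:Fdegreeclt}. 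Inverting this \emph{joint} system is what yields the estimates for all lengths at total degree $k$ simultaneously, which is the step your proposal is missing.
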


\section{Nazarov-Sklyanin Operators}\label{s:NSop}

In this section we recall the Nazarov-Sklyanin operators from \cite[Section 6]{MR3141546}. We denote $p_k$ and $p_k^*$ the operators,
\begin{align*}
p_k=p_k,\quad p_k^*=\frac{k\del}{\theta \del p_k},\quad k\geq 1.
\end{align*}
We introduce the infinite matrix $L$,
\begin{align}\label{e:Lmat}
L=\left[
\begin{array}{cccccc}
0 & p_1 & p_2 & p_3 & p_4 & \cdots\\
p_1^* & 1(\theta^{-1}-1) & p_1 & p_2 & p_3  & \cdots\\
p_2^*  & p_1^* & 2(\theta^{-1}-1) & p_1 & p_2   & \cdots\\
p_3^* & p_2^*  & p_1^* & 3(\theta^{-1}-1) & p_1   & \cdots\\
p_4^* & p_3^* & p_2^*  & p_1^* & 4(\theta^{-1}-1)   & \cdots\\
\vdots & \vdots & \vdots & \vdots & \vdots & \ddots \\
\end{array}
\right],
\end{align}
where the rows and columns are labelled by the indices $i,j=0,1,2,\cdots$. It is convenient to set $p_{-k}=p_k^*$ for $k\geq 1$. We also use the convention $p_0=0$. Then 
\begin{align*}
L=[L_{ij}]_{i,j=0}^{\infty},\quad L_{ij}=j(\theta^{-1}-1)\delta_{ij}+p_{j-i}.
\end{align*}
\begin{remark}
For our convenience, the matrix $L$ in \eqref{e:Lmat} is slightly different from the \emph{Lax matrix} defined in \cite[Section 6]{MR3141546}, which is the submatrix $[L]_{i,j=1}^{\infty}$.
\end{remark}

The following is the main theorem of \cite[Theorem 2]{MR3141546}
\begin{theorem}\label{t:mainT}
We define the operator 
\begin{align*}
I(u;\theta)=(u-L)^{-1}_{00},
\end{align*}
where the inverse to the infinite matrix $u-L$ is regarded as a formal power series in $1/u$:
\begin{align}\label{e:inv}
(u-L)_{00}^{-1}=\frac{1}{u}+\frac{I^{(1)}}{u^2}+\frac{I^{(2)}}{u^3}+\cdots,\quad I^{(k)}=(L^k)_{00}.
\end{align}
The Jack symmetric functions $J_{\bmla}(\bmp;\theta)$ are eigenfunctions of $I(u;\theta)$,
\begin{align}\label{e:IJ}
I(u;\theta)J_{\bmla}(\bmp;\theta) =\frac{1}{u+\ell(\bmla)} \prod_{i=1}^{ \ell(\bmla)}\frac{u+i-\la_i/\theta}{u+i-1-\la_i/\theta}J_\bmla(\bmp;\theta).
\end{align}
\end{theorem}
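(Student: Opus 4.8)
The plan is to follow the approach of Nazarov and Sklyanin \cite{MR3141546}: give $I(u;\theta)$ a meaning as a commuting family of self-adjoint operators on $\Sym=\bR[p_1,p_2,\dots]$, argue that such a family is forced to have the Jack functions as its joint eigenbasis, and then compute the eigenvalues by acting on leading monomials. Throughout, the rows and columns of $L$ are thought of as an orthonormal basis $v_0,v_1,\dots$ of an auxiliary space $\cV=\bigoplus_{n\geq 0}\bR v_n$, and $p_m^*=\tfrac m\theta\del_{p_m}$ as an operator on $\Sym$.

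First I would make sense of the formal inverse. Expanding $(u-L)^{-1}_{00}=\sum_{k\geq 0}u^{-k-1}I^{(k)}$ with $I^{(k)}=(L^k)_{00}=\sum_{c_1,\dots,c_{k-1}\geq 0}L_{0c_1}L_{c_1c_2}\cdots L_{c_{k-1}0}$, each summand is a composition of operators of the form $p_m\cdot$ (multiplication), $p_m^*$ (differentiation), and the scalars $j(\theta^{-1}-1)$. An off-diagonal step changes the degree in $\Sym$ by exactly minus its change in the auxiliary index, so the net degree change along any closed path $0\to\cdots\to 0$ vanishes; hence every $I^{(k)}$ preserves the grading of $\Sym$ by total degree. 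On a homogeneous component of degree $n$ only finitely many paths act nontrivially (the auxiliary index never exceeds $n$ and each step is bounded by $n$), so each $I^{(k)}$ is a well-defined, degree-preserving differential operator with polynomial coefficients, and each graded piece of $\Sym$ is finite-dimensional.

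Next I would establish the two structural facts that pin down the eigenbasis. (a) \emph{Self-adjointness.} With respect to the pairing $\langle\cdot,\cdot\rangle$ of \eqref{e:scalarP}, multiplication by $p_m$ and $p_m^*=\tfrac m\theta\del_{p_m}$ are mutually adjoint (a direct check on the basis $p_{\bmmu}$); since the off-diagonal shifts of $L$ along $\cV$ are mutually adjoint for the orthonormal basis $v_n$ and the diagonal part is real, $L$, and hence every $I^{(k)}=(L^k)_{00}$, is self-adjoint for $\langle\cdot,\cdot\rangle$. (b) \emph{Commutativity.} The operators $\{I^{(k)}\}_{k\geq 1}$ pairwise commute; this is the integrability (quantum Lax/$R$-matrix) statement of \cite{MR3141546}, which I would import. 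A commuting family of self-adjoint operators preserving each finite-dimensional graded component of $\Sym$ has an orthogonal joint eigenbasis. To identify it with the Jack basis one checks that each $I^{(k)}$ acts triangularly in the monomial basis for the dominance order $\preceq$ (the $I^{(k)}$ realize, in this stable setting, the higher Jack/Sekiguchi--Debiard Hamiltonians, which have this property); granting that the joint spectrum is simple on each graded component, the characterization of the Jack functions by orthogonality and triangularity (Macdonald's theorem, stated above) identifies the joint eigenbasis with $\{J_{\bmla}(\bmp;\theta)\}$ once leading terms are normalized.

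Finally, the eigenvalue. By triangularity, the eigenvalue of $I(u;\theta)$ on $J_{\bmla}$ equals the coefficient of the leading monomial $x_1^{\lambda_1}\cdots x_{\ell(\bmla)}^{\lambda_{\ell(\bmla)}}$ in $I(u;\theta)$ applied to it. I would extract it by running the continued-fraction (transfer-matrix) recursion for $(u-L)^{-1}_{00}$ on this leading monomial and solving row by row in the auxiliary index; the diagonal shifts $i(\theta^{-1}-1)$ together with the explicit actions of $p_m,p_m^*$ on a dominant monomial produce a product that telescopes to $\tfrac{1}{u+\ell(\bmla)}\prod_{i=1}^{\ell(\bmla)}\frac{u+i-\lambda_i/\theta}{u+i-1-\lambda_i/\theta}$; in particular the diagonal values are distinct for distinct $\bmla$ of a fixed size, which closes the simplicity gap above. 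The main obstacles are the pairwise commutativity of the $I^{(k)}$ (the genuine integrability input) and this final telescoping identity. An alternative to the latter is to prove the eigenrelation directly by induction on $|\bmla|$ using the Pieri rule for Jack functions, the product again emerging from the telescoping of ratios $\frac{u+i-\lambda_i/\theta}{u+i-1-\lambda_i/\theta}$ as boxes are added to $\bmla$.
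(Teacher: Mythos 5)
The paper does not prove Theorem \ref{t:mainT} at all: it is stated as a direct citation, ``The following is the main theorem of \cite[Theorem 2]{MR3141546},'' and the only original content in the vicinity is the remark that the matrix $L$ of \eqref{e:Lmat} is bordered by an extra $0$th row and column compared to the Lax matrix of \cite[Section 6]{MR3141546}. So if your task was to reconstruct the paper's treatment, the correct answer is simply a citation.

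Your plan, by contrast, tries to re-derive the statement along Nazarov--Sklyanin's lines, but the genuine mathematical content is either imported or gestured at. You explicitly import the pairwise commutativity of the $I^{(k)}$ from \cite{MR3141546} --- but that is the crux of the theorem, and once you grant it you are essentially granting the theorem. The two remaining load-bearing steps are not actually carried out: (i) the claim that ``each $I^{(k)}$ acts triangularly in the monomial basis for the dominance order'' is asserted with the parenthetical justification that they ``realize the higher Jack/Sekiguchi--Debiard Hamiltonians,'' which is circular in this context (that identification is what the theorem establishes, not an input); and (ii) the eigenvalue computation is described as a ``continued-fraction recursion'' that ``telescopes to'' the stated product, without any derivation. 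The continued-fraction formula $(u-L)^{-1}_{00}=\bigl(u-r(u-\hat L)^{-1}c\bigr)^{-1}$ (with $\hat L=[L]_{i,j\geq1}$, $r$ the $0$th row, $c$ the $0$th column) is indeed the right bridge between the paper's bordered $L$ and Nazarov--Sklyanin's $\hat L$; spelling this out would address the discrepancy in the remark, which neither the paper nor your sketch does. Two small technical points: the self-adjointness check in (a) is correct and routine; but your degree-preservation argument has a sign slip (an off-diagonal step from auxiliary index $i$ to $j$ changes the $\Sym$-degree by $j-i$, the \emph{same} sign as the change of auxiliary index, not minus it) --- the conclusion that closed paths preserve degree is still right, since the net index change along a loop is zero.

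In short: as a reconstruction of the paper this is off-target (the paper cites rather than proves), and as a standalone proof it is a plausible roadmap that defers all the nontrivial steps --- commutativity, dominance triangularity, and the telescoping eigenvalue identity --- either to \cite{MR3141546} or to unproven assertions.
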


We note that the operator $I^{(k)}$ is an infinite sum of operators acting on $\bR[p_1,p_2,\cdots]$. However, only finite number of the summands do not vanish on any subspace of $\bR[p_1,p_2,\cdots]$ of a fixed degree in $x_1,x_2,\cdots$. This can be easily seen by rewriting the operator $I^{(k)}$ explicitly as
\begin{align}\label{e:Ik}
I^{(k)}=\sum_{i_1,i_2,\cdots, i_{k-1}\geq 0}L_{0i_1}L_{i_1i_2}\cdots L_{i_{k-2}i_{k-1}}L_{i_{k-1}0}.
\end{align} 

% $3^k$ sums  their 
%A sign pattern $(\bms_1,\bms_2,\cdots,\bms_k)$ is a $k$-tuple of signs, where $\bms_i\in\{\}$
%
%Each term in \eqref{e:Ik}, $L_{0i_1}L_{i_1i_2}\cdots L_{i_{k-2}i_{k-1}}L_{i_{k-1}0}$ can be naturally represented by a \emph{Ribbon path} as introduced in \cite[Section 1.5]{Moll}, which is a path of length $k$ from $(0,0)$ to $(k,0)$, $(0,0)\rightarrow (1,i_1)\rightarrow (2,i_2)\cdots\rightarrow (k-1,i_{k-1})\rightarrow (k,0)$, such that the path never goes below the $x$-axis. 

\subsection{Perelomov-Popov Measures}\label{s:PPM}

For any $\bmla\in \bY(N)$,  we have $\ell(\bmla)=N$. We define the empirical density and the Perelomov-Popov measure respectively
\begin{align}\label{def:PP}
\mu[\bmla]=\frac{1}{N}\sum_{i=1}^N\delta_{y_i},\quad \mu_{PP}[\bmla]=\frac{1}{N}\sum_{i=1}^N\prod_{j:j\neq i}\frac{y_i-y_j+1/N}{y_i-y_j}\delta_{y_i},
\end{align}
where
\begin{align*}
y_i=\frac{\la_i}{\theta N}-\frac{i-1}{N},\quad i=1,2,\cdots, N.
\end{align*}
The above definition of Perelomov-Popov measure first appears in \cite[Section 1.4]{MR3361772},  which originates from the work of Perelomov and
Popov \cite{MR0236308} on the centers of universal enveloping algebras of classical Lie groups. The empirical density and the Perelomov-Popov measure are closely related. In the rest of this section, we prove that the the law of large numbers and central limit theorems of the empirical density, in the sense of Definition \ref{def:LLNCLT}, is equivalent to those of the Perelomov-Popov measures.

\begin{lemma}\label{l:STPP}
The Stieltjes transform of $\mu_{PP}[\bmla]$ is given by
\begin{align}\label{e:STPP}
\int \frac{\rd \mu_{PP}[\bmla](x)}{z-x} 
=\prod_{i=1}^N\left(1+\frac{1}{N}\frac{1}{z-y_i}\right)-1.
\end{align}
\end{lemma}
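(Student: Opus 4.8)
The plan is to compute the Stieltjes transform directly from the definition of the Perelomov-Popov measure by a partial fraction / residue argument. Starting from the right-hand side of \eqref{e:STPP}, I would set
\begin{align*}
G(z) \deq \prod_{i=1}^N\left(1+\frac{1}{N}\frac{1}{z-y_i}\right)-1 = \prod_{i=1}^N\frac{z-y_i+1/N}{z-y_i}-1,
\end{align*}
which is a rational function of $z$ that vanishes as $z\to\infty$ (the product tends to $1$) and has at most simple poles at $z=y_i$, $i=1,\dots,N$ — here one should note that the $y_i$ need not be distinct a priori, but they are strictly decreasing by \eqref{e:defyi} since $\la_i \geq \la_{i+1}$ forces $y_i - y_{i+1} = (\la_i-\la_{i+1})/(\theta N) + 1/N > 0$, so in fact the $y_i$ are pairwise distinct and the poles are genuinely simple.

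Since $G(z)\to 0$ at infinity and has only simple poles, it admits a partial fraction expansion $G(z) = \sum_{i=1}^N \frac{r_i}{z-y_i}$, and the goal reduces to identifying the residue $r_i = \lim_{z\to y_i}(z-y_i)G(z)$ with $\frac{1}{N}\prod_{j\neq i}\frac{y_i-y_j+1/N}{y_i-y_j}$, which is exactly the mass that $\mu_{PP}[\bmla]$ assigns to $y_i$ in \eqref{def:PP}. Computing the residue: the $-1$ term contributes nothing, and in the product only the $i$-th factor $\frac{z-y_i+1/N}{z-y_i}$ has a pole at $y_i$, so
\begin{align*}
r_i = \left(\lim_{z\to y_i}(z-y_i)\cdot\frac{z-y_i+1/N}{z-y_i}\right)\prod_{j\neq i}\frac{y_i-y_j+1/N}{y_i-y_j} = \frac{1}{N}\prod_{j\neq i}\frac{y_i-y_j+1/N}{y_i-y_j}.
\end{align*}
Hence $G(z) = \sum_{i=1}^N \frac{1}{N}\prod_{j\neq i}\frac{y_i-y_j+1/N}{y_i-y_j}\cdot\frac{1}{z-y_i} = \int\frac{\rd\mu_{PP}[\bmla](x)}{z-x}$, which is the claim.

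There is essentially no serious obstacle here; this is a routine rational-function identity. The only points requiring a line of care are (i) justifying that $G$ has no pole at infinity and no other poles — both immediate from the factored form — and (ii) observing distinctness of the $y_i$ so that the partial fraction expansion with simple poles is valid and the residue computation is clean. If one wished to avoid even the distinctness remark, one could instead verify the identity as an identity of rational functions by clearing denominators and checking that both sides, multiplied by $\prod_i(z-y_i)$, are polynomials of degree $\leq N-1$ agreeing at the $N$ points $z=y_i$ (or, more robustly, expand both sides as formal power series in $1/z$ and match coefficients, which connects directly to the moment interpretation used later); but the residue argument above is the most transparent.
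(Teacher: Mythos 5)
Your proof is correct and proves exactly what is claimed. It is a close cousin of the paper's argument but runs in the opposite direction: you start from the product $G(z)=\prod_i \frac{z-y_i+1/N}{z-y_i}-1$, verify it vanishes at infinity and has simple poles at the (distinct) $y_i$, compute the residues, and recognize them as the Perelomov--Popov weights — so $G$ is the Stieltjes transform. The paper instead starts from the defining sum $\frac{1}{N}\sum_i \prod_{j\neq i}\frac{y_i-y_j+1/N}{y_i-y_j}\frac{1}{z-y_i}$, rewrites each $\frac{1}{N(z-y_i)}$ as $1-\frac{y_i-z+1/N}{y_i-z}$, and then collapses the resulting sum into $\prod_i\frac{z-y_i+1/N}{z-y_i}-1$; that last step is left unexplained and tacitly invokes a Lagrange-interpolation-type identity (equivalently, it is the partial-fraction expansion read in reverse). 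So the underlying algebra is the same, but your organization makes the crucial step explicit rather than compressing it into a one-line equality, and your observation that $y_i-y_{i+1}=(\la_i-\la_{i+1})/(\theta N)+1/N>0$ (so the $y_i$ are pairwise distinct and the poles are genuinely simple) is a point the paper leaves implicit. Your alternative suggestion of matching coefficients of $1/z$ as formal power series is also sound and in fact is exactly the viewpoint the paper adopts in the subsequent Lemma \ref{l:change}, so either route fits naturally into what follows.
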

\begin{proof}
The Stieltjes transform of $\mu_{PP}[\bmla]$ is given by
\begin{align*}\begin{split}
\int \frac{\rd \mu_{PP}[\bmla](x)}{z-x} 
&=\frac{1}{N}\sum_{i=1}^N\prod_{j:j\neq i}\frac{y_i-y_j+1/N}{y_i-y_j}\frac{1}{z-y_i}\\
&=\sum_{i=1}^N\prod_{j:j\neq i}\frac{y_i-y_j+1/N}{y_i-y_j}\left(1-\frac{y_i-z+1/N}{y_i-z}\right)\\
&=\prod_{i=1}^N\frac{z-y_i+1/N}{z-y_i}-1.
\end{split}\end{align*}

\end{proof}

Lemma \ref{l:STPP} connects the empirical density $\mu[\bmla]$ with its Perelomov-Popov measure $\mu_{PP}[\bmla]$. It is used in the next lemma to derive the relations between their moments.
\begin{lemma}\label{l:change}
We denote the moments of $\mu[\bmla]$ and $\mu_{PP}[\bmla]$,
\begin{align*}
c^{(k)}[\bmla]=\int x^k \rd \mu[\bmla](x),\quad
c^{(k)}_{PP}[\bmla]=\int x^k \rd \mu_{PP}[\bmla](x),\quad k=0,1,2,3,\cdots.
\end{align*}
Fix any $k\geq 1$. There exists $k+1$ multivariate polynomials $P_0,P_1,\cdots, P_{k} $, 
\begin{align}\label{e:tock}
c^{(k)}_{PP}[\bmla]=\sum_{r=0}^{k}\frac{1}{N^{r}}P_r(c^{(1)}[\bmla], c^{(2)}[\bmla],\cdots, c^{(k-r)}[\bmla]).
\end{align}
There exists a multivariate polynomial $Q$,
\begin{align}\label{e:tosk}
c^{(k)}[\bmla]=Q(c^{(1)}_{PP}[\bmla], c^{(2)}_{PP}[\bmla],\cdots, c^{(k)}_{PP}[\bmla])+\sum_{r=1}^{k}\frac{(-1)^{r+1}}{r+1}{k+1 \choose r}\frac{c^{(k-r)}[\bmla]}{N^{r}}.
\end{align}
\end{lemma}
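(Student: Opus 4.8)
The statement relates the moments of the empirical density $\mu[\bmla]$ and those of the Perelomov-Popov measure $\mu_{PP}[\bmla]$ by two polynomial change-of-variables formulas. The plan is to extract both from the single identity in Lemma~\ref{l:STPP},
\begin{align*}
\int \frac{\rd \mu_{PP}[\bmla](x)}{z-x} = \prod_{i=1}^N\left(1+\frac{1}{N}\frac{1}{z-y_i}\right)-1,
\end{align*}
by expanding both sides as formal power series in $1/z$ and matching coefficients. The left-hand side is $\sum_{k\geq 1} c^{(k-1)}_{PP}[\bmla]\,z^{-k}$, while on the right-hand side, writing $G(z)\deq \sum_{i=1}^N (z-y_i)^{-1} = \sum_{m\geq 0} (Nc^{(m)}[\bmla])\,z^{-m-1}$ for the (unnormalized) Stieltjes transform of $\mu[\bmla]$, we have $\prod_i(1+\frac1N(z-y_i)^{-1})=\exp\big(\sum_i\ln(1+\frac1N(z-y_i)^{-1})\big)$, and expanding the logarithm and re-exponentiating expresses the product as a formal power series in $1/z$ whose coefficients are universal polynomials in the $c^{(m)}[\bmla]$ with $1/N$ prefactors.

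First I would prove \eqref{e:tock}. Expanding $\prod_i(1+\frac1N(z-y_i)^{-1})$ directly: the coefficient of $z^{-k-1}$ is a sum over ways of distributing powers among the factors, and each factor contributes a term $\frac1N\sum_i y_i^{a} z^{-a-1} = \frac1N (Nc^{(a)}[\bmla]) z^{-a-1}=c^{(a)}[\bmla]z^{-a-1}$ when it contributes $(z-y_i)^{-1}$ expanded to order $a$; but since two distinct factors with the same index $i$ cannot both be selected, one must use the power-sum-to-moment relations to rewrite sums like $\sum_{i\neq j} y_i^a y_j^b$ in terms of the $c^{(m)}[\bmla]$, which introduces extra $1/N$ factors precisely when indices coincide. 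A cleaner route is via the cumulant/logarithm expansion: $\ln\prod_i(1+\tfrac1N(z-y_i)^{-1}) = \sum_{r\geq1}\frac{(-1)^{r+1}}{r}\frac{1}{N^r}\sum_i (z-y_i)^{-r}$, and $\sum_i (z-y_i)^{-r}$ has $z^{-m}$-coefficient a polynomial in $c^{(0)}[\bmla],\dots,c^{(m-r)}[\bmla]$ times $N$ (by differentiating $G(z)$). Exponentiating, the $z^{-k-1}$-coefficient of the product is a polynomial in these, and a bookkeeping of the powers of $N$ shows the term with $r$ total "logarithm factors used" carries $N^{-r}\cdot N^{(\#\text{of them})}$; one checks the net power of $N$ is $\geq 0$ and the depth of moments appearing when the $1/N^r$ term survives is $\leq k-r$. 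This gives \eqref{e:tock} with $P_0$ reproducing $c^{(k)}[\bmla]$ (the $r=1$, single-factor, leading term) — wait, more carefully $P_0(c^{(1)},\dots,c^{(k)})=c^{(k)}[\bmla]$ — and the remaining $P_r$ collecting the lower-order corrections.

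Then \eqref{e:tosk} follows by inverting \eqref{e:tock}. From \eqref{e:tock} with the leading behavior $c^{(k)}_{PP}[\bmla]=c^{(k)}[\bmla]+\frac1N P_1(\dots)+\cdots$, solve recursively for $c^{(k)}[\bmla]$ in terms of $c^{(j)}_{PP}[\bmla]$, $j\leq k$, and lower $c^{(j)}[\bmla]$, $j<k$; the explicit form of the $1/N$ corrections is obtained by isolating the single-factor ($r=1$) contribution of the logarithm expansion, which is exactly $\sum_{r\geq1}\frac{(-1)^{r+1}}{r}\frac{1}{N^r}\sum_i(z-y_i)^{-r}$ evaluated coefficient-wise — the binomial coefficient $\binom{k+1}{r}$ and the factor $\frac{1}{r+1}$ arise from re-expanding $(z-y_i)^{-r}$ and integrating against the relation between $z^{-k}$-coefficients, matching the stated formula. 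The main obstacle is the combinatorial bookkeeping of powers of $N$: one must verify that no negative powers of $N$ appear (so that the $N\to\infty$ limit is well behaved) and that the moment-depth truncation $c^{(k-r)}[\bmla]$ is correct; this is a careful but routine induction on $k$ using the explicit series $G(z)=\sum_{m\geq0}Nc^{(m)}[\bmla]z^{-m-1}$ and its derivatives, together with the observation that each additional coinciding index in a product of power sums costs one factor of $N$ in the numerator against the $N^{-r}$ already present.
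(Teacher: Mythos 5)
Your plan rests on the same foundation as the paper's proof: Lemma~\ref{l:STPP} and formal power series comparison in $1/z$, with the logarithm identity playing the key role. The log-and-re-exponentiate route you favor for \eqref{e:tock} is a legitimate variant of what the paper does there (the paper expands $\prod_i\bigl(1+\tfrac{1}{N(z-y_i)}\bigr)$ directly into monomial-type symmetric polynomials and then converts to power sums; your route works with the cumulant series instead, but the index-coincidence bookkeeping is the same in disguise).

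There are two slips worth flagging. First, the net power of $N$ carried by a product of $s$ logarithm factors with indices $r_1,\dots,r_s$ is $N^{s-(r_1+\cdots+r_s)}$, which is $\leq N^0$ since each $r_i\geq 1$, not $\geq 0$ as you wrote; what you actually need to check is that $(r_1+\cdots+r_s)-s\leq k$ and that the moment indices multiplying $N^{-((r_1+\cdots+r_s)-s)}$ are at most $k-((r_1+\cdots+r_s)-s)$, which follows from $r_i\leq m_i$ and $\sum m_i=k+1$. Second, and more importantly, $P_0(c^{(1)},\dots,c^{(k)})\neq c^{(k)}[\bmla]$: $P_0$ is the full polynomial coming from the formal $N=\infty$ relation $m_{\mu_{PP}}(z)=e^{m_{\mu}(z)}-1$ (for instance $P_0=c^{(1)}+\tfrac{1}{2}$ when $k=1$); the moment $c^{(k)}[\bmla]$ is only the highest-index term of $P_0$, appearing with coefficient $1$. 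Because of this, ``inverting \eqref{e:tock}'' does not directly yield \eqref{e:tosk}: the recursion would produce $1/N$-corrections that are themselves polynomials in the $c^{(j)}_{PP}[\bmla]$, not the bare moments $c^{(k-r)}[\bmla]$ appearing on the right of \eqref{e:tosk}. The clean route---which you gesture at in your last sentences and which is exactly what the paper does---is to take $\ln$ of both sides of \eqref{e:STPP}, expand both sides as formal series in $1/z$, and compare the $z^{-(k+1)}$-coefficients directly: the left side gives the polynomial $Q$ in the $c^{(j)}_{PP}[\bmla]$ (using $c^{(0)}_{PP}=1$), and the right side gives $c^{(k)}[\bmla]$ from the $r=1$ term together with the explicit binomial $1/N$-corrections from the $r\geq 2$ terms. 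Drop the inversion step and carry out that coefficient comparison; it is the whole argument.
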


\begin{proof}
For the proof of \eqref{e:tock}, we expand both sides of \eqref{e:STPP} as a formal power series in $1/z$. The lefthand side of \eqref{e:STPP} is the moment generating function of $\rd \mu_{PP}[\bmla]$
\begin{align}\label{e:left1}
\int \frac{\rd \mu_{PP}[\bmla](x)}{z-x} 
=\frac{1}{z}+\frac{c^{(1)}_{PP}[\bmla]}{z^2}+\frac{c^{(2)}_{PP}[\bmla]}{z^3}+\frac{c^{(3)}_{PP}[\bmla]}{z^4}\cdots.
\end{align}
For the righthand side of \eqref{e:STPP}, we have
\begin{align}\begin{split}\label{e:right1}
\prod_{i=1}^N\left(1+\frac{1}{N}\frac{1}{z-y_i}\right)-1
&=\prod_{i=1}^N\left(1+\frac{1}{Nz}+\frac{y_i}{Nz^2}+\frac{y_i^2}{Nz^3}+\cdots\right)-1\\
&=\sum_{r\geq 1}\sum_{s_1, s_2,\cdots,s_r\geq 0}\sum_{1\leq i_1<i_2<\cdots <i_r\leq N}\frac{y_{i_1}^{s_1}y_{i_2}^{s_2}\cdots y_{i_r}^{s_r}}{N^r z^{s_1+s_2+\cdots +s_r+r}}.
\end{split}\end{align}
By comparing the coefficient of $1/z^{k+1}$ in \eqref{e:left1} and \eqref{e:right1}, we get
\begin{align*}
c^{(k)}_{PP}[\bmla]=\sum_{r\geq 1}\sum_{s_1+s_2+\cdots+s_r=k+1-r,\atop s_1, s_2,\cdots, s_r\geq 0}\sum_{1\leq i_1<i_2<\cdots<i_r\leq N}\frac{y_{i_1}^{s_1}y_{i_2}^{s_2}\cdots y_{i_r}^{s_r}}{N^r}.
\end{align*}
The claim \eqref{e:tock} follows from rewriting the monomial symmetric polynomials in terms of the power-sum symmetric polynomials.

For the proof of \eqref{e:tosk}, we rewrite \eqref{e:STPP} as
\begin{align}\label{e:logSTPP}
\ln\left(1+\int\frac{\rd \mu_{PP}[\bmla](x)}{z-x}\right)
=\sum_{i=1}^N\ln\left(1+\frac{1}{N}\frac{1}{z-y_i}\right).
\end{align}
Similarly we expand both sides of \eqref{e:logSTPP} as formal power series in $1/z$. For the lefthand side of \eqref{e:logSTPP}, we have
\begin{align}\label{e:left2}\begin{split}
\ln\left(1+\int\frac{\rd \mu_{PP}[\bmla](x)}{z-x}\right)
&=\ln\left(1+\frac{1}{z}+\frac{c^{(1)}_{PP}[\bmla]}{z^2}+\frac{c^{(2)}_{PP}[\bmla]}{z^3}+\frac{c^{(3)}_{PP}[\bmla]}{z^4}\cdots\right)\\
&=-\sum_{r\geq 1}\frac{(-1)^{r}}{r}\left(\frac{1}{z}+\frac{c^{(1)}_{PP}[\bmla]}{z^2}+\frac{c^{(2)}_{PP}[\bmla]}{z^3}+\frac{c^{(3)}_{PP}[\bmla]}{z^4}\cdots\right)^r\\
&=-\sum_{r\geq 1}\frac{(-1)^{r}}{r}\sum_{i_1,i_2,\cdots,i_r\geq 0}\frac{c_{PP}^{(i_1)}[\bmla]c_{PP}^{(i_2)}[\bmla]\cdots c_{PP}^{(i_r)}[\bmla]}{z^{i_1+i_2+\cdots+i_r+r}},
\end{split}\end{align}
where we make the convention $c_{PP}^{(0)}=1$. For the righthand side of \eqref{e:logSTPP}, we have
\begin{align}\label{e:right2}\begin{split}
&\phantom{{}={}}\sum_{i=1}^N\ln\left(1+\frac{1}{N}\frac{1}{z-y_i}\right)
=\sum_{i=1}^N\ln\left(1+\frac{1}{Nz}+\frac{y_i}{Nz^2}+\frac{y_i^2}{Nz^3}+\cdots\right)\\
&=\left(\frac{1}{z}+\frac{c^{(1)}[\bmla]}{z^2}+\frac{c^{(2)}[\bmla]}{z^3}+\cdots\right)-\sum_{r\geq 2}\frac{(-1)^r}{r}\sum_{i=1}^N\left(\frac{1}{Nz}+\frac{y_i}{Nz^2}+\frac{y_i^2}{Nz^3}+\cdots\right)^r\\
&=\left(\frac{1}{z}+\frac{c^{(1)}[\bmla]}{z^2}+\frac{c^{(2)}[\bmla]}{z^3}+\cdots\right)-\sum_{r\geq 2}\frac{(-1)^r}{r}\sum_{i_1,i_2,\cdots, i_r\geq 0}\frac{c^{(i_1+i_2+\cdots+i_r)}[\bmla]}{N^{r-1}z^{i_1+i_2+\cdots+i_r+r}}.
\end{split}\end{align}
By comparing the coefficient of $1/z^{k+1}$ in \eqref{e:left2} and \eqref{e:right2}, we get
\begin{align*}
c^{(k)}[\bmla]=-\sum_{r\geq 1}\frac{(-1)^r}{r}\sum_{i_1+i_2+\cdots+i_r=k+1-r,\atop 
i_1,i_2,\cdots, i_r\geq 0}c^{(i_1)}_{PP}[\bmla]c^{(i_2)}_{PP}[\bmla]\cdots c^{(i_r)}_{PP}[\bmla]+\sum_{r\geq 1}\frac{(-1)^{r+1}}{r+1}{k+1 \choose r}\frac{c^{(k-r)}[\bmla]}{N^{r}},
\end{align*}
and the claim \eqref{e:tosk} follows.
\end{proof}

In terms of moments, the empirical density $\mu[\bmla]$ and Perelomov-Popov measure $\mu_{PP}[\bmla]$ are equivalent, i.e. one uniquely determine the other. As an easy consequence, $\mu[\bmla]$ satisfies the law of large numbers or the central limit theorems, in the sense of Definition \ref{def:LLNCLT}, if and only if $\mu_{PP}[\bmla]$ satisfies the law of large numbers or the central limit theorems.

\begin{lemma}\label{l:CLTtoCLT}
In the sense of Definition \ref{def:LLNCLT}$,  \mu[\bmla]$ satisfies the law of large numbers if any only if $\mu_{PP}[\bmla]$ satisfies the law of large numbers; $\mu[\bmla]$ satisfies the central limit theorems if and only if $\mu_{PP}[\bmla]$ satisfies the central limit theorems.
\end{lemma}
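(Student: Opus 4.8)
The plan is to deduce the equivalence directly from the two polynomial relations \eqref{e:tock} and \eqref{e:tosk} of Lemma \ref{l:change}, treating the two directions symmetrically. First I would fix an index $r\geq 1$ and indices $k_1,\dots,k_r\geq 1$ and expand the product $\prod_{i=1}^r c^{(k_i)}_{PP}[\bmla]$ using \eqref{e:tock}. Each factor is a sum $\sum_{s=0}^{k_i}N^{-s}P_s(c^{(1)}[\bmla],\dots,c^{(k_i-s)}[\bmla])$, so the product becomes a finite sum of terms of the form $N^{-s}\,\big(\text{monomial in the }c^{(j)}[\bmla]\big)$ with $s\geq 0$, and the $s=0$ term is exactly $\prod_{i=1}^r c^{(k_i)}[\bmla]$ because $P_0$ is the identity (this is visible from the $r=1$, leading-order term in \eqref{e:right1}, where the coefficient of $1/z^{k+1}$ at order $N^{-1}\cdot N$ is $p_k/N\cdot N=c^{(k)}$; more precisely $P_0(c^{(1)},\dots,c^{(k)})=c^{(k)}$). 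Taking expectations, $\bE\big[\prod_i c^{(k_i)}_{PP}[\bmla]\big]=\bE\big[\prod_i c^{(k_i)}[\bmla]\big]+\sum_{s\geq 1}N^{-s}\,\bE[\text{polynomial in lower moments}]$.

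To close the argument I need that all these lower-order expectations stay bounded as $N\to\infty$, so that the $N^{-s}$ factors kill the correction terms. Under the law of large numbers hypothesis, $\bE\big[\prod_j c^{(k_j)}[\bmla]\big]$ converges for every choice of indices, hence is bounded; an induction on the total degree $k_1+\dots+k_r$ then shows that every term appearing in the expansion has bounded expectation, and therefore $\lim_N \bE\big[\prod_i c^{(k_i)}_{PP}[\bmla]\big]=\lim_N\bE\big[\prod_i c^{(k_i)}[\bmla]\big]=\prod_i \fp_{k_i}$, which is exactly the law of large numbers for $\mu_{PP}[\bmla]$. The reverse implication is identical using \eqref{e:tosk} in place of \eqref{e:tock}: that identity expresses $c^{(k)}[\bmla]$ as a polynomial in $c^{(1)}_{PP}[\bmla],\dots,c^{(k)}_{PP}[\bmla]$ plus terms carrying explicit negative powers of $N$ times lower moments $c^{(k-r)}[\bmla]$, so the same bounded-correction induction applies, now bootstrapping boundedness of the $c^{(j)}[\bmla]$ moments from that of the $c^{(j)}_{PP}[\bmla]$ moments.

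For the central limit theorems the strategy is the same but applied to the centered variables. Writing $\xi^{(k)}[\bmla]=N(c^{(k)}[\bmla]-\bE c^{(k)}[\bmla])$ and the analogous $\xi^{(k)}_{PP}[\bmla]=N(c^{(k)}_{PP}[\bmla]-\bE c^{(k)}_{PP}[\bmla])$, I would subtract the expectation from \eqref{e:tock} to get $c^{(k)}_{PP}[\bmla]-\bE c^{(k)}_{PP}[\bmla]=(c^{(k)}[\bmla]-\bE c^{(k)}[\bmla])+\sum_{s\geq 1}N^{-s}\big(P_s(\dots)-\bE P_s(\dots)\big)$; multiplying by $N$ gives $\xi^{(k)}_{PP}[\bmla]=\xi^{(k)}[\bmla]+\sum_{s\geq 1}N^{1-s}\big(P_s-\bE P_s\big)$. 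Since $P_s$ is a polynomial in the moments $c^{(j)}[\bmla]$ with $j<k$, each $P_s-\bE P_s$ can be re-expanded as a polynomial in the $\xi^{(j)}[\bmla]$ with coefficients that are polynomials in the (bounded, by the established LLN) expectations $\bE c^{(j)}[\bmla]$, divided by appropriate powers of $N$; the leading $s=1$ term is $O(1)$ in such $\xi$'s while all $s\geq 2$ terms carry a genuine negative power of $N$. Feeding this into a joint moment $\bE\big[\prod_i \xi^{(k_i)}_{PP}[\bmla]\big]$, expanding, and using that all joint moments of the $\xi^{(j)}[\bmla]$ converge (hence are bounded) by the CLT hypothesis for $\mu[\bmla]$, I conclude that $\xi^{(k)}_{PP}$ and $\xi^{(k)}$ have the same limiting joint moments; in particular the limit is Gaussian with the same covariance and vanishing higher cumulants. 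The converse uses \eqref{e:tosk} the same way. The main obstacle — and the only place requiring real care rather than bookkeeping — is making the boundedness induction airtight: one must verify that in both expansions the correction terms never involve a moment of order $\geq k$ without a compensating power of $N$, so that the induction on degree actually terminates; this is guaranteed by the structure of \eqref{e:tock} (arguments of $P_r$ have order $\leq k-r$) and \eqref{e:tosk} (the explicit sum involves $c^{(k-r)}[\bmla]/N^r$ with $r\geq 1$), but it should be stated carefully.
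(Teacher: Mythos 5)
Your general strategy is the same as the paper's — work directly from the two polynomial relations of Lemma~\ref{l:change}, kill the $N^{-s}$ corrections using moment boundedness from the LLN hypothesis, and handle both directions symmetrically with an induction on the degree. However, there is a concrete error at the heart of your argument: you claim ``$P_0$ is the identity, more precisely $P_0(c^{(1)},\dots,c^{(k)})=c^{(k)}$.'' This is false. The $r=1$ term in the coefficient expansion of the right-hand side of \eqref{e:STPP} indeed contributes $c^{(k)}$, but the $r\geq 2$ terms (which involve $\binom{N}{r}/N^r = O(1)$ combinatorial factors) also contribute at order $N^0$, and they contribute \emph{nonlinear} expressions. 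For example, the coefficient of $1/z^4$ picks up a $\frac{1}{2N^2}\sum_{i\neq j} y_i y_j$ term from $r=2$, so $P_0(c^{(1)},c^{(2)},c^{(3)})=c^{(3)}+c^{(2)}+\tfrac12(c^{(1)})^2+\tfrac12 c^{(1)}+\tfrac{1}{24}$, which contains the genuinely nonlinear monomial $(c^{(1)})^2$.

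This error propagates into both halves of your proof. In the LLN direction, the conclusion $\lim_N\bE\big[\prod_i c^{(k_i)}_{PP}\big]=\prod_i\fp_{k_i}$ is wrong; the correct statement (as in \eqref{e:cklaw}) is $\lim_N c^{(k)}_{PP}=P_0(\fc^{(1)},\dots,\fc^{(k)})$, a different number. This is harmless for the \emph{existence} of the LLN limit — a polynomial in convergent quantities converges — but your claimed value is incorrect. In the CLT direction the error is more serious: because $P_0$ is nonlinear, it is not true that $c^{(k)}_{PP}-\bE c^{(k)}_{PP}=(c^{(k)}-\bE c^{(k)})+\sum_{s\geq1}N^{-s}(P_s-\bE P_s)$. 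You must Taylor-expand $P_0$ around the means, exactly as the paper does in \eqref{e:Nsk2}: the quadratic and higher Taylor terms are $O_\bP(1/N^2)$ because each fluctuation $c^{(j)}-\bE c^{(j)}$ is $O_\bP(1/N)$, and this yields
\begin{align*}
\xi^{(k)}_{PP}[\bmla]=\sum_{j=1}^{k}\xi^{(j)}[\bmla]\,\del_j P_0\big(\bE c^{(1)}[\bmla],\dots,\bE c^{(k)}[\bmla]\big)+o_\bP(1),
\end{align*}
an asymptotically \emph{deterministic linear combination} of the original fluctuations, not the original fluctuation itself. Consequently your statement that ``$\xi^{(k)}_{PP}$ and $\xi^{(k)}$ have the same limiting joint moments'' is false; what is true (and sufficient for the lemma) is that a fixed linear map sends one Gaussian vector to the other, so asymptotic joint Gaussianity is preserved while the covariance transforms. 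Once you replace the identity-$P_0$ claim with this Taylor expansion, the bounded-correction induction you outline does go through.
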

\begin{proof}
We denote the moments of $\mu[\bmla]$ and $\mu_{PP}[\bmla]$
\begin{align*}
c^{(k)}[\bmla]=\int x^k \rd \mu[\bmla](x),\quad
c^{(k)}_{PP}[\bmla]=\int x^k \rd \mu_{PP}[\bmla](x),\quad k=0,1,2,3,\cdots.
\end{align*}

If $\mu[\bmla]$ satisfies the law of large numbers, in the sense of Definition \ref{def:LLNCLT}, there exists  a collection of numbers $\fc^{(1)}, \fc^{(2)},\fc^{(3)}, \cdots,$ in the sense of moments
\begin{align*}
\lim_{N\rightarrow \infty} c^{(i)}[\bmla]=\fc^{(i)}, \quad i=1,2,3,\cdots.
\end{align*}
Thanks to \eqref{e:tock}, we have
\begin{align}\begin{split}\label{e:cklaw}
\lim_{N\rightarrow \infty}c^{(k)}_{PP}[\bmla]
&=\lim_{N\rightarrow \infty}\left(\sum_{r=0}^k\frac{1}{N^{r}}P_r(c^{(1)}[\bmla], c^{(2)}[\bmla],\cdots, c^{(k-r)}[\bmla])\right)=P_0(\fc^{(1)}, \fc^{(2)},\cdots, \fc^{(k)}),
\end{split}\end{align}
in the sense of moments, and thus $\mu_{PP}[\bmla]$ satisfies the law of large numbers.

If $\mu_{PP}[\bmla]$ satisfies the law of large numbers in the sense of Definition \ref{def:LLNCLT}, there exists  a collection of numbers $\fc_{PP}^{(1)}, \fc_{PP}^{(2)},\fc^{(3)}_{PP}, \cdots,$ in the sense of moments
\begin{align*}
\lim_{N\rightarrow \infty} c_{PP}^{(i)}[\bmla]=\fc_{PP}^{(i)}, \quad i=1,2,3,\cdots.
\end{align*}
%We prove that $\mu_{PP}[\bmla]$ satisfies the law of large numbers by induction. We assume that there exists  a collection of numbers $\fc^{(1)}, \fc^{(2)},\fc^{(3)}, \cdots, $ in the sense of mements
%\begin{align}
%\lim_{N\rightarrow \infty} c^{(i)}[\bmla]=\fc^{(i)}, \quad i=1,2,3,\cdots.
%\end{align}
We assume that for any $1\leq l\leq k-1$, there exists $\fc^{(l)}$ such that in the sense of moments
\begin{align*}
\lim_{N\rightarrow \infty} c^{(l)}[\bmla]=\fc^{(l)}.
\end{align*}
Then thanks to \eqref{e:tosk}, we have
\begin{align}\begin{split}\label{e:sklaw}
\lim_{N\rightarrow \infty}c^{(k)}[\bmla]
&=\lim_{N\rightarrow \infty}\left(Q(c^{(1)}_{PP}[\bmla], c^{(2)}_{PP}[\bmla],\cdots, c^{(k)}_{PP}[\bmla])+\sum_{r=1}^{k}\frac{(-1)^{r+1}}{r+1}{k+1 \choose r}\frac{s_{k-r}[\bmla]}{N^{r}}\right)\\
&=Q(\fc^{(1)}_{PP}, \fc^{(2)}_{PP},\cdots, \fc^{(k)}_{PP}),
\end{split}\end{align}
in the sense of moments, and thus $\mu_{PP}[\bmla]$ satisfies the law of large numbers.

If $\mu[\bmla]$ satisfies the central limit theorems in the sense of Definition \ref{def:LLNCLT}, then it also satisfies the law of large numbers and from the discussion above, $\mu_{PP}[\bmla]$ satisfies the law of large numbers. Moreover, for any $k\geq 1$, 
\begin{align}\label{e:jointc}
N(c^{(1)}[\bmla]-\bE[c^{(1)}[\bmla]]), N(c^{(2)}[\bmla]-\bE[c^{(2)}[\bmla]]),\cdots, N(c^{(k)}[\bmla]-\bE[c^{(k)}[\bmla]]),
\end{align}
are asymptotically Gaussian as $N\rightarrow \infty$. From \eqref{e:tock}, we have
\begin{align}\begin{split}\label{e:Nsk2}
Nc^{(k)}_{PP}[\bmla]
%&=N\left(\sum_{r=0}^k\frac{1}{N^{r}}P_r(c^{(1)}[\bmla], c^{(2)}[\bmla],\cdots, s_{k-r}[\bmla])-P_0(c^{(1)}, c^{(2)},\cdots, c^{(k)})\right)\\
&=\sum_{i=1}^kN(c^{(i)}[\bmla]-\bE[c^{(i)}[\bmla]])\del_{i}P_0(\bE[c^{(1)}[\bmla]], \bE[c^{(2)}[\bmla]],\cdots, \bE[c^{(k)}[\bmla]])\\
&+NP_0(\bE[c^{(1)}[\bmla]], \bE[c^{(2)}[\bmla]],\cdots, \bE[c^{(k)}[\bmla]])+P_1(\fc^{(1)},\fc^{(2)},\cdots,\fc^{(k-1)})+o_{\bP}(1),
\end{split}\end{align}
Then, by subtracting mean from both sides of \eqref{e:Nsk2},  we conclude that $N(c^{(k)}_{PP}[\bmla]-\bE[c^{(k)}_{PP}[\bmla]])$ is a linear combination of \eqref{e:jointc}
\begin{align*}\begin{split}
N(c^{(k)}_{PP}[\bmla]-\bE[c^{(k)}_{PP}[\bmla]])
%&=N\left(\sum_{r=0}^k\frac{1}{N^{r}}P_r(c^{(1)}[\bmla], c^{(2)}[\bmla],\cdots, s_{k-r}[\bmla])-P_0(c^{(1)}, c^{(2)},\cdots, c^{(k)})\right)\\
&=\sum_{i=1}^kN(c^{(i)}[\bmla]-\bE[c^{(i)}[\bmla]])\del_{i}P_0(\bE[c^{(1)}[\bmla]], \bE[c^{(2)}[\bmla]],\cdots, \bE[c^{(k)}[\bmla]])+o_{\bP}(1).
\end{split}\end{align*}
Thus $\mu[\bmla]$ satisfies the central limit theorems.

If $\mu_{PP}[\bmla]$ satisfies the central limit theorems in the sense of Definition \ref{def:LLNCLT}, then it also satisfies the law of large numbers and from the discussion above, $\mu[\bmla]$ satisfies the law of large numbers. Moreover, for any $k\geq 1$, 
\begin{align}\label{e:jointcpp}
N(c^{(1)}_{PP}[\bmla]-\bE[c^{(1)}_{PP}[\bmla]]), N(c^{(2)}_{PP}[\bmla]-\bE[c^{(2)}_{PP}[\bmla]]),\cdots, N(c^{(k)}_{PP}[\bmla]-\bE[c^{(k)}_{PP}[\bmla]]),
\end{align}
are asymptotically Gaussian as $N\rightarrow \infty$. 
%We prove by induction on $k$, that $N(c^{(1)}[\bmla]-\bE[c^{(1)}[\bmla]]), N(c^{(2)}[\bmla]-\bE[c^{(2)}[\bmla]]),\cdots, N(c^{(k)}[\bmla]-\bE[c^{(k)}[\bmla]])$ are asymptotically Gaussian. Now we assume that the joint law of $N(c^{(1)}[\bmla]-\bE[c^{(1)}[\bmla]]), N(c^{(2)}[\bmla]-\bE[c^{(2)}[\bmla]]),\cdots, N(c^{(k-1)}[\bmla]-\bE[c^{(k-1)}[\bmla]])$ is asymptotically Gaussian.
From \eqref{e:tosk}, we have 
\begin{align}\begin{split}\label{e:Nsk}
Nc^{(k)}[\bmla]
%&=N\left(Q(c^{(1)}_{PP}[\bmla], c^{(2)}_{PP}[\bmla],\cdots, c^{(k)}_{PP}[\bmla])-Q(c^{(1)}_{PP}, c^{(2)}_{PP},\cdots, c^{(k)}_{PP})-\sum_{r=1}^{k}{k+1 \choose r}\frac{s_{k-r}[\bmla]}{N^{r}}\right)\\
&=\sum_{i=1}^kN(c^{(i)}_{PP}[\bmla]-\bE[c^{(i)}_{PP}[\bmla]])\del_{i}Q(\bE[c^{(1)}_{PP}[\bmla]], \bE[c^{(2)}_{PP}[\bmla]],\cdots, \bE[c^{(k)}_{PP}[\bmla]])\\
&+NQ(\bE[c^{(1)}_{PP}[\bmla]], \bE[c^{(2)}_{PP}[\bmla]],\cdots, \bE[c^{(k)}_{PP}[\bmla]])+\frac{1}{2}(k+1)\fc^{(k-1)}+o_{\bP}(1).
\end{split}\end{align}
Then, by subtracting mean from both sides of \eqref{e:Nsk},  we conclude that $N(c^{(k)}[\bmla]-\bE[c^{(k)}[\bmla]])$ is a linear combination of \eqref{e:jointcpp}
\begin{align*}
N(c^{(k)}[\bmla]-\bE[c^{(k)}[\bmla]])
%&=N\left(Q(c^{(1)}_{PP}[\bmla], c^{(2)}_{PP}[\bmla],\cdots, c^{(k)}_{PP}[\bmla])-Q(c^{(1)}_{PP}, c^{(2)}_{PP},\cdots, c^{(k)}_{PP})-\sum_{r=1}^{k}{k+1 \choose r}\frac{s_{k-r}[\bmla]}{N^{r}}\right)\\
&=\sum_{i=1}^kN(c^{(i)}_{PP}[\bmla]-\bE[c^{(i)}_{PP}[\bmla]])\del_{i}Q(\bE[c^{(1)}_{PP}[\bmla]], \bE[c^{(2)}_{PP}[\bmla]],\cdots, \bE[c^{(k)}_{PP}[\bmla]])+o_{\bP}(1).
\end{align*}
Thus $\mu_{PP}[\bmla]$ satisfies the central limit theorems. 
 
% 
% On the other hand, if $\mu_{PP}[\bmla]$ satisfies the law of large numbers and the central limit theorems, then from the discussion above, $\mu[\bmla]$ satisfies the law of large numbers. Moreover, for any $k\geq 1$, 
%\begin{align}
%N(c^{(1)}[\bmla]-\bE[c^{(1)}[\bmla]]), N(c^{(2)}[\bmla]-\bE[c^{(2)}[\bmla]]),\cdots, N(c^{(k)}[\bmla]-\bE[c^{(k)}[\bmla]]),
%\end{align}
%are asymptotically Gaussian as $N\rightarrow \infty$. From \eqref{e:tock}, we have
%\begin{align}\begin{split}
%Nc^{(k)}_{PP}[\bmla]
%%&=N\left(\sum_{r=0}^k\frac{1}{N^{r}}P_r(c^{(1)}[\bmla], c^{(2)}[\bmla],\cdots, s_{k-r}[\bmla])-P_0(c^{(1)}, c^{(2)},\cdots, c^{(k)})\right)\\
%&=\sum_{i=1}^kN(c^{(i)}[\bmla]-\bE[c^{(i)}[\bmla]])\del_{i}P_0(\bE[c^{(1)}[\bmla]], \bE[c^{(2)}[\bmla]],\cdots, \bE[c^{(k)}[\bmla]])\\
%&+NP_0(\bE[c^{(1)}[\bmla]], \bE[c^{(2)}[\bmla]],\cdots, \bE[c^{(k)}[\bmla]])+P_1(c^{(1)},c^{(2)},\cdots,c^{(k-1)})+o_{\bP}(1),
%\end{split}\end{align}
%and thus $\mu[\bmla]$ satisfies the central limit theorems.

\end{proof}

\begin{remark}\label{r:change}
Due to the nature of the equation \eqref{e:STPP}, it is in fact easier to see the relation between $\mu[\bmla]$ and $\mu_{PP}[\bmla]$ using Stieltjes transform, viewed as formal power series in $1/z$. If both $\mu[\bmla]$ and $\mu_{PP}[\bmla]$ satisfy the law of large numbers, then the coefficients of their Stieltjes transform viewed as formal power series in $1/z$ are stochastically bounded. We can rewrite \eqref{e:STPP} as
\begin{align}\label{e:mppst}\begin{split}
\int \frac{\rd \mu_{PP}[\bmla](x)}{z-x}
&=e^{\sum_{i=1}^N \ln\left(1+\frac{1}{N}\frac{1}{z-y_i}\right)}-1\\
&=e^{\frac{1}{N}\sum_{i=1}^N \frac{1}{z-y_i}+\OO_{\bP}\left(\frac{1}{N}\right)}-1
=e^{\int \frac{\rd \mu[\bmla](x)}{z-x}+\OO_{\bP}\left(\frac{1}{N}\right)}-1.
\end{split}\end{align}
$\mu[\bmla]$ converges weakly to $\mu$, if and only if $\mu_{PP}[\bmla]$ converges weakly to $\mu_{PP}$. We denote the Stieltjes transforms of $\mu$ and $\mu_{PP}$ by $m_\mu(z)$ and $m_{\mu_{PP}}(z)$ respectively, and they  satisfy the relation,
\begin{align}\label{e:limitlaw}
m_{\mu_{PP}}(z)=e^{m_\mu(z)}-1.
\end{align}
If we further assume that $\mu[\bmla]$ and $\mu_{PP}[\bmla]$ satisfy the central limit theorems, we denote 
\begin{align*}
\Delta m_{PP}[\bmla](z)=N\int \frac{\rd \mu_{PP}[\bmla](x)-\bE\rd \mu_{PP}[\bmla](x)}{z-x},\quad 
\Delta m[\bmla](z)=N\int \frac{\rd \mu[\bmla](x)-\bE\rd \mu[\bmla](x)}{z-x}.
\end{align*}
The coefficients of $\Delta m_{PP}[\bmla](z)$ and $\Delta m[\bmla](z)$ viewed as formal power series in $1/z$ are stochastically bounded.
We expand \eqref{e:STPP} to a higher order
\begin{align}\begin{split}\label{e:higherorder}
\int \frac{\rd \mu_{PP}[\bmla](x)}{z-x}
&=e^{\frac{1}{N}\sum_{i=1}^N \frac{1}{z-y_i}-\frac{1}{2N^2}\sum_{i=1}^N \frac{1}{(z-y_i)^2}+\OO_{\bP}\left(\frac{1}{N^2}\right)}-1\\
&=\exp\left(\bE\left[\int \frac{\rd \mu[\bmla](x)}{z-x}\right]\right)e^{\frac{1}{N}\Delta m[\bmla](z)+\frac{1}{2N}\del_z m_\mu(z)+\oo_{\bP}\left(\frac{1}{N}\right)}-1\\
&=\exp\left(\bE\left[\int \frac{\rd \mu[\bmla](x)}{z-x}\right]\right)-1+\oo_{\bP}\left(\frac{1}{N}\right).
\end{split}\end{align}
And it follows by taking expectation on both sides of \eqref{e:higherorder},
\begin{align}\label{e:hexpect}
\bE\left[\int \frac{\rd \mu_{PP}[\bmla](x)}{z-x}\right]=\exp\left(\bE\left[\int \frac{\rd \mu[\bmla](x)}{z-x}\right]\right)-1+\oo\left(\frac{1}{N}\right).
\end{align}
By taking difference of \eqref{e:higherorder} and \eqref{e:hexpect}, we get
\begin{align*}\begin{split}
\frac{1}{N}\Delta m_{PP}[\bmla](z)
&=\exp\left(\bE\left[\int \frac{\rd \mu[\bmla](x)}{z-x}\right]\right)\left(e^{\frac{1}{N}\Delta m[\bmla](z)+\frac{1}{2N}\del_zm_\mu(z)+o_{\bP}\left(\frac{1}{N}\right)}-1\right).
\end{split}\end{align*}
We can rearrange the above expression, 
\begin{align*}
\Delta m_{PP}[\bmla](z)=e^{m_\mu(z)}\Delta m[\bmla](z)+\frac{1}{2}\del_z m_\mu(z)e^{m_\mu(z)}+o_{\bP}(1),
\end{align*}
and 
\begin{align}\label{e:Deltam}
\cov\langle\Delta m[\bmla](z_1),\Delta m[\bmla](z_2)\rangle=e^{-m_\mu(z_1)}e^{-m_\mu(z_2)}\cov\langle\Delta m_{PP}[\bmla](z_1),\Delta m_{PP}[\bmla](z_2)\rangle+o(1).
\end{align}
The relation \eqref{e:Deltam} will be used later to derive the covariance structures of the moments of  $\mu[\bmla]$ from those of $\mu_{PP}[\bmla]$.
\end{remark}

\subsection{Eigenvalues of the Nazarov-Sklyanin operators}
\label{subs:NSOP}
For any $\bmla\in \bY(N)$, we recall that 
\begin{align*}
y_i=\frac{\la_i}{\theta N}-\frac{i-1}{N},\quad i=1,2,\cdots, N.
\end{align*}
We can rewrite \eqref{e:IJ} as 
\begin{align}\begin{split}\label{e:IJ1}
I(u;\theta)J_{\bmla}(\bmp;\theta) 
=&\frac{1}{u+N} \prod_{i=1}^{ N}\frac{u+i-\la_i/\theta}{u+i-1-\la_i/\theta}J_\bmla(\bmp;\theta)\\
=&\frac{1}{u+N}\prod_{i=1}^N\left(1+\frac{1}{u-Ny_i}\right)J_\bmla(\bmp;\theta).
\end{split}\end{align}
We take $u=Nz$ for some $z\in \bC\setminus \bR$. Using \eqref{e:STPP}, we can rewrite \eqref{e:IJ1} as
\begin{align}\label{e:PPid}
N(z+1)I(Nz;\theta)J_{\bmla}(\bmp;\theta)
=\left(1+\int\frac{\rd \mu_{PP}[\bmla](x)}{z-x}\right) J_\bmla(\bmp;\theta).
\end{align} 
Since the operator $I(Nz;\theta)$ is defined by the formal power series \eqref{e:inv}, the lefthand side of \eqref{e:PPid} is given by
\begin{align*}\begin{split}
&\phantom{{}={}}N(z+1)I(Nz;\theta)J_{\bmla}(\bmp;\theta)
=(z+1)\left(\frac{1}{z}+\frac{I^{(1)}}{Nz^2}+\frac{I^{(2)}}{N^2z^3}+\cdots\right)J_\bmla(\bmp;\theta)\\
&=\left(1+\left(1+\frac{I^{(1)}}{N}\right)\frac{1}{z}+\left(\frac{I^{(1)}}{N}+\frac{I^{(2)}}{N^2}\right)\frac{1}{z^2}%+\left(\frac{I^{(2)}}{N^2}+\frac{I^{(3)}}{N^3}\right)\frac{1}{z^3}
+\cdots\right)J_\bmla(\bmp;\theta).
\end{split}\end{align*}
The righthand side of \eqref{e:PPid} is given by
\begin{align*}\begin{split}
\phantom{{}={}}&\left(1+ \int\frac{\rd \mu_{PP}[\bmla](x)}{z-x}\right) J_\bmla(\bmp;\theta)\\
=&\left(1+\frac{1}{z}\int \rd\mu_{PP}[\bmla](x)+\frac{1}{z^2}\int x\rd \mu_{PP}[\bmla](x)+\frac{1}{z^3}\int x^2\rd \mu_{PP}[\bmla](x) +\cdots\right) J_\bmla(\bmp;\theta).
\end{split}\end{align*}
It follows by comparing both sides of \eqref{e:PPid}, for any $k\geq 1$, we have
\begin{align}\label{t:momentPP}
\left(\frac{I^{(k)}}{N^{k}}+\frac{I^{(k+1)}}{N^{k+1}}\right)J_{\bmla}(\bmp;\theta)
=\left(\int x^{k}\rd \mu_{PP}[\bmla](x) \right)J_{\bmla}(\bmp;\theta),
\end{align}
We can repeatedly apply \eqref{t:momentPP}, and get the following Theorem.
\begin{theorem}\label{t:moment}
The joint moments of the Perelomov-Popov measure $\mu_{PP}[\bmla]$ satisfies: for any $r\geq 1$ and numbers $k_1,k_2,\cdots, k_r\geq 1$, we have
\begin{align*}
\prod_{j=1}^r\left(\frac{I^{(k_j)}}{N^{k_j}}+\frac{I^{(k_j+1)}}{N^{k_j+1}}\right)J_{\bmla}(\bmp;\theta)
=\prod_{j=1}^r\left(\int x^{k_j}\rd \mu_{PP}[\bmla](x) \right)J_{\bmla}(\bmp;\theta).
\end{align*}
Let $\{M_N\}_{N\geq 1}$ be a sequence of probability measures on $\bY(N)$, with Jack generating functions $F_N(\bmp;\theta)$ as defined in \eqref{e:gener}, then 
\begin{align*}
\left.\prod_{j=1}^r\left(\frac{I^{(k_j)}}{N^{k_j}}+\frac{I^{(k_j+1)}}{N^{k_j+1}}\right)F_N(\bmp;\theta)\right|_{\bmp=1^N}=\bE\left[\prod_{j=1}^r\int x^{k_j}\rd \mu_{PP}[\bmla](x) \right].
\end{align*}
\end{theorem}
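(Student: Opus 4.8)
The plan is to bootstrap from the single-operator identity \eqref{t:momentPP} by induction on $r$, using that each eigenvalue of $I(u;\theta)$ — hence of the combinations $I^{(k_j)}/N^{k_j} + I^{(k_j+1)}/N^{k_j+1}$ — depends only on $\bmla$, so the operators in the product act diagonally on $J_{\bmla}(\bmp;\theta)$ and their order is immaterial. Concretely, for the first assertion I would argue as follows. Equation \eqref{t:momentPP} says that $J_{\bmla}(\bmp;\theta)$ is an eigenfunction of the operator $A_k := I^{(k)}/N^{k} + I^{(k+1)}/N^{k+1}$ with eigenvalue $\int x^{k}\rd\mu_{PP}[\bmla](x)$. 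Applying $A_{k_1}$ to $J_{\bmla}$ yields $(\int x^{k_1}\rd\mu_{PP}[\bmla]) J_{\bmla}$; applying $A_{k_2}$ to this, and using that multiplication by the scalar $\int x^{k_1}\rd\mu_{PP}[\bmla]$ commutes with $A_{k_2}$, gives $(\int x^{k_1}\rd\mu_{PP}[\bmla])(\int x^{k_2}\rd\mu_{PP}[\bmla]) J_{\bmla}$. Iterating $r$ times establishes
\begin{align*}
\prod_{j=1}^r A_{k_j}\, J_{\bmla}(\bmp;\theta) = \prod_{j=1}^r\left(\int x^{k_j}\rd\mu_{PP}[\bmla](x)\right) J_{\bmla}(\bmp;\theta),
\end{align*}
which is the first displayed identity. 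One subtlety to check is that $A_{k_2}$ genuinely preserves the (infinite-dimensional) space on which these are honest operators; here I would invoke the remark after Theorem \ref{t:mainT}, namely that $I^{(k)}$, though an infinite sum of operators on $\bR[p_1,p_2,\dots]$, acts with only finitely many nonvanishing summands on each fixed-degree homogeneous component, so the successive applications are well-defined and the eigenvalue computation is legitimate term by term.

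For the second assertion I would apply the operator identity to the Jack generating function. Since $F_N(\bmp;\theta) = \sum_{\bflambda\in\Y(N)} M_N(\bflambda)\, J_{\bflambda}(\bmp;\theta)/J_{\bflambda}(1^N;\theta)$ and each $\bmla\in\Y(N)$ satisfies $\ell(\bmla)=N$, the already-proved identity gives, by linearity,
\begin{align*}
\prod_{j=1}^r A_{k_j}\, F_N(\bmp;\theta) = \sum_{\bflambda\in\Y(N)} M_N(\bflambda)\,\frac{\prod_{j=1}^r\left(\int x^{k_j}\rd\mu_{PP}[\bflambda](x)\right)}{J_{\bflambda}(1^N;\theta)}\, J_{\bflambda}(\bmp;\theta).
\end{align*}
Now I would specialize at $\bmp = 1^N$. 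By the normalization noted after the definition of $F_N$, $J_{\bflambda}(\bmp;\theta)|_{\bmp=1^N} = J_{\bflambda}(1^N;\theta)$, so the ratio collapses and the right-hand side becomes $\sum_{\bflambda} M_N(\bflambda)\prod_{j=1}^r\int x^{k_j}\rd\mu_{PP}[\bflambda](x) = \bE\left[\prod_{j=1}^r\int x^{k_j}\rd\mu_{PP}[\bmla](x)\right]$, as claimed.

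The main obstacle is the analytic justification of exchanging the (possibly infinite) sum over $\bflambda\in\Y(N)$ with the action of the differential operators $A_{k_j}$ and with the specialization $\bmp=1^N$: $F_N$ is a priori only an element of the completion $\widehat{\Sym}$, so termwise application of $\prod_j A_{k_j}$ and evaluation at $1^N$ need to be controlled. I would address this using Assumption \ref{asup:infinite} together with Proposition \ref{p:summable}, which guarantee absolute convergence of the relevant series in a neighborhood of $\bmp=1^N$; this lets one treat $F_N$ as a genuine analytic function there and differentiate/evaluate term by term. Everything else — the commutativity of scalar multiplication with the operators, the finiteness of the operator action on each graded piece, and the normalization $J_{\bflambda}(1^N;\theta)>0$ from Theorem \ref{t:J1N} — is routine once that convergence issue is settled.
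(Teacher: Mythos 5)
Your proof is correct and takes essentially the same route as the paper, which simply says ``we can repeatedly apply \eqref{t:momentPP}''; you have spelled out the induction on $r$ using that scalar eigenvalues commute with the operators $A_k$, and then applied linearity and the normalization $J_{\bflambda}(\bmp;\theta)|_{\bmp=1^N}=J_{\bflambda}(1^N;\theta)$ to pass to $F_N$. Your remark on the analytic justification of termwise differentiation and evaluation via Assumption \ref{asup:infinite} and Proposition \ref{p:summable} is a reasonable addition that the paper leaves implicit.
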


\begin{remark}
Let $\tilde F_N(\bmp;\theta)$ be a function which differs from $ F_N(\bmp;\theta)$ by an element in the linear span $\Sp(J_\bmla(\bmp;\theta): \bmla\not\in \bY(N))$, i.e. 
\begin{align}\label{e:diffsp}
\tilde F_N(\bmp;\theta)= F_N(\bmp;\theta)+\Sp(J_\bmla(\bmp;\theta): \bmla\not\in \bY(N)).
\end{align}
Since the Nazarov-Sklyanin Operators preserve the subspace $\Sp(J_\bmla(\bmp;\theta): \bmla\not\in \bY(N))$, and $J_{\bmla}(1^N;\theta)=0$ for any $\bmla\not\in \bY(N)$, we have
\begin{align*}
\left.\prod_{j=1}^r\left(\frac{I^{(k_j)}}{N^{k_j}}+\frac{I^{(k_j+1)}}{N^{k_j+1}}\right)F_N(\bmp;\theta)\right|_{\bmp=1^N}
=\left.\prod_{j=1}^r\left(\frac{I^{(k_j)}}{N^{k_j}}+\frac{I^{(k_j+1)}}{N^{k_j+1}}\right)\tilde F_N(\bmp;\theta)\right|_{\bmp=1^N}.
\end{align*}
One can use $\tilde F_N(\bmp;\theta)$ as the Jack generating function of $M_N$, and compute the expectation of the joint moments of the Perelomov-Popov measure $\mu_{PP}[\bmla]$. In most of our applications in Section \ref{s:applications}, there are no closed forms for the Jack generating functions $F_N(\bmp;\theta)$. However, there exist functions $\tilde F_N(\bmp;\theta)$ with easy form such that \eqref{e:diffsp} holds. The law of large numbers and central limit theorems follow from analyzing $\tilde F_N(\bmp;\theta)$ instead.
\end{remark}

%
%The following theorem follows by comparing both sides of \eqref{e:PPid}
%\begin{theorem}\label{t:moment}
%The $k$-th moment of the Perelomov-Popov measure $\mu_{PP}[\bmla]$ satisfies
%\begin{align}
%\left(\frac{I^{(k)}}{N^k}+\frac{I^{(k+1)}}{N^{k+1}}\right)J_{\bmla}(\bmp;\theta)
%=\left(\int x^k\rd \mu_{PP}[\bmla](x) \right)J_\bmla(\bmp;\theta).
%\end{align}
%\end{theorem}
%%
%
%Thanks to Theorem \ref{t:moment}, for any $r\geq 1$ and numbers $k_1,k_2,\cdots, k_r\geq 1$, we have
%\begin{align}
%\prod_{j=1}^r\left(\frac{I^{(k_j)}}{N^{k_j}}+\frac{I^{(k_j+1)}}{N^{k_j+1}}\right)J_{\bmla}(\bmp;\theta)
%=\prod_{j=1}^r\left(\int x^{k_j}\rd \mu_{PP}[\bmla](x) \right)J_{\bmla}(\bmp;\theta),
%\end{align}
%and 
%\begin{align}
%\left.\prod_{j=1}^r\left(\frac{I^{(k_j)}}{N^{k_j}}+\frac{I^{(k_j+1)}}{N^{k_j+1}}\right)F_N\right|_{\bmp=1^N}=\bE\left[\prod_{j=1}^r\int x^{k_j}\rd \mu_{PP}[\bmla](x) \right].
%\end{align}
%

\section{Proof of Main Results} \label{s:proof}
In this section we give proofs of our main results in Section \ref{s:main}.
Let $\{M_N\}_{N\geq 1}$ be a sequence of probability measures on $\bY(N)$, with Jack generating functions $F_N(\bmp;\theta)$ as defined in \eqref{e:gener}. Then the expectation of the joint moments of the Perelomov-Popov measure $\mu_{PP}[\bmla]$, as defined \eqref{def:PP}, can be extracted from the Jack generating functions by applying the Nazarov-Sklyanin operators,
\begin{align}\label{e:momentPP}
\left.\prod_{j=1}^r\left(\frac{I^{(k_j)}}{N^{k_j}}+\frac{I^{(k_j+1)}}{N^{k_j+1}}\right)F_N(\bmp;\theta)\right|_{\bmp=1^N}=\bE\left[\prod_{j=1}^r\int x^{k_j}\rd \mu_{PP}[\bmla](x) \right].
\end{align}
We have proven in Section \ref{s:PPM} that the law of large numbers and the central limit theorems of the empirical density $\mu[\bmla]$ are equivalent to those of the Perelomov-Popov measure. The asymptotic questions boil down to analyze the lefthand side of \eqref{e:momentPP}. In Section \ref{s:Tl}, we show that the lefthand side of \eqref{e:momentPP} decomposes into a finite sum of product of terms in the form
\begin{align}\label{e:dpterms}
\sum_{i_1,i_2,\cdots, i_r\geq 1}{i_1\choose k_1}{i_2\choose k_2}\cdots {i_r\choose k_r} \left.\frac{\del^r \ln F_N(\bmp;\theta)}{\del p_{i_1}\del p_{i_2}\cdots \del p_{i_r}}\right|_{\bmp=1^N}.
\end{align}
We remark that Assumptions \ref{a:LLN} and \ref{a:CLT} are the asymptotic behaviors of these terms \eqref{e:dpterms}. The law of large numbers in Section \ref{subs:LLN}, directly follows from the decomposition. In Section \ref{subs:CLT} and \ref{subs:mulCLT}, using \eqref{e:momentPP}, we show higher order cumulants of the moments of the Perelomov-Popov measure \ref{def:PP} vanish asymptotically. Thus, the one level and multi-level central limit theorems follow. In Section \ref{subs:invCLT}, we prove that Assumption \ref{a:LLN} and \ref{a:CLT} are necessary for the law of large numbers and the central limit theorems.

\subsection{Some Notations}\label{s:SN}

For any positive integer $r$, we denote $\qq{r}\deq \{1,2,\cdots,r\}$. Given any set $\Omega$, we denote $\cP(\Omega)$ the set of partitions of the set $\Omega$. 

For a sequence of numbers $\{a_N\}_{N
\geq1}$, we say it has an $N$-degree (at most) $d$, if 
\begin{align*}
\lim_{N\rightarrow \infty}\frac{a_N}{N^d}
\end{align*}
exists.
We say it has an $N$-degree less than $d$, if 
\begin{align*}
\lim_{N\rightarrow \infty}\frac{a_N}{N^d} =0.
\end{align*}

We define the infinite Toeplitz matrix $[T_{ij}]_{i,j=0}^\infty$ such that $T_{ij}=q_{i-j}$, and its symbol
\begin{align}\label{e:defTw}
T(w)=\sum_{i=-\infty}^{\infty}q_i w^{i},
\end{align}
viewed as a formal series of $w$.
There is an easy relation between the inverse of the infinite Toeplitz matrix $[T_{ij}]_{i,j=0}^\infty$ and its symbol $T(w)$.
\begin{lemma}\label{l:invertT}
Let $[T_{ij}]_{i,j=0}^\infty$ be an infinite Toeplitz matrix with symbol $T(w)$ as defined in \eqref{e:defTw}. We assume that the symbol $T(w)$ converges absolutely on $\{w\in \bC:1<|w|<1+\varepsilon\}$, then
\begin{align}\label{e:invertT}
(z-T)^{-1}_{00}=\exp\left(-\frac{1}{2\pi \ri}\int_{|w|=1+\varepsilon/2}\ln(z-T(w))\frac{\rd w}{w}\right),
\end{align}
as a formal power series of $1/z$. 
\end{lemma}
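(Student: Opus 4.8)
The plan is to treat $z$ as a formal large parameter and to recognize the asserted identity as the Wiener--Hopf factorization of the symbol $z-T(w)$. First I would unpack both sides as power series in $1/z$. On the left, $(z-T)^{-1}_{00}=\frac1z\sum_{k\geq 0}(T^k)_{00}z^{-k}$ with $(T^k)_{00}=\sum_{i_1,\dots,i_{k-1}\geq 0}q_{-i_1}q_{i_1-i_2}\cdots q_{i_{k-1}}$; dropping the constraints $i_j\geq 0$ only enlarges the sum of absolute values, which becomes $[w^0]\bigl(\sum_j|q_j|w^j\bigr)^k$, a finite number because by hypothesis $\sum_j|q_j||w|^j$ converges on $1<|w|<1+\varepsilon$ (evaluate the constant Laurent coefficient by a contour integral over $|w|=1+\varepsilon/2$). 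So the left-hand side is a well-defined element of the ring of formal power series in $1/z$. On the right, writing $z-T(w)=z\bigl(1-T(w)/z\bigr)$ gives $\ln(z-T(w))=\ln z-\sum_{k\geq 1}\frac{T(w)^k}{kz^k}$, a power series in $1/z$ each of whose coefficients is a function analytic in $w$ on that annulus; expanding in Laurent modes, $\ln(z-T(w))=\sum_{n\in\bZ}c_n(z)w^n$ with $c_n(z)=\frac1{2\pi\ri}\int_{|w|=1+\varepsilon/2}\ln(z-T(w))\,w^{-n-1}\,\rd w$, and the right-hand side of \eqref{e:invertT} is precisely $e^{-c_0(z)}$.

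Next I would carry out the Wiener--Hopf split of the symbol. Write $\ln(z-T(w))=g_-(w)+c_0(z)+g_+(w)$, where $g_+(w)=\sum_{n\geq 1}c_n(z)w^n$ collects the strictly positive $w$-modes and $g_-(w)=\sum_{n\leq -1}c_n(z)w^n$ the strictly negative ones. Exponentiating yields a factorization $z-T(w)=A_-(w)A_+(w)$ with $A_+\deq e^{g_+}$ having only nonnegative Fourier modes and $0$-th mode $1$, and $A_-\deq e^{c_0(z)}e^{g_-}$ having only nonpositive Fourier modes and $0$-th mode $e^{c_0(z)}$; the same one-sided structure holds for $A_+^{-1}=e^{-g_+}$ and $A_-^{-1}=e^{-c_0(z)}e^{-g_-}$. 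All four are legitimate power series in $1/z$ with Laurent-in-$w$ coefficients.

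Then I would invoke the elementary multiplicativity of Toeplitz operators: for symbols $a,b$ one has $T(ab)=T(a)T(b)$ whenever $a$ has only nonpositive modes or $b$ has only nonnegative modes, since $T(ab)_{ij}-(T(a)T(b))_{ij}=\sum_{m<0}a_{i-m}b_{m-j}$, which vanishes for all $i,j\geq 0$ under either hypothesis. Because $zI-T=T(z-T(w))$, the factorization together with these identities gives $T(A_+^{-1})T(A_-^{-1})(zI-T)=T(A_+^{-1})T(A_-^{-1})T(A_-A_+)=T(A_+^{-1})T(A_+)=I$, and likewise on the other side, so $(zI-T)^{-1}=T(A_+^{-1})T(A_-^{-1})$. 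Taking the $(0,0)$ entry, $(zI-T)^{-1}_{00}=\sum_{k\geq 0}(A_+^{-1})_{-k}(A_-^{-1})_{k}$; only $k=0$ contributes since $A_+^{-1}$ has no strictly negative modes and $A_-^{-1}$ no strictly positive ones, leaving $(A_+^{-1})_0(A_-^{-1})_0=1\cdot e^{-c_0(z)}$, which is the right-hand side of \eqref{e:invertT}.

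The only delicate point — the main obstacle — is bookkeeping rather than substance: one must fix the formal-power-series-in-$1/z$ setting so that $\ln(z-T(w))$, the contour integrals $c_n(z)$, the exponentials $e^{\pm g_\pm}$, and the entrywise products defining $T(A_+^{-1})T(A_-^{-1})$ all make sense coefficient by coefficient, and this is exactly where the absolute convergence of the symbol on $1<|w|<1+\varepsilon$ is used: it turns every coefficient into a convergent contour integral and legitimizes working on the circle $|w|=1+\varepsilon/2$. The analytic content, the Wiener--Hopf inversion of a Toeplitz operator with a factorizable symbol, is classical; the work is in checking that it survives verbatim in the formal-parameter reformulation, which the displayed multiplicativity identity delivers. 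One could alternatively give a self-contained combinatorial proof by expanding the left-hand side and recognizing a first-return/cycle-lemma identity relating walks that stay nonnegative to unrestricted bi-infinite walks, but the Wiener--Hopf route is shorter.
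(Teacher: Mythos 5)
Your proof is correct, and it follows the same Wiener--Hopf strategy as the paper: decompose $\ln(z-T(w))$ into its negative, zero, and positive Laurent modes, exponentiate to obtain a factorization $z-T(w)=A_-(w)A_+(w)$ with one-sided factors, and read off the $(0,0)$ entry of the inverse. The difference is in how the one-sidedness is exploited. The paper realizes the Toeplitz matrix as $\pi\circ T(w)$ on the Hardy space $H^2(\bT_r)$, checks boundedness to justify invertibility for large $z$, and then uses the projection identities $\pi\circ e^{a_-}\circ\pi=\pi\circ e^{a_-}$ and $\pi\circ e^{-a_-}\circ\pi=\pi\circ e^{-a_-}$ to peel off the factors, finishing with $\langle 1,\cdot\,1\rangle$. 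You instead invoke the purely algebraic Toeplitz multiplicativity $T(ab)=T(a)T(b)$ valid when $a$ has only nonpositive modes or $b$ has only nonnegative modes, and cancel the four one-sided factors directly to get $(zI-T)^{-1}=T(A_+^{-1})T(A_-^{-1})$; the $(0,0)$ entry then collapses to $(A_+^{-1})_0(A_-^{-1})_0=e^{-c_0(z)}$ because the two factors have no overlapping nonzero modes. Your route avoids the Hilbert-space machinery entirely and makes the ``formal power series in $1/z$'' bookkeeping explicit, which is arguably closer in spirit to how the lemma is actually used downstream (all identities are applied coefficient-by-coefficient in $1/z$). Both proofs rest on the same factorization; yours trades operator-norm estimates for an elementary matrix-entry computation, a fair exchange given the formal setting.
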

\begin{proof}
The lemma follows from Wiener-Hopf Factorization. Fix any $1<r<1+\varepsilon$, and let $\bT_r=\{w\in \bC:|w|=r\}$. The Hardy space $H^2(\bT_r)$ is a closed subspace of $L^2(\bT_r)$. $L^2(\bT_r)$ consists of bi-infinite sequences indexed by $\bZ$, i.e., $f(w)=\sum_{i\in \bZ}a_i w^i$, and $H^2(\bT_r)$ consists of infinite sequences indexed by $\bZ_{\geq 0}$, i.e. $f(w)=\sum_{i\in \bZ_{\geq 0}} a_i w^i$. The norm is given by $\|f(z)\|_{L^2(\bT_r)}=\sum_{i\in \bZ} |a_i|^2 r^2$. We denote the projection from $L^2(\bT_r)$ to $H^2(\bT_r)$ by $\pi$. The Toeplitz operator $[T_{ij}]_{i,j=0}^\infty$ can 
be identified with the operator $\pi\circ T(w)$ on the Hardy space $H^2(\bT_r)$, where $T(w)$ is the multiplication operator. $\pi\circ T(w)$ is a bounded operator,
\begin{align*}
\|\pi\circ T(w)\|_{H^2(\bT_r)\rightarrow H^2(\bT_r)}
\leq \sup_{w\in \bT_r}|T(w)|^2.
\end{align*} 
Therefore for $z$ large enough, $z-T$ is invertible. $(z-T)^{-1}$ is well-defined as
\begin{align*}
(z-T)^{-1}=\frac{1}{z}+\frac{T}{z^2}+\frac{T^2}{z^3}+\cdots.
\end{align*}
Fix $z$ large enough, we can write $\ln(z-T(w))$ as a power series of $w$,
\begin{align*}
\ln(z-T(w))=\ln(z)-\sum_{i\geq 1}\frac{T(w)^i}{i z^i}\eqd a_0+a_{+}(w)+a_{-}(w),
\end{align*}
where $a_0$ is the constant term,
\begin{align}\label{def:a0}
a_0=[w^0]\ln(z-T(w))=\frac{1}{2\pi \ri}\int_{|w|=1+\varepsilon/2}\ln(z-T(w))\frac{\rd w}{w},
\end{align}
$a_{+}(w)$ collects terms with positive degrees, and $a_{-}(w)$ collects terms with negative degrees. We notice that 
\begin{align*}
\pi\circ e^{a_-(w)}\circ\pi = \pi\circ e^{a_-(w)},\quad \pi\circ e^{-a_-(w)}\circ\pi = \pi\circ e^{-a_-(w)},
\end{align*}
where $e^{a_-(w)}$ and $e^{-a_-(w)}$ are multiplication operators on $L^2(\bT_r)$. With those relations, the inverse of $z-\pi \circ T(w)$ is explicitly given by
\begin{align*}
(z-\pi\circ T(w))^{-1}=(\pi\circ (z-T(w)))^{-1}
=(\pi\circ e^{a_0}e^{a_+(w)}e^{a_-(w)})^{-1}
=e^{-a_0}e^{-a_+(w)}\circ \pi \circ e^{-a_-(w)}.
\end{align*}
Therefore,
\begin{align*}
(z-T)^{-1}_{00}=\langle 1, (z-\pi\circ T(w))^{-1} 1\rangle
=\langle 1, e^{-a_0}e^{-a_+(w)}\circ \pi \circ e^{-a_-(w)}1\rangle
=e^{-a_0},
\end{align*}
and the claim \eqref{e:invertT} follows by combining the above expression with \eqref{def:a0}.
\end{proof}

%We define $q_i(M_N)=1$ for $i\in \bZ_{>0}$, $q_i(M_N)=0$ for $i=0$, 

Fix a Jack generating function $F_N(\bmp;\theta)$, satisfying Assumption \ref{asup:infinite}.
We define for $i\in \bZ_{>0}$,
\begin{align*}
q_i(F_N)=\frac{1}{N}\frac{i}{\theta}\left.\frac{\del\ln F_N(\bmp;\theta)}{\del p_{i}}\right|_{\bmp=1^N},
\end{align*}
for $i=0$, $q_i(F_N)=0$, and for $i\in \bZ_{<0}$, $q_i(F_N)=1$. For $i,j\in \bZ_{>0}$, we define
\begin{align*}
q_{i,j}(F_N)=\left.\frac{\del^2\ln F_N(\bmp;\theta)}{\del p_{i}\del p_j}\right|_{\bmp=1^N}.
\end{align*}

We define the infinite Toeplitz matrix $[(T_{F_N})_{ij}]_{i,j=0}^\infty$ such that $(T_{F_N})_{ij}=q_{i-j}(F_N)$.
%$(T_{F_N})_{ij}=1$ for $i<j$, $(T_{F_N})_{ij}=0$ for $i=j$, and 
%\begin{align}\label{e:defTMN}
%(T_{F_N})_{ij}=(i-j)q_{i-j}(F_N)/\theta
%\end{align}
% for $i>j$. 
The symbol of $[(T_{F_N})_{ij}]_{i,j=1}^\infty$ is given by
 \begin{align}\label{def:TMN}
 T_{F_N}(w)=\frac{w}{\theta}U_{F_N}(w)+\sum_{i\geq 1}\frac{1}{w^i},\quad U_{F_N}(w)=\sum_{i\geq 1}\left(\frac{i}{N}\left.\frac{\del\ln F_N(\bmp;\theta)}{\del p_{i}}\right|_{\bmp=1^N}\right)w^{i-1}.
 \end{align}
 Thanks to Proposition \ref{p:summable}, $U_{F_N}(w)$ converges absolutely on $\{w\in \bC: |w|<1+\varepsilon\}$. Thus we can rewrite it as
 \begin{align*}
 U_{F_N}(w)=\sum_{k\geq 1}\frac{\del_w^{k-1}U_{F_N}(1)}{(k-1)!}(w-1)^{k-1},
 \end{align*}
which converges uniformly on $\{w\in \bC: |w-1|< \varepsilon\}$. We notice that by Assumption \ref{a:LLN}
 \begin{align*}
 \del_w^{k-1}U_{F_N}(1)
 =\sum_{i\geq 1}k!{i\choose k}\left(\frac{1}{N}\left.\frac{\del\ln F_N(\bmp;\theta)}{\del p_{i}}\right|_{\bmp=1^N}\right)\rightarrow \fc_k,
 \end{align*}
as $N\rightarrow \infty$. Thus as formal power series, the coefficients converge
 \begin{align}\begin{split}\label{e:UMNlimit}
  \lim_{N\rightarrow \infty}U_{F_N}(w)&=\sum_{k\geq 1}\frac{c_k}{(k-1)!}(w-1)^{k-1}=U_\mu(w),\\
  \lim_{N\rightarrow \infty}T_{F_N}(w)&=T_\mu(w)\deq \frac{w}{\theta}U_\mu(w)+\sum_{i\geq 1}\frac{1}{w^i},
\end{split} \end{align}
 where $U_\mu(w)$ is defined in \eqref{eqn:Frho}.
 
 We define the function,
 \begin{align*}
 V_{F_N}(z,w)=\sum_{i,j\geq 1}ij q_{i,j}(F_N)z^{i-1}w^{j-1}.
 \end{align*}
 Thanks to Proposition \ref{p:summable}, $V_{F_N}(z,w)$ converges absolutely on $\{(z,w)\in \bC: |z|,|w|<1+\varepsilon\}$. Thus we can rewrite it as
 \begin{align*}
 V_{F_N}(z,w)=\sum_{k,l\geq 1}\frac{\del_z^{k-1}\del_w^{l-1}V_{F_N}(1,1)}{(k-1)!(l-1)!}(z-1)^{k-1}(w-1)^{l-1},
 \end{align*}
 which converges uniformly on $\{(z,w)\in \bC: |z-1|,|w-1|<\varepsilon\}$.
 We notice that by Assumption \ref{a:CLT}
 \begin{align*}
\del_z^{k-1}\del_w^{l-1}V_{F_N}(1,1)
 =\sum_{i\geq 1}k!l!{i\choose k}{j\choose l}q_{i,j}(F_N)\rightarrow \fd_{k,l},
 \end{align*}
as $N\rightarrow \infty$. Thus as a formal power series, we have
 \begin{align}\label{e:VMNlimit}
  \lim_{N\rightarrow \infty}V_{F_N}(z,w)=\sum_{k,l\geq 1}\frac{d_{k,l}}{(k-1)!(l-1)!}(z-1)^{k-1}(w-1)^{l-1}=V_\mu(z,w),
 \end{align}
 where $V_\mu(z,w)$ is defined in \eqref{eqn:UV}.

\subsection{Technical lemmas}\label{s:Tl}
In Section \ref{s:Tl}, we show that the lefthand side of \eqref{e:momentPP} decomposes into a finite sum of product of terms in the form
\begin{align}\label{e:dptermscopy}
\sum_{i_1,i_2,\cdots, i_s\geq 1}{i_1\choose k_1}{i_2\choose k_2}\cdots {i_s\choose k_s} \left.\frac{\del^s \ln F_N(\bmp;\theta)}{\del p_{i_1}\del p_{i_2}\cdots \del p_{i_s}}\right|_{\bmp=1^N},
\end{align}
whose asymptotic behaviors are our Assumptions \ref{a:LLN} and \ref{a:CLT}.

To precisely state the results, we need to introduce some notations. Given a positive integer $r$, and indices $k_1,k_2,\cdots, k_r\geq 1$. We view $j_1,j_2,\cdots,j_{k_1+k_2+\cdots+k_r}$ as formal symbols and define 
\begin{itemize}
\item Let $\cK=(k_1,k_2,\cdots, k_r)$, we define an index function $\sigma_\cK: \qq{k_1+k_2+\cdots+k_r}\mapsto \qq{r}$, $\sigma(t)=s$ where $k_1+k_2+\cdots+k_{s-1}<t\leq k_1+k_2+\cdots+k_{s}$.
\item A sign pattern is an element $\cS=(\cS_1,\cS_2,\cdots, \cS_{k_1+k_2+\cdots+k_r})$ in $\cS(\cK)=\{-,0,+\}^{k_1+k_2+\cdots+k_r}$. 
\item Given a sign pattern $\cS\in \cS(\cK)$, we define $\cD(\cS)$ the set of pairings $\cD=\{(s_1,t_1), (s_2,t_2), \cdots, (s_v,t_v)\}$, such that $|\cD|=v\geq 0$, $1\leq s_u<t_u\leq k_1+k_2+\cdots+k_r$, $\cS_{s_u}=-$, $\cS_{t_u}=+$ for $1\leq u\leq v$, and $s_1, s_2,\cdots, s_v, t_1,t_2,\cdots, t_v$ are distinct.  We also identify $\cD$ with the set $\{s_1, s_2,\cdots, s_v, t_1,t_2,\cdots, t_v\}$.
\item Given a sign pattern $\cS\in \cS(\cK)$, and a  pairing $\cD\in \cD(\cS)$, we define the set of partitions $\cP(\cS, \cD)=\cP(\{t: \cS_t=-, t\not\in \cD\})$.
%\item Given a sign pattern $\cS$, a pairing $\cD\in \cD(\cS)$, and a partition $\cP\in \cP(\cS,\cD)$, we denote $\cE_0=\{1\leq t\leq k_1+k_2+\cdots+k_r: \cS_t=+\}$, $\cE_1=\{1\leq t\leq k: \cS_t>0,t\notin\cP\}$ and $\cE_2=\{1\leq t\leq k: \cS_t<0,t\notin\cP\}$.

\item Given a sign pattern $\cS\in \cS(\cK)$ and a pairing $\cD\in \cD(\cS)$, we define the set of conditions $\cC(\cK, \cS, \cD)$ over $\{j_t\in \bZ_{>0}: \cS_t=+ \text{ or }\cS_t=-, t\not\in \cD\}$: for $1\leq t\leq k_1+k_2+\cdots+k_r$ with $\cS_t=+$, if $t=k_1+k_2+\cdots +k_{s-1}+1$ for some $1\leq s\leq r$,
\begin{align}\label{e:cond1}
j_t+\sum_{v: t<v,\atop\cS_v=+, v\notin \cD} j_v+\sum_{v:u<t<v\atop (u,v)\in \cD}j_v=\sum_{v: t<v,\atop \cS_v=-, v\notin\cD}j_v,
\end{align}
otherwise if $t\not= k_1+k_2+\cdots +k_{s-1}+1$ for any $1\leq s\leq r$,
\begin{align}\label{e:cond2}
j_t+\sum_{v: t<v,\atop\cS_v=+, v\notin \cD} j_v+\sum_{v:u<t<v\atop (u,v)\in \cD}j_v\leq\sum_{v: t<v,\atop \cS_v=-, v\notin\cD}j_v.
\end{align}

\item Given a triple $(\cS, \cD, \cP)$, where $\cS\in \cS(\cK)$ is a sign pattern, $\cD\in \cD(\cS)$ is a pairing, and $\cP\in \cP(\cS, \cD)$ is a partition, we  define a multi-edge hypergraph $\cG(\cK,\cS,\cD, \cP)$ on the vertex set $\{1,2,\cdots,r\}$. For any pair $(s,t)\in \cD$, we add an edge  $\{{\sigma_\cK(s)},{\sigma_\cK(t)}\}$. For any set $V\in \cP$, we add an edge $\{{\sigma_\cK(t)}: t\in V\}$. The connected components of $\cG(\cK,\cS, \cD, \cP)$ induce a partition of $\{1,2,\cdots,r\}$.

\end{itemize}

\begin{theorem}\label{t:decompose}
Fix a Jack generating function $F_N(\bmp;\theta)$, satisfying Assumption \ref{asup:infinite}. For any positive integer $r$ and indices $k_1,k_2,\cdots,k_r\geq 1$, 
\begin{align*}
\left.I^{(k_1)}I^{(k_2)}\cdots I^{(k_r)}F_N\right|_{\bmp=1^N}
\end{align*}
is a sum over terms $\cE(\cK,\cS,\cD, \cP)$
\begin{align}\label{e:sum}
N^{|\{t: \cS_t=+, t\not\in \cD\}|}\sum_{\{j_t\in \bZ_{>0}: \cS_t=-,t\not\in \cD\}}Q_{\cK,\cS, \cD}((j_t: \cS_t=-, t\not\in \cD))\prod_{V\in \cP}\left.\left(\prod_{t\in V} \del p_{j_t}\right)(\ln F_N)\right|_{\bmp=1^N},
\end{align}
where $\cK=(k_1,k_2,\cdots,k_r)$, $\cS\in \cS(\cK)$ is a sign pattern, $\cD\in \cD(\cS)$ is a pairing, $\cP\in \cP(\cS, \cD)$ is a partition,
and $Q_{\cK,\cS, \cD}$ is a polynomial over $\{j_t: \cS_t=-, t\not\in \cD\}$, with %. If $Q_{\cK,\cS, \cD}\not\equiv 0$, then  
\begin{align}\label{e:degQKSD}
\deg(Q_{\cK,\cS, \cD})=k_1+k_2+\cdots+k_r-r.
\end{align}
\end{theorem}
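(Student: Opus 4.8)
The plan is to compute $I^{(k_1)}I^{(k_2)}\cdots I^{(k_r)}F_N$ by expanding each operator $I^{(k)}$ via the explicit formula \eqref{e:Ik} as a sum of products of matrix entries $L_{i_1 i_2}$ of the Nazarov--Sklyanin matrix \eqref{e:Lmat}, and then track how the creation operators $p_k$ and the annihilation operators $p_k^* = \tfrac{k}{\theta}\partial/\partial p_k$ act on $F_N$ at the point $\bmp = 1^N$. The key structural fact is that $F_N(\bmp;\theta)|_{\bmp=1^N}=1$, so every time a derivative $\partial/\partial p_k$ lands on $F_N$ it produces a factor $\partial \ln F_N/\partial p_k|_{\bmp=1^N}$ times $F_N$ (times lower-order derivative terms coming from the product rule, when several annihilation operators act before evaluation); thus repeated application of the Leibniz rule converts the whole expression into a sum of products of the ``cumulant'' terms $(\prod_{t\in V}\partial p_{j_t})(\ln F_N)|_{\bmp=1^N}$ indexed by a set partition $\cP$. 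The creation operators $p_k$ on the other hand act by multiplication, and when later hit by an annihilation operator $p_j^*$ they contribute $\frac{k}{\theta}\delta_{jk}$ rather than a derivative of $F_N$; these are the paired positions recorded by $\cD$, and they introduce the factor $k/\theta$. Finally, positions where a creation operator is \emph{not} subsequently annihilated survive as multiplication operators acting on $F_N|_{\bmp=1^N}=1$, and since $p_k(1^N)=N$ each such position produces a factor of $N$ --- this is exactly the power $N^{|\{t:\cS_t=+,\,t\notin\cD\}|}$ in \eqref{e:sum}.

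The steps, in order, would be: (i) write $I^{(k_j)} = \sum_{0=i_0,\, i_1,\dots,i_{k_j-1},\, i_{k_j}=0} L_{i_0 i_1}L_{i_1 i_2}\cdots L_{i_{k_j-1} i_{k_j}}$ and assign to each of the $k_1+\cdots+k_r$ matrix-entry factors a formal symbol $t \in \qq{k_1+\cdots+k_r}$, with $\sigma_\cK$ recording which block (which $I^{(k_j)}$) the symbol belongs to; (ii) classify each entry $L_{i_{t-1}i_t} = i_t(\theta^{-1}-1)\delta_{i_{t-1}i_t} + p_{i_t - i_{t-1}}$ by its sign $\cS_t\in\{-,0,+\}$ according to whether $i_t-i_{t-1}<0$ (an annihilation operator $p^*$), $=0$ (the diagonal scalar, or $p_0=0$ which kills the term), or $>0$ (a creation operator $p$) --- using the convention $p_{-k}=p_k^*$ --- and set $j_t = |i_t - i_{t-1}|$; (iii) move all annihilation operators to the right through the creation operators using $[p_k^*, p_l] = \frac{k}{\theta}\delta_{kl}$, which generates either a contraction (pairing a $-$ with an earlier-or-later $+$, recorded in $\cD$, with weight $\frac{j_t}{\theta}$) or leaves the annihilation operator to eventually act on $F_N$; (iv) apply all surviving annihilation operators to $F_N$ and use Leibniz to express the result as a sum over set partitions $\cP$ of the unpaired negative positions into the product of higher $\partial$-derivatives of $\ln F_N$, evaluated at $1^N$; (v) evaluate the surviving unpaired creation operators at $\bmp=1^N$, each giving a factor $N$; (vi) collect the remaining scalar data --- the diagonal contributions $j_t(\theta^{-1}-1)$, the contraction weights $j_t/\theta$, and the combinatorial factors from the Leibniz expansion --- into a polynomial $Q_{\cK,\cS,\cD}$ in the free indices $\{j_t : \cS_t=-,\, t\notin\cD\}$; (vii) read off the linear constraints among the indices: since each $i_t$ is determined by a telescoping sum of the signed increments $j_v$ and the boundary conditions $i_0 = i_{k_j}=0$ force $\sum_{\text{block}}(\pm j_v)=0$ at block ends (giving the equalities \eqref{e:cond1}) and the nonnegativity $i_t\geq 0$ gives the inequalities \eqref{e:cond2}; (viii) verify the degree count \eqref{e:degQKSD} by bookkeeping: each diagonal factor contributes degree $1$, each contraction weight contributes degree $1$, and the Leibniz combinatorial coefficients are degree $0$, while the total number of non-creation symbols among $k_1+\cdots+k_r$ symbols, after accounting that the $r$ block-leading creation-type positions are forced and the $N$-power absorbs the unpaired creations, works out to $k_1+\cdots+k_r - r$; finally (ix) identify the connected-components partition of $\{1,\dots,r\}$ induced by $\cD$ and $\cP$ with the graph $\cG(\cK,\cS,\cD,\cP)$, which is needed only for the later bookkeeping of $N$-degrees in Sections \ref{subs:LLN}--\ref{subs:mulCLT}, not for the statement itself.

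The main obstacle is step (iii)--(vii): carefully ordering the non-commuting operators and bookkeeping \emph{exactly which} $p_k$'s get contracted against \emph{which} $p_j^*$'s, in a way that is consistent with the telescoping structure of the indices $i_0,i_1,\dots$ and produces precisely the constraint sets \eqref{e:cond1}--\eqref{e:cond2}. The subtlety is that an annihilation operator at position $t$ can be contracted against a creation operator at \emph{any} position (earlier in the word it sees a creation operator as a ``bra'' side, later as a ``ket'' side), so one must set up the normal-ordering carefully --- e.g. by pushing annihilation operators to the right one at a time and splitting into ``contract now'' versus ``commute past'' at each step --- and then argue that the resulting sum reorganizes into the stated sum over $(\cS,\cD,\cP)$ with the pairing constraints $s_u < t_u$, $\cS_{s_u}=-$, $\cS_{t_u}=+$. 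I expect the degree claim \eqref{e:degQKSD} to follow cleanly once the normal-ordering is set up, essentially by counting symbols, but getting the constraint inequalities in exactly the asymmetric form of \eqref{e:cond1} (equality at block heads) versus \eqref{e:cond2} (inequality elsewhere) will require care about the boundary index being $0$ at the start of each block $I^{(k_j)}$.
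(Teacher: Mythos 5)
Your plan follows the same route as the paper: expand each $I^{(k)}$ via the matrix-entry formula \eqref{e:Ik}, classify each factor by its sign, normal-order annihilation operators past creation operators to produce the pairings $\cD$, apply the surviving annihilation operators to $F_N$ via the Leibniz rule to produce the set partition $\cP$ and the cumulant factors, and evaluate the unpaired creation operators at $\bmp=1^N$ to produce $N^{|\{t:\cS_t=+,\,t\notin\cD\}|}$. Steps (i)--(v), (vii), and (ix) correctly match the structure of the paper's Claim~\ref{c:firststep}.

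There is, however, a genuine gap in steps (vi) and (viii). After normal-ordering and Leibniz, the scalar data assemble into a polynomial $\tilde Q_{\cS,\cD}$ (the paper's \eqref{e:tQSD}) that depends on \emph{all} of $\{j_t : \cS_t=+\}$ as well as $\{j_t : \cS_t=-,\, t\notin\cD\}$ --- in particular the diagonal factor at a $0$-position $t$ equals $i_t(\theta^{-1}-1)$, and the running index $i_t$ is a signed partial sum of all later $j_v$, including $+$-positions. To obtain the polynomial $Q_{\cK,\cS,\cD}$ in only the free $-$-indices, as required by \eqref{e:sum}, one must explicitly \emph{sum} $\tilde Q_{\cS,\cD}$ over $\{j_t\in\bZ_{>0}:\cS_t=+\}$ subject to the constraints $\cC(\cK,\cS,\cD)$, and then prove that this constrained sum is again a polynomial in the remaining indices. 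Your proposal never makes this step visible: you ``collect the remaining scalar data into a polynomial $Q_{\cK,\cS,\cD}$ in the free indices,'' which presupposes the conclusion. Proving it requires the elementary but essential Lemma~\ref{lem:basic1} (that $\sum_{x_1,\dots,x_k\geq 1,\ x_1+\cdots+x_k\leq y}P(x_1,\dots,x_k)$ is a polynomial in $y$ of degree $\deg P+k$, with the analogous equality version of degree $\deg P+k-1$), applied block-by-block through an induction, which is exactly the paper's Claim~\ref{c:existtQ}. This is also where your degree bookkeeping fails: $\deg\tilde Q_{\cS,\cD}=|\{t:\cS_t=0\}|+|\{t:\cS_t=-\}|$, which is not $k_1+\cdots+k_r-r$ in general. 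The degree $k_1+\cdots+k_r-r$ for $Q_{\cK,\cS,\cD}$ emerges only because the constrained summation over the $+$-indices raises the degree --- each inequality-constrained sum by $1$, each block-head equality sum by the length of the leading run of $+$'s minus one --- for a net gain of $|\{t:\cS_t=+\}|-r$, giving $\deg Q = |\{t:\cS_t=0\}|+|\{t:\cS_t=-\}|+|\{t:\cS_t=+\}|-r = k_1+\cdots+k_r-r$. Without the summation, the ``$-r$'' has no source.

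Two smaller points. The ``subtlety'' you raise about contracting an annihilation operator against a creation operator in either direction is overcautious: in the one-pass normal-ordering of the operator word, contractions occur only between a $-$-position $s$ and a \emph{later} $+$-position $t>s$, which is precisely the definition of $\cD(\cS)$. Also, the paper proves Claim~\ref{c:firststep} for $r=1$ and asserts the general case by the same argument; the one thing to check for $r>1$ is that each block $I^{(k_j)}$ independently resets the boundary index to $0$ at both ends, which is what produces the $r$ equality constraints \eqref{e:cond1} at block heads (and hence the $-r$ in the degree count above).
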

The proof of Theorem \ref{t:decompose} follows from Claim \ref{c:firststep} and \ref{c:existtQ}.

\begin{claim}\label{c:firststep}
Fix a Jack generating function $F_N(\bmp;\theta)$, satisfying Assumption \ref{asup:infinite}. For any positive integer $r$ and indices $k_1,k_2,\cdots,k_r\geq 1$, let $\cK=(k_1,k_2,\cdots, k_r)$, then
\begin{align*}
\frac{1}{F_N}I^{(k_1)}I^{(k_2)}\cdots I^{(k_r)}F_N
\end{align*}
is a sum over triples $(\cS, \cD, \cP)$, where $\cS\in \cS(\cK)$ is a sign pattern, $\cD\in \cD(\cS)$ is a pairing, and $\cP\in \cP(\cS, \cD)$ is a partition,
\begin{align*}
\sum_{\cC(\cK,\cS, \cD)}\tilde Q_{\cS, \cD}((j_t: \cS_t=+ \text{ or } \cS_t=-, t\not\in \cD))\prod_{t: \cS_t=+\atop t\not\in \cD}p_{j_t}\prod_{V\in \cP}\left(\prod_{t\in V}\del p_{j_t}\right)(\ln F_N),
\end{align*}
where $\tilde Q_{\cS, \cD}$ is a polynomial over $\{j_t: \cS_t=+ \text{ or } \cS_t=-, t\not\in \cD\}$ given by
\begin{align}\label{e:tQSD}
\tilde Q_{\cS, \cD}((j_t: \cS_t=+ \text{ or } \cS_t=-, t\not\in \cD))
=\prod_{t: \cS_t=0}(1-\theta^{-1})\left(\sum_{v: t<v,\atop v\notin \cD} \cS_vj_v+\sum_{v:u<t<v\atop (u,v)\in \cD}\cS_vj_v\right)\prod_{t:\cS_t=-}\frac{j_t}{\theta}.
\end{align}
\end{claim}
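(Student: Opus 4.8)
The plan is to expand the composition $I^{(k_1)}I^{(k_2)}\cdots I^{(k_r)}$ completely using \eqref{e:Ik}, divide by $F_N$ by conjugating the resulting operators, and then organize the Leibniz expansion into the asserted sum over triples $(\cS,\cD,\cP)$. First I would expand: by \eqref{e:Ik} the operator $I^{(k_1)}\cdots I^{(k_r)}$ is a sum over all sequences of intermediate indices $n_0=0,n_1,n_2,\dots$ of words of total length $M:=k_1+\cdots+k_r$ in the matrix entries $L_{n_{a-1}n_a}$, where the index is forced to return to $0$ at the two ends of each block $I^{(k_s)}$ and all intermediate indices are $\geq 0$. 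Recalling $L_{ij}=j(\theta^{-1}-1)\delta_{ij}+p_{j-i}$ and $p_{-m}=p_m^*=\frac m\theta\frac{\del}{\del p_m}$, each entry is either multiplication by a power sum $p_m$ (a ``$+$'' step, $n_a=n_{a-1}+m$), the scalar $n_a(\theta^{-1}-1)$ (a ``$0$'' step, $n_a=n_{a-1}$), or $\frac m\theta\frac{\del}{\del p_m}$ (a ``$-$'' step, $n_a=n_{a-1}-m$). Recording the up/stay/down pattern produces a sign pattern $\cS\in\cS(\cK)$; the remaining data is the collection $(j_t)_{\cS_t\neq 0}$ of step sizes in $\bZ_{>0}$, with the index sequence recovered as partial sums. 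The requirements that all indices be $\geq 0$ and that each block return to $0$ translate, after eliminating the equalities, into exactly the conditions \eqref{e:cond1}--\eqref{e:cond2} with $\cD=\emptyset$; here one uses that it suffices to impose index-nonnegativity at the ``$+$'' positions, because within any descending run of ``$-$'' and ``$0$'' steps the index is non-increasing and is terminated by a ``$+$'' step or a block end, where its value is already pinned down.

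Next I would handle the division by $F_N$. Writing $G:=\ln F_N$, the identities $e^{-G}p_j e^{G}=p_j$ and $e^{-G}\frac{\del}{\del p_j}e^{G}=\frac{\del}{\del p_j}+\bigl(\frac{\del G}{\del p_j}\bigr)$ give, for any word of multiplications, scalars and derivations $X_1\cdots X_M$, the identity $\frac1{F_N}(X_1\cdots X_M)(F_N)=(\tilde X_1\cdots \tilde X_M)(1)$, where $\tilde X_t=X_t$ at ``$+$'' and ``$0$'' positions and $\tilde X_t=\frac{\del}{\del p_{j_t}}+\bigl(\frac{\del G}{\del p_{j_t}}\bigr)$ at ``$-$'' positions. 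I would then expand $(\tilde X_1\cdots \tilde X_M)(1)$ — purely a combinatorial fact about such words — by induction on $M$, peeling off the leftmost factor (which is applied last): a multiplication is pulled to the front, identities disappear, and at a ``$-$'' position the two summands act on the already-expanded remainder, the ``function'' part $\bigl(\frac{\del G}{\del p_{j_t}}\bigr)$ opening a new cluster $\{t\}$ while the derivative part $\frac{\del}{\del p_{j_t}}$ either hits one surviving factor $p_{j_v}$ with $v>t$ (producing $\delta_{j_t,j_v}$, forcing $j_t=j_v$ and removing $p_{j_v}$) or merges $t$ into a cluster already built from positions $>t$. One checks this assembles each triple $(\cS,\cD,\cP)$ — $\cD$ the set of contracted ``$-$''/``$+$'' pairs, $\cP$ a partition of the uncontracted ``$-$'' positions — with multiplicity exactly one, since a cluster $V$ forms only with $\max V$ as its seed (that element being applied first within $V$). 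Hence $(\tilde X_1\cdots \tilde X_M)(1)$ equals $\sum_{\cD,\cP}\bigl(\prod_{(u,v)\in\cD}\delta_{j_u,j_v}\bigr)\prod_{t:\cS_t=+,\,t\notin\cD}p_{j_t}\prod_{V\in\cP}\bigl(\prod_{t\in V}\frac{\del}{\del p_{j_t}}\bigr)(\ln F_N)$.

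It then remains to carry the scalar prefactors of Step 1 through this expansion and impose the relations $j_u=j_v$ for $(u,v)\in\cD$. The ``$-$''-position prefactors $\frac{j_t}\theta$ simply multiply through, giving $\prod_{t:\cS_t=-}\frac{j_t}\theta$; for a ``$0$'' position $t$ the scalar $n_t(\theta^{-1}-1)$ is rewritten, using the block return-to-zero equalities, as $(1-\theta^{-1})\sum_{v>t}\cS_vj_v$, and the substitution $j_u=j_v$ on $\cD$ turns this into $(1-\theta^{-1})\bigl(\sum_{v:\,t<v,\,v\notin\cD}\cS_vj_v+\sum_{v:\,u<t<v,\,(u,v)\in\cD}\cS_vj_v\bigr)$, the factor appearing in \eqref{e:tQSD}. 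The same substitution converts the raw constraints of Step 1 into the conditions $\cC(\cK,\cS,\cD)$ of \eqref{e:cond1}--\eqref{e:cond2}. Assembling the pieces gives the asserted decomposition with coefficient $\tilde Q_{\cS,\cD}$ as in \eqref{e:tQSD}; absolute convergence of the (infinite) sum over $\cC(\cK,\cS,\cD)$ near $\bmp=1^N$ is provided by Assumption \ref{asup:infinite} together with Proposition \ref{p:summable}.

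The algebra is routine; the delicate points are in Steps 2--3 and in the bookkeeping. First, one must verify that the cumulant-type expansion organizes into the triples $(\cS,\cD,\cP)$ with no over- or under-counting — the seed/merge description and the application-order argument are the crux. Second, and most fiddly, one must rewrite both the ``$0$''-position scalars and the index-nonnegativity/return conditions in terms of the reduced variable set $\{j_t:\cS_t=+\text{ or }(\cS_t=-,\,t\notin\cD)\}$; this uses the block return-to-zero equalities, so one must track carefully that those equalities hold throughout the relevant summation set before invoking them. I expect this last reconciliation to be where most of the work sits.
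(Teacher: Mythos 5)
Your proposal is correct, and the overall architecture — expand $I^{(k_1)}\cdots I^{(k_r)}$ into words in the $L$-entries, classify by sign pattern, pass to step sizes $j_t$ and impose the return-to-zero and nonnegativity constraints only at ``$+$'' positions — is the same as the paper's. Where you diverge is purely in the bookkeeping of the key combinatorial step: the paper first normal-orders the $p,p^*$ word (the commutators $[p^*_j,p_j]=j/\theta$ produce the pairings $\cD$) and only afterwards applies the exponential rule $\prod\del_{p_{j_t}}F_N=F_N\sum_{\cP}\prod_{V\in\cP}(\prod_{t\in V}\del_{p_{j_t}})(\ln F_N)$ to produce the partitions $\cP$; you instead conjugate by $F_N=e^G$ at the outset, turning $p^*_{j_t}$ into $\tfrac{j_t}{\theta}(\del_{p_{j_t}}+\del_{p_{j_t}}G)$, and then extract $\cD$ and $\cP$ in a single left-to-right Leibniz expansion applied to $1$. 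Both are standard equivalent Wick-type bookkeepings; the paper's version modularizes the two structures cleanly, while yours merges them and therefore needs the ``$\max V$ is the unique seed of its block'' argument for multiplicity one — a small extra step, which you do supply. Two presentational points worth fixing if you write this up: (i) the $j_t/\theta$ prefactor is part of $\tilde X_t$ at ``$-$'' positions and should be carried, not suppressed and reinstated in Step~3; (ii) the induction in Step~2 should be stated for arbitrary words of multiplications, scalars, and operators $\del_{p_j}+G_j$, decoupled from the block constraints on $(j_t)$, since those constraints live in Step~1. With those clarifications the argument matches the paper's conclusion exactly.
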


\begin{claim}\label{c:existtQ}
Fix a Jack generating function $F_N(\bmp;\theta)$, satisfying Assumption \ref{asup:infinite}. For any positive integer $r$ and indices $k_1,k_2,\cdots,k_r\geq 1$, let $\cK=(k_1,k_2,\cdots, k_r)$, $\cS\in \cS(\cK)$ a sign pattern, $\cD\in \cD(\cS)$ a pairing, and $\cP\in \cP(\cS, \cD)$ a partition. Then
there exists a polynomial $Q_{\cK,\cS,\cD}$ over $\{j_t: \cS_t=-, t\not\in \cD \}$, such that on $\{j_t\in \bZ_{>0}: \cS_t=-, t\not\in \cD \}$,
\begin{align}\label{e:existtQ}
\sum_{\{j_t\in \bZ_{>0}: \cS_t=+\} \atop\text{satisfies } \cC(\cK, \cS,\cD)}\tilde Q_{\cS, \cD}((j_t: \cS_t=+ \text{ }\mathrm{ or }\text{ } \cS_t=-, t\not\in \cD))=Q_{\cK, \cS,\cD}((j_t: \cS_t=-, t\not\in \cD )),
\end{align}
where the polynomial $\tilde Q_{\cS, \cD}$ is as defined in \eqref{e:tQSD}, and %. If $Q_{\cK,\cS, \cD}\not\equiv0$
\begin{align*}
\deg(Q_{\cK,\cS, \cD})=k_1+k_2+\cdots+k_r-r.
\end{align*}
\end{claim}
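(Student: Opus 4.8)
The key observation is that, given a sign pattern $\cS$, a pairing $\cD$, and fixed values $\{j_t: \cS_t=-, t\not\in\cD\}$, the constraint system $\cC(\cK,\cS,\cD)$ is a system of linear equalities and inequalities on the remaining variables $\{j_t: \cS_t=+\}$. The plan is to analyze this polytope of solutions and show that summing the polynomial $\tilde Q_{\cS,\cD}$ over its lattice points produces a polynomial in the free variables of the expected degree. First I would note that $\tilde Q_{\cS,\cD}$ as given in \eqref{e:tQSD} is a polynomial in \emph{all} of $\{j_t: \cS_t\neq 0,\ t\not\in\cD\}$ (the factors indexed by $\cS_t=0$ only contribute linear forms in these), so I may write $\tilde Q_{\cS,\cD}$ as a polynomial whose coefficients are polynomials in the free variables, with monomials in the variables $\{j_t: \cS_t=+\}$. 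Reading off \eqref{e:cond1} and \eqref{e:cond2}, the $\cS_t=+$ variables are partitioned according to which ``block'' $s\in\qq{r}$ they belong to (via $\sigma_\cK$), and within each block the equality \eqref{e:cond1} at the block-start index fixes the total sum of the $+$ variables in that block in terms of the $-$ variables, while the inequalities \eqref{e:cond2} at the interior indices say that the partial sums (read from the right) of the $+$ variables in that block are bounded by prescribed affine functions of the other variables. Thus, block by block, we are summing a polynomial over the lattice points of a chain-type polytope with one linear equality and a nested family of inequalities.

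The main technical input I would invoke is the classical fact (Faulhaber / Ehrhart-type summation) that if $R(x_1,\dots,x_m)$ is a polynomial of degree $e$, then $\sum_{x_1,\dots,x_m} R(x_1,\dots,x_m)$, where the sum runs over nonnegative integers with $x_1+\cdots+x_m = M$ and additional inequality constraints whose right-hand sides are affine in an external parameter vector $\bm{a}$, is a polynomial in $(M,\bm{a})$ of degree $e+m-1$ (the sum over a chain polytope of dimension $m-1$ raises the degree by $m-1$). Applying this to each block $s$ separately: block $s$ contains $k_s$ of the variables $j_t$ in total, of which some carry sign $+$, some carry sign $-$ but lie outside $\cD$ (these are among the free variables), some carry sign $-$ but lie in $\cD$, and some carry sign $0$. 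Let me write $p_s$ for the number of $+$ indices in block $s$; the summation over these $p_s$ variables is over a polytope of dimension $p_s-1$ (one equality cuts it down). Summing over the $+$ variables in block $s$ therefore raises the degree of the relevant part of $\tilde Q_{\cS,\cD}$ by exactly $p_s-1$. Doing this for all blocks and tracking the total degree is the bookkeeping step.

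For the degree count I would proceed as follows. By \eqref{e:tQSD}, $\deg \tilde Q_{\cS,\cD}$ equals $|\{t:\cS_t=0\}| + |\{t:\cS_t=-\}|$: each $\cS_t=0$ factor contributes a linear form, and each $\cS_t=-$ factor contributes the linear monomial $j_t/\theta$ (the $\cS_t=-,\,t\not\in\cD$ ones are free variables, the $\cS_t=-,\,t\in\cD$ ones got absorbed earlier — here I would check the precise conventions from Claim \ref{c:firststep}, but the net linear contribution is as stated). Writing $z=|\{\cS_t=0\}|$, $n_-=|\{\cS_t=-\}|$, $n_+=|\{\cS_t=+\}|$, and recalling $z+n_-+n_+ = k_1+\cdots+k_r$ (every index has exactly one sign), the summation over the $n_+$ plus-variables — which splits into $r$ blocks and hence cuts $r$ equalities total, losing $r$ degrees of freedom — raises the degree by $n_+ - r$. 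Hence $\deg Q_{\cK,\cS,\cD} = (z+n_-) + (n_+ - r) = (k_1+\cdots+k_r) - r$, as claimed. The existence of $Q_{\cK,\cS,\cD}$ as an honest polynomial (not merely a quasi-polynomial) follows because all the constraint right-hand sides are affine with \emph{integer} coefficients in the free variables, so the Ehrhart quasi-polynomials involved have period $1$; I would make this precise by an induction on $n_+$, peeling off one $+$-variable at a time and using the one-dimensional Faulhaber fact $\sum_{x=0}^{M}x^d = $ (polynomial in $M$) together with the observation that partial sums of the previously-summed variables only shift the summation ranges by affine integer amounts.

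\textbf{Main obstacle.} The genuinely delicate point is not the degree arithmetic but verifying that the iterated summation really does yield a \emph{polynomial} rather than a piecewise-polynomial (quasi-polynomial) function of the free variables: one must confirm that the nested inequalities \eqref{e:cond2}, when the upper summation limits are themselves affine functions of already-summed variables, never produce fractional-part corrections. I would handle this by the peeling induction above, being careful that at each stage the variable being summed ranges over an interval $[0, A]$ with $A$ a \emph{nonnegative integer} affine combination of the remaining variables (this uses $\cS_v j_v \in \bZ$ and that the coefficients in \eqref{e:cond1}–\eqref{e:cond2} are all $\pm 1$), so that the Faulhaber polynomial evaluated at $A$ is again a polynomial in those variables with no periodicity. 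A secondary bookkeeping nuisance is correctly matching the conventions for the indices in $\cD$ between Claim \ref{c:firststep} and the statement here; this is routine but must be done carefully so that the linear forms in \eqref{e:tQSD} are summed over the correct ranges.
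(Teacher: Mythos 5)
Your proposal is correct and follows essentially the same route as the paper: the paper proves exactly your Faulhaber-type input as Lemma \ref{lem:basic1}, and then runs an induction on the index $s=1,\ldots,k_1+\cdots+k_r$ that peels off the $+$-variables (a whole block-prefix at once at each block boundary, where the equality constraint sits, and one at a time in the interior), tracking the degree via the identity $\deg Q^{(s)}_{\cK,\cS,\cD}=|\{t<s:\cS_t=+\}|+|\{t:\cS_t=0\}|+|\{t:\cS_t=-\}|-\sigma_\cK(s-1)$, which is just a running version of your count $(z+n_-)+(n_+-r)$. Your Ehrhart/chain-polytope framing and your stated worry about quasi-polynomial corrections are a cosmetic repackaging of the same content — Lemma \ref{lem:basic1} already asserts an honest polynomial because the summation limits are nonnegative integer affine forms, exactly as you observe.
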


\begin{proof}[Proof of Claim \ref{c:firststep}]
We prove the case when $r=1$, i.e. $\cK=(k)$, the general cases follow from the same argument. We recall the definition of $I^{(k)}$ from \eqref{e:Ik},
\begin{align}\label{e:Ikcopy}
I^{(k)}=\sum_{i_1,i_2,\cdots, i_{k-1}\geq 0}L_{0i_1}L_{i_1i_2}\cdots L_{i_{k-2}i_{k-1}}L_{i_{k-1}0},
\end{align} 
If $j\geq i$, $L_{ij}=\delta_{ij}j(\theta^{-1}-1)+(1-\delta_{ij})p_{j-i}$ is a multiplication operator; if $j<i$, $L_{ij}=p_{j-i}=\theta^{-1}(i-j) \del/ \del{p_{i-j}}$ is a differential operator.
We divide the summands of \eqref{e:Ik} into groups according to their sign patterns, which is an element in $\{-,0,+\}^k$. We recall the sign function $\sgn(i)=+$ if $i>0$, $\sgn(i)=-$ if $i<0$ and $\sgn(i)=0$ if $i=0$. Then \eqref{e:Ikcopy} is a sum of the following terms over all $3^k$ sign patterns $\cS=(\cS_1,\cS_2,\cdots, \cS_k)\in\{-,0,+\}^k$,
\begin{align}\label{e:signsum}
\sum_{i_1,i_2,\cdots, i_{k-1}\geq 0,\atop
(\sgn(i_1-i_0), \sgn(i_2-i_1), \cdots,\sgn(i_k-i_{k-1}))=\cS
}\prod_{1\leq t\leq k,\atop\cS_t=0}i_t(\theta^{-1}-1)\prod_{1\leq t\leq k,\atop\cS_t\neq 0}p_{i_{t}-i_{t-1}},
\end{align}
where $i_0=i_k=0$.

We denote $j_t=|i_t-i_{t-1}|$ for $1\leq t\leq k$, and rewrite the last factor of \eqref{e:signsum}
\begin{align}\label{e:lastterm}
\prod_{1\leq t\leq k,\atop\cS_t\neq 0}p_{i_{t}-i_{t-1}}
=\prod_{1\leq t\leq k,\atop\cS_t\neq 0}p_{\cS_tj_t}.
\end{align}
We recall that if $i+j\neq 0$, $p_i$ and $p_j$ commute i.e. $p_ip_j=p_jp_i$. If $i>0$, then $p_{-i}p_i=p_ip_{-i}+i/\theta$. By repeatedly using those commutative relations, we can rewrite \eqref{e:signsum} such that multiplication operators are followed by differential operators, 
\begin{align}\begin{split}\label{e:lastterm2}
\prod_{1\leq t\leq k,\atop\cS_t\neq 0}p_{\cS_tj_t}
&=\sum_{\cD\in \cD(\cS)}\prod_{(s,t)\in \cD}\frac{j_s{\bm1}_{j_s=j_t}}{\theta}\prod_{1\leq t\leq k,\atop\cS_t=+, t\notin \cD}p_{j_t}\prod_{1\leq t\leq k,\atop\cS_t=-, t\notin \cD}p_{-j_t}\\
&=\sum_{\cD\in \cD(\cS)}\prod_{(s,t)\in \cD}\frac{j_s{\bm1}_{j_s=j_t}}{\theta}
\prod_{1\leq t\leq k,\atop\cS_t=-, t\notin \cD}\frac{j_t}{\theta}
\prod_{1\leq t\leq k,\atop\cS_t=+, t\notin \cD}p_{j_t}\prod_{1\leq t\leq k,\atop\cS_t=-, t\notin \cD}\del p_{j_t},
\end{split}\end{align}
where $\cD(\cS)$ is the set of pairings, in the following form $\{(s_1,t_1), (s_2,t_2), \cdots, (s_v,t_v)\}$, where $v\geq 0$, $1\leq s_u<t_u\leq k$, $\cS_{s_u}=-$, $\cS_{t_u}=+$ for $1\leq u\leq v$, and $s_1, s_2,\cdots, s_v, t_1,t_2,\cdots, t_v$ are distinct. In this case, we also identify $\cD$ with the set $\{s_1, s_2,\cdots, s_v, t_1,t_2,\cdots, t_v\}$.

By combining \eqref{e:signsum} and \eqref{e:lastterm2} together, \eqref{e:Ikcopy} is a sum of the following terms over pairs $(\cS, \cD)$, where $\cS$ is a sign pattern and  $\cD\in \cD(\cS)$ is a pairing,
\begin{align}\begin{split}\label{e:SPterm}
\sum_{j_1,j_2,\cdots,j_k\geq 0,\atop \cS_1 j_1+\cS_2j_2+\cdots+\cS_k j_k=0}
&\prod_{1\leq t\leq k, \atop \cS_t=0}\bm1_{j_t=0} \prod_{1\leq t\leq k, \atop \cS_t\neq 0}\bm1_{j_t\geq 1}\prod_{1\leq t\leq k}\bm1_{\cS_t j_t+\cS_{t+1}j_{t+1}+\cdots+
\cS_kj_k\leq 0}\prod_{(s,t)\in \cD}{\bm1}_{j_s=j_t}\\
&\prod_{1\leq t\leq k,\atop \cS_t=0}(1-\theta^{-1})(\cS_tj_t+\cS_{t+1}j_{t+1}+\cdots+\cS_kj_k)\prod_{1\leq t\leq k,\atop \cS_t=-}\frac{j_t}{\theta}\prod_{1\leq t\leq k,\atop\cS_t=+, t\notin \cD}p_{j_t}\prod_{1\leq t\leq k,\atop\cS_t=-, t\notin \cD}\del p_{j_t}.
\end{split}\end{align}
For $1\leq t\leq k$ such that $\cS_t=0$, we have $j_t=0$. And  for $(s,t)\in \cD$, we have $j_s=j_t$. We can view \eqref{e:SPterm} as an expression in terms of $\{j_t: \cS_t=+\text{ or } \cS_t=-, t\not\in \cD\}$. The conditions $\cS_1 j_1+\cS_2j_2+\cdots+\cS_k j_k=0$ and $\cS_t j_t+\cS_{t+1}j_{t+1}+\cdots+
\cS_kj_k\leq 0$ for $1\leq t\leq k$ are equivalent to:
for $1\leq t\leq k$ with $\cS_t=+$, if $t=1$,
\begin{align}\label{e:condset1}
j_t+\sum_{v: t<v,\atop\cS_v=+, v\notin \cD} j_v+\sum_{v:u<t<v\atop (u,v)\in \cD}j_v=\sum_{v: t<v,\atop \cS_v=-, v\notin\cD}j_v,
\end{align}
otherwise if $t\not=1$, 
\begin{align}\label{e:condset2}
j_t+\sum_{v: t<v,\atop\cS_v=+, v\notin \cD} j_v+\sum_{v:u<t<v\atop (u,v)\in \cD}j_v\leq\sum_{v: t<v,\atop \cS_v=-, v\notin\cD}j_v.
\end{align}
The constraints \eqref{e:condset1} and \eqref{e:condset2} define the condition set $\cC(\cK, \cS, \cD)$, over $\{j_t\in \bZ_{>0}: \cS_t=+\text{ or } \cS_t=-, t\not\in \cD\}$. With these notations, we can rewrite \eqref{e:SPterm} as
\begin{align}\label{e:sum2}
\sum_{\cC(\cK,\cS,\cD)}\tilde Q_{\cS, \cD}((j_t: \cS_t=+ \text{ or }\cS_t=-, t\not\in \cD))\prod_{1\leq t\leq k,\atop\cS_t=+, t\notin \cD}p_{j_t}\prod_{1\leq t\leq k,\atop\cS_t=-, t\notin \cD}\del p_{j_t}.
\end{align}
where $\tilde Q_{\cS, \cD}$ is a polynomial given by
\begin{align*}
\tilde Q_{\cS, \cD}((j_t: \cS_t=+ \text{ or } \cS_t=-, t\not\in \cD))
=\prod_{t: \cS_t=0}(1-\theta^{-1})\left(\sum_{v: t<v,\atop v\notin \cD} \cS_vj_v+\sum_{v:u<t<v\atop (u,v)\in \cD}\cS_vj_v\right)\prod_{t:\cS_t=-}\frac{j_t}{\theta}.
\end{align*}
This is \eqref{e:tQSD}. Finally we notice that for any $j\in \bZ_{>0}$,
\begin{align*}
\del p_{j}( F_N )=\del p_{j} (e^{\ln F_N})=F_N  \del p_{j}( \ln F_N),  
\end{align*}
and it follows by induction,
\begin{align}\label{e:sum3}
\prod_{1\leq t\leq k,\atop\cS_t=-, t\notin \cD}\del p_{j_t}F_N
%=F_N\sum_{\{V_1, V_2,\cdots, V_s\}\atop\in \cP(\{j_t: \cS_t=-,t\not\in \cD\})}
%\prod_{v=1}^s \del p_{V_v}(\ln F_N)
=F_N\sum_{\cP\in \cP(\cS, \cD)}\prod_{V\in \cP}\left(\prod_{t\in V}\del p_{j_t}\right)(\ln F_N),
\end{align}
where $\cP(\cS,\cD)=\cP(\{t: \cS_t=-,t\not\in \cD\})$. The claim \ref{c:firststep} follows from combining \eqref{e:sum2} and \eqref{e:sum3}.

%Next we prove the statement \eqref{e:productterm}, which is an easy consequence of \eqref{e:sum1}, and the following observation.
%For $t=1$, we can formally write 
%\begin{align}
%\frac{1}{F_N}\frac{\del F_N}{\del{p_{j_1}}}=\frac{\del\ln F_N}{\del p_{j_1}},
%\end{align}
%and for $t=2$, 
%\begin{align}
%\frac{1}{F_N}\frac{\del^2 F_N}{\del{p_{j_1}}\del_{j_2}}=\frac{1}{F_N}\frac{\del}{\del p_{j_2}}\left(\frac{\del \ln F_N}{\del p_{j_1}}F_N\right)
%=\frac{\del^2 \ln F_N}{\del p_{j_1}\del p_{j_2}}+\frac{\del\ln F_N}{\del p_{j_1}}\frac{\del \ln F_N}{\del p_{j_2}}.
%\end{align}
%And by induction, it is easy to see that 
%\begin{align}
%\frac{1}{F_N}\frac{\del^{t}F_N}{\del{p_{j_1}}\del{p_{j_2}}\cdots \del{p_{j_{t}}}},
%\end{align}
%is a finite sum of terms which are product of derivatives of $\ln F_N$. This finishes the proof of Proposition \ref{p:rewrite}.
\end{proof}
 
The proof of Claim \ref{c:existtQ} relies on the following elementary lemma.
\begin{lemma}\label{lem:basic1}
For any $k\geq 1$ and a  polynomial $P(x_1,x_2,\cdots, x_k)$, there exists a single variable polynomial $Q_P$, with $\deg Q_P=\deg P+k$, such that for any non-negative integer $y$,
\begin{align}\label{e:plytoply}
\sum_{x_1,x_2,\cdots, x_k\geq 1\atop x_1+x_2+\cdots+x_k\leq y}P(x_1,x_2,\cdots,x_k)=Q_P(y),
\end{align}
where the sum is over integers. Moreover, there exists a single variable polynomial $\Delta Q_P$, with $\deg \Delta Q_P=\deg P+k-1$, such that for any positive integer $y$,
\begin{align}\label{e:plytoply2}
\sum_{x_1,x_2,\cdots, x_k\geq 1\atop x_1+x_2+\cdots+x_k=y}P(x_1,x_2,\cdots,x_k)=\Delta Q_P(y)\deq Q_P(y)-Q_P(y-1),
\end{align}
where the sum is over integers.
\end{lemma}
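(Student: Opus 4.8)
The plan is to reduce the claim to the case $P=x_1^{a_1}\cdots x_k^{a_k}$ a monomial (writing $d=a_1+\dots+a_k=\deg P$) using linearity of both sides of \eqref{e:plytoply} in $P$, and then to induct on the number of variables $k$. The only input is the elementary one-variable fact that for a polynomial $p$ of degree $a$ there is a unique polynomial $\Sigma p$ of degree $a+1$ with $(\Sigma p)(n)=\sum_{x=1}^{n}p(x)$ for all $n\in\bZ_{\geq 0}$; one produces it by expanding $p$ in the binomial basis and using $\sum_{x=0}^{n}\binom{x}{j}=\binom{n+1}{j+1}$. Being a genuine polynomial, $\Sigma p$ may be composed with and evaluated at arbitrary polynomial expressions.

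For $k=1$ take $Q_P\deq\Sigma(x^{a_1})$. For $k\geq 2$, the inductive hypothesis applied to $x_1^{a_1}\cdots x_{k-1}^{a_{k-1}}$ yields a polynomial $R$ with $\deg R\leq(d-a_k)+(k-1)$ and $R(m)=\sum_{x_1,\dots,x_{k-1}\geq 1,\ x_1+\dots+x_{k-1}\leq m}x_1^{a_1}\cdots x_{k-1}^{a_{k-1}}$ for all $m\in\bZ_{\geq 0}$. Grouping the left side of \eqref{e:plytoply} by the value of $x_k$, and noting that $y-x_k\geq 0$ for $1\leq x_k\leq y$ while the $x_k$-slice is empty for $x_k>y$, one gets for every $y\in\bZ_{\geq 0}$
\begin{align*}
\sum_{x_1,\dots,x_k\geq 1,\ x_1+\dots+x_k\leq y}x_1^{a_1}\cdots x_k^{a_k}=\sum_{x_k=1}^{y}x_k^{a_k}\,R(y-x_k).
\end{align*}
The summand is a polynomial in $(x_k,y)$ of total degree $\leq a_k+\deg R\leq d+k-1$, so applying $\Sigma$ in the variable $x_k$ gives a polynomial $\Phi(N,y)$ of total degree $\leq d+k$ with $\Phi(N,y)=\sum_{x_k=1}^{N}x_k^{a_k}R(y-x_k)$ for $N\in\bZ_{\geq 0}$; then $Q_P(y)\deq\Phi(y,y)$ is a single-variable polynomial of degree $\leq d+k$ agreeing with the left side of \eqref{e:plytoply} for all $y\in\bZ_{\geq 0}$ (at $y=0$ both sides vanish). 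The one point requiring care is that in this step both the summand and the upper summation limit depend on $y$, which is why one forms the two-variable polynomial $\Phi(N,y)$ and only at the end sets $N=y$; this and the bookkeeping of the $x_i\geq 1$ lower limits are the sole mild obstacles — there is nothing deep.

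For the exact degree, a Riemann-sum comparison gives $Q_P(y)\sim y^{d+k}\int_{\{u_i\geq 0,\ \sum u_i\leq 1\}}P_d(u)\,\rd u$ as $y\to\infty$, where $P_d$ is the leading homogeneous part of $P$; for a monomial the integral equals $\prod_{i}a_i!/(d+k)!>0$, so $\deg Q_P=\deg P+k$ there, while for a general $P$ the same identity holds whenever this simplex integral is nonzero (e.g. when $P$ has non-negative coefficients) and in any case $\deg Q_P\leq\deg P+k$, which is what is used in the sequel. Finally \eqref{e:plytoply2} is immediate: for $y\geq 1$ the equality-constrained sum is the difference of the inequality-constrained sums at $y$ and $y-1$, hence equals $Q_P(y)-Q_P(y-1)=\Delta Q_P(y)$, and the first finite difference of a degree-$m$ polynomial is a polynomial of degree $m-1$ (its leading coefficient multiplied by $m$), giving $\deg\Delta Q_P=\deg P+k-1$.
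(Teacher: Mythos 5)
Your proof is correct and follows essentially the same inductive scheme as the paper's: reduce to the one-variable fact that $\sum_{x=1}^{n}p(x)$ is a polynomial in $n$, then induct on $k$ by summing in one coordinate at a time. The only cosmetic difference is the order of operations — the paper sums over $x_k$ first with the $k=1$ base case and then invokes the inductive hypothesis on $x_1,\dots,x_{k-1}$, while you invert this by applying the inductive hypothesis to the first $k-1$ variables and then summing over $x_k$; your preliminary reduction to monomials by linearity is also a mild simplification rather than a different idea. Both treatments correctly handle the subtlety that the upper summation limit and the summand both depend on $y$, by working with a two-variable polynomial (your $\Phi(N,y)$, the paper's $\tilde Q_P$) before substituting.

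One thing worth noting is that you are in fact more careful than the paper about the degree. The paper asserts $\deg Q_P = \deg P + k$, but this exact equality can fail due to cancellation: e.g.\ $P(x_1,x_2)=x_1-x_2$ gives $Q_P\equiv 0$ by symmetry, so $\deg Q_P\neq \deg P + 2$. Your Riemann-sum argument correctly identifies the leading coefficient as $\int_{\{u_i\geq 0,\ \sum u_i\leq 1\}}P_d\,\rd u$, which is positive for monomials (and more generally for $P$ with nonnegative coefficients) but can vanish. You correctly observe that in general only $\deg Q_P \leq \deg P + k$ is established and that this upper bound is all that is needed downstream (the degree bookkeeping in Claim~\ref{c:existtQ} and the $N$-degree estimates in Section~\ref{s:proof} only use upper bounds); the paper's statement of exact equality should really be read the same way.
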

\begin{proof}
The second statement \eqref{e:plytoply2} follows directly from \eqref{e:plytoply}. In the following, we prove \eqref{e:plytoply} by induction on $k$. For $k=1$, it is well known that, for any non-negative integer $d$, there exists a polynomial $Q_d(x)$, with $\deg Q_d = d+1$, such that
\begin{align}\label{e:k=1}
\sum_{x=1}^y {x^d} = Q_d(y), 
\end{align}
for any non-negative integer $y$. We assume that the statement \eqref{e:plytoply} holds with $k-1$, then
\begin{align*}
\sum_{x_1,x_2,\cdots, x_k\geq 1\atop x_1+x_2+\cdots+x_k\leq y}P(x_1,x_2,\cdots,x_k)
=\sum_{x_1,x_2,\cdots, x_{k-1}\geq 1\atop x_1+x_2+\cdots+x_{k-1}\leq y}\sum_{x_k=1}^{y-(x_1+x_2+\cdots+x_{k-1})}P(x_1,x_2,\cdots,x_k).
\end{align*}
We treat $P(x_1,x_2,\cdots,x_k)$ as a polynomial of $x_k$, with $x_1,x_2,\cdots,x_{k-1}$ as fixed constants. From \eqref{e:k=1}, there exists a $k$-variable polynomial $\tilde Q_P$, with $\deg\tilde Q_P=\deg P+1$,
\begin{align*}
\sum_{x_k=1}^{y-(x_1+x_2+\cdots+x_{k-1})}P(x_1,x_2,\cdots,x_k)=\tilde Q_P(x_1,x_2,\cdots,x_{k-1},y),
\end{align*}
provided $x_1+x_2+\cdots+x_{k-1}\leq y$. We treat $\tilde Q_P(x_1,x_2,\cdots,x_{k-1},y)$ as a polynomial of $x_1,x_2,\cdots,x_{k-1}$, with $y$ as a fixed constant. 
By induction, there exists a single variable polynomial $Q_P$ with $\deg Q_P=\deg \tilde Q_P+k-1=\deg P+k$, such that
\begin{align*}
\sum_{x_1,x_2,\cdots, x_k\geq 1\atop x_1+x_2+\cdots+x_k\leq y}P(x_1,x_2,\cdots,x_k)=
\sum_{x_1,x_2,\cdots, x_{k-1}\geq 1\atop x_1+x_2+\cdots+x_{k-1}\leq y}\tilde Q_P(x_1,x_2,\cdots,x_{k-1},y)=Q_P(y).
\end{align*}
This finishes the proof of \eqref{e:plytoply}.
\end{proof}

\begin{proof}[Proof of Claim \ref{c:existtQ}]
In the following we assume $\cS_{k_1+k_2+\cdots+k_{s-1}+1}=+$, and $\cS_{k_1+k_2+\cdots+k_s-1}=-$ for any $1\leq s\leq r$. Otherwise, the lefthand side of \eqref{e:existtQ} is identically zero, and thus $Q_{\cK,\cS,\cD}\equiv0$. The statement holds trivially.

We treat $\{j_t\in \bZ_{>0}: \cS_t=-, t\not\in\cD\}$ as fixed constants, and sequentially sum over those $j_t$ with $\cS_t=+$ under the conditions $\cC(\cK,\cS,\cD)$, i.e. \eqref{e:cond1} and \eqref{e:cond2}. 
More precisely, we prove by induction that for any $1\leq s\leq k_1+k_2+\cdots+k_r$, there exists a polynomial $Q^{(s)}_{\cK,\cS,\cD}$ over $\{j_t: \cS_t=+, t\geq s  \text{ }\mathrm{ or }\text{ }\cS_t=-, t\not\in \cD\}$, such that on $\{j_t\in \bZ_{>0}:  \cS_t=+ \text{ }\mathrm{ or }\text{ }\cS_t=-, t\not\in \cD\}$,
\begin{align}\begin{split}\label{e:indasup}
&\phantom{{}={}}\sum_{\{j_t\in \bZ_{>0}: \cS_t=+\} \atop\text{satisfies } \cC(\cK, \cS,\cD)}\tilde Q_{\cS, \cD}((j_t: \cS_t=+ \text{ }\mathrm{ or }\text{ } \cS_t=-, t\not\in \cD))\\
&=\sum_{\{j_t\in \bZ_{>0}:  \cS_t=+,t\geq s\} \atop\text{satisfies } \cC^{(s)}(\cK, \cS,\cD)}Q^{(s)}_{\cK, \cS,\cD}((j_t: \cS_t=+ ,t\geq s\text{ }\mathrm{ or }\text{ }\cS_t=-, t\not\in \cD )),
\end{split}\end{align}
% that for any for $1\leq s\leq k_1+k_2+\cdots+k_r$, there exists a polynomial $Q^{(s)}_{\cK,\cS,\cD}$, such that if we treat $(j_t: t\geq s, \cS_t=+ \text{ or } \cS_t=-,t\in \cD)$ as fixed constants, and sum $\tilde Q_{\cS, \cD}((j_t: \cS_t=+ \text{ }\mathrm{ or }\text{ } \cS_t=-, t\not\in \cD))$ over those $(j_t: t<s, \cS_t=+)$ under the conditions $\cC(\cK,\cS,\cD)$, i.e. \eqref{e:cond1} and \eqref{e:cond2}
where $\cC^{(s)}(\cK, \cS,\cD)$ is the set of conditions:
for $s\leq t\leq k_1+k_2+\cdots+k_r$ with $\cS_t=+$, if $t=k_1+k_2+\cdots +k_{r'-1}+1$ for some $1\leq r'\leq r$,
\begin{align*}
j_t+\sum_{v: t<v,\atop\cS_v=+, v\notin \cD} j_v+\sum_{v:u<t<v\atop (u,v)\in \cD}j_v=\sum_{v: t<v,\atop \cS_v=-, v\notin\cD}j_v,
\end{align*}
otherwise if $t\not= k_1+k_2+\cdots +k_{r'-1}+1$ for any $1\leq r'\leq r$,
\begin{align*}
j_t+\sum_{v: t<v,\atop\cS_v=+, v\notin \cD} j_v+\sum_{v:u<t<v\atop (u,v)\in \cD}j_v\leq\sum_{v: t<v,\atop \cS_v=-, v\notin\cD}j_v.
\end{align*}
Moreover, the degree of $Q^{(s)}_{\cK,\cS, \cD}$ is
\begin{align}\label{e:Qdeg}
\deg(Q^{(s)}_{\cK,\cS, \cD})=|\{t<s: \cS_t=+\}|+|\{t:\cS_t=0\}|+|\{t: \cS_t=-\}|-\sigma_{\cK}(s-1),
\end{align}
where we make the convention $\sigma_{\cK}(0)=0$.

The statements \eqref{e:indasup} and \eqref{e:Qdeg} hold for $s=1$, with $Q_{\cK,\cS,\cD}^{(1)}=\tilde Q_{\cS,\cD}$, and 
\begin{align*}
\deg(Q_{\cK,\cS,\cD}^{(1)})=\deg(\tilde Q_{\cS,\cD})=|\{t:\cS_t=0\}|+|\{t: \cS_t=-\}|.
\end{align*}
We assume that the statement \eqref{e:indasup} holds for $s$. There are several cases. If $\cS_{s}\neq +$, the statement  \eqref{e:indasup} for $s+1$ holds trivially. Since $(j_t: \cS_t=+,t\geq s \text{ }\mathrm{ or }\text{ }\cS_t=-, t\not\in \cD )=(j_t:  \cS_t=+,t\geq s+1 \text{ }\mathrm{ or }\text{ }\cS_t=-, t\not\in \cD )$ and $\cC^{(s)}(\cK, \cS,\cD)=\cC^{(s+1)}(\cK, \cS,\cD)$, we can take $Q^{(s+1)}_{\cK, \cS,\cD}=Q^{(s)}_{\cK, \cS,\cD}$
\begin{align*}\begin{split}
&\phantom{{}={}}\sum_{\{j_t\in \bZ_{>0}:  \cS_t=+,t\geq s\} \atop\text{satisfies } \cC^{(s)}(\cK, \cS,\cD)}Q^{(s)}_{\cK, \cS,\cD}((j_t:  \cS_t=+ ,t\geq s\text{ }\mathrm{ or }\text{ }\cS_t=-, t\not\in \cD )),\\
&=\sum_{\{j_t\in \bZ_{>0}:  \cS_t=+,t\geq s+1\} \atop\text{satisfies } \cC^{(s+1)}(\cK, \cS,\cD)}Q^{(s+1)}_{\cK, \cS,\cD}((j_t:  \cS_t=+,t\geq s+1 \text{ }\mathrm{ or }\text{ }\cS_t=-, t\not\in \cD )).
\end{split}\end{align*}
Moreover, since $\cS_s\neq +$, we have $s\neq k_1+k_2+\cdots+k_{r'-1}+1$ for all $1\leq r'\leq r$ and $\sigma_{\cK}(s-1)=\sigma_{\cK}(s)$. The degree of $Q_{\cK,\cS,\cD}^{(s+1)}$ is 
\begin{align*}
\deg(Q_{\cK,\cS,\cD}^{(s+1)})=\deg(Q_{\cK,\cS,\cD}^{(s)})=|\{t<s+1: \cS_t=+\}|+|\{t:\cS_t=0\}|+|\{t: \cS_t=-\}|-\sigma_{\cK}(s).
\end{align*}
If $\cS_{s}=+$ and $s\neq k_1+k_2+\cdots+k_{r'-1}+1$ for any $1\leq r'\leq r$, $\cC^{(s)}(\cK,\cS,\cD)$ consists of $\cC^{(s+1)}(\cK,\cS,\cD)$ and the constraint on $j_{s}$
\begin{align}\label{e:conds+1}
1\leq j_{s}\leq -\sum_{v: s<v,\atop\cS_v=+, v\notin \cD} j_v-\sum_{v:u<s<v\atop (u,v)\in \cD}j_v+\sum_{v: s<v,\atop \cS_v=-, v\notin\cD}j_v.
\end{align}
We notice that the constraints $\cC^{(s+1)}(\cK,\cS,\cD)$ ensures that the righthand side of \eqref{e:conds+1} is non-negative.  Therefore we can sum over $j_{s}$, and by Lemma \ref{lem:basic1}, there exists a polynomial
$Q^{(s+1)}_{\cK, \cS,\cD}$
\begin{align*}\begin{split}
&\phantom{{}={}}\sum_{\{j_t\in \bZ_{>0}: \cS_t=+,t\geq s\} \atop\text{satisfies } \cC^{(s)}(\cK, \cS,\cD)}Q^{(s)}_{\cK, \cS,\cD}((j_t:  \cS_t=+,t\geq s \text{ }\mathrm{ or }\text{ }\cS_t=-, t\not\in \cD )),\\
&=\sum_{\{j_t\in \bZ_{>0}:  \cS_t=+,t\geq s+1\} \atop\text{satisfies } \cC^{(s+1)}(\cK, \cS,\cD)}Q^{(s+1)}_{\cK, \cS,\cD}((j_t:  \cS_t=+,t\geq s+1 \text{ }\mathrm{ or }\text{ }\cS_t=-, t\not\in \cD )).
\end{split}\end{align*}
Moreover, we have $s\neq k_1+k_2+\cdots+k_{r'-1}+1$ for all $1\leq r'\leq r$ and thus $\sigma_{\cK}(s-1)=\sigma_{\cK}(s)$. The degree of $Q_{\cK,\cS,\cD}^{(s+1)}$ is 
\begin{align*}
\deg(Q_{\cK,\cS,\cD}^{(s+1)})=\deg(Q_{\cK,\cS,\cD}^{(s)})+1=|\{t<s+1: \cS_t=+\}|+|\{t:\cS_t=0\}|+|\{t: \cS_t=-\}|-\sigma_{\cK}(s).
\end{align*}
The last case is $\cS_{s}=+$ and $s= k_1+k_2+\cdots+k_{r'-1}+1$ for some $1\leq r'\leq r$.
%if $\cS_s, \cS_{s+1},\cdots, \cS_{s+k_{u+1}}\in \{0,+\}$, the constraint $\cC^{(s)}(\cK, \cS, \cD)$ is empty, and we can take $Q_{\cK,\cS, \cD}^{(s+1)}=0$. Otherwise, 
There exists some $1\leq l\leq k_{r'}$, such that $\cS_{s+l}=-$. Let $l$ be the smallest such value. Then we have $\cS_{s}=\cS_{s+1}=\cdots=\cS_{s+l-1}=+$ and $\cC^{(s)}(\cK, \cS,\cD)$ is equivalent to $\cC^{(s+l)}(\cK, \cS,\cD)$ and the constraints on $\{j_t\in \bZ_{>0}:\cS_t=+,s\leq t\leq s+l-1\}$:
\begin{align}\label{e:conds+l}
\sum_{s\leq t\leq s+l-1}j_t=-\sum_{v: s+l-1<v,\atop\cS_v=+, v\notin \cD} j_v-\sum_{v:u<s+l-1<v\atop (u,v)\in \cD}j_v+\sum_{v: s+l-1<v,\atop \cS_v=-, v\notin\cD}j_v.
\end{align}
We notice that $\cS_{s+l}=-$ and the constraints $\cC^{(s+l)}(\cK,\cS,\cD)$ together ensure that the righthand side of \eqref{e:conds+l} is a positive integer.  Therefore we can sum over $\{j_t\in \bZ_{>0}: \cS_t=+,s\leq t\leq s+l-1\}$, and by Lemma \ref{lem:basic1}, there exists a polynomial
$Q^{(s+l)}_{\cK, \cS,\cD}$
\begin{align*}\begin{split}
&\phantom{{}={}}\sum_{\{j_t\in \bZ_{>0}:  \cS_t=+,t\geq s\} \atop\text{satisfies } \cC^{(s)}(\cK, \cS,\cD)}Q^{(s)}_{\cK, \cS,\cD}((j_t:  \cS_t=+,t\geq s \text{ }\mathrm{ or }\text{ }\cS_t=-, t\not\in \cD )),\\
&=\sum_{\{j_t\in \bZ_{>0}:  \cS_t=+,t\geq s+l\} \atop\text{satisfies } \cC^{(s+l)}(\cK, \cS,\cD)}Q^{(s+l)}_{\cK, \cS,\cD}((j_t: \cS_t=+ ,t\geq s+l\text{ }\mathrm{ or }\text{ }\cS_t=-, t\not\in \cD )).
\end{split}\end{align*}
Moreover, $\sigma_{\cK}(s+l-1)=\sigma_{\cK}(s-1)+1$, the degree of $Q_{\cK,\cS,\cD}^{(s+l)}$ is 
\begin{align*}\begin{split}
\deg(Q_{\cK,\cS,\cD}^{(s+l)})
&=\deg(Q_{\cK,\cS,\cD}^{(s)})+|\{s\leq t<s+l:\cS_t=+\}|-1\\
&=|\{t<s+l: \cS_t=+\}|+|\{t:\cS_t=0\}|+|\{t: \cS_t=-\}|-\sigma_{\cK}(s+l-1).
\end{split}\end{align*}
Therefore, by induction \eqref{e:indasup} and \eqref{e:Qdeg} hold for any $s\geq 1$. We can take $Q_{\cK,\cS,\cD}=Q_{\cK,\cS,\cD}^{(k_1+k_2+\cdots+k_r+1)}$. This finishes the proof of Claim \ref{c:existtQ}.
\end{proof}

We recall the definition of the hypergraph $\cG(\cK, \cS, \cD, \cP)$ on the vertex set $\{1,2,\cdots,r\}$, at the beginning of this section. The connected components of $\cG(\cK,\cS, \cD, \cP)$ induce a partition of $\{1,2,\cdots,r\}$. Informally, if $s$ and $t$ are in different connected components of $\cG(\cK,\cS, \cD, \cP)$, then in \eqref{e:sum} $I^{(k_s)}$ and $I^{(k_t)}$ do not interact. Especially, \eqref{e:sum} splits into a product of terms corresponding to different connected components of $\cG(\cK,\cS, \cD, \cP)$. More precisely, say $\cG(\cK,\cS, \cD, \cP)$ induces a partition $\{U_1, U_2,\cdots, U_v\}$ of $\{1,2,\cdots,r\}$, then \eqref{e:sum} equals
\begin{align}\label{e:Usplit}\begin{split}
&\cE(\cK,\cS, \cD, \cP)=\prod_{u=1}^v\cE(\cK|_{U_u}, \cS|_{U_u},\cD|_{U_u}, \cP|_{U_u})=\prod_{u=1}^v
N^{|\{t:(\cS|_{U_u})_t, t\not\in \cD|_{U_u}\}|}
\sum_{\{j_t\in \bZ_{>0}: (\cS|_{U_u})_t=-,t\not\in \cD|_{U_u}\}}\\
& Q_{\cK|_{U_u},\cS|_{U_u}, \cD|_{U_u}}((j_t: (\cS|_{U_u})_t=-, t\not\in \cD|_{U_u}))\prod_{V\in \cP|_{U_u}}\left(\prod_{t\in V}\del p_{j_t}\right)\left.(\ln F_N)\right|_{\bmp=1^N},
\end{split}\end{align}
where $\cK|_{U_u}=(k_s: s\in U_u)$ is the restriction of $\cK$ on $U_u$. %$\cS|_{U_u}$ is the restriction of $\cS$ on the index set $\{t: \sigma(t)\in U_u\}$, which is a sign pattern in $\{-,0,+\}^{\sum_{k_s\in U_u}k_s}$. 
$\cS|_{U_u}, \cD|_{U_u}, \cP|_{U_u}$, are the restriction of $\cS, \cD, \cP$ on the index set $\{t: \sigma(t)\in U_u\}$, respectively, and they satisfy $\cS|_{U_u}\in \cS(\cK|_{U_u})$, $\cD|_{U_u}\in \cD(\cS|_{U_u})$, and $\cP|_{U_u}\in \cP(\cS|_{U_u}, \cD|_{U_u})$. The polynomial $Q_{\cK|_{U_u}\cS|_{U_u}, \cD|_{U_u}}$ is defined using $\cK|_{U_u}, \cS|_{U_u}, \cD|_{U_u}$.
For any positive integer $r$ and indices $k_1,k_2,\cdots,k_r\geq1$, we define 
\begin{align}\label{e:defFk}
\cF_{(k_1,k_2,\cdots,k_r)}\deq \sum_{\cS\in\cS(\cK), \cD\in \cD(\cS), \cP\in \cP(\cS, \cD)\atop\cG(\cK,\cS, \cD, \cP) \text{ is connected}}\cE(\cK,\cS, \cD, \cP).
\end{align}
Then we have
\begin{align*}
\cF_{(k)}=\left.I^{(k)}F_N\right|_{\bmp=1^N},
\end{align*}
and
\begin{align*}
\cF_{(k_1,k_2)}=\left.I^{(k_1)}I^{(k_2)}F_N\right|_{\bmp=1^N}-\cF_{(k_1)}\cF_{(k_2)}.
\end{align*}

\begin{proposition}\label{p:totalsum}
Given any positive integer $r$ and indices $k_1,k_2,\cdots,k_r\geq1$.  
For any partition $\{U_1, U_2,\cdots, U_v\}$ of $\{1,2,\cdots,r\}$, we have
\begin{align}\label{e:tsum1}
\prod_{u=1}^v \cF_{(k_s: s\in U_u)}=\sum_{\cS\in\cS(\cK), \cD\in \cD(\cS), \cP\in \cP(\cS, \cD)\atop\cG(\cK,\cS, \cD, \cP) \text{ induces } \{U_1,U_2,\cdots, U_v\}}\cE(\cK,\cS,\cD,\cP).
\end{align}
Moreover, 
\begin{align}\begin{split}\label{e:tsum2}
\left.I^{(k_1)}I^{(k_2)}\cdots I^{(k_r)}F_N\right|_{\bmp=1^N}=
\sum_{v\geq 1}\sum_{\{U_1,U_2,\cdots, U_v\}\atop 
\in \cP(\{1,2,\cdots,r\})}\prod_{u=1}^v \cF_{(k_s: s\in U_u)},
\end{split}\end{align}
and
\begin{align}\label{e:tsum3}
\cF_{(k_1,k_2,\cdots,k_r)}=\sum_{\cP\in \cP(\{1,2,\cdots,r\})}(-1)^{|\cP|-1}(|\cP|-1)!\prod_{V\in \cP}\left.\left(\prod_{s\in V}I^{(k_s)}\right)F_N\right|_{\bmp=1^N}.
\end{align}
As a consequence of \eqref{e:tsum3}, $\cF_{(k_1,k_2,\cdots, k_r)}$ only depends on the set $\{k_1,k_2,\cdots, k_r\}$.
\end{proposition}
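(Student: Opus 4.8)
The plan is to obtain all three identities from Theorem~\ref{t:decompose} together with the factorization \eqref{e:Usplit}, and then to deduce \eqref{e:tsum3} from \eqref{e:tsum2} by Möbius inversion on the lattice of set partitions. I will use throughout that, by Theorem~\ref{t:decompose}, $\left.I^{(k_1)}I^{(k_2)}\cdots I^{(k_r)}F_N\right|_{\bmp=1^N}=\sum_{\cS,\cD,\cP}\cE(\cK,\cS,\cD,\cP)$, the sum ranging over sign patterns $\cS\in\cS(\cK)$, pairings $\cD\in\cD(\cS)$ and partitions $\cP\in\cP(\cS,\cD)$.

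For \eqref{e:tsum1} I would split this sum according to the set partition of $\qq{r}$ cut out by the connected components of $\cG(\cK,\cS,\cD,\cP)$. The crucial combinatorial point is that, for a fixed set partition $\{U_1,\dots,U_v\}$ of $\qq{r}$, restriction to the index blocks $\{t:\sigma_\cK(t)\in U_u\}$ sets up a bijection between triples $(\cS,\cD,\cP)$ whose hypergraph has connected components exactly $\{U_1,\dots,U_v\}$ and $v$-tuples of triples $(\cS|_{U_u},\cD|_{U_u},\cP|_{U_u})$ for which each $\cG(\cK|_{U_u},\cS|_{U_u},\cD|_{U_u},\cP|_{U_u})$ is connected: every pair of $\cD$ and every block of $\cP$ contributes an edge of $\cG$, so its endpoints lie in one component, which forces $\cD$ and $\cP$ to respect $\{U_1,\dots,U_v\}$ and makes the restriction well-defined, while gluing connected pieces back produces a triple whose components are precisely $\{U_1,\dots,U_v\}$. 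Feeding this bijection into the factorization \eqref{e:Usplit} of $\cE(\cK,\cS,\cD,\cP)$ over connected components and recognizing the resulting inner sums via the definition \eqref{e:defFk} of $\cF$ gives \eqref{e:tsum1}.

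Summing \eqref{e:tsum1} over all $\{U_1,\dots,U_v\}\in\cP(\qq{r})$ then reassembles the full sum $\sum_{\cS,\cD,\cP}\cE(\cK,\cS,\cD,\cP)$, which is \eqref{e:tsum2}. Running the same argument with $\qq{r}$ replaced by an arbitrary $S\subseteq\qq{r}$ yields $g(S):=\left.\bigl(\prod_{s\in S}I^{(k_s)}\bigr)F_N\right|_{\bmp=1^N}=\sum_{\cP\in\cP(S)}\prod_{V\in\cP}\cF_{(k_s:s\in V)}$ for every $S$, which is exactly the moment--cumulant relation over the partition lattice; inverting it (the partition-lattice Möbius number on an $n$-set being $(-1)^{n-1}(n-1)!$, provable by a short induction on $|S|$ if no reference is invoked) is \eqref{e:tsum3}. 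Finally, Theorem~\ref{t:mainT} shows that all the operators $I^{(k)}=(L^k)_{00}$ are simultaneously diagonalized by the Jack symmetric functions $J_\bmla(\bmp;\theta)$, which span $\Sym$; hence the $I^{(k)}$ pairwise commute, so each $g(V)$ depends only on the multiset $\{k_s:s\in V\}$, and since permuting $(k_1,\dots,k_r)$ merely relabels $\cP(\qq{r})$ without altering block structure, the right-hand side of \eqref{e:tsum3} — and therefore $\cF_{(k_1,\dots,k_r)}$ — is invariant under such permutations.

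The main obstacle I anticipate is the restriction/gluing bijection in the second paragraph and, above all, its compatibility with \eqref{e:Usplit}: one must check that the condition set $\cC(\cK,\cS,\cD)$, the polynomial $Q_{\cK,\cS,\cD}$, and the product $\prod_{V\in\cP}\bigl(\prod_{t\in V}\del p_{j_t}\bigr)(\ln F_N)\big|_{\bmp=1^N}$ all factor across the blocks $U_u$ with the expected induced data $\cK|_{U_u},\cS|_{U_u},\cD|_{U_u},\cP|_{U_u}$, so that $\cE(\cK,\cS,\cD,\cP)=\prod_{u=1}^v\cE(\cK|_{U_u},\cS|_{U_u},\cD|_{U_u},\cP|_{U_u})$ holds exactly. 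Once that compatibility is confirmed, the remaining steps are routine bookkeeping and the classical partition-lattice Möbius inversion.
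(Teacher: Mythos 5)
Your proof is correct and follows essentially the same route as the paper: split the total sum from Theorem~\ref{t:decompose} by the partition induced by $\cG(\cK,\cS,\cD,\cP)$, use the factorization \eqref{e:Usplit} and the restriction/gluing bijection to get \eqref{e:tsum1} and \eqref{e:tsum2}, invert the resulting moment–cumulant relation on the partition lattice (the paper phrases this as ``by induction,'' which amounts to the same Möbius inversion) to obtain \eqref{e:tsum3}, and finally invoke commutativity of the $I^{(k)}$ — which the paper asserts outright and you justify via simultaneous diagonalization by the Jack basis — to deduce symmetry in $(k_1,\dots,k_r)$. The compatibility with \eqref{e:Usplit} that you flag as the main worry is exactly what the paper takes as a given (it states \eqref{e:Usplit} informally before the proposition), so no gap remains.
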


\begin{proof}
If $\cG(\cK,\cS, \cD, \cP)$  induces the partition $\{U_1,U_2,\cdots, U_v\}$, we denote $\cK_u=\cK|_{U_u}$ for $1\leq u\leq v$.
\eqref{e:sum} splits according to \eqref{e:Usplit},
\begin{align*}\begin{split}
&\phantom{{}={}}\sum_{\cS\in\cS(\cK), \cD\in \cD(\cS), \cP\in \cP(\cS, \cD)\atop\cG(\cK,\cS, \cD, \cP) \text{ induces } \{U_1,U_2,\cdots, U_v\}}\cE(\cK,\cS, \cD, \cP)\\
&=\sum_{\cS\in\cS(\cK), \cD\in \cD(\cS), \cP\in \cP(\cS, \cD)\atop\cG(\cK,\cS, \cD, \cP) \text{ induces } \{U_1,U_2,\cdots, U_v\}}\prod_{u=1}^v\cE(\cK|_{U_u}, \cS|_{U_u},\cD|_{U_u}, \cP|_{U_u})\\
&=\prod_{u=1}^v \sum_{\cS_u\in\cS(\cK_u), \cD_u\in \cD(\cS_u), \cP_u\in \cP(\cS_u, \cD_u)\atop\cG(\cK_u,\cS_u, \cD_u, \cP_u) \text{ is connected}}\cE(\cK_u, \cS_u,\cD_u, \cP_u)=\prod_{u=1}^v\cF_{\cK_u}=\prod_{u=1}^v \cF_{(k_s:s\in U_u)}.
\end{split}\end{align*}
Thus \eqref{e:tsum1} follows. \eqref{e:tsum2} follows from Proposition \ref{t:decompose},
\begin{align*}\begin{split}
\left.I^{(k_1)}I^{(k_2)}\cdots I^{(k_r)}F_N\right|_{\bmp=1^N}
&=\sum_{v\geq 1}\sum_{\{U_1,U_2,\cdots, U_v\}\atop 
\in \cP(\{1,2,\cdots,r\})}
\sum_{\cS\in\cS(\cK), \cD\in \cD(\cS), \cP\in \cP(\cS, \cD)\atop\cG(\cK,\cS, \cD, \cP) \text{ induces } \{U_1,U_2,\cdots, U_v\}}\cE(\cK,\cS, \cD, \cP)\\
&=\sum_{v\geq 1}\sum_{\{U_1,U_2,\cdots, U_v\}\atop 
\in \cP(\{1,2,\cdots,r\})}\prod_{u=1}^v \cF_{(k_s: s\in U_u)},
\end{split}\end{align*}
The relation \eqref{e:tsum2} resembles the relation between cumulants and moments. It follows by induction,
\begin{align}\label{e:copytsum3}
\cF_{(k_1,k_2,\cdots,k_r)}=\sum_{\cP\in \cP(\{1,2,\cdots,r\})}(-1)^{|\cP|-1}(|\cP|-1)!\prod_{V\in \cP}\left.\left(\prod_{s\in V}I^{(k_s)}\right)F_N\right|_{\bmp=1^N}.
\end{align}
Since the operators $I^{(k_1)}, I^{(k_2)},\cdots, I^{(k_r)}$ commute, we conclude from \eqref{e:copytsum3} that $\cF_{(k_1,k_2,\cdots, k_r)}$ only depends on the set $\{k_1,k_2,\cdots, k_r\}$.

\end{proof}

From Theorem \ref{t:moment}, the lefthand side of \eqref{e:tsum2} is related to the joint moments of the Perelomov-Popov measure as defined in \eqref{def:PP}. The relations \eqref{e:tsum2} and \eqref{e:tsum3} resemble the relations between moments and cumulants. It turns out joint cumulants of the random variables 
\begin{align*}
\xi_{PP}^{(k)}[\bmla]
=N\left(\int x^k \rd\mu_{PP} [\bmla]-\bE \int x^k \rd\mu_{PP} [\bmla]\right)
\end{align*}
 are linear combinations of $\cF_{(k_1,k_2,\cdots,k_r)}$.
\begin{proposition}\label{p:cumulant}
For any positive integer $r\geq 2$, and $k_1,k_2,\cdots,k_r\geq 1$, as a formal power series of $1/z_1, 1/z_2,\cdots, 1/z_r$,
\begin{align}\begin{split}\label{e:cumulant}
&\phantom{{}={}}\sum_{k_1,k_2,\cdots,k_r\geq 1}\frac{\kappa_{k_1, k_2, \cdots, k_r}(\xi_{PP}^{(k)}[\bmla],k=1,2,3,\cdots)}{z_1^{k_1+1}z_2^{k_2+1}\cdots z_r^{k_r+1}}\\
&=\prod_{j=1}^{r}(z_j+1)\sum_{k_1,k_2,\cdots,k_r\geq 1}\frac{\cF_{(k_1,k_2,\cdots, k_r)}}{N^{k_1+k_2+\cdots+k_r-r}z_1^{k_1+1}z_2^{k_2+1}\cdots z_r^{k_r+1}}.
\end{split}\end{align}
%And explicitly we have
%\begin{align}\label{e:explicit}
%\frac{\cF_{(k_1,k_2,\cdots,k_r)}}{N^{k_1+k_2+\cdots+k_r-r}}=\sum_{\{1\leq l_s\leq k_s: 1\leq s\leq r\}}(-1)^{k_1+k_2+\cdots+k_r-l_1-l_2-\cdots-l_r}
%\kappa_{l_1,l_2,\cdots,l_r}(\xi_{PP}^{(k)}[\bmla],k=1,2,3\cdots).
%\end{align}
\end{proposition}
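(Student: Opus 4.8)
The plan is to pass from joint cumulants of the $\xi_{PP}^{(k)}[\bmla]$ to joint cumulants of the bare moments $c_{PP}^{(k)}[\bmla]\deq\int x^k\,\rd\mu_{PP}[\bmla]$, to express the latter through the $\cF_{(k_1,\cdots,k_r)}$ by combining Theorem~\ref{t:moment} with Proposition~\ref{p:totalsum}, to exploit the vanishing of $I^{(1)}$, and finally to reorganize everything into the stated formal power series identity. For the first step, recall that joint cumulants of order $r\geq2$ are unchanged by adding constants to their arguments and are homogeneous of degree $r$ under a common rescaling; since $\xi_{PP}^{(k_j)}[\bmla]=N\bigl(c_{PP}^{(k_j)}[\bmla]-\bE c_{PP}^{(k_j)}[\bmla]\bigr)$, this yields, for $r\geq2$,
\begin{align*}
\kappa_{k_1,k_2,\cdots,k_r}(\xi_{PP}^{(k)}[\bmla],k=1,2,3,\cdots)=N^r\,\kappa\bigl(c_{PP}^{(k_1)}[\bmla],c_{PP}^{(k_2)}[\bmla],\cdots,c_{PP}^{(k_r)}[\bmla]\bigr).
\end{align*}

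The main step is to compute the right-hand cumulant. For a nonempty $B\subseteq\qq{r}$, set $\Phi_B\deq\sum_{\epsilon\in\{0,1\}^B}N^{-\sum_{s\in B}(k_s+\epsilon_s)}\cF_{(k_s+\epsilon_s:\,s\in B)}$. Applying Theorem~\ref{t:moment} to the subtuple indexed by $B$, expanding the product $\prod_{i\in B}\bigl(I^{(k_i)}/N^{k_i}+I^{(k_i+1)}/N^{k_i+1}\bigr)$ over $\epsilon\in\{0,1\}^B$, inserting \eqref{e:tsum2}, and using that both the scalar $N^{-\sum_{s\in B}(k_s+\epsilon_s)}$ and the index set $\{0,1\}^B$ factorize along any partition of $B$, one obtains, for every such $B$,
\begin{align*}
\bE\Bigl[\prod_{i\in B}c_{PP}^{(k_i)}[\bmla]\Bigr]=\sum_{\pi\in\cP(B)}\prod_{V\in\pi}\Phi_V.
\end{align*}
These are precisely the moment--cumulant relations for the family $\{c_{PP}^{(k_i)}[\bmla]\}_{i\in\qq{r}}$, with the $\Phi$'s in the role of cumulants; since that correspondence over the partition lattice is a bijection (one solves for $\Phi_B$ by induction on $|B|$), it follows that $\kappa\bigl(c_{PP}^{(k_i)}[\bmla]:i\in B\bigr)=\Phi_B$ for every $B$. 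Taking $B=\qq{r}$ and combining with the first step gives
\begin{align*}
\kappa_{k_1,\cdots,k_r}(\xi_{PP}^{(k)}[\bmla],k=1,2,3,\cdots)=\sum_{\epsilon\in\{0,1\}^r}\frac{\cF_{(k_1+\epsilon_1,\cdots,k_r+\epsilon_r)}}{N^{(k_1+\epsilon_1)+\cdots+(k_r+\epsilon_r)-r}}.
\end{align*}

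Next I would note from \eqref{e:Lmat} that $I^{(1)}=L_{00}=p_0=0$ is the zero operator, so by \eqref{e:tsum3} and the commutativity of the $I^{(k)}$ every summand of $\cF_{(k_1,\cdots,k_r)}$ carries a factor $I^{(1)}$ whenever some $k_j=1$; hence $\cF_{(k_1,\cdots,k_r)}=0$ when $\min_j k_j=1$. Finally I would multiply the previous display by $z_1^{-k_1-1}\cdots z_r^{-k_r-1}$ and sum over $k_1,\cdots,k_r\geq1$: the substitution $l_j\deq k_j+\epsilon_j$ rewrites $z_j^{-k_j-1}$ as $z_j^{\epsilon_j}z_j^{-l_j-1}$ under the constraint $l_j\geq1+\epsilon_j$, which by the vanishing just proved can be loosened to $l_j\geq1$, after which $\sum_{\epsilon\in\{0,1\}^r}\prod_j z_j^{\epsilon_j}=\prod_j(1+z_j)$ factors out and \eqref{e:cumulant} follows.

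The step demanding the most care is the reorganization of $\bE\bigl[\prod_{i\in B}c_{PP}^{(k_i)}[\bmla]\bigr]$ above: one must verify that the double expansion really collapses to $\sum_{\pi\in\cP(B)}\prod_{V\in\pi}\Phi_V$ with one and the same family $\{\Phi_V\}$ for all subsets $B$, so that the uniqueness of the moment--cumulant inversion can be invoked. The remaining manipulations are routine, with the one subtlety that the vanishing $\cF_{(k_1,\cdots,k_r)}=0$ at $\min_j k_j=1$ is exactly what cancels the spurious $z_j^{-1}$ contributions produced on the right-hand side of \eqref{e:cumulant} by the factor $\prod_j(z_j+1)$.
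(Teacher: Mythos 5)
Your proof is correct and follows essentially the same route as the paper: expand the cumulant via the partition-lattice formula, combine Theorem~\ref{t:moment} with Proposition~\ref{p:totalsum} to recognize the connected sum $\cF_{(k_1,\ldots,k_r)}$, and pass to generating functions in $1/z_1,\ldots,1/z_r$. Your explicit observation that $I^{(1)}=L_{00}=p_0=0$, and the resulting vanishing $\cF_{(l_1,\ldots,l_r)}=0$ whenever some $l_j=1$, is exactly the fact (left implicit in the paper's terse display) that justifies shifting the summation index $k_j\mapsto k_j+\epsilon_j$ back to the full range $l_j\geq 1$ and hence pulling the factor $\prod_j(z_j+1)$ out of the sum.
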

\begin{proof}
The cumulant $\kappa_{k_1, k_2, \cdots, k_r}(\xi_{PP}^{(k)}[\bmla],k=1,2,3,\cdots)$ is given by
\begin{align*}\begin{split}
&\phantom{{}={}}\kappa_{k_1, k_2, \cdots, k_r}(\xi_{PP}^{(k)}[\bmla],k=1,2,3,\cdots)
\\
&=\sum_{\cP\in \cP(\{1,2,\cdots,r\})}(-1)^{|\cP|-1}(|\cP|-1)!\prod_{V\in \cP}\bE\left[\prod_{s\in V}N\int x^{k_s}\rd \mu_{PP}[\bmla]\right]\\
&=\sum_{\cP\in \cP(\{1,2,\cdots,r\})}(-1)^{|\cP|-1}(|\cP|-1)!\prod_{V\in \cP}\prod_{s\in V}\left.\left(\frac{I^{(k_s)}}{N^{k_s-1}}+\frac{I^{(k_s+1)}}{N^{k_s}}\right)F_N(\bmp;\theta)\right|_{\bmp=1^N},
\end{split}\end{align*}
where we used Theorem \ref{t:moment}. Therefore, we have the following generating function of the $r$-th cumulants,
\begin{align*}\begin{split}
&\phantom{{}={}}\sum_{k_1,k_2,\cdots,k_r\geq 1}\frac{\kappa_{k_1, k_2, \cdots, k_r}(\xi_{PP}^{(k)}[\bmla],k=1,2,3,\cdots)}{z_1^{k_1+1}z_2^{k_2+1}\cdots z_r^{k_r+1}}\\
&=\prod_{j=1}^{r}(z_j+1)\sum_{k_1,k_2,\cdots,k_r\geq 1}\sum_{\cP\in \cP(\{1,2,\cdots,r\})}\frac{(-1)^{|\cP|-1}(|\cP|-1)!\prod_{V\in \cP}\left.\left(\prod_{s\in V}I^{(k_s)}F_N\right)\right|_{\bmp=1^N}}{N^{k_1+k_2+\cdots+k_r-r}z_1^{k_1+1}z_2^{k_2+1}\cdots z_r^{k_r+1}}\\
&=\prod_{j=1}^{r}(z_j+1)\sum_{k_1,k_2,\cdots,k_r\geq 1}\frac{\cF_{(k_1,k_2,\cdots, k_r)}}{N^{k_1+k_2+\cdots+k_r-r}z_1^{k_1+1}z_2^{k_2+1}\cdots z_r^{k_r+1}},
\end{split}\end{align*}
where we used \eqref{e:tsum3} in the last line. %The relation \ref{e:explicit} follows from \eqref{e:cumulant}, and the fact $\cF_{(k_1,k_2,\cdots,k_r)}=0$ if $k_{r'}=1$ for some $1\leq r'\leq r$.
\end{proof}

\subsection{Law of large numbers}\label{subs:LLN}

%
%We denote 
%\begin{align}
%G_N(z)=\sum_{i=1}^N\ln \left(1-\frac{1}{N}\frac{1}{ y_i-z}\right)
%\end{align}
%Then for any $\xi_1, \xi_2,\cdots, \xi_r\in \bC\setminus \bR$, we have
%\begin{align}
%\prod_{i=1}^r I(N\xi_i;\theta)J_{\bmla}(\bmp;\theta)=\prod_{i=1}^r\frac{1}{N(\xi_i+1)}\exp \left\{\sum_{i=1}^rG_N(\xi_i)\right\}J_\bmla(\bmp;\theta),
%\end{align}
%and 
%\begin{align}
%\left.\prod_{i=1}^r I(N\xi_i;\theta)F_N\right|_{\bmp=N\rho\geq 1}=\prod_{i=1}^r\frac{1}{N(\xi_i+1)}\bE\left[\exp \left\{\sum_{i=1}^r G_N(\xi_i)\right\}\right].
%\end{align}
%Using \eqref{e:inv}, we get
%\begin{align}\begin{split}
%\left(\prod_{i=1}^r\frac{\xi_i}{\xi_i+1}\right)\bE\left[\exp \left\{\sum_{i=1}^r G_N(\xi_i)\right\}\right]
%&=\left.\left(\prod_{i=1}^r\sum_{k\geq 0}\frac{1}{\xi_i^{k}}\frac{I^{(k)}}{N^k}\right)F_N\right|_{\bmp=N\rho}\\
%&=\sum_{k_1,k_2,\cdots, k_r\geq 0}\left.\left(\prod_{i=1}^r\frac{1}{\xi_i^{k_i}}\right)\frac{I^{(k_1)}I^{(k_2)}\cdots I^{(k_r)}}{N^{k_1+k_2+\cdots + k_r}}F_N\right|_{\bmp=N\rho}
%\end{split}\end{align}

The law of large numbers for the measure $M_N$ follows, if the limit exists
\begin{align}\begin{split}\label{e:limit}
&\phantom{{}={}}\lim_{N\rightarrow \infty}\bE\left[\prod_{j=1}^r\int x^{k_j}\rd \mu_{PP}[\bmla](x) \right]=\lim_{N\rightarrow \infty}\left.\prod_{j=1}^r\left(\frac{I^{(k_j)}}{N^{k_j}}+\frac{I^{(k_j+1)}}{N^{k_j+1}}\right)F_N\right|_{\bmp=1^N}
\end{split}\end{align}
and splits.

\begin{claim}\label{c:LLNup}
Fix a Jack generating function $F_N(\bmp;\theta)$, satisfying Assumption \ref{asup:infinite} and \ref{a:LLN}. For any positive integer $k$, $j_1,j_2,\cdots, j_k$ viewed as symbols, and $\{V_1, V_2,\cdots, V_s\}$ a partition of $\{1,2,\cdots, k\}$, we have that
\begin{align*}
\sum_{j_1,j_2,\cdots, j_k\geq 1}Q(j_1, j_2,\cdots, j_k)
\prod_{u=1}^s \left(\prod_{t\in V_u}\del p_{j_t}\right)(\ln F_N)|_{\bmp=1^N},
\end{align*}
has an $N$-degree at most $s$,
if $|V_u|= 1$ for all $1\leq u\leq s$; and has an $N$-degree less than
$s$,
if $|V_u|> 1$ for some $1\leq u\leq s$.
\end{claim}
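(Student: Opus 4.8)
The plan is to reduce the statement to the two parts of Assumption \ref{a:LLN}, which are precisely the special cases in which $Q$ is a product of binomial coefficients and the partition consists of a single block, respectively of singletons. First I would record the analytic input: by Proposition \ref{p:summable} the series $\sum_{i_1,\ldots,i_b\geq 1}\big|\del p_{i_1}\cdots\del p_{i_b}(\ln F_N)|_{\bmp=1^N}\big|\,t^{i_1+\cdots+i_b}$ converges for $t<1+\varepsilon$, and since any polynomial in $i_1,\ldots,i_b$ is eventually dominated by $(1+\varepsilon/2)^{i_1+\cdots+i_b}$, every multi-indexed sum appearing below converges absolutely. This licenses Fubini and the free rearrangement and factorization of such sums.

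Next I would perform two elementary reductions on $Q$. Writing $Q$ as an $\bR$-linear combination of monomials $\prod_{t=1}^{k}j_t^{m_t}$, and each monomial as $\prod_{u=1}^{s}\big(\prod_{t\in V_u}j_t^{m_t}\big)$, linearity reduces to the case $Q(j_1,\ldots,j_k)=\prod_{u=1}^{s}Q_u((j_t)_{t\in V_u})$ with each $Q_u$ a monomial in the variables indexed by $V_u$; here I would also use that the polynomials produced by the decomposition \eqref{e:sum} carry, by the explicit form \eqref{e:tQSD}, a factor $j_t/\theta$ for every derivative variable $j_t$, so I may assume every exponent $m_t\geq 1$. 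For such $Q$ the sum in the claim factors across the blocks as $\prod_{u=1}^{s}A_N^{(u)}$, where $A_N^{(u)}\deq\sum_{(j_t)_{t\in V_u}\geq 1}Q_u\prod_{t\in V_u}\del p_{j_t}(\ln F_N)|_{\bmp=1^N}$, so it is enough to control one factor. Fixing a block $V_u$ of size $b$ and expanding each power $i^m$ with $m\geq 1$ in the basis $\{\binom{i}{n}:n\geq 1\}$, I would rewrite $A_N^{(u)}$ as a finite linear combination of the quantities $\sum_{i_1,\ldots,i_b\geq 1}\binom{i_1}{n_1}\cdots\binom{i_b}{n_b}\,\del p_{i_1}\cdots\del p_{i_b}(\ln F_N)|_{\bmp=1^N}$ with all $n_l\geq 1$. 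By the first part of Assumption \ref{a:LLN}, if $b=1$ each of these divided by $N$ converges, so $A_N^{(u)}$ has $N$-degree at most $1$; by the second part, if $b\geq 2$ each of these divided by $N$ tends to $0$, so $A_N^{(u)}$ has $N$-degree less than $1$.

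The final step is the $N$-degree arithmetic for the product of the $s$ factors. If $|V_u|=1$ for every $u$ then $N^{-s}\prod_{u}A_N^{(u)}=\prod_{u}(A_N^{(u)}/N)$ converges, so the sum has $N$-degree at most $s$; if $|V_{u_0}|\geq 2$ for some $u_0$ then $A_N^{(u_0)}/N\to 0$ while the remaining factors divided by $N$ converge, so $N^{-s}\prod_{u}A_N^{(u)}\to 0$ and the sum has $N$-degree less than $s$. Summing over the finitely many monomials of the original $Q$ preserves both bounds. I do not expect a genuine conceptual obstacle; the points needing care are the correct propagation of ``$N$-degree less than $d$'' through products, checking that the absolute convergence from Proposition \ref{p:summable} really permits factoring the multiple series along the blocks, and verifying that every monomial of $Q$ expands into binomial coefficients with all indices $\geq 1$, so that Assumption \ref{a:LLN} is applied verbatim and no uncontrolled quantity such as $\sum_{i\geq 1}\del p_i(\ln F_N)|_{\bmp=1^N}$ enters.
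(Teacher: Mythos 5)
Your argument is correct and follows the same route as the paper, but with substantially more detail than the paper's one-sentence proof. The paper simply asserts that for ``any $|V_u|$-variable polynomial $Q_u$'' the single-block sum has $N$-degree $1$ if $|V_u|=1$ and less than $1$ otherwise, and then treats the product over blocks as immediate. You correctly point out the one place where that formulation glosses over something: the constant-term part of a block polynomial would produce an uncontrolled quantity such as $\sum_{i\geq 1}\del p_i(\ln F_N)|_{\bmp=1^N}$, which Assumption~\ref{a:LLN} does not bound. Your resolution --- that the polynomials $Q_{\cK,\cS,\cD}$ arising from \eqref{e:sum} inherit the factor $\prod_{t:\cS_t=-,\,t\notin\cD}j_t/\theta$ from \eqref{e:tQSD}, so every monomial has each $m_t\geq 1$, and expanding $i^m$ for $m\geq 1$ in the basis $\bigl\{\binom{i}{n}:n\geq 1\bigr\}$ keeps all lower indices at least $1$ --- is exactly the right observation and makes Assumption~\ref{a:LLN} apply verbatim. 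Your remaining steps (absolute convergence via Proposition~\ref{p:summable} to justify factoring over the disjoint blocks, linearity across monomials of $Q$, and the elementary arithmetic of $N$-degrees for products where one factor has $N$-degree at most $1$ in each block and strictly less than $1$ in a block of size $\geq 2$) all check out. In short, you have reconstructed the paper's argument and in the process repaired a small imprecision in its phrasing.
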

\begin{proof}
The claims follow from Assumption \ref{a:LLN}, for any $|V_u|$-variable polynomial $Q_u$,
\begin{align*}
\sum_{\{j_t\in \bZ_{>0}:t\in V_u\}}Q_u((j_t: t\in V_u))\left(\prod_{t\in V_u}\del p_{j_t}\right)(\ln F_N)|_{\bmp=1^N}
\end{align*}
has an $N$-degree $1$ if $|V_u|=1$, and has an $N$-degree less than $1$ if $|V_u|>1$.
\end{proof}

\begin{proposition}\label{p:split}
Fix a Jack generating function $F_N(\bmp;\theta)$, satisfying Assumption \ref{asup:infinite} and \ref{a:LLN}. For any positive integer $r$ and indices $k_1,k_2,\cdots, k_r$, the limit splits
\begin{align}\label{e:split}
\lim_{N\rightarrow \infty}\left.\frac{I^{(k_1)}I^{(k_2)}\cdots I^{(k_r)}}{N^{k_1+k_2+\cdots + k_r}}F_N\right|_{\bmp=1^N}
=\prod_{j=1}^r
\lim_{N\rightarrow \infty}\left.\frac{I^{(k_j)}}{N^{k_j}}F_N\right|_{\bmp=1^N},
\end{align}
and the following limit exists
\begin{align*}
\lim_{N\rightarrow \infty}\left.\frac{I^{(k)}}{N^k}F_N(\bmp;\theta)\right|_{\bmp=1^N}
=\lim_{N\rightarrow \infty}((T_{F_N})^k)_{00}.
\end{align*} 
where the infinite Toeplitz matrix $[(T_{F_N})_{ij}]_{i,j=0}^\infty$ is defined in Section \ref{s:SN}, and is characterized by
\begin{align}\label{e:characterTMN}
(z-T_{F_N})^{-1}_{00}
=\exp\left(-\frac{1}{2\pi \ri}\oint_{|w|=1+\varepsilon/2}\ln\left(z-T_{F_N}(w)\right)\frac{\rd w}{w}\right),
%=\exp\left(-[w^0]\ln\left(z-T_\mu(w)\right)\right),
\end{align}
as a formal power series of $1/z$.
\end{proposition}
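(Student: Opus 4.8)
The plan is to extract both assertions from the decomposition of Theorem~\ref{t:decompose} together with the $N$-degree bookkeeping of Claim~\ref{c:LLNup}, and then to identify the surviving top-degree contribution with the ``path sum'' for $(T_{F_N}^{k})_{00}$. \textbf{Step 1 (top $N$-degree terms).} In a term $\cE(\cK,\cS,\cD,\cP)$ as in \eqref{e:sum}, the explicit prefactor contributes $N^{|\{t:\cS_t=+,\,t\notin\cD\}|}$, while by Claim~\ref{c:LLNup} the remaining $j$-sum has $N$-degree at most $|\cP|$, with equality only if every block of $\cP$ is a singleton; hence
\[
\deg\cE(\cK,\cS,\cD,\cP)\ \leq\ |\{t:\cS_t=+,\,t\notin\cD\}|+|\cP|\ \leq\ k_1+\cdots+k_r-2|\cD|-|\{t:\cS_t=0\}|,
\]
and this bound reaches $k_1+\cdots+k_r$ only when $\cD=\emptyset$, no sign equals $0$, and $\cP$ consists of singletons. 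In that extremal case $\cG(\cK,\cS,\cD,\cP)$ carries only the trivial one-vertex ``edges'' $\{\sigma_\cK(t)\}$, so it induces the finest partition $\{\{1\},\dots,\{r\}\}$; consequently, for $r\geq2$ the sum \eqref{e:defFk} defining $\cF_{(k_1,\dots,k_r)}$ ranges over triples with connected $\cG$, none of which is extremal, so $\cF_{(k_1,\dots,k_r)}$ has $N$-degree strictly less than $k_1+\cdots+k_r$.

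\textbf{Step 2 (the splitting).} Substituting Step~1 into \eqref{e:tsum2} and dividing by $N^{k_1+\cdots+k_r}$, every summand indexed by a set partition $\{U_1,\dots,U_v\}$ of $\{1,\dots,r\}$ with some $|U_u|\geq2$ is $\oo\bigl(N^{k_1+\cdots+k_r}\bigr)$, so only the finest partition survives in the limit; this gives
\[
\lim_{N\to\infty}\left.\frac{I^{(k_1)}I^{(k_2)}\cdots I^{(k_r)}}{N^{k_1+k_2+\cdots+k_r}}F_N\right|_{\bmp=1^N}
=\prod_{j=1}^{r}\lim_{N\to\infty}\frac{\cF_{(k_j)}}{N^{k_j}}
=\prod_{j=1}^{r}\lim_{N\to\infty}\left.\frac{I^{(k_j)}}{N^{k_j}}F_N\right|_{\bmp=1^N},
\]
once the single-operator limit is known to exist, which is \eqref{e:split}.

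\textbf{Step 3 (single operator; the Toeplitz formula).} For $r=1$, $\cK=(k)$, I would single out the extremal terms $\cE((k),\cS,\emptyset,\cP)$ with $\cP$ a partition into singletons. There $\tilde Q_{\cS,\emptyset}=\prod_{t:\cS_t=-}(j_t/\theta)$ by \eqref{e:tQSD}, and by Lemma~\ref{lem:basic1} the polynomial $Q_{(k),\cS,\emptyset}$ is exactly the count of nonnegative lattice walks $0=i_0,i_1,\dots,i_{k-1},i_k=0$ with $t$-th step going up if $\cS_t=+$ and down if $\cS_t=-$ (the constraints $\cC((k),\cS,\emptyset)$ being equivalent to $i_t\geq0$ for all $t$). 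Using $p_{j_t}\big|_{\bmp=1^N}=N$ and the identity $\tfrac{j_t}{\theta N}\del p_{j_t}\ln F_N\big|_{\bmp=1^N}=q_{j_t}(F_N)$ from Section~\ref{s:SN}, the sum of the extremal terms over all $\cS$ equals precisely
\[
N^{k}\sum_{i_1,\dots,i_{k-1}\geq0}(T_{F_N})_{0i_1}(T_{F_N})_{i_1i_2}\cdots(T_{F_N})_{i_{k-1}0}=N^{k}(T_{F_N}^{k})_{00},
\]
since $(T_{F_N})_{i_{t-1}i_t}=q_{i_{t-1}-i_t}(F_N)$ equals $1$ on up-steps and $q_{j_t}(F_N)$ on down-steps (the sum converging absolutely because $q_i(F_N)$ decays geometrically, by Proposition~\ref{p:summable}). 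As the finitely many non-extremal terms are each $\oo(N^{k})$, this yields $\left.\tfrac{I^{(k)}}{N^{k}}F_N\right|_{\bmp=1^N}=(T_{F_N}^{k})_{00}+\oo(1)$. Finally, by \eqref{def:TMN}--\eqref{e:UMNlimit} and Proposition~\ref{p:summable}, the symbol $T_{F_N}(w)$ converges absolutely on $\{1<|w|<1+\varepsilon\}$ and tends to $T_\mu(w)$ uniformly on $\{|w|=1+\varepsilon/2\}$, so Lemma~\ref{l:invertT} supplies the characterization \eqref{e:characterTMN} for $T_{F_N}$ and, letting $N\to\infty$ under the contour integral, shows $(z-T_{F_N})^{-1}_{00}$ converges coefficientwise in $1/z$ to $(z-T_\mu)^{-1}_{00}$; hence $\lim_N(T_{F_N}^{k})_{00}=(T_\mu^{k})_{00}$ exists, which also completes Step~2.

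I expect the main obstacle to be the exact bookkeeping in Step~3: one must check that the polynomial prefactors $Q_{(k),\cS,\emptyset}$ produced by Lemma~\ref{lem:basic1}, together with the accumulated powers of $N$, reassemble \emph{precisely} into the Toeplitz path sum for $(T_{F_N}^{k})_{00}$ --- i.e.\ that for each sign pattern $\cS$ the condition set $\cC((k),\cS,\emptyset)$ is in bijective correspondence with the nonnegative walks weighting $(T_{F_N}^{k})_{00}$, rather than only matching to leading order in $N$ --- so that the identity holds on the nose rather than up to lower-order corrections.
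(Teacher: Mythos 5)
Your proposal follows the paper's own route essentially step for step: the degree bookkeeping from Theorem~\ref{t:decompose} and Claim~\ref{c:LLNup} that isolates the triples with $\cD=\emptyset$, no zero signs, and singleton blocks; the use of Proposition~\ref{p:totalsum} to split the limit; the identification of the $r=1$ extremal sum with $N^{k}(T_{F_N}^{k})_{00}$; and Lemma~\ref{l:invertT} for \eqref{e:characterTMN}. The worry you flag at the end is unfounded --- once you substitute the definition of $Q_{(k),\cS,\emptyset}$ via \eqref{e:existtQ} and sum over all sign patterns $\cS\in\{-,+\}^{k}$, the extremal terms reassemble \emph{exactly} into the Toeplitz path sum (not merely to leading order), which is precisely the computation the paper carries out at the end of its proof; only the phrase calling $Q_{(k),\cS,\emptyset}$ ``the count of nonnegative lattice walks'' is a slip, since it is the polynomial $\sum_{\cC}\prod_{t:\cS_t=-}(j_t/\theta)$ rather than a cardinality, but your subsequent display handles this correctly.
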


\begin{proof}
We use the same notation  as in section \ref{s:Tl}. Fix $\cK=(k_1,k_2,\cdots, k_r)$. From Theorem \ref{t:decompose}, we have that 
\begin{align*}
\left.\frac{I^{(k_1)}I^{(k_2)}\cdots I^{(k_r)}}{N^{k_1+k_2+\cdots + k_r}}F_N\right|_{\bmp=1^N}
\end{align*} 
is a sum over triples $(\cS, \cD, \cP)$, where $\cS\in \cS(\cK)$ is a sign pattern, $\cD\in \cD(\cS)$ is a pairing, and $\cP\in \cP(\cS, \cD)$ is a partition,
\begin{align}\label{e:sumcopy}
\frac{N^{|\{t: \cS_t=+, t\not\in \cD\}|}}{N^{k_1+k_2+\cdots+k_r}}\sum_{\{j_t\in \bZ_{>0}: \cS_t=-,t\not\in \cD\}}Q_{\cK,\cS, \cD}((j_t: \cS_t=-, t\not\in \cD))\prod_{V\in \cP}\left(\prod_{t\in V}\del p_{j_t}\right)\left.(\ln F_N)\right|_{\bmp=1^N},
\end{align}
where $Q_{\cK,\cS, \cD}$ is a polynomial over $\{j_t: \cS_t=-, t\not\in \cD\}$. Say $\cP=\{V_1, V_2,\cdots, V_s\}$, then from the Claim \ref{c:LLNup} the $N$-degree of \eqref{e:sumcopy} is at most
\begin{align*}
|\{t: \cS_t=+, t\not\in \cD\}|-(k_1+k_2+\cdots+k_r)+s
=-2|\cD|-\sum_{u=1}^s(|V_u|-1)-|\{t:\cS_t=0\}|.
\end{align*}
More precisely, if $|V_1|=|V_2|=\cdots=|V_s|=1$, then \eqref{e:sumcopy} has an $N$-degree at most $-2|\cD|-|\{t: \cS_t=0\}|\leq 0$.
Otherwise, if for some $1\leq u\leq s$, $|V_u|>1$, then \eqref{e:sumcopy} has an $N$-degree less than
\begin{align*}
-2|\cD|-\sum_{u=1}^s(|V_u|-1)-|\{t:\cS_t=0\}|<0.
\end{align*}
Therefore the terms from \eqref{e:sumcopy} that contribute are those with $\cD=\emptyset$ and $|V_1|=|V_2|=\cdots=|V_s|=1$. In this case $\cG(\cK, \cS, \cD, \cP)$ is a hypergraph without any edges, and by Proposition \ref{p:totalsum} we have
\begin{align*}
\lim_{N\rightarrow \infty}\left.\frac{I^{(k_1)}I^{(k_2)}\cdots I^{(k_r)}}{N^{k_1+k_2+\cdots + k_r}}F_N\right|_{\bmp=1^N}
=\lim_{N\rightarrow \infty}\prod_{j=1}^r \frac{\cF_{(k_j)}}{N^{k_j}}=\prod_{j=1}^r
\lim_{N\rightarrow \infty}\left.\frac{I^{(k_j)}}{N^{k_j}}F_N\right|_{\bmp=1^N}.
\end{align*}
Thus \eqref{e:split} follows.

%
%is a finite sum of terms of the form,
%\begin{align}\label{e:productterm1}
%N^{d-(k_1+k_2+\cdots+k_r)}\prod_{s=1}^t D(Q_s, n_s)
%\end{align}
%where integers $d\geq 0, t\geq 1$, $Q_1,Q_2,\cdots, Q_t$ are multivariable polynomials, with $n_1, n_2,\cdots, n_t$ variables respectively, and $d+n_1+n_2+\cdots+n_t\leq k_1+k_2+\cdots+k_r$. Moreover, by Assumption \ref{a:LLN}, if $n_s>1$, $\lim_{N\rightarrow \infty}D(Q_s,n_s)=0$. Thus the terms of form \eqref{e:productterm1} that contribute are those with $n_1=n_2=\cdots=n_t=1$ and $d+n_1+n_2+\cdots+n_t=k_1+k_2+\cdots+k_r$. They correspond to terms \eqref{e:newsum} associated with $(\cS,\cP)$, where $\cS=\{-,+\}^{k_1+k_2+\cdots+k_r}$ and $\cP=\emptyset$. Therefore operators from $I^{(k_1)}, I^{(k_2)},\cdots, I^{k_r}$ do not interact with each other, the limit 
%\begin{align}
%\lim_{N\rightarrow \infty}\left.\frac{I^{(k_1)}I^{(k_2)}\cdots I^{(k_r)}}{N^{k_1+k_2+\cdots + k_r}}F_N\right|_{\bmp=1^N},
%\end{align}
%splits, and thus \eqref{e:split} follows.

From the discussion above, the leading order terms in $\left.(I^{(k)}/N^k)F_N(\bmp;\theta)\right|_{\bmp=1^N}$ corresponds to triples $(\cS, \cD, \cP)$, where the sign pattern $\cS\in \{-,+\}^{k}$, the pairing $\cD=\emptyset$, and the partition $\cP=\{\{j_t\}: \cS_t=-\}$. We can estimate the following limit
%is a finite sum of terms of the form,
%\begin{align}\label{e:productterm1}
%N^{d-k}\prod_{s=1}^t D(Q_s, n_s)
%\end{align}
%where integers $d\geq 0, t\geq 1$, $Q_1,Q_2,\cdots, Q_t$ are multivariable polynomials, with $n_1, n_2,\cdots, n_t$ variables respectively, and $d+n_1+n_2+\cdots+n_t\leq k$. Moreover, by Assumption \ref{a:LLN}, if $n_s>1$, $\lim_{N\rightarrow \infty}D(Q_s,n_s)=0$. Thus the terms of form \eqref{e:productterm1} that contribute are those with $n_1=n_2=\cdots=n_t=1$ and $d+n_1+n_2+\cdots+n_t=k$. They correspond to terms \eqref{e:newsum} associated with $(\cS,\cP)$, where $\cS=\{-,+\}^k$ and $\cP=\emptyset$,
\begin{align*}\begin{split}
&\phantom{{}={}}\lim_{N\rightarrow \infty}\left.\frac{I^{(k)}}{N^k}F_N\right|_{\bmp=1^N}
=\lim_{N\rightarrow\infty}\sum_{i_1,i_2,\cdots,i_{k-1}\geq 0}
\left.\prod_{s: i_s\geq i_{s-1}}\frac{p_{i_s-i_{s-1}}}{N} \prod_{s: i_s< i_{s-1}}\frac{p_{i_s-i_{s-1}}(\ln F_N)}{N}\right|_{\bmp=1^N}\\
%=\lim_{N\rightarrow \infty}
%\sum_{i_1,i_2,\cdots, i_{k-1}\geq 0\atop i_2\neq i_1,\cdots, i_{k-1}\neq i_{k-2}}\frac{L_{0i_1}L_{i_1i_2}\cdots L_{i_{k-2}i_{k-1}}L_{i_{k-1}0}\left.F_N\right|_{\bmp=1^N}}{N^k}\\
&=\lim_{N\rightarrow \infty}\sum_{i_1,i_2,\cdots, i_{k-1}\geq 0}(T_{F_N})_{0i_1}(T_{F_N})_{i_1i_2}\cdots (T_{F_N})_{i_{k-2}i_{k-1}}(T_{F_N})_{i_{k-1}0}=\lim_{N\rightarrow \infty}((T_{F_N})^k)_{00},
\end{split}\end{align*}
where the infinite Toeplitz matrix $[(T_{F_N})_{ij}]_{i,j=0}^\infty$ is defined in Section \ref{s:SN}. The last statement \eqref{e:characterTMN} follows from Lemma \ref{l:invertT}. We recall the symbol of $[(T_{F_N})_{ij}]_{i,j=0}^\infty$ in \eqref{def:TMN}, then by Lemma \ref{l:invertT}
\begin{align*}\begin{split}
(z-T_{F_N})^{-1}_{00}
&=\exp\left(-\frac{1}{2\pi \ri}\oint_{|w|=1+\varepsilon/2}\ln\left(z-T_{F_N}(w)\right)\frac{\rd w}{w}\right),
% &=\exp\left(-[w^0]\ln\left(z-T_\mu(w)\right)\right),
\end{split}\end{align*}
as a formal power series of $1/z$. 

%where we used \eqref{e:UMNlimit} in the last line.
\end{proof}

\begin{proof}[Proof of Theorem \ref{thm:LLN}]
From Proposition \ref{p:split} and Theorem \ref{t:moment}, we have
\begin{align*}
\lim_{N\rightarrow \infty}\bE\left[\prod_{j=1}^r\int x^{k_j}\rd \mu_{PP}[\bmla](x) \right]
=\prod_{j=1}^r\lim_{N\rightarrow \infty}\bE\left[\int x^{k_j}\rd \mu_{PP}[\bmla](x) \right],
\end{align*}
and the limits exist. Thus the sequence of random measures $\mu_{PP}[\bmla]$ satisfies the law of large numbers in the sense of Definition \ref{def:LLNCLT}. Combining with Lemma \ref{l:change} and \ref{l:CLTtoCLT}, we conclude that the sequence of random measures $\mu[\bmla]$ also satisfies the law of large numbers. By a tightness argument, the sequence of random measures $\mu[\bmla]$ converges as $N\rightarrow \infty$, weakly to a deterministic measure $\mu$ on $\bR$. 

In the following we determine the moments of $\mu$. From Remark \ref{r:change} and \eqref{e:PPid}, as a formal power series of $1/z$, we have
\begin{align}\begin{split}\label{e:formalsum}
e^{m_\mu(z)}
&=\lim_{N\rightarrow\infty}(z+1)(z-T_{F_N})^{-1}_{00}\\
&=\lim_{N\rightarrow\infty}\exp\left(\ln(z+1)-\frac{1}{2\pi \ri}\oint_{|w|=1+\varepsilon/2}\ln\left(z-T_{F_N}(w)\right)\frac{\rd w}{w}\right).
\end{split}\end{align}
We take logarithm on both sides of \eqref{e:formalsum}, and compare the coefficients of $1/z^{k+1}$,
\begin{align}\begin{split}\label{e:kmoment}
&\phantom{{}={}}\int x^k \rd \mu(x)=\lim_{N\rightarrow\infty}\frac{1}{k+1}\left(\frac{1}{2\pi \ri}\oint_{|w|=1+\varepsilon/2} \left(T_{F_N}(w)\right)^{k+1}\frac{\rd w}{w}+(-1)^k\right)\\
&=\lim_{N\rightarrow\infty}
\frac{1}{k+1}\left(\frac{1}{2\pi\ri}\oint_{|w|=1+\varepsilon/2}\left(\frac{w}{\theta}U_{F_N}(w)+\frac{1}{w-1}\right)^{k+1}\frac{\rd w}{w}+(-1)^k\right)
\end{split}\end{align}
The integrand has two poles at $w=0$ and $w=1$. The residual at $w=0$ is $(-1)^{k+1}$, which cancels with the last term $(-1)^k$. Thus we can rewrite  \eqref{e:kmoment} as,
\begin{align*}\begin{split}
\int x^k \rd \mu(x)
&=\lim_{N\rightarrow\infty}\frac{1}{2\pi \ri(k+1)}\int_{|w-1|
=\varepsilon/2}\left(\frac{w}{\theta}U_{F_N}(w)+\frac{1}{w-1}\right)^{k+1}\frac{\rd w}{w}\\
&=\lim_{N\rightarrow\infty}\frac{1}{2\pi \ri(k+1)}\int_{|w|
=\varepsilon/2}\left(\frac{w+1}{\theta}U_{F_N}(w+1)+\frac{1}{w}\right)^{k+1}\frac{\rd w}{w+1}\\
&=[w^{-1}]\frac{1}{(k+1)}\left(\frac{(w+1)}{\theta}U_{\mu}(w+1)+\frac{1}{w}\right)^{k+1}\sum_{a\geq 0}(-w)^a,
\end{split}
\end{align*}
where we used \eqref{e:UMNlimit}, and the claim \eqref{eqn:moments} follows.
\end{proof}

%
%\subsection{Relation with Schur generating functions}
%
%
%\begin{proposition}
%For any index $k\geq 1$, we have
%\begin{align}
%\lim_{N\rightarrow \infty}\left.\frac{I^{(k)}}{N^k}F_N\right|_{\bmp=N\rho}
%=\lim_{N\rightarrow \infty}\left.\frac{I^{(k)}(1)}{N^k}F_N^{1/\theta}\right|_{\bmp=N\rho}
%\end{align}
%
%
%\end{proposition}
%

%
%\begin{proposition}
%There are quan
%\begin{align}
%\frac{I^{(k_1)}I^{(k_2)}\cdots I^{(k_r)}}{N^{k_1+k_2+\cdots + k_r}}\left.\exp\{\theta f\}\right|_{\bmp=N\rho\geq 1}
%=\sum_{v,g\geq 0}\left(\frac{1}{\theta}-1\right)^v\frac{1}{\theta^g}\frac{1}{N^{d+2g}}
%\end{align}
%
%\end{proposition}

\subsection{Central limit theorems for one level}\label{subs:CLT}
Before proving Theorem \ref{t:clt}, we need some elementary estimates, which will be repeatedly used in the rest of this section.
\begin{claim}\label{c:CLTup}
Fix a Jack generating function $F_N(\bmp;\theta)$, satisfying Assumption \ref{asup:infinite} and \ref{a:CLT}. For any positive integer $k$, $j_1,j_2,\cdots, j_k$ viewed as symbols, and $\{V_1, V_2,\cdots, V_s\}$ a partition of $\{1,2,\cdots,k\}$, we have
\begin{align*}
\sum_{j_1,j_2,\cdots, j_k\geq 1}Q(j_1, j_2,\cdots, j_k)
\prod_{u=1}^s \left(\prod_{t\in V_u}\del p_{j_t}\right)(\ln F_N)|_{\bmp=1^N}
\end{align*}
has an $N$-degree at most 
\begin{align}\label{e:cltdeg}
\sum_{u=1}^s {\bm1_{|V_u|=1}},
\end{align}
if $|V_u|\leq 2$ for all $1\leq u\leq s$; and has an $N$-degree less than
\eqref{e:cltdeg},
if $|V_u|> 2$ for some $1\leq u\leq s$.
\end{claim}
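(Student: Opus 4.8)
The plan is to follow the proof of Claim \ref{c:LLNup} essentially verbatim, feeding in the sharper estimates of Assumption \ref{a:CLT} in place of Assumption \ref{a:LLN}. First I would use linearity in $Q$ to reduce to the case where $Q$ is a single monomial $\prod_{t=1}^{k}j_t^{a_t}$. Since the operator $\prod_{t\in V_u}\del p_{j_t}$ involves only the symbols $\{j_t:t\in V_u\}$ and such a monomial factors over the blocks, the whole sum then splits as a product over $u=1,\dots,s$ of single-block sums
\begin{align*}
S_u(N)\deq\sum_{\{j_t\in\bZ_{>0}:\,t\in V_u\}}\Big(\prod_{t\in V_u}j_t^{a_t}\Big)\Big(\prod_{t\in V_u}\del p_{j_t}\Big)(\ln F_N)\big|_{\bmp=1^N},
\end{align*}
so the problem reduces to bounding the $N$-degree of each $S_u(N)$.

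To estimate $S_u(N)$ I would expand every power $j_t^{a_t}$ with $a_t\geq1$ in the basis $\binom{j_t}{1},\dots,\binom{j_t}{a_t}$ of polynomials vanishing at $j_t=0$, which rewrites $S_u(N)$ as a finite integer combination of exactly the sums occurring in Assumption \ref{a:CLT}. Then: for $|V_u|=1$, \eqref{e:CLT1} gives that $\sum_{i\geq1}\binom{i}{m}\del p_i(\ln F_N)|_{\bmp=1^N}$ has $N$-degree at most $1$; for $|V_u|=2$, \eqref{e:CLT2} gives that $\sum_{i,j\geq1}\binom{i}{m}\binom{j}{n}\del^2(\ln F_N)|_{\bmp=1^N}$ has $N$-degree at most $0$; and for $|V_u|=r\geq3$, \eqref{e:CLT5} gives that the corresponding $r$-fold sum tends to $0$, i.e. has $N$-degree strictly less than $0$. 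The expansion only produces binomials $\binom{j_t}{m}$ with $m\geq1$, matching the ranges $k\geq1$, $k,l\geq1$, $k_i\geq1$ in Assumption \ref{a:CLT}, because every variable $j_t$ indexing a block occurs to positive degree in $Q$: in each application of the claim $Q$ is the polynomial $Q_{\cK,\cS,\cD}$ of Theorem \ref{t:decompose}, and the construction in the proof of Claim \ref{c:existtQ} keeps it divisible by $\prod_{t:\cS_t=-,\,t\notin\cD}j_t$, since every summation there runs over a variable $j_s$ with $\cS_s=+$ and so does not touch the variables $j_t$ with $\cS_t=-$.

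Finally I would multiply the block estimates. Since an $N$-degree-at-most-$d_1$ sequence times an $N$-degree-at-most-$d_2$ sequence has $N$-degree at most $d_1+d_2$, when every $|V_u|\leq2$ the product $\prod_u S_u(N)$ has $N$-degree at most $\sum_{u}\bm1_{|V_u|=1}$, the two-element blocks contributing $N$-degree at most $0$. If instead some $|V_{u_0}|\geq3$, then $S_{u_0}(N)\to0$ while $\bm1_{|V_{u_0}|=1}=0$, so writing $\prod_u S_u(N)/N^{\sum_u\bm1_{|V_u|=1}}$ as $S_{u_0}(N)$ times the product of the remaining factors divided by that same power of $N$ (a quantity that stays bounded, as the exponent already accounts for all the growth of the other blocks) shows the product tends to $0$; hence the $N$-degree is strictly less than $\sum_u\bm1_{|V_u|=1}$, as claimed. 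The only place calling for care is the combinatorial bookkeeping in the monomial reduction and the factorization across blocks; once that is set up the statement is an immediate consequence of Assumption \ref{a:CLT}, so I do not anticipate a substantive obstacle.
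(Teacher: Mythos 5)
Your proof is correct and follows the same route the paper takes: factor the sum over the blocks $V_u$, rewrite each single-block sum in the binomial basis, and read off the $N$-degree of each block directly from parts (1)–(3) of Assumption \ref{a:CLT}, then multiply. The paper's own proof is a one-line assertion of exactly the per-block estimates you derive, so the mathematical content is identical; the one genuine improvement in your write-up is the explicit observation that the rewriting into $\binom{j_t}{m}$ with $m\geq1$ only works because $Q$ is divisible by $\prod_t j_t$, and the tracking of that divisibility through the proof of Claim \ref{c:existtQ} — a point the paper leaves tacit, and which is needed since Assumption \ref{a:CLT} supplies no control for the $m=0$ binomial.
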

\begin{proof}
The claims follow from Assumption \ref{a:CLT}, for any $|V_u|$-variable polynomial $Q_u$,
\begin{align*}
\sum_{\{j_t\in \bZ_{>0}:t\in V_u\}}Q_u((j_t: t\in V_u))\left(\prod_{t\in V_u}\del p_{j_{t}}\right)(\ln F_N)|_{\bmp=1^N}
\end{align*}
has an $N$-degree $1$ if $|V_u|=1$, has an $N$-degree $0$ if $|V_u|=2$, and has an $N$-degree less than $0$ if $|V_u|>2$.
\end{proof}

\begin{claim}\label{c:edgeineq}
If the hypergraph $\cG(\cK, \cS, \cD, \cP)$ is connected, then
\begin{align*}
|\cD|+\sum_{V\in \cP}(|V|-1)\geq r-1.
\end{align*}
%\begin{align}
%2|\cD|+\sum_{V\in \cP}{\bm1_{|V|\geq 2}}|V|\geq r.
%\end{align}
\end{claim}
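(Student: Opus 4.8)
The plan is to reduce the statement to the elementary fact that a connected hypergraph on $r$ vertices must carry enough incidences to merge $r$ components into one. Recall the construction of $\cG(\cK,\cS,\cD,\cP)$: it is a multi-hypergraph on the vertex set $\{1,2,\dots,r\}$ whose edges are, one for each pair $(s,t)\in\cD$, the set $\{\sigma_\cK(s),\sigma_\cK(t)\}$ (of size at most $2$), together with, one for each block $V\in\cP$, the set $\{\sigma_\cK(t):t\in V\}$ (of size at most $|V|$).

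Next I would track the number of connected components while adding these edges one at a time. Start from the edgeless hypergraph on $\{1,\dots,r\}$, which has exactly $r$ connected components. Adding one hyperedge whose support consists of $m$ distinct vertices can decrease the number of connected components by at most $m-1$. Hence an edge arising from a pair $(s,t)\in\cD$, for which $m\leq 2$, decreases the number of components by at most $1$, and a hyperedge arising from a block $V\in\cP$, for which $m\leq|V|$, decreases it by at most $|V|-1$.

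Finally, since $\cG(\cK,\cS,\cD,\cP)$ is assumed connected, after adding all the edges the number of connected components equals $1$; comparing with the initial value $r$ and summing the per-edge bounds gives
\[
r-1\;\leq\;\sum_{(s,t)\in\cD}1\;+\;\sum_{V\in\cP}(|V|-1)\;=\;|\cD|+\sum_{V\in\cP}(|V|-1),
\]
which is the claimed inequality. There is essentially no obstacle here; the only point deserving a word of care is that $\sigma_\cK$ need not be injective, so a $\cD$-edge may collapse to a loop and a $\cP$-hyperedge may have support of size strictly less than $|V|$. In either case the support size $m$ only gets smaller, so the bounds $m-1\leq 1$ and $m-1\leq|V|-1$ are unaffected and the argument goes through unchanged.
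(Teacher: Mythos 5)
Your argument is the same as the paper's: start from the edgeless hypergraph on $r$ vertices with $r$ components, add the edges from $\cD$ and $\cP$ one at a time, bound the drop in the number of components by $1$ for a $\cD$-edge and by $|V|-1$ for a $\cP$-hyperedge, and compare with the single component at the end. The extra remark about possible non-injectivity of $\sigma_\cK$ is a correct, harmless refinement that the paper leaves implicit.
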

\begin{proof}
The hypergraph $\cG(\cK,\cS,\cD, \cP)$ is obtained from the graph $\{1,2,\cdots,r\}$ (which consists of $r$ vertices and no edges) by sequentially adding edges $\{{\sigma_\cK(s)},{\sigma_\cK(t)}\}$ for pairs $(s,t)\in \cD$, and edges $\{{\sigma_\cK(t)}: t\in V\}$ for sets $V\in \cP$. The graph $\{1,2,\cdots,r\}$ has $r$ connected components, the hypergraph $\cG(\cK,\cS,\cD, \cP)$ has only one connected component, and by adding an edge $\{{\sigma_\cK(s)},{\sigma_\cK(t)}\}$ the number of connected components decreases by at most one, by adding an edge $\{{\sigma_\cK(t)}: t\in V\}$, the number of connected components decreases by at most $|V|-1$. The claim follows.

\end{proof}

\begin{proposition}\label{p:crossterm}
For any indices $k_1,k_2\geq 1$, $\cF_{(k_1,k_2)}$ has an $N$-degree at most $k_1+k_2-2$,
\begin{align}\begin{split}\label{e:crossterm}
\lim_{N\rightarrow \infty}\frac{\cF_{(k_1,k_2)}}{N^{k_1+k_2-2}}
=\lim_{N\rightarrow \infty}
&\left(\sum_{k\geq 1}\frac{k}{\theta}\del q_{k} (T^{k_1})_{00}\del q_{-k} (T^{k_2})_{00}\right.\\
&\left.\left.+\sum_{k,l\geq 1}\frac{q_{k,l}(M_N)kl}{\theta^2}\del q_{k}(T^{k_1})_{00}\del q_{l}(T^{k_2})_{00}\right)\right|_{q_j=q_j(F_N)},
\end{split}\end{align}
where the infinite Topelitz matrix $[T_{ij}]_{i,j=0}^\infty$ is defined in Section \ref{s:SN}, given by $T_{ij}=q_{i-j}$. 
 For any positive integer $r\geq 3$, and indices $k_1,k_2,\cdots, k_r\geq 1$, $\cF_{(k_1, k_2, \cdots,k_r)}$ has an $N$-degree less than $k_1+k_2+\cdots+k_r-r$.
\end{proposition}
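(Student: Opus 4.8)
The plan is to prove Proposition \ref{p:crossterm} by combining the decomposition from Theorem \ref{t:decompose} with the $N$-degree bookkeeping of Claim \ref{c:CLTup} and Claim \ref{c:edgeineq}, and then doing an explicit leading-order computation for the case $r=2$. First I would recall that by \eqref{e:defFk}, $\cF_{(k_1,k_2,\cdots,k_r)}$ is a sum of terms $\cE(\cK,\cS,\cD,\cP)$ over triples with $\cG(\cK,\cS,\cD,\cP)$ \emph{connected}, and each such term has the shape \eqref{e:sum}: a prefactor $N^{|\{t:\cS_t=+,t\notin\cD\}|}$ times a sum over $\{j_t:\cS_t=-,t\notin\cD\}$ of a polynomial $Q_{\cK,\cS,\cD}$ of degree $k_1+\cdots+k_r-r$ times $\prod_{V\in\cP}(\prod_{t\in V}\del p_{j_t})(\ln F_N)|_{\bmp=1^N}$. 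Writing $\cP=\{V_1,\dots,V_s\}$, Claim \ref{c:CLTup} gives that the $j$-summation contributes $N$-degree at most $\sum_u \bm1_{|V_u|=1}$ when all $|V_u|\leq 2$ (and strictly less otherwise). Hence the $N$-degree of $\cE(\cK,\cS,\cD,\cP)$ is bounded by
\begin{align*}
|\{t:\cS_t=+,t\notin\cD\}| + \sum_{u=1}^s \bm1_{|V_u|=1} = (k_1+\cdots+k_r) - 2|\cD| - |\{t:\cS_t=0\}| - \sum_{V\in\cP}(|V|-1),
\end{align*}
using $|\{t:\cS_t=+,t\notin\cD\}| = |\{t:\cS_t=+\}| - |\cD|$, $|\{t:\cS_t=-,t\notin\cD\}| = |\{t:\cS_t=-\}| - |\cD|$, $\sum_u 1 = |\cP|$, $\sum_u|V_u| = |\{t:\cS_t=-,t\notin\cD\}|$, and $|\{t:\cS_t=+\}|+|\{t:\cS_t=-\}|+|\{t:\cS_t=0\}| = k_1+\cdots+k_r$.

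Next, since $\cG(\cK,\cS,\cD,\cP)$ is connected on $r$ vertices, Claim \ref{c:edgeineq} gives $|\cD| + \sum_{V\in\cP}(|V|-1) \geq r-1$, so the $N$-degree is at most $(k_1+\cdots+k_r) - 2|\cD| - |\{t:\cS_t=0\}| - \sum_{V\in\cP}(|V|-1) \leq (k_1+\cdots+k_r) - r + 1 - |\cD| - |\{t:\cS_t=0\}|$. This is already $\leq k_1+\cdots+k_r-r+1$; I need to shave it to $\leq k_1+\cdots+k_r-r$ in general, and strictly below when $r\geq 3$. The key observation is that when $r=2$, equality $|\cD|+\sum_V(|V|-1) = r-1 = 1$ forces either $|\cD|=1,\ \cP$ all singletons, or $|\cD|=0$ and $\cP$ has exactly one block of size $2$ (all others singletons); in both cases $|\{t:\cS_t=0\}|=0$ is needed for the bound to be attained, and since there are no terms with $\bm1_{|V_u|=1}$ extra loss, the $N$-degree is exactly $k_1+k_2-2$ and only these two families of triples contribute. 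For $r\geq 3$: if $|\cD|+\sum_V(|V|-1)=r-1$ the same analysis as $r=2$ applies componentwise, but then connectivity together with $r\geq 3$ means the hypergraph has at least $r-1\geq 2$ edges, and a careful look shows that to attain the top degree one would need every $\del p$-block to be a singleton \emph{and} every pairing edge to saturate — but a connected hypergraph on $r\geq 3$ vertices built only from $2$-element edges (from $\cD$) and at most one $2$-element block necessarily has some vertex of degree $\geq 2$, which via the structure of \eqref{e:sum} (two derivative slots landing on overlapping index sets) forces at least one block $V$ with $|V|\geq 2$ beyond what the count allows, or forces $|\cD|\geq r-1$ combined with a repeated index constraint ${\bm1}_{j_s=j_t}$ which kills one free summation variable and hence drops the degree by one; either way the $N$-degree drops below $k_1+\cdots+k_r-r$. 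I expect this combinatorial case analysis — showing the top-degree terms cannot survive when $r\geq 3$ — to be the main obstacle, and I would handle it by arguing that any connected $\cG$ with $r\geq 3$ and total ``edge weight'' exactly $r-1$ must use a $\cD$-edge or a $\cP$-block contributing a constraint (either $\bm1_{j_s=j_t}$ inside $Q$, or a shared derivative), each of which strictly lowers the effective degree of the $j$-summation by Claim \ref{c:CLTup} applied with the constraint, or a spanning-tree double-count argument showing $2|\cD| + \sum_V(|V|-1) \geq r$ whenever $r\geq 3$ and $\cG$ is connected with at least one $\cD$-edge.

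Finally, for the explicit formula \eqref{e:crossterm}, I specialize to $r=2$ and collect the two surviving families of triples. The family with $\cD=\emptyset$ and one $\del p$-block $\{t_1,t_2\}$ of size $2$ (with $\sigma_\cK(t_1)=1$, $\sigma_\cK(t_2)=2$, i.e.\ one index slot from each of $I^{(k_1)}$ and $I^{(k_2)}$) produces, after the identifications $q_{i,j}(F_N) = \del p_i\del p_j(\ln F_N)|_{\bmp=1^N}$ and the rewriting in Section \ref{s:SN}, the term $\sum_{k,l\geq 1}\frac{q_{k,l}(F_N)kl}{\theta^2}\del q_k(T^{k_1})_{00}\,\del q_l(T^{k_2})_{00}$ — here the factors $k/\theta$, $l/\theta$ come from the $\prod_{t:\cS_t=-}\frac{j_t}{\theta}$ in \eqref{e:tQSD}, and $\del q_k(T^{k_1})_{00}$ records the combinatorial weight of inserting a $\del p_k$ into the $I^{(k_1)}$ Toeplitz string (computed just as in the proof of Proposition \ref{p:split} where $\lim I^{(k)}/N^k = \lim (T^k)_{00}$). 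The family with $|\cD|=1$ (a pairing $(s,t)$ with $\cS_s=-$, $\cS_t=+$, $s$ in one $I$-block and $t$ in the other) produces the term $\sum_{k\geq 1}\frac{k}{\theta}\del q_k(T^{k_1})_{00}\,\del q_{-k}(T^{k_2})_{00}$, where the $k/\theta$ is the factor $\frac{j_s\bm1_{j_s=j_t}}{\theta}$ from \eqref{e:lastterm2} and $\del q_{-k}$ records that the paired slot acts as the ``$p_{-k}=p_k^*$'' entry of the Toeplitz string for $I^{(k_2)}$; summing over which block of the pair is $+$ and which is $-$ and over the locations of $s,t$ reassembles exactly the stated sum $\sum_{k\geq 1}\frac{k}{\theta}(\cdots)$. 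I would carry out these two identifications of combinatorial weights with $\del q_k(T^{k_i})_{00}$ carefully but briefly, noting they are the same bookkeeping already done for Proposition \ref{p:split}, and then \eqref{e:crossterm} follows by adding the two contributions and taking $N\to\infty$.
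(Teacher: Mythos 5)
Your overall strategy (Theorem \ref{t:decompose} plus the degree bookkeeping of Claim \ref{c:CLTup} and Claim \ref{c:edgeineq}, then isolating the $r=2$ leading terms) matches the paper's, and the part where you identify the two surviving families for $r=2$ and unpack them into the two sums in \eqref{e:crossterm} via the Toeplitz bookkeeping of Proposition \ref{p:split} is sound. However, there is a concrete algebraic error in your degree count that both costs you a factor of $N$ and then causes the $r\geq 3$ step to go astray.

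The displayed ``identity''
\[
|\{t:\cS_t=+,t\notin\cD\}| + \sum_{u} \bm1_{|V_u|=1} = (k_1+\cdots+k_r) - 2|\cD| - |\{t:\cS_t=0\}| - \sum_{V\in\cP}(|V|-1)
\]
is not an identity: the correct right-hand side has $\sum_{u:|V_u|\geq 2}|V_u|$ in place of $\sum_{V}(|V|-1)$, and the two differ by $\sum_{u}\bm1_{|V_u|\geq 2}$, the number of non-singleton blocks. (Your computation produces $|\{t:\cS_t=+\}|-|\cD|+s$ on the right, whereas the left is $|\{t:\cS_t=+\}|-|\cD|+\sum_u\bm1_{|V_u|=1}$.) As a consequence your upper bound is too weak by exactly that amount, which is why you land at $k_1+\cdots+k_r-r+1$ and then need to ``shave'' one more. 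With the sharper accounting the extra unit falls out for free: write
\[
2|\cD| + \sum_{u}\bm1_{|V_u|\geq 2}|V_u| = \Bigl(|\cD|+\sum_{u}(|V_u|-1)\Bigr) + \Bigl(|\cD| + \sum_{u}\bm1_{|V_u|\geq 2}\Bigr),
\]
and the second parenthesis is the number of edges of $\cG(\cK,\cS,\cD,\cP)$, hence $\geq 1$ by connectedness; combined with Claim \ref{c:edgeineq} this gives $\geq (r-1)+1 = r$. Equality forces a single edge that already spans $\{1,\dots,r\}$, i.e.\ a $\cD$-pair when $r=2$, or a block $V\in\cP$ with $|V|\geq r$. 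For $r\geq 3$ that block has $|V|\geq 3$, so Claim \ref{c:CLTup} drops the degree strictly — no extra combinatorial case analysis is needed.

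Finally, the fallback bound you propose for the $r\geq 3$ case, $2|\cD|+\sum_V(|V|-1)\geq r$, is actually false: take $\cD=\emptyset$ and one block $V\in\cP$ of size exactly $r$ whose image under $\sigma_\cK$ is all of $\{1,\dots,r\}$; then $2|\cD|+\sum_V(|V|-1)=r-1<r$. This is precisely the configuration for which the degree drop must instead come from $|V|\geq 3$ via Claim \ref{c:CLTup}, not from the raw edge count. So the fix is to replace the incorrect identity with the sharp one and use the single-edge observation; the rest of your argument then goes through and recovers the paper's proof.
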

\begin{proof}
We use the same notations  as in section \ref{s:Tl}. From Proposition \ref{t:decompose}, we have that 
$
\cF_{(k_1,k_2,\cdots,k_r)}
%\frac{\cF_{\{k_1,k_2,\cdots,k_r\}}}{N^{k_1+k_2+\cdots+k_r-r}}
$
is a sum over $\cE(\cK,\cS, \cD, \cP)$ such that the hypergraph $\cG(\cK, \cS, \cD, \cP)$ is connected, where $\cK=(k_1,k_2,\cdots, k_r)$, $\cS\in \cS(\cK)$ is a sign pattern, $\cD\in \cD(\cS)$ is a pairing, and $\cP\in \cP(\cS, \cD)$ is a partition,
\begin{align}\label{e:sumcopy2}
%\frac{N^{|\{t: \cS_t=+, t\not\in \cD\}|}}{N^{k_1+k_2+\cdots+k_r-r}}
N^{|\{t: \cS_t=+, t\not\in \cD\}|}
\sum_{\{j_t\in \bZ_{>0}: \cS_t=-,t\not\in \cD\}}Q_{\cK,\cS, \cD}((j_t: \cS_t=-, t\not\in \cD))\prod_{V\in \cP}\left.\left(\prod_{t\in V} \del p_{j_t}\right)(\ln F_N)\right|_{\bmp=1^N},
\end{align}
where $Q_{\cK,\cS, \cD}$ is a polynomial over $\{j_t: \cS_t=-, t\not\in \cD\}$. Say $\cP=\{V_1, V_2,\cdots, V_s\}$, then by Claim \ref{c:CLTup}, \eqref{e:sumcopy2} has an $N$-degree at most
\begin{align}\label{e:cltNd}
|\{t: \cS_t=+, t\not\in \cD\}|+\sum_{u=1}^s {\bf1}_{|V_u|=1}=
(k_1+k_2+\cdots+k_r)-|\{t:\cS_t=0\}|-2|\cD|-\sum_{u=1}^s {\bm1_{|V_u|\geq 2}}|V_u|,
\end{align}
if $|V_u|\leq 2$ for all $1\leq u\leq s$. And \eqref{e:sumcopy2} has an $N$-degree less than \eqref{e:cltNd}, if for some $1\leq u\leq s$, $|V_u|> 2$. % {\cor The hypergraph $\cG(\cK, \cS, \cD, \cP)$ has $r$ vertices, and $2|\cD|+\sum_{u=1}^s {\bm1_{|V_u|\geq 2}}|V_u|$ is its total number of degrees (ignoring self-loops).}
Since $\cG(\cK, \cS, \cD, \cP)$ is connected, either $|\cD|\geq 1$ or $|V_u|\geq 2$ for some $1\leq u\leq s$. By Claim \ref{c:edgeineq}, we have
\begin{align*}
2|\cD|+\sum_{u=1}^s {\bm1_{|V_u|\geq 2}}|V_u|
\geq 1+|\cD|+\sum_{u=1}^s (|V_u|-1)\geq r,
\end{align*}
and the equality holds, only if $\cG(\cK, \cS, \cD, \cP)$ has one edge $\{1,2,\cdots,r\}$, and its other edges are all singleton. We conclude that $\cF_{(k_1,k_2,\cdots, k_r)}$ has an $N$-degree at most $k_1+k_2+\cdots+k_r-r$.

If $r\geq 3$,  either $\eqref{e:cltNd}<k_1+k_2+\cdots+k_r-r$, or $\eqref{e:cltNd}=k_1+k_2+\cdots+k_r-r$ and there exists some $1\leq u\leq s$ such that $\{\sigma_\cK(t): t\in V_u\}=\{1,2,\cdots, r\}$. If this is the case, by Claim \ref{c:CLTup}, we in fact have that \eqref{e:sumcopy2} has an $N$-degree less than $\eqref{e:cltNd}=k_1+k_2+\cdots+k_r-r$. Therefore, if $r\geq 3$, $\cF_{(k_1, k_2, \cdots,k_r)}$ has an $N$-degree less than $k_1+k_2+\cdots+k_r-r$.

If $r=2$, $\eqref{e:cltNd}=0$ only if $\cG(\cK, \cS,\cD, \cP)$ has exactly one edge $\{1,2\}$, and its other edges are all singleton. The edge $\{1,2\}$ comes either from the pairing $\cD$, or the partition $\cP$. In the first case,
$|\{t:\cS_t=0\}|=0$, $|V_u|=1$ for all $1\leq u\leq s$ and $\cD=\{\{t_1t_2\}\}$, where $\sigma_\cK(t_1)=1$ and $\sigma_\cK(t_2)=2$. The sum of such terms are given by
\begin{align}\label{e:dterm}
\left.\sum_{k\geq 1}\frac{k}{\theta}\del q_{k} (T^{k_1})_{00}\del q_{-k} (T^{k_2})_{00}\right|_{q_j=q_j(F_N)}.
\end{align}
In the second case, $|\{t:\cS_t=0\}|=0$, $|\cD|=0$ and exactly one of $V_1, V_2,\cdots, V_s$ equals $\{t_1,t_2\}$ with $\sigma_\cK(t_1)=1$ and $\sigma_\cK(t_2)=2$, and others are singleton. The sum of such terms are given by
\begin{align}\label{e:vterm}
\left.\sum_{k,l\geq 1}\frac{q_{k,l}(F_N)kl}{\theta^2}\del q_{k}(T^{k_1})_{00}\del q_{l}(T^{k_2})_{00}\right|_{q_j=q_j(F_N)}.
\end{align}
The claim \eqref{e:crossterm} follows from combining \eqref{e:dterm} and \eqref{e:vterm}.
\end{proof}

\begin{proof}[Proof of Theorem \ref{t:clt}]
It follows from Proposition \ref{p:cumulant} and \ref{p:crossterm}, 
the covariance $\cov\langle\xi_{PP}^{(k)}[\bmla], \xi_{PP}^{(l)}[\bmla] \rangle$ has an $N$-degree $0$;
for any 
$r\geq 3$ and $k_1,k_2,\cdots, k_r\geq 1$, the $r$-th cumulant $\kappa_{k_1,k_2,\cdots,k_r}(\xi_{PP}^{(k)}[\bmla],k=1,2,3\cdots)$ has an $N$-degree less than $0$. Thus the sequence of random measures $\mu_{PP}[\bmla]$ satisfies the central limit theorems in the sense of Definition \ref{def:LLNCLT}. Combining with Lemma \ref{l:change} and \ref{l:CLTtoCLT}, we conclude that the sequence of random measures $\mu[\bmla]$ also satisfies the central limit theorems. In the following we determine the covariance structure of moments of the sequence of random measures $\mu[\bmla]$.

We take derivative of both sides of \eqref{e:invertT} with respect to $q_i$,
\begin{align*}\begin{split}
\sum_{k\geq 0}\frac{\del q_i (T^k)_{00}}{z^{k+1}}
&=\del{q_i}(z-T)^{-1}_{00}
=\del q_i \exp\left(-\frac{1}{2\pi \ri}\oint_{|w|=1+\varepsilon/2}\ln(z-T(w))\frac{\rd w}{w}\right)\\
&=\left(\frac{1}{2\pi\ri}\oint_{|w|=1+\varepsilon/2}\frac{w^{i}}{z-T(w)}\frac{\rd w}{w} \right) (z-T)^{-1}_{00}.
%=\del q_i \exp\left(-\frac{1}{2\pi i}\oint_{|w|=1+\varepsilon/2}\ln(z-T(w))\frac{\rd w}{w} \right)\\
%&=\left(\frac{1}{2\pi i}\oint_{|w|=1+\varepsilon/2}\frac{w^{-i-1}}{z-T(w)}\rd w \right) (z-T)^{-1}_{00}%\exp\left(-\frac{1}{2\pi i}\oint_{|w|=1+\varepsilon/2}\ln(z-T(w))\frac{\rd w}{w} \right).
\end{split}\end{align*}
Therefore, we have
\begin{align}\begin{split}\label{e:gfunction}
&\phantom{{}={}}\sum_{k_1,k_2\geq 1}\left(\sum_{i\geq 1}\frac{i}{\theta}\frac{\del q_{i} (T^{k_1})_{00}\del q_{-i} (T^{k_2})_{00}}{z_1^{k_1+1}z_2^{k_2+1}}+\sum_{i,j\geq 1}\frac{q_{i,j}(F_N)ij}{\theta^2}\frac{\del q_{i}(T^{k_1})_{00}\del q_{j}(T^{k_2})_{00}}{z_1^{k_1+1}z_2^{k_2+1}}\right)\\
&=\sum_{i\geq 1}\frac{i}{\theta}\del{q_{i}}(z_1-T)^{-1}_{00}\del{q_{-i}}(z_2-T)^{-1}_{00}
+\sum_{i,j\geq 1}\frac{q_{i,j}(F_N)ij}{\theta^2}\del{q_{i}}(z_1-T)^{-1}_{00}\del{q_{j}}(z_2-T)^{-1}_{00}\\
&=\left(\frac{1}{(2\pi \ri)^2}
\oint_{|w_1|=1+\varepsilon/3\atop|w_2|=1+\varepsilon/2}
%[w_1^0w_2^0]
\frac{\sum_{i\geq 1}iw_1^{i}w_2^{-i}/\theta +\sum_{i,j\geq 1}q_{i,j}(F_N)ijw_1^{i}w_2^{j}/\theta^2}{(z_1-T(w_1))(z_2-T(w_2))}\frac{\rd w_1}{w_1} \frac{\rd w_2}{w_2} \right)\\ %\exp\left(-\frac{1}{2\pi i}\oint_{|w_1|=1+\varepsilon/3}\ln(z_1-T(w_1))\frac{\rd w_1}{w_1} \right)\exp\left(-\frac{1}{2\pi i}\oint_{|w_2|=1+\varepsilon/2}\ln(z_2-T(w_2))\frac{\rd w_2}{w_2} \right).
&\phantom{{}={}}\times (z_1-T)^{-1}_{00}(z_2-T)^{-1}_{00}.
\end{split}\end{align}
We take the specialization $q_j=q_j(F_N)$ in \eqref{e:gfunction} and send $N\rightarrow \infty$. From Proposition \ref{p:crossterm}, the lefthand side becomes the generating function of $\lim_{N\rightarrow \infty}\cF_{(k_1,k_2)}/N^{k_1+k_2-2}$. For the righthand side of \eqref{e:gfunction}, on $|w_1|<|w_2|$, we have
\begin{align*}
&\sum_{i\geq 1}\frac{i}{\theta}w_1^{i-1}w_2^{-i-1}=\frac{1}{\theta}\frac{1}{(w_1-w_2)^2}.
\end{align*}
And $\sum_{i,j\geq1}q_{i,j}(F_N)ijw_1^{i-1}w_2^{j-1}=V_{F_N}(w_1,w_2)$ converges uniformly on $\{(w_1,w_2)\in \bC: |w_1|,|w_2|<1+\varepsilon\}$.
Therefore, by plugging $q_i=q_i(F_N)$, we have $T(w)|_{q_j=q_j(F_N)}=T_{F_N}(w)$, and
\begin{align*}\begin{split}
&\phantom{{}={}}\frac{1}{(2\pi \ri)^2}
\oint_{|w_1|=1+\varepsilon/3\atop|w_2|=1+\varepsilon/2}
\frac{\sum_{i\geq 1}iw_1^{i}w_2^{-i}/\theta +\sum_{i,j\geq 1}q_{i,j}(F_N)ijw_1^{i}w_2^{j}/\theta^2}{(z_1-T_{F_N}(w_1))(z_2-T_{F_N}(w_2))}
\frac{\rd w_1}{w_1}\frac{\rd w_2}{w_2}\\
&=\frac{1}{(2\pi \ri)^2}
\oint_{|w_1|=1+\varepsilon/3\atop|w_2|=1+\varepsilon/2}
\left(\frac{1}{\theta}\frac{1}{(w_1-w_2)^{2}} +\frac{V_{F_N}(w_1,w_2)}{\theta^{2}}\right)\frac{\rd w_1\rd w_2}{(z_1-T_{F_N}(w_1))(z_2-T_{F_N}(w_2))}.
\end{split}\end{align*}
Therefore, we get
\begin{align*}\begin{split}
&\lim_{N\rightarrow \infty}\sum_{k_1,k_2\geq 1}\frac{\cF_{(k_1,k_2)}}{N^{k_1+k_2-2}}\frac{1}{z_1^{k_1+1}z_2^{k_2+1}}
=
\lim_{N\rightarrow \infty}
(z_1-T_{F_N})^{-1}_{00}(z_2-T_{F_N})^{-1}_{00}\\
&\left(\frac{1}{(2\pi \ri)^2}
\oint_{|w_1|=\varepsilon/3\atop|w_2|=\varepsilon/2}
\left(\frac{1}{\theta}\frac{1}{(w_1-w_2)^{2}} +\frac{V_{F_N}(w_1+1,w_2+1)}{\theta^{2}}\right)\frac{\rd w_1\rd w_2}{(z_1-T_{F_N}(w_1+1))(z_2-T_{F_N}(w_2+1))}\right).
%&\exp\left(-\frac{1}{2\pi i}\oint_{|w_1|=1+\varepsilon/3}\ln(z_1-T_{F_N}(w_1))\frac{\rd w_1}{w_1} \right)\exp\left(-\frac{1}{2\pi i}\oint_{|w_2|=1+\varepsilon/2}\ln(z_2-T_{F_N}(w_2))\frac{\rd w_2}{w_2} \right).
\end{split}\end{align*}
Thanks to Proposition \ref{p:cumulant} and \eqref{e:formalsum},
\begin{align*}\begin{split}
&\phantom{{}={}}\lim_{N\rightarrow\infty}\cov\langle\Delta m_{PP}[\bmla](z_1),\Delta m_{PP}[\bmla](z_2)\rangle=\lim_{N\rightarrow \infty}\sum_{k_1,k_2\geq 1}\frac{\cov\langle \xi_{PP}^{(k_1)}[\bmla],\xi_{PP}^{(k_2)}[\bmla]\rangle}{z_1^{k_1+1}z_2^{k_2+1}}\\
&=\lim_{N\rightarrow \infty}(z_1+1)(z_2+1)\sum_{k_1,k_2\geq 1}\frac{\cF_{(k_1,k_2)}}{N^{k_1+k_2-2}z_1^{k_1+1}z_2^{k_2+1}}
=e^{-m_\mu(z_1)-m_\mu(z_2)}\\
&\times\left([w_1^{-1}w_2^{-1}]
\left(\sum_{a\geq 1}\frac{a}{\theta}w_1^{a-1}w_2^{-a-1} +\frac{V_{\mu}(w_1+1,w_2+1)}{\theta^{2}}\right)\frac{1}{(z_1-T_{\mu}(w_1+1))(z_2-T_{\mu}(w_2+1))} \right). 
\end{split}\end{align*}
From \eqref{e:Deltam}, the covariance structure of $\xi^{(k)}[\bmla], k=1,2,3,\cdots$ is uniquely determined by the covariance structure of $\xi^{(k)}_{PP}[\bmla], k=1,2,3,\cdots$.
\begin{align*}\begin{split}
&\phantom{{}={}}\lim_{N\rightarrow \infty}\cov\langle\Delta m[\bmla](z_1),\Delta m[\bmla](z_2)\rangle
=e^{-m_\mu(z_1)}e^{-m_\mu(z_2)}\lim_{N\rightarrow \infty}\cov\langle\Delta m_{PP}[\bmla](z_1),\Delta m_{PP}[\bmla](z_2)\rangle\\
&=\sum_{k_1,k_2\geq 1}\frac{1}{z_1^{k_1+1}z_2^{k_2+1}}
\left([w_1^{-1}w_2^{-1}]
\left(\sum_{a\geq 1}\frac{a}{\theta}w_1^{a-1}w_2^{-a-1} +\frac{V_{\mu}(w_1+1,w_2+1)}{\theta^{2}}\right)T_{\mu}^{k_1}(w_1+1)T^{k_2}_{\mu}(w_2+1) \right),
\end{split}\end{align*}
and the claim \eqref{e:cltcov} follows by comparing the coefficients of $1/z_1^{k_1+1}z_2^{k_2+1}$ on both side.
\end{proof}

 \subsection{Central limit theorems for several levels}\label{subs:mulCLT}
 
In this section, let $\{M_N\}_{N\geq 1}$, $\{\fm_N^{(1)}\}_{N\geq 1}, \{\fm_N^{(2)}\}_{N\geq 1}, \{\fm_N^{(3)}\}_{N\geq 1},\cdots$ be a sequence of probability measures on $\bY(N)$, with Jack generating functions $F_N(\bmp;\theta)$,$g_N^{(1)}(\bmp;\theta), g_N^{(2)}(\bmp;\theta), g_N^{(3)}(\bmp;\theta), \cdots $.  We recall the transition probabilities $\fp_N^{(t)}(\bmla,\bmmu)$, the Markov chain $\bmla^{(0)}, \bmla^{(1)}, \bmla^{(2)},\cdots$, the Jack generating functions $H_N^{(t)}(\bmp;\theta)$ from Section \ref{s:mulclt}.
 
By the same argument as in Lemma \ref{l:CLTtoCLT}, $\{\mu_{PP}[\bmla^{(t)}]\}_{0\leq t\leq T}$ satisfies the law of large numbers and the central limit theorems if and only if $\{\mu_{PP}[\bmla^{(t)}]\}_{0\leq t\leq T}$ satisfies the central limit theorems. If this is the case, 
for any $0\leq t\leq T$, $\mu[\bmla^{(t)}]$ converges weakly to $\mu^{(t)}$ and $\mu_{PP}[\bmla^{(t)}]$ converges weakly to $\mu_{PP}^{(t)}$. We denote
\begin{align*}\begin{split}
\Delta m_{PP}[\bmla^{(t)}](z)
&=N\int \frac{\rd \mu_{PP}[\bmla^{(t)}](x)-\bE\rd \mu_{PP}[\bmla^{(t)}](x)}{z-x}\\
\Delta m[\bmla^{(t)}](z)
&=N\int \frac{\rd \mu[\bmla^{(t)}](x)-\bE\rd \mu[\bmla^{(t)}](x)}{z-x}.
\end{split}\end{align*}
The covariance structure of moments of  $\{\mu[\bmla^{(t)}]\}_{0\leq t\leq T}$ is explicitly given by the covariance structure of moments  of  $\{\mu_{PP}[\bmla^{(t)}]\}_{0\leq t\leq T}$,
\begin{align*}
e^{m_{\mu^{(s)}}(z_1)}e^{m_{\mu^{(t)}}(z_2)}\cov\langle\Delta m[\bmla^{(s)}](z_1),\Delta m[\bmla^{(t)}](z_2)\rangle=\cov\langle\Delta m_{PP}[\bmla^{(s)}](z_1),\Delta m_{PP}[\bmla^{(t)}](z_2)\rangle+o(1),
\end{align*}
where $m_{\mu^{(s)}}(z)$ and $m_{\mu^{(t)}}(z)$ are the Stieltjes transforms of $\mu^{(s)}$ and $\mu^{(t)}$ respectively.

\begin{theorem}\label{t:mulmoment}
For any positive integer $r\geq 1$, and times $0\leq t_1\leq t_2\leq \cdots\leq t_r$, the joint moments of the Perelomov-Popov measures $\mu_{PP}[\bmla^{(t_1)}], \mu_{PP}[\bmla^{(t_2)}],\cdots, \mu_{PP}[\bmla^{(t_r)}]$ satisfy
\begin{align}\begin{split}\label{e:mulmoment}
&\bE\left[\prod_{j=1}^r\int x^{k_j}\rd \mu_{PP}[\bmla^{(t_j)}](x) \right]=\left(\frac{I^{(k_r)}}{N^{k_r}}+\frac{I^{(k_r+1)}}{N^{k_r+1}}\right)
\left(\prod_{s=t_{r-1}+1}^{t_r} g_N^{(s)}\right)\left(\frac{I^{(k_{r-1})}}{N^{k_{r-1}}}+\frac{I^{(k_{r-1}+1)}}{N^{k_{r-1}+1}}\right)\\
&
\left(\prod_{s=t_{r-2}+1}^{t_{r-1}} g_N^{(s)}\right)
\cdots \left(\frac{I^{(k_{2})}}{N^{k_{2}}}+\frac{I^{(k_{2}+1)}}{N^{k_{2}+1}}\right)
\left(\prod_{s=t_1+1}^{t_{2}} g_N^{(s)}\right)
\left.\left(\frac{I^{(k_{1})}}{N^{k_{1}}}+\frac{I^{(k_{1}+1)}}{N^{k_{1}+1}}\right) \left(\prod_{s=1}^{t_{1}} g_N^{(s)}\right)
F_N\right|_{\bmp=1^N}
.
\end{split}\end{align}
\end{theorem}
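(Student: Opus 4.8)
The plan is to iterate the single-level identity from Theorem \ref{t:moment} together with the defining relation \eqref{e:deffpinfinite} of the transition kernels, peeling off one factor at a time starting from the initial distribution $M_N$. First I would recall that for the starting measure we have, as elements of $\widehat{\Sym}$,
\begin{align*}
\sum_{\bmla\in\bY(N)}\bP(\bmla^{(0)}=\bmla)\frac{J_\bmla(\bmp;\theta)}{J_\bmla(1^N;\theta)}
=F_N(\bmp;\theta)+\Sp(J_\bmeta(\bmp;\theta):\bmeta\not\in\bY(N)),
\end{align*}
and more generally, by the computation preceding Theorem \ref{t:mulclt},
\begin{align*}
\sum_{\bmla\in\bY(N)}\bP(\bmla^{(t)}=\bmla)\frac{J_\bmla(\bmp;\theta)}{J_\bmla(1^N;\theta)}
=H_N^{(t)}(\bmp;\theta)+\Sp(J_\bmeta(\bmp;\theta):\bmeta\not\in\bY(N)),
\end{align*}
where $H_N^{(t)}(\bmp;\theta)=F_N(\bmp;\theta)\prod_{s=1}^t g_N^{(s)}(\bmp;\theta)$.

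The key observation is that the Nazarov–Sklyanin operators $I^{(k)}$ act diagonally on each $J_\bmla$ with eigenvalue controlled by $\mu_{PP}[\bmla]$ via \eqref{t:momentPP}, and, crucially, they preserve the ideal $\Sp(J_\bmeta(\bmp;\theta):\bmeta\not\in\bY(N))$ (as noted in the Remark following Theorem \ref{t:moment}), while evaluation at $\bmp=1^N$ kills that ideal since $J_\bmeta(1^N;\theta)=0$ for $\bmeta\not\in\bY(N)$. Therefore I would proceed by downward induction on the number of time-marginals, applying the following mechanism at each stage: given that $\bP(\bmla^{(t_{j})}=\bmla)$ has Jack generating function (mod the ideal) equal to some expression $\Phi$, the operator $(I^{(k_j)}/N^{k_j}+I^{(k_j+1)}/N^{k_j+1})$ applied to $\sum_\bmla \bP(\bmla^{(t_j)}=\bmla)(J_\bmla/J_\bmla(1^N))$ inserts the factor $\int x^{k_j}\rd\mu_{PP}[\bmla^{(t_j)}]$ into the coefficient of each $J_\bmla$; then multiplying by $\prod_{s=t_j+1}^{t_{j+1}}g_N^{(s)}$ and re-expanding via \eqref{e:deffpinfinite} advances the chain from time $t_j$ to time $t_{j+1}$ (again modulo the ideal, which is an ideal of $\Sym$ so multiplication by $g_N^{(s)}$ preserves it). Iterating $j=1,2,\dots,r$ from the inside out, and finally evaluating at $\bmp=1^N$, produces exactly the nested expression on the right-hand side of \eqref{e:mulmoment}, with the expectation $\bE$ appearing because the coefficient of $J_\bmla/J_\bmla(1^N;\theta)$ in the innermost object is the probability weight $\bP(\bmla^{(t_1)}=\bmla)=M_N$-pushforward along the chain, so that summing against $\prod_j\int x^{k_j}\rd\mu_{PP}[\bmla^{(t_j)}]$ is precisely $\bE[\prod_j \int x^{k_j}\rd\mu_{PP}[\bmla^{(t_j)}]]$ by the Markov property.

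In more detail, I would set up the induction so that after processing the first $j$ insertions one has, modulo $\Sp(J_\bmeta:\bmeta\not\in\bY(N))$,
\begin{align*}
\left(\prod_{s=t_j+1}^{t_{j+1}}g_N^{(s)}\right)\left(\frac{I^{(k_j)}}{N^{k_j}}+\frac{I^{(k_j+1)}}{N^{k_j+1}}\right)\cdots\left(\prod_{s=1}^{t_1}g_N^{(s)}\right)F_N
=\sum_{\bmla\in\bY(N)}\bE\!\left[\prod_{i=1}^{j}\int x^{k_i}\rd\mu_{PP}[\bmla^{(t_i)}]\;\bm1_{\bmla^{(t_{j+1})}=\bmla}\right]\frac{J_\bmla(\bmp;\theta)}{J_\bmla(1^N;\theta)},
\end{align*}
using at each step that $I^{(k)}$ is diagonal with the eigenvalue identity \eqref{t:momentPP}, that $I^{(k)}$ preserves the span ideal, and that \eqref{e:deffpinfinite} implements one Markov step. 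The base case $j=0$ is the displayed identity for $\bP(\bmla^{(0)}=\bmla)=M_N(\bmla)$, and the inductive step is the mechanism above. After the last insertion $j=r$ and evaluation at $\bmp=1^N$ — which uses $F_N(\bmp;\theta)|_{\bmp=1^N}=1$, $J_\bmla(1^N;\theta)/J_\bmla(1^N;\theta)=1$, and the vanishing of the ideal — the sum over $\bmla\in\bY(N)$ collapses to the total expectation, giving \eqref{e:mulmoment}.

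The main obstacle — really the only subtle point — is bookkeeping the interaction between the operators $I^{(k_j)}$, the multiplications by $g_N^{(s)}$, and the ideal $\Sp(J_\bmeta:\bmeta\not\in\bY(N))$: one must verify that at no stage does an application of $I^{(k_j)}$ or a multiplication by $g_N^{(s)}$ create components outside the ideal that would survive the final evaluation at $\bmp=1^N$ and spoil the identification with $\bE[\cdot]$. This is handled by the two facts already available in the excerpt — that the Nazarov–Sklyanin operators preserve the span $\Sp(J_\bmeta:\bmeta\not\in\bY(N))$ (Remark after Theorem \ref{t:moment}), and that this span is an ideal of $\Sym$ (noted in Section \ref{s:mulclt}) so that multiplication by any $g_N^{(s)}$ preserves it as well — together with $J_\bmeta(1^N;\theta)=0$ for $\bmeta\not\in\bY(N)$. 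A secondary technical point is that all manipulations take place in the completion $\widehat{\Sym}$ and the sums may be infinite, so one needs the analyticity/summability afforded by Assumption \ref{asup:infinite} (applied to each $H_N^{(\lfloor N\tau\rfloor)}$) to justify rearranging the series and applying the operators term by term; this is routine given Proposition \ref{p:summable}. Everything else is a straightforward iteration of Theorem \ref{t:moment}.
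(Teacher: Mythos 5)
Your proposal is correct and takes essentially the same approach as the paper: an induction that peels off one $I^{(k_j)}$ insertion and one block of $g_N^{(s)}$ multiplications at a time, tracking everything modulo the ideal $\Sp(J_\bmeta:\bmeta\not\in\bY(N))$, using the diagonal action of the Nazarov--Sklyanin operators (via $(\ref{t:momentPP})$), the ideal and operator-invariance properties of that span, and evaluation at $\bmp=1^N$ to kill the remainder. The only cosmetic difference is that the paper writes the induction hypothesis with the explicit weight $M_N(\bmla_0)\prod_t\fp(\bmla_{t-1},\bmla_t)$ while you phrase it as a conditional expectation against $\bm1_{\bmla^{(t_{j+1})}=\bmla}$; these are the same identity.
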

\begin{proof}
By the definition \eqref{e:deffp} of the coefficients $\fp_N^{(t)}(\bmla,\bmmu)$, we have for $\bmla,\bmmu\in \bY(N)$, 
\begin{align*}
g_N^{(t)}(\bmp;\theta)\frac{J_\bmla(\bmp;\theta)}{J_\bmla(1^N;\theta)}
=\sum_{\bmmu\in \Y(N)}\fp_N^{(t)}(\bmla,\bmmu)
\frac{J_\bmmu(\bmp;\theta)}{J_\bmmu(1^N;\theta)}+\Sp(J_\bmla(\bmp;\theta): \bmla\not\in \bY(N)).
\end{align*}
The linear subspace $\Sp(J_\bmla(\bmp;\theta): \bmla\not\in \bY(N))$ is an ideal of the algebra of symmetric functions (in infinitely many variables). Moreover, the Nazarov-Sklyanin Operators preserve the subspace $\Sp(J_\bmla(\bmp;\theta): \bmla\not\in \bY(N))$.
It follows by induction that for any $1\leq u\leq r$,
\begin{align}\begin{split}\label{e:mulmoment2}
&\phantom{{}={}} \left(\frac{I^{(k_{u})}}{N^{k_{u}}}+\frac{I^{(k_{u}+1)}}{N^{k_{u}+1}}\right)
\left(\prod_{s=t_{u-1}+1}^{t_{u}} g_N^{(s)}\right)\cdots
\left(\frac{I^{(k_{1})}}{N^{k_{1}}}+\frac{I^{(k_{1}+1)}}{N^{k_{1}+1}}\right) \left(\prod_{s=1}^{t_{1}} g_N^{(s)}\right)
F_N
\\
&=\sum_{\bmla_0,\bmla_1,\cdots,\bmla_{t_u}\in \bY(N)}M_{N}(\bmla_0)\prod_{t=1}^{t_u}\fp(\bmla_{t-1},\bmla_{t})\prod_{j=1}^u \int x^{k_j}\rd \mu_{PP}[\bmla^{(t_j)}](x)
\frac{J_{\bmla_{t_u}}(\bmp;\theta)}{J_{\bmla_{t_u}}(1^N;\theta)}\\
&+\Sp(J_\bmla(\bmp;\theta): \bmla\not\in \bY(N))
\end{split}\end{align}
The claim \eqref{e:mulmoment} follows by taking $\bmp=1^N$ in \eqref{e:mulmoment2} and $u=r$.
\end{proof}

Fix a positive integer $r$, and indices $k_1,k_2,\cdots,k_r\geq 1$, we recall the $r$-tuple $\cK=(k_1,k_2,\cdots,k_r)$, the set of sign patterns $\cS(\cK)$, the set of pairings $\cD(\cS)$, and the set of conditions $\cC(\cK, \cS,\cD)$ from Section \ref{s:Tl}. For a given $r$-tuple $\cK$, the set of sign patterns $\cS(\cK)$, and the set of pairings $\cD(\cS)$, we define the set of colored partitions $\cP(\cK, \cS, \cD)$, which is a partition $\cP\in \cP(\{t:\cS_t=-,t\not\in \cD\})$, with a map $\sigma_{\cP}: \cP\mapsto \{1,2,\cdots,r\}$ such that for any $V\in \cP$ and $t\in V$, $\sigma_\cK(t)\geq \sigma_{\cP}(V)$. For a given $r$-tuple $\cK$, a sign patterns $\cS\in \cS(\cK)$, a  pairing $\cD\in\cD(\cS)$, and a colored partition $\cP\in \cP(\cK, \cS, \cD)$, we define the multi-edge hypergraph $\cG(\cK,\cS,\cD, \cP)$ on the vertex set $\{1,2,\cdots, r\}$ the same as in Section \ref{s:Tl}.

The same as Theorem \ref{t:decompose}, we have
\begin{proposition}\label{p:muldecompose}
For any positive integer $r$, indices $k_1,k_2,\cdots,k_r\geq 1$, and times $0\leq t_1\leq t_2\leq\cdots\leq t_r$,
\begin{align*}\begin{split}
&\phantom{{}={}}\left. \frac{I^{(k_{r})}}{N^{k_{r}}}
\left(\prod_{s=t_{r-1}+1}^{t_{r}} g_N^{(s)}\right)\cdots
\frac{I^{(k_{1})}}{N^{k_{1}}}\left(\prod_{s=1}^{t_{1}} g_N^{(s)}\right)
F_N\right|_{\bmp=1^N}
\end{split}\end{align*}
is a sum over $\cE(\cT, \cK,\cS,\cD, \cP)$, 
\begin{align}\label{e:mulsum}
N^{|\{t: \cS_t=+, t\not\in \cD\}|}\sum_{\{j_t\in \bZ_{>0}: \cS_t=-,t\not\in \cD\}}Q_{\cK,\cS, \cD}((j_t: \cS_t=-, t\not\in \cD))\prod_{V\in \cP}\left.\left(\prod_{t\in V}\del p_{j_t}\right)(\ln H_N^{(t_{\sigma_\cP(V)})})\right|_{\bmp=1^N},
\end{align}
where $\cT=(t_1,t_2,\cdots,t_r)$, $\cK=(k_1,k_2,\cdots,k_r)$, $\cS\in \cS(\cK)$ is a sign pattern, $\cD\in \cD(\cS)$ is a pairing, $\cP\in \cP(\cK, \cS, \cD)$ is a colored partition,
and 
$Q_{\cK,\cS, \cD}$ is a polynomial over $\{j_t: \cS_t=-, t\not\in \cD\}$ with degree  
\begin{align*}
\deg(Q_{\cK,\cS, \cD})=k_1+k_2+\cdots+k_r-r.
\end{align*}
\end{proposition}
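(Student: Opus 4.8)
The plan is to adapt the proof of Theorem \ref{t:decompose} essentially verbatim, tracking the extra structure coming from the intermediate factors $\prod_{s} g_N^{(s)}$. First I would recall the key identities that make the single-level case work: the formula \eqref{e:Ik} for $I^{(k)}$ as a sum of products of entries $L_{ij}$, the decomposition of each such product according to sign patterns $\cS\in\cS(\cK)$ and pairings $\cD\in\cD(\cS)$ (Claim \ref{c:firststep}), and the summation-of-polynomials lemma (Lemma \ref{lem:basic1}) that collapses the constrained sums over the ``$+$'' indices into a single polynomial $Q_{\cK,\cS,\cD}$ of the stated degree (Claim \ref{c:existtQ}). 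None of these steps sees the measure at all — they are purely about how the Nazarov--Sklyanin operators act on $\widehat\Sym$ — so they carry over unchanged. The only genuinely new ingredient is bookkeeping: when the operator $\del p_{j_t}$ is moved past the factors $g_N^{(s)}$, it acts not on $\ln F_N$ but on whichever partial product $H_N^{(t_{\sigma_\cP(V)})}=F_N\prod_{s=1}^{t_{\sigma_\cP(V)}}g_N^{(s)}$ separates it from the $I^{(k_j)}$ that produced it.

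The key steps, in order, would be: (1) Fix $\cT,\cK$ and expand $I^{(k_r)}\cdots I^{(k_1)}(\prod_s g_N^{(s)})F_N$ using \eqref{e:Ik}; by the commutation relations of the $p_i$, $p_i^*$, normal-order each monomial so that all multiplication operators precede all differential operators, producing the sign pattern $\cS$ and the pairing $\cD$ exactly as in Claim \ref{c:firststep}. (2) Observe that a differential operator $\del p_{j_t}$ arising from $I^{(k_u)}$ (so that $\sigma_\cK(t)=u$) gets applied, after normal-ordering, only to the block of symmetric functions lying to its right, which is precisely $F_N\prod_{s=1}^{t_u}g_N^{(s)}=H_N^{(t_u)}$ together with the monomials $p_{j_{t'}}$ created by operators $I^{(k_{u'})}$ with $u'<u$; since $\del p_{j_t}\big(e^{\ln H_N^{(t_u)}}\cdot(\text{monomials})\big) = H_N^{(t_u)}\cdot\del p_{j_t}(\ln H_N^{(t_u)})\cdot(\text{monomials}) + (\text{lower terms})$, iterating the Leibniz rule groups the surviving $\del p_{j_t}$'s into blocks $V$, and the block $V$ acts on $\ln H_N^{(t_{\sigma_\cP(V)})}$ where $\sigma_\cP(V):=\min_{t\in V}\sigma_\cK(t)$. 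This is exactly the constraint $\sigma_\cK(t)\ge\sigma_\cP(V)$ in the definition of the colored partition set $\cP(\cK,\cS,\cD)$. (3) Perform the sums over the ``$+$'' indices under the conditions $\cC(\cK,\cS,\cD)$ using Lemma \ref{lem:basic1} verbatim as in Claim \ref{c:existtQ}; this is unaffected by which $H_N^{(t)}$ the $\del p$'s land on, and gives the polynomial $Q_{\cK,\cS,\cD}$ with $\deg Q_{\cK,\cS,\cD}=k_1+\cdots+k_r-r$ and the prefactor $N^{|\{t:\cS_t=+,\,t\notin\cD\}|}$. (4) Set $\bmp=1^N$ to obtain \eqref{e:mulsum}.

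I expect the main obstacle to be purely notational rather than conceptual: one must check carefully that, after all the commutations, a $\del p_{j_t}$ created by $I^{(k_u)}$ never reaches a factor $g_N^{(s)}$ with $s>t_u$ — i.e. that the ordering of operators $I^{(k_r)},\dots,I^{(k_1)}$ interleaved with the products $\prod_{s=t_{u-1}+1}^{t_u}g_N^{(s)}$ in the statement is exactly such that each differential operator sees only $H_N^{(t_u)}$ to its right, and hence the ``color'' of each block $V$ is well-defined by the minimum of $\sigma_\cK$ over $V$ — together with verifying that the pairings $\cD$ behave identically (a ``$-$''–``$+$'' pair $(s,t)$ produces a scalar $j_s\bm1_{j_s=j_t}/\theta$ just as in \eqref{e:lastterm2}, with no dependence on the $g_N^{(s)}$'s). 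Once this interleaving is pinned down, the degree count and the $N$-power count are literally copied from the proof of Theorem \ref{t:decompose}, since the polynomial $Q_{\cK,\cS,\cD}$ depends only on $\cK,\cS,\cD$ and not on the measures involved.
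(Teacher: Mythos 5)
Your proof follows the paper's implicit intent exactly: the paper gives no argument for Proposition~\ref{p:muldecompose}, simply asserting ``the same as Theorem~\ref{t:decompose},'' and your plan correctly identifies that Claims~\ref{c:firststep} and \ref{c:existtQ} carry over unchanged, and that the genuinely new content is tracking which $\ln H_N^{(t_u)}$ each block of $\del p$'s lands on. Your observation that the block $V$ acts on $\ln H_N^{(t_u)}$ with $u=\min_{t\in V}\sigma_\cK(t)$ — because the element of $V$ that first hits a symmetric function directly is the one produced by the innermost $I^{(k_u)}$, which at that moment sees $H_N^{(t_u)}$ and nothing further to the left — is correct, and is the reason the constraint $\sigma_\cK(t)\ge\sigma_\cP(V)$ in the definition of $\cP(\cK,\cS,\cD)$ is satisfied (with equality at the minimum; the terms with $\sigma_\cP(V)<\min$ simply receive coefficient zero, which is consistent with the existential form of the statement).

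There is, however, a small but real imprecision in your step~(1) where you say the normal-ordering produces ``the pairing $\cD$ exactly as in Claim~\ref{c:firststep}.'' In the multi-level expression $I^{(k_r)}\cdots I^{(k_1)}\,(\cdots)\,F_N$, the operator $I^{(k_r)}$ is outermost but carries the \emph{largest} indices under $\sigma_\cK$, which is the reverse of the physical left-to-right ordering used in Theorem~\ref{t:decompose} ($I^{(k_1)}\cdots I^{(k_r)}F_N$). Consequently, a cross-block pairing — a differential operator from $I^{(k_u)}$ commuted right past a multiplication operator from $I^{(k_{u'})}$ with $u'<u$ — is a pair $(s,t)$ with $\cS_s=-$, $\cS_t=+$ and $s>t$, which does \emph{not} lie in $\cD(\cS)$ as defined in Section~\ref{s:Tl} (that definition requires $s<t$). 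The commutator factor $(j_s/\theta)\bm1_{j_s=j_t}$ is of course unchanged, as you note, but the index set $\cD(\cS)$ and the conditions $\cC(\cK,\cS,\cD)$ need to be read with the block order reversed, or equivalently re-indexed; otherwise the sum misses exactly the cross-time pairings. This is visible in the asymmetry of the leading term of Proposition~\ref{p:mulcrossterm}, where the differential side $\del q_k$ appears at $(T^{k_2})_{00}$ (later time $\tau_2$) and the multiplication side $\del q_{-k}$ at $(T^{k_1})_{00}$ (earlier time $\tau_1$). You should make this reversal explicit rather than asserting identity with the single-level pairings; once that is done, the degree count and the power of $N$ are, as you say, literally copied from Claim~\ref{c:existtQ}, so the proof is complete.
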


For any positive integer $r$, indices $k_1,k_2,\cdots,k_r\geq 1$, and times $0\leq \tau_1\leq \tau_2\leq\cdots\leq \tau_r$, we take $\cT=(\lfloor N\tau_1\rfloor, \lfloor N\tau_2 \rfloor, \cdots, \lfloor N\tau_r\rfloor)$, and define 
\begin{align*}
\cF_{(k_1,k_2,\cdots,k_r)}^{(\tau_1,\tau_2,\cdots,\tau_r)}\deq \sum_{\cS\in\cS(\cK), \cD\in \cD(\cS), \cP\in \cP(\cK,\cS, \cD)\atop\cG(\cK,\cS, \cD, \cP) \text{ is connected}}\cE(\cT, \cK,\cS, \cD, \cP).
\end{align*}

The same as Proposition \ref{p:cumulant}, we have
\begin{proposition}\label{p:mulcumulant}
For any positive integer $r\geq 2$, indices $k_1,k_2,\cdots,k_r\geq 1$, and times $0\leq t_1\leq t_2\leq\cdots\leq t_r$, as a formal power series of $1/z_1, 1/z_2,\cdots, 1/z_r$,
\begin{align*}\begin{split}
&\phantom{{}={}}\sum_{k_1,k_2,\cdots,k_r\geq 1}\frac{\kappa(\xi_{PP}^{(k_1)}[\bmla^{(\lfloor N\tau_1\rfloor)}],\xi_{PP}^{(k_2)}[\bmla^{(\lfloor N\tau_2\rfloor)}], \cdots, \xi_{PP}^{(k_r)}[\bmla^{(\lfloor N\tau_r\rfloor)}])}{z_1^{k_1+1}z_2^{k_2+1}\cdots z_r^{k_r+1}}\\
&=\prod_{j=1}^{r}(z_j+1)\sum_{k_1,k_2,\cdots,k_r\geq 1}\frac{\cF_{(k_1,k_2,\cdots, k_r)}^{(\tau_1,\tau_2,\cdots,\tau_r)}}{N^{k_1+k_2+\cdots+k_r-r}z_1^{k_1+1}z_2^{k_2+1}\cdots z_r^{k_r+1}}.
\end{split}\end{align*}
\end{proposition}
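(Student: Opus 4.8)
The plan is to follow the proof of Proposition~\ref{p:cumulant} line by line, replacing Theorem~\ref{t:moment} by Theorem~\ref{t:mulmoment} and Theorem~\ref{t:decompose} by Proposition~\ref{p:muldecompose}. Since the random variables $\xi_{PP}^{(k)}[\bmla^{(t)}]$ are centered, their $r$-th joint cumulant for $r\geq 2$ coincides with the $r$-th joint cumulant of the uncentered variables $N\int x^{k}\rd\mu_{PP}[\bmla^{(t)}]$, so by M\"obius inversion on the partition lattice
\[
\kappa(\xi_{PP}^{(k_1)}[\bmla^{(\lfloor N\tau_1\rfloor)}],\dots,\xi_{PP}^{(k_r)}[\bmla^{(\lfloor N\tau_r\rfloor)}])
=\sum_{\cP\in\cP(\{1,\dots,r\})}(-1)^{|\cP|-1}(|\cP|-1)!\prod_{V\in\cP}\bE\Big[\prod_{s\in V}N\int x^{k_s}\rd\mu_{PP}[\bmla^{(\lfloor N\tau_s\rfloor)}]\Big].
\]
Each block expectation is, by Theorem~\ref{t:mulmoment} applied to the sub-collection of times $\{\lfloor N\tau_s\rfloor:s\in V\}$, the nested operator string $\prod_{s\in V}\big(\tfrac{I^{(k_s)}}{N^{k_s-1}}+\tfrac{I^{(k_s+1)}}{N^{k_s}}\big)$ interleaved with the products $\prod_s g_N^{(s)}$ and evaluated at $\bmp=1^N$, which is exactly the multi-level analogue of the expression appearing in the proof of Proposition~\ref{p:cumulant}.

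The one new ingredient I would need is the multi-level analogue of Proposition~\ref{p:totalsum}. Decomposing the full string $\tfrac{I^{(k_r)}}{N^{k_r}}(\prod g)\cdots\tfrac{I^{(k_1)}}{N^{k_1}}(\prod g)F_N\big|_{\bmp=1^N}$ by Proposition~\ref{p:muldecompose} writes it as a sum of terms $\cE(\cT,\cK,\cS,\cD,\cP)$; I would group these according to the partition $\{U_1,\dots,U_v\}$ of $\{1,\dots,r\}$ induced by the connected components of $\cG(\cK,\cS,\cD,\cP)$ and check that $\cE(\cT,\cK,\cS,\cD,\cP)$ factors over those components verbatim as in \eqref{e:Usplit}. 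The point that makes this work is that in \eqref{e:mulsum} the factor attached to a block $V\in\cP$ is $\big(\prod_{t\in V}\del p_{j_t}\big)(\ln H_N^{(t_{\sigma_\cP(V)})})$, which depends on $V$ only through the single time $t_{\sigma_\cP(V)}$ and not on how the levels are interleaved; hence a connected component carried by a vertex set $U$ reproduces exactly the nested string and the functions $H_N^{(\cdot)}$ associated to $U$, so the sum over terms inducing $\{U_1,\dots,U_v\}$ equals $\prod_{u=1}^v\cF^{(\tau|_{U_u})}_{(k_s:s\in U_u)}$. M\"obius-inverting this identity over $\cP(\{1,\dots,r\})$ gives the analogue of \eqref{e:tsum3}, expressing $\cF^{(\tau_1,\dots,\tau_r)}_{(k_1,\dots,k_r)}$ as the alternating sum over partitions of products of restricted nested strings at $\bmp=1^N$, with the powers of $N$ tracked exactly as in \eqref{e:degQKSD}.

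With these two steps in hand, the generating-function computation is identical to the last display in the proof of Proposition~\ref{p:cumulant}: substituting the block expectations of the first step into the cumulant series, resumming each variable $z_j$ produces the prefactor $(z_j+1)$, the leftover powers of $N$ assemble into $N^{-(k_1+\cdots+k_r-r)}$, and the alternating sum over $\cP$ collapses, by the identity of the second step, to $\cF^{(\tau_1,\dots,\tau_r)}_{(k_1,\dots,k_r)}$. The only genuinely new work is the second paragraph --- verifying that the colored-partition bookkeeping of Proposition~\ref{p:muldecompose} is compatible with restriction to a sub-collection of levels, so that the connected-component factorization, and hence the M\"obius inversion, goes through; this is the step I expect to require the most care, since it is where the interleaving of the $g_N^{(s)}$ with the Nazarov--Sklyanin operators enters, but it is a structural extension of Proposition~\ref{p:totalsum} rather than a new estimate.
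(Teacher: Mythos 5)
Your proposal is correct and follows exactly the route the paper intends (the paper itself just says ``the same as Proposition~\ref{p:cumulant}''): shift-invariance of cumulants of order $r\geq 2$, Theorem~\ref{t:mulmoment} for the block expectations, a multi-level version of the connected-component factorization in Proposition~\ref{p:totalsum}, and the same resummation producing the $(z_j+1)$ prefactors. You also correctly single out the multi-level analogue of Proposition~\ref{p:totalsum} as the one piece of extra bookkeeping that the paper leaves unstated, and your observation that the block contribution $\bigl(\prod_{t\in V}\del p_{j_t}\bigr)(\ln H_N^{(t_{\sigma_\cP(V)})})$ depends on $V$ only through the single time $t_{\sigma_\cP(V)}$ is the right reason the factorization over components goes through.
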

The same as Proposition \ref{p:crossterm}, we have 
\begin{proposition}\label{p:mulcrossterm}
For any indices $k_1,k_2\geq 1$ and times $0\leq \tau_1\leq \tau_2$, $\cF_{(k_1,k_2)}^{(\tau_1,\tau_2)}$ has an $N$-degree at most $k_1+k_2-2$,
\begin{align*}\begin{split}
\lim_{N\rightarrow \infty}\frac{\cF^{(\tau_1,\tau_2)}_{(k_1,k_2)}}{N^{k_1+k_2-2}}
=\lim_{N\rightarrow \infty}
&\left(\sum_{k\geq 1}\frac{k}{\theta} \left.\del q_{k} (T^{k_2})_{00}\right|_{q_j=q_j(H_N^{(\lfloor N\tau_2\rfloor)})}\left. \del q_{-k} (T^{k_1})_{00}\right|_{q_{j}=q_j(H_N^{(\lfloor N\tau_1\rfloor)})}\right.\\
&\left.+\sum_{k,l\geq 1}\frac{q_{k,l}(H_N^{(\lfloor N\tau_1\rfloor)})kl}{\theta^2}\left.\del q_{k}(T^{k_2})_{00}\right|_{q_j=q_j(H_N^{(\lfloor N\tau_2\rfloor)})}
\left.\del q_{l}(T^{k_1})_{00}\right|_{q_j=q_j(H_N^{(\lfloor N\tau_1\rfloor)})}\right).
\end{split}\end{align*}
 For any positive integer $r\geq 3$, indices $k_1,k_2,\cdots, k_r>0$,  and times $0\leq \tau_1\leq \tau_2\leq \cdots\leq \tau_r$, $\cF^{(\tau_1,\tau_2,\cdots,\tau_r)}_{(k_1, k_2, \cdots,k_r)}$ has an $N$-degree less than $k_1+k_2+\cdots+k_r-r$.
\end{proposition}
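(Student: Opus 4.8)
The plan is to establish Proposition~\ref{p:mulcrossterm} by running the same argument that proved Proposition~\ref{p:crossterm}, with the decomposition Theorem~\ref{t:decompose} replaced by its multilevel counterpart Proposition~\ref{p:muldecompose} and with the single Jack generating function $F_N$ replaced by the family $H_N^{(\lfloor N\tau\rfloor)}$. First I would recall that, by Proposition~\ref{p:muldecompose}, $\cF^{(\tau_1,\dots,\tau_r)}_{(k_1,\dots,k_r)}$ is a sum over quadruples $(\cK,\cS,\cD,\cP)$ with $\cG(\cK,\cS,\cD,\cP)$ connected of terms of the form \eqref{e:mulsum}, where now $\cP$ is a \emph{colored} partition and the factor attached to a block $V\in\cP$ is $\prod_{t\in V}\del p_{j_t}(\ln H_N^{(t_{\sigma_\cP(V)})})|_{\bmp=1^N}$. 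The crucial input needed is that Claim~\ref{c:CLTup} continues to apply block-by-block: since by hypothesis each $H_N^{(\lfloor N\tau\rfloor)}$ satisfies Assumption~\ref{a:CLT}, the sum over $\{j_t:t\in V\}$ of a polynomial times $\prod_{t\in V}\del p_{j_t}(\ln H_N^{(t_{\sigma_\cP(V)})})|_{\bmp=1^N}$ has $N$-degree $1$ if $|V|=1$, $N$-degree $0$ if $|V|=2$, and $N$-degree $<0$ if $|V|>2$; different blocks use different $H_N$'s but each one individually is controlled, and distinct blocks involve disjoint sets of summation variables, so the degrees add.

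With that in hand the counting is identical to the one-level case. For a term indexed by $(\cK,\cS,\cD,\cP)$ with $\cP=\{V_1,\dots,V_s\}$ and all $|V_u|\le 2$, the $N$-degree of \eqref{e:mulsum} is at most
\begin{align*}
|\{t:\cS_t=+,t\notin\cD\}|+\sum_{u=1}^s\bm1_{|V_u|=1}
=(k_1+\cdots+k_r)-|\{t:\cS_t=0\}|-2|\cD|-\sum_{u=1}^s\bm1_{|V_u|\ge 2}|V_u|,
\end{align*}
with strict inequality if some $|V_u|>2$. Since $\cG(\cK,\cS,\cD,\cP)$ is connected, Claim~\ref{c:edgeineq} gives $2|\cD|+\sum_u\bm1_{|V_u|\ge2}|V_u|\ge 1+|\cD|+\sum_u(|V_u|-1)\ge r$, so the $N$-degree is at most $k_1+\cdots+k_r-r$. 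For $r\ge 3$, equality in Claim~\ref{c:edgeineq} forces either $|\{t:\cS_t=0\}|>0$ or a block $V_u$ with $\{\sigma_\cK(t):t\in V_u\}=\{1,\dots,r\}$, i.e. $|V_u|\ge 3$ (using $r\ge 3$); in the latter case Claim~\ref{c:CLTup} drops the degree strictly, and in the former the extra $-|\{t:\cS_t=0\}|$ already does. Hence $\cF^{(\tau_1,\dots,\tau_r)}_{(k_1,\dots,k_r)}$ has $N$-degree strictly less than $k_1+\cdots+k_r-r$ when $r\ge 3$.

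For $r=2$ one identifies the leading terms exactly as before: $N$-degree $0$ is attained only when $\cG(\cK,\cS,\cD,\cP)$ has a single non-singleton edge $\{1,2\}$ and no zero signs, and this edge comes either from the pairing $\cD$ or from a two-element block of $\cP$. In the first case the contribution is $\sum_{k\ge1}(k/\theta)\,\del q_k(T^{?})_{00}\,\del q_{-k}(T^{?})_{00}$; here the key point, already implicit in the induction of Proposition~\ref{p:muldecompose}, is that the pairing connects the operator block at the later time $\tau_2$ with the generating function at the earlier time $\tau_1$ — so the $q_k$-derivative (a creation-side $p_k$ coming from the inner factor) is evaluated at $q_j(H_N^{(\lfloor N\tau_1\rfloor)})$ and the $q_{-k}$-derivative at $q_j(H_N^{(\lfloor N\tau_2\rfloor)})$ — and in the second case the two-element block's coefficient $q_{k,l}$ is a second $p$-derivative of $\ln H_N^{(\lfloor N\tau_1\rfloor)}$ since $\sigma_\cP(V)$ is the minimum color in the block, namely $1$, while the two derivatives of $(z-T)^{-1}_{00}$ land one on $T^{k_2}$ at time $\tau_2$ and one on $T^{k_1}$ at time $\tau_1$. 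Collecting the two cases gives precisely the displayed formula. The main obstacle I anticipate is purely bookkeeping: making sure the colored-partition structure of Proposition~\ref{p:muldecompose} routes each surviving factor to the correct time index (later times for the Nazarov--Sklyanin operator blocks, the minimal time in a block for the $\ln H_N$ factor), so that $\tau_1\wedge\tau_2$ appears in the $V_\mu$-part but the $U_\mu$-parts carry the individual times; once this is tracked, the asymptotics are identical to the one-level proof, and feeding the result into Proposition~\ref{p:mulcumulant} yields the covariance \eqref{e:mulcltcov} and the vanishing of higher cumulants.
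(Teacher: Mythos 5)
Your overall strategy is exactly the one the paper has in mind (the paper gives no independent argument for Proposition~\ref{p:mulcrossterm}, only the remark that it goes ``the same as Proposition~\ref{p:crossterm}''), and the degree counting is carried out correctly: Proposition~\ref{p:muldecompose} replaces Theorem~\ref{t:decompose}, Claim~\ref{c:CLTup} applies block-by-block because each $H_N^{(\lfloor N\tau\rfloor)}$ is assumed to satisfy Assumption~\ref{a:CLT}, Claim~\ref{c:edgeineq} kills all connected terms with $N$-degree strictly below $k_1+\cdots+k_r-r$ when $r\ge 3$, and for $r=2$ only the single-edge hypergraphs contribute, with the edge coming either from a pairing in $\cD$ or from a two-element block in $\cP$. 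The colored-partition bookkeeping for the two-element block is also handled correctly: $\sigma_\cP(V)=\min_{t\in V}\sigma_\cK(t)=1$, so the block factor is $q_{k,l}(H_N^{(\lfloor N\tau_1\rfloor)})$.

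However, the parenthetical routing you give for the pairing term is backwards and contradicts both the formula you claim to arrive at and the paper's own conventions. For $i>0$ one has $q_i(G)=\frac{1}{N}\frac{i}{\theta}\,\del\ln G/\del p_i\big|_{\bmp=1^N}$, so $q_k$ with $k>0$ is the \emph{annihilation-side} quantity (it records $p_k^*$ hitting $\ln G$), not a ``creation-side $p_k$'' as you write; $q_{-k}=1$ records the creation side. In the composition $I^{(k_2)}(\cdot)I^{(k_1)}(\cdot)F_N$ the later-time operator $I^{(k_2)}$ is outermost, and the pairing arises when an annihilation $p^*$ from $I^{(k_2)}$ commutes past a creation $p$ from $I^{(k_1)}$. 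Consequently the removed annihilation slot lives in $(T^{k_2})_{00}$ evaluated at $q_j=q_j(H_N^{(\lfloor N\tau_2\rfloor)})$ and the removed creation slot lives in $(T^{k_1})_{00}$ at $q_j=q_j(H_N^{(\lfloor N\tau_1\rfloor)})$, giving $\sum_{k\ge 1}\frac{k}{\theta}\,\del q_k(T^{k_2})_{00}\big|_{\tau_2}\,\del q_{-k}(T^{k_1})_{00}\big|_{\tau_1}$, as displayed — the opposite of what your parenthetical says. (The same remark applies to the second, $q_{k,l}$-weighted, term: the $\del q_k$ factor is on $T^{k_2}$ at $\tau_2$ and the $\del q_l$ factor on $T^{k_1}$ at $\tau_1$.) One likely source of confusion is that between Theorem~\ref{t:decompose}, where the outermost operator is $I^{(k_1)}$ (so \eqref{e:dterm} carries $\del q_k$ on $(T^{k_1})_{00}$), and Proposition~\ref{p:muldecompose}, where the outermost operator is $I^{(k_r)}$, the role of ``index $1$'' flips; since this is precisely the bookkeeping you flag as the main obstacle, it needs to be straightened out before the displayed formula, and thence Theorem~\ref{t:mulclt}, actually drops out.
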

Theorem \ref{t:mulclt} follows from combining Proposition \ref{p:mulcumulant} and \ref{p:mulcrossterm}.
 
 \subsection{Necessary conditions for the central limit theorems}\label{subs:invCLT}
 
 In Proposition \ref{p:Fdegreeclt}, we show that the central limit theorems imply certain estimates on the quantities $\cF_{(k_1,k_2,\cdots, k_r)}$ as defined in \eqref{e:defFk}. Since $\cF_{(k_1,k_2,\cdots, k_r)}$ can be expressed as a finite sum of product of terms in the form
\begin{align}\label{e:dpp}
\sum_{i_1,i_2,\cdots, i_r\geq 1}{i_1\choose k_1}{i_2\choose k_2}\cdots {i_r\choose k_r} \left.\frac{\del^r \ln F_N(\bmp;\theta)}{\del p_{i_1}\del p_{i_2}\cdots \del p_{i_r}}\right|_{\bmp=1^N}.
\end{align}
By a delicate induction arguments, the estimates on the quantities $\cF_{(k_1,k_2,\cdots, k_r)}$ transfer to the estimates on \eqref{e:dpp}, and Theorems \ref{t:invclt} follow.

%\begin{proposition}\label{p:Fdegreelln}
%Under the assumption of Theorem \ref{t:invlln}, 
% For any positive integer $r\geq 2$, and indices $k_1,k_2,\cdots, k_r\geq 1$, $I^{(k_1)}I^{(k_2)}\cdots I^{(k_r)}F_N(\bmp;\theta)|_{\bmp=1^N}-\cF_{(k_1)}\cF_{(k_2)}\cdots\cF_{(k_r)}$ has an $N$-degree less than $k_1+k_2+\cdots+k_r$.
%\end{proposition}

\begin{proposition}\label{p:Fdegreeclt}
Under the assumption of Theorem \ref{t:invclt}, for any indices $k\geq 1$, $\cF_{(k)}$ has an $N$-degree at most $k$. For any indices $k_1,k_2\geq 1$, $\cF_{(k_1,k_2)}$ has an $N$-degree at most $k_1+k_2-2$,
 For any positive integer $r\geq 3$, and indices $k_1,k_2,\cdots, k_r\geq 1$, $\cF_{(k_1, k_2, \cdots,k_r)}$ has an $N$-degree less than $k_1+k_2+\cdots+k_r-r$.
\end{proposition}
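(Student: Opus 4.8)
The plan is to relate $\cF_{(k_1,\dots,k_r)}$ directly to the joint cumulants of the Perelomov--Popov moments, and then to translate the central--limit hypotheses into $N$-degree bounds on these cumulants. First I would apply Proposition~\ref{p:cumulant}, which expresses the generating function of the $r$-th cumulant $\kappa_{k_1,\dots,k_r}(\xi_{PP}^{(k)}[\bmla],k\geq 1)$ as $\prod_{j=1}^r(z_j+1)$ times the generating function of $\cF_{(k_1,\dots,k_r)}/N^{k_1+\cdots+k_r-r}$. Since $\xi_{PP}^{(k)}[\bmla]$ is $N$ times the centered $k$-th moment of $\mu_{PP}[\bmla]$, and by Lemma~\ref{l:CLTtoCLT} and Remark~\ref{r:change} the central limit theorems for $\mu[\bmla]$ (assumed in Theorem~\ref{t:invclt}) are equivalent to those for $\mu_{PP}[\bmla]$, we know: the variances $\cov(\xi_{PP}^{(k)}[\bmla],\xi_{PP}^{(l)}[\bmla])$ stay bounded, the first moments $\bE[\xi_{PP}^{(k)}[\bmla]]/N = \bE\int x^k\,\rd\mu_{PP}$ converge, and all $r$-th cumulants with $r\geq 3$ tend to zero. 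Reading off the coefficient of $1/(z_1^{k_1+1}\cdots z_r^{k_r+1})$ in the cumulant generating function, and using that multiplication by $\prod(z_j+1)$ only mixes finitely many coefficients, I obtain: $\cF_{(k)}$ has $N$-degree at most $k$ (from $\bE\int x^k\,\rd\mu_{PP} = \lim_N (\cF_{(k)}/N^k + \cF_{(k+1)}/N^{k+1})$ being $O(1)$, combined with the degree bound on $\cF_{(k+1)}$ which follows inductively); $\cF_{(k_1,k_2)}$ has $N$-degree at most $k_1+k_2-2$ (from boundedness of $\cov(\xi_{PP}^{(k_1)},\xi_{PP}^{(k_2)})$); and $\cF_{(k_1,\dots,k_r)}$ has $N$-degree strictly less than $k_1+\cdots+k_r-r$ for $r\geq 3$ (from the $r$-th cumulant vanishing).

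The care required is in the inductive bookkeeping. The relation $\kappa_{k_1,\dots,k_r}(\xi_{PP}) = \prod_j(z_j+1)$-weighted sum of $\cF_{(k_1',\dots,k_r')}/N^{\sum k_i'-r}$ expresses each cumulant as a finite linear combination of $\cF$-terms with indices $k_i' \in \{k_i, k_i+1\}$. To extract the bound on a particular $\cF_{(k_1,\dots,k_r)}$ one must peel off the $\cF$-terms with strictly larger total index, which is why the argument proceeds by induction on $k_1+\cdots+k_r$ (and, for the $r=1$ case, one needs the $r=1$ statement for index $k+1$ before concluding it for index $k$, which is fine since the bound for $\cF_{(k+1)}$ is weaker in the relevant sense — it is multiplied by an extra $1/N$). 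Concretely: assume the three claimed bounds hold for all index tuples of smaller total sum; then for a given tuple the cumulant relation isolates $N^{-(\sum k_i - r)}\cF_{(k_1,\dots,k_r)}$ plus a combination of terms each of which is, by induction, of order $o(1)$ (for $r\geq 3$), $O(1)$ (for $r=2$), or $O(1)$ (for $r=1$, after accounting for the $N^{-k}$ normalization), so the stated behavior of the cumulant forces the stated $N$-degree of $\cF_{(k_1,\dots,k_r)}$.

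The main obstacle I anticipate is making the base case and the interleaving of the inductions on $r$ and on $\sum k_i$ fully rigorous, since the relation in Proposition~\ref{p:cumulant} for a fixed $r$ involves $\cF$-terms of the \emph{same} $r$ but larger $k$'s, and one must be sure that the $\prod_j(z_j+1)$ factor and the coefficient extraction genuinely produce only finitely many cross terms with controllable signs and that no circularity arises. I would handle this by ordering the claims first by $r$ (the $r=1$ and $r=2$ cases first, as they feed into the $r\geq 3$ argument only trivially) and within each $r$ by induction on $k_1+\cdots+k_r$, and by being explicit that for $r\geq 3$ every auxiliary term appearing in the combination has, by the inductive hypothesis, $N$-degree strictly below its naive value, so the leading contribution to the $r$-th cumulant is exactly $N^{-(\sum k_i - r)}\cF_{(k_1,\dots,k_r)}$ up to lower order; since the $r$-th cumulant vanishes, $\cF_{(k_1,\dots,k_r)}$ must have $N$-degree strictly less than $\sum k_i - r$. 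The transfer of these $\cF$-bounds to bounds on the individual expressions \eqref{e:dpp} — needed to verify Assumption~\ref{a:CLT} itself — is then a separate delicate induction using Theorem~\ref{t:decompose} and Proposition~\ref{p:totalsum}, which I would carry out afterward, but the $N$-degree statement of Proposition~\ref{p:Fdegreeclt} proper is exactly the cumulant computation just described.
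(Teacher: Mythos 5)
Your proposal identifies the right ingredients — Lemma~\ref{l:CLTtoCLT} to pass from $\mu[\bmla]$ to $\mu_{PP}[\bmla]$, Theorem~\ref{t:moment} for $r=1$, and Proposition~\ref{p:cumulant} for $r\geq 2$ — but the inductive bookkeeping you sketch has a genuine directionality problem that you flag as a "possible obstacle" without actually resolving it.

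The issue is this: the relation from Proposition~\ref{p:cumulant} expresses $\kappa_{k_1,\dots,k_r}$ as a finite sum of terms $N^{-(\sum k_i'-r)}\cF_{(k_1',\dots,k_r')}$ with $k_i'\in\{k_i,k_i+1\}$, so \emph{every} auxiliary $\cF$-term in that relation has total index $\geq \sum k_i$, i.e., \emph{larger} than the target. An induction on $\sum k_i$ upward therefore gives you no control over the auxiliary terms when you write out the relation for $\kappa_{k_1,\dots,k_r}$: they fall outside the inductive hypothesis. You see the same problem explicitly in your $r=1$ step, where you propose to conclude the bound for $\cF_{(k)}$ from the bound on $\cF_{(k+1)}$ — this is an unfounded infinite descent toward larger indices with no terminating base case. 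The fix for $r=1$ is to go \emph{upward} from the base $\cF_{(1)}=I^{(1)}F_N|_{\bmp=1^N}=0$ (since $I^{(1)}=L_{00}=0$), and then $\cF_{(k+1)}/N^{k+1}=\bE\int x^k\,\rd\mu_{PP}-\cF_{(k)}/N^k$ gives the step. The fix for $r\geq 2$ is to apply the cumulant relation at the \emph{shifted} indices $\kappa_{k_1-1,\dots,k_r-1}$ (possible once all $k_i\geq 2$; tuples with some $k_i=1$ give $\cF=0$ because of the $I^{(1)}=0$ factor), so that the target $\cF_{(k_1,\dots,k_r)}$ is the term of \emph{highest} total index, and every auxiliary term does fall under the inductive hypothesis.

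The paper sidesteps all of this: instead of inverting the relation term-by-term, it divides by $\prod_j(z_j+1)$ as a formal geometric series in $1/z_j$ and reads off the closed-form finite alternating inversion
\[
\frac{\cF_{(k_1,\dots,k_r)}}{N^{\sum k_i - r}}=\sum_{\substack{1\leq l_i\leq k_i-1\\ 1\leq i\leq r}}(-1)^{\sum k_i-\sum l_i-r}\,\kappa_{l_1,\dots,l_r}\bigl(\xi_{PP}^{(k)}[\bmla]\bigr),
\]
together with the analogous finite sum for $r=1$ coming from the moment relation and $\cF_{(1)}=0$. Since this is a \emph{finite} linear combination of cumulants, each of which has the required $N$-degree by the CLT hypothesis, the claim follows with no induction at all. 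Your approach can be made rigorous once the direction of the induction and the choice of which cumulant relation to invert are corrected, but as written it does not close.
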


\begin{proof}[Proof of Proposition \ref{p:Fdegreeclt}]
Under the assumption of Theorem \ref{t:invclt}, thanks to Lemma \ref{l:CLTtoCLT}, the sequence of measures $\mu_{PP}[\bmla]$ satisfies the central limit theorems in the sense of Definition \ref{def:LLNCLT}. 
By Theorem \ref{t:moment}, the joint moments of the Perelomov-Popov measures satisfy
%\begin{align*}
%\frac{I^{(k_1)}I^{(k_2)}\cdots I^{(k_r)}F_N(\bmp;\theta)|_{\bmp=1^N}}{N^{k_1+k_2+\cdots+k_r}}=\sum_{\{1\leq l_i\leq k_i-1: 1\leq i\leq r\}}(-1)^{k_1+k_2+\cdots+k_r-l_1-l_2-\cdots-l_r-r}
%\bE\left[\prod_{i=1}^r\int x^{l_i}\rd \mu_{PP}(x)\right].
%\end{align*}
%And especially, we have
\begin{align*}
\frac{\cF_{(k)}}{N^k}=\frac{I^{(k)}F_N(\bmp;\theta)|_{\bmp=1^N}}{N^{k}}=\sum_{l=1}^k(-1)^{k-l-1}
\bE\left[\int x^{l}\rd \mu_{PP}(x)\right].
\end{align*}
By our definition \eqref{def:LLNCLT}, the limit exists
\begin{align*}
\lim_{N\rightarrow \infty}\frac{\cF_{(k)}}{N^k}=
\sum_{l=1}^{k-1}(-1)^{k-l-1}
\lim_{N\rightarrow\infty}\bE\left[\int x^{l}\rd \mu_{PP}(x)\right],
\end{align*}
It follows that $\cF_{(k)}$ has an $N$-degree at most $k$. 

Moreover, for any $k_1,k_2\geq 1$, $\kappa_{k_1,k_2}(\xi_{PP}^{(k)}[\bmla],k=1,2,3,\cdots)$ has an $N$-degree $0$, and for any $r\geq 3$, and  $k_1,k_2,\cdots,k_r\geq 1$, $\kappa_{k_1,k_2,\cdots,k_r}(\xi_{PP}^{(k)}[\bmla], k=1,2,3,\cdots)$ has an $N$-degree less than $0$. From Proposition \ref{p:cumulant}, for any $r\geq 2$ and $k_1,k_2,\cdots, k_r\geq 1$, we have
\begin{align*}
\frac{\cF_{(k_1,k_2,\cdots,k_r)}}{N^{k_1+k_2+\cdots+k_r-r}}=\sum_{\{1\leq l_i\leq k_i-1: 1\leq i\leq r\}}(-1)^{k_1+k_2+\cdots+k_r-l_1-l_2-\cdots-l_r-r}
\kappa_{l_1,l_2,\cdots,l_r}(\xi_{PP}^{(k)}[\bmla],k=1,2,3\cdots).
\end{align*}
It follows that for any indices $k_1,k_2\geq 1$, $\cF_{(k_1,k_2)}$ has an $N$-degree at most $k_1+k_2-2$,
 For any positive integer $r\geq 3$, and indices $k_1,k_2,\cdots, k_r\geq 1$, $\cF_{(k_1, k_2, \cdots,k_r)}$ has an $N$-degree less than $k_1+k_2+\cdots+k_r-r$.

\end{proof}

\begin{proposition}\label{p:decompQ}
For any $k\geq 1$ and $\cS\in \{-,+\}^k$ with $s=|\{t: \cS_t=-\}|$, $Q_{(k),\cS,\emptyset}(i_1,i_2,\cdots,i_s)$, defined as in \eqref{e:sum}, is a sum of terms in the following form
\begin{align}\label{e:form}
a_{(\bml)}{i_1\choose l_1}{i_2\choose l_2}\cdots{i_s\choose l_s},
\end{align}
parametrized by $\bml=(l_1,l_2,\cdots, l_s)$ with $l_1\geq l_2\geq \cdots\geq l_s\geq 1$. Moreover, $|\bml|=l_1+l_2+\cdots+l_s\leq k-1$. 
The only term of the form \eqref{e:form} with $s=1$ appears when $\cS=(+,+,\cdots,+,-)$, and is given by
\begin{align*}
\frac{k-1}{\theta}{i\choose k-1}.
\end{align*}
\end{proposition}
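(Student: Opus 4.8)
The plan is to read the claimed structure off the iterated summation that defines $Q_{(k),\cS,\emptyset}$ in Claim~\ref{c:existtQ}. Fix $\cS\in\{-,+\}^k$. Because $\cD=\emptyset$ and $\cS$ has no zero entries, \eqref{e:tQSD} gives $\tilde Q_{\cS,\emptyset}=\prod_{t:\,\cS_t=-}\frac{j_t}{\theta}$, so in \eqref{e:existtQ} only the ``$+$'' variables are summed and the ``$-$'' variables $i_1,\dots,i_s$ are spectators. Hence
\[
Q_{(k),\cS,\emptyset}(i_1,\dots,i_s)=\frac{i_1i_2\cdots i_s}{\theta^{s}}\,N_\cS(i_1,\dots,i_s),
\]
where $N_\cS(i_1,\dots,i_s)$ is the number of lattice paths $0=a_0,a_1,\dots,a_k=0$ whose $t$-th step has sign $\cS_t$, whose ``$-$'' steps have the prescribed sizes $i_1,\dots,i_s$, whose ``$+$'' step sizes are arbitrary positive integers, and which satisfy $a_t\geq 0$ for every $t$ --- these being exactly the constraints $\cC((k),\cS,\emptyset)$ of \eqref{e:cond1} and \eqref{e:cond2}. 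By Claim~\ref{c:existtQ} the function $N_\cS$ is a polynomial, and by the degree count \eqref{e:degQKSD} it has total degree $(k-1)-s$ whenever $Q_{(k),\cS,\emptyset}\not\equiv 0$.

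From this I would obtain the two ``size'' assertions simultaneously. Writing $N_\cS=\sum_{\bm m}b_{\bm m}\prod_v\binom{i_v}{m_v}$ in the binomial basis (so every $m_v\geq 0$) and applying the identity $x\binom{x}{m}=(m+1)\binom{x}{m+1}+m\binom{x}{m}$ to each factor, multiplication by $i_1\cdots i_s$ turns every $\binom{i_v}{m_v}$ into a combination of $\binom{i_v}{l_v}$ with $l_v\geq 1$; hence in the binomial expansion of $Q_{(k),\cS,\emptyset}$ every exponent satisfies $l_v\geq 1$. Moreover $\deg Q_{(k),\cS,\emptyset}=k-1$, and the family $\{\prod_v\binom{i_v}{l_v}:\sum_v l_v\leq k-1\}$ is a basis of the space of polynomials of total degree at most $k-1$, so every term also satisfies $|\bml|=\sum_v l_v\leq k-1$.

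Next I would settle the case $s=1$. If $Q_{(k),\cS,\emptyset}\not\equiv 0$ then $\cS_1=+$ and $\cS_k=-$ (otherwise the operator product in \eqref{e:Ik} annihilates, as observed at the start of the proof of Claim~\ref{c:existtQ}); with $s=1$ this forces $\cS=(+,+,\dots,+,-)$. For that pattern $\cC((k),\cS,\emptyset)$ reduces to $j_1+\cdots+j_{k-1}=i$ with all $j_t\geq 1$ (the partial sums being automatically positive), so $N_\cS(i)=\binom{i-1}{k-2}$ by stars and bars, and
\[
Q_{(k),(+,\dots,+,-),\emptyset}(i)=\frac{i}{\theta}\binom{i-1}{k-2}=\frac{k-1}{\theta}\binom{i}{k-1},
\]
using $i\binom{i-1}{k-2}=(k-1)\binom{i}{k-1}$; this is the single term $\bml=(k-1)$, and it is the only $s=1$ contribution since no other sign pattern realizes $s=1$.

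The remaining assertion, the ordering $l_1\geq l_2\geq\cdots\geq l_s$, is the step I expect to be the main obstacle. The plan is to refine the induction in the proof of Claim~\ref{c:existtQ}: process the positions $t=1,\dots,k$ from left to right, summing out the ``$+$'' variables, while strengthening the inductive hypothesis so that the partially summed polynomial $Q^{(t+1)}$, expanded in the binomial basis of its remaining variables, carries only exponent tuples compatible with the nesting order of the ``$-$'' positions of $\cS$; this already requires fixing the right labelling of $i_1,\dots,i_s$ relative to the block decomposition $\cS=+^{a_1}-^{b_1}\cdots +^{a_p}-^{b_p}$. Two kinds of steps arise: at a non-equality ``$+$'' position one sums a single $j_t$ over $1\leq j_t\leq y$ with $y$ a non-negative integer combination of later variables, and then re-expands $\binom{y}{\cdot}$ in the individual variables using the hockey-stick identity together with Vandermonde's convolution; at an equality ``$+$'' position one sums several ``$+$'' variables over a simplex of fixed total $y$ (again a combination of later variables) using the fixed-sum case of Lemma~\ref{lem:basic1} carried out in the binomial basis, followed once more by Vandermonde. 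Verifying that these re-expansions never produce an exponent tuple violating the prescribed order --- in particular controlling, at each equality step, which of the remaining variables are allowed to pick up additional degree --- is the delicate bookkeeping on which the proof hinges. Once it is in place, it combines with the bounds $l_v\geq 1$ and $|\bml|\leq k-1$ and with the explicit $s=1$ term established above to yield the proposition.
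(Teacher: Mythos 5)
Your first three paragraphs are correct and are in fact more explicit than the paper's own very terse argument, which merely cites the degree bound of Theorem~\ref{t:decompose} and then does the stars-and-bars computation for $s=1$; in particular the paper does not spell out why every exponent can be taken $\geq 1$. You supply exactly the right reason: with $\cS\in\{-,+\}^k$ and $\cD=\emptyset$, \eqref{e:tQSD} reads $\tilde Q_{\cS,\emptyset}=\prod_{t:\cS_t=-}j_t/\theta$, so the factor $i_1\cdots i_s/\theta^s$ pulls out of the sum defining $Q_{(k),\cS,\emptyset}$, and multiplication by each $i_v$ (via $x\binom{x}{m}=(m+1)\binom{x}{m+1}+m\binom{x}{m}$) pushes every binomial exponent up to $\geq 1$. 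Your lattice-path reading of the constraints, the degree count, and the $s=1$ evaluation reproducing \eqref{e:specialQ} all check out.

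Your final paragraph, however, is chasing something that is not there, and the refined induction you sketch could not close. The inequality $l_1\geq l_2\geq\cdots\geq l_s$ in \eqref{e:form} is a labelling convention rather than a structural constraint on $Q_{(k),\cS,\emptyset}$ to be propagated through Claim~\ref{c:existtQ}; read literally, with the $i_v$ attached to the $-$ positions in order of $t$, it is simply false. Take $k=4$ and $\cS=(+,-,+,-)$, so $i_1=j_2$, $i_2=j_4$; the constraints in $\cC((4),\cS,\emptyset)$ are $j_1+j_3=i_1+i_2$ and $j_3\leq i_2$, giving $i_2$ admissible pairs $(j_1,j_3)$, hence $Q_{(4),\cS,\emptyset}(i_1,i_2)=\theta^{-2}i_1 i_2^2=\theta^{-2}\bigl(2\binom{i_1}{1}\binom{i_2}{2}+\binom{i_1}{1}\binom{i_2}{1}\bigr)$, whose first term carries the exponent tuple $(1,2)$, not $(2,1)$. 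What the proposition is asserting, and all that its application needs (proof of Theorem~\ref{t:invclt}, displays \eqref{e:decomposeal}--\eqref{e:l=3}), is that the binomial expansion uses only exponent tuples with each entry $\geq 1$ and total $\leq k-1$; each such term is then filed under the partition $\bml$ obtained by sorting its exponents, which is harmless since the downstream quantities $\cL_{\bml}(F_N)$ of \eqref{e:derlln} are symmetric under permuting the entries of $\bml$. So delete that last paragraph; together with what you have already established, the proposition is proved.
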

\begin{proof}
From Proposition \ref{t:decompose}, $Q_{(k),\cS,\emptyset}$ is a polynomial of degree $k-1$.
%\begin{align}
%\deg(Q_{(k),\cS,\emptyset})= k-1.
%\end{align}
Therefore $Q_{(k),\cS,\emptyset}$ is a sum of terms in the form \eqref{e:form}. If $s=|\{t:\cS_t=-\}|=1$, then $|\{t:\cS_t=+\}|=k-1$, and $\cS=(+,+,\cdots,+,-)$. If this is the case $\tilde Q_{\cS,\cD}=j_k/\theta$. By the definition \eqref{e:existtQ} of $Q_{(k),\cS,\emptyset}$
\begin{align}\label{e:specialQ}
Q_{(k),\cS,\emptyset}(j_k)
=\sum_{j_1,j_2,\cdots,j_{k-1}\geq 1\atop j_1+j_2+\cdots+j_{k-1}=j_k}\frac{j_k}{\theta}
={j_k-1\choose k-2}\frac{j_k}{\theta}=\frac{k-1}{\theta}{j_k\choose k-1}.
\end{align}
This finishes the proof of Proposition \ref{p:decompQ}.
\end{proof}

\begin{proof}[Proof of Theorem \ref{t:invclt}]
We use the same notations  as in section \ref{s:Tl}. In the following, we prove by induction that for any positive integer $r\geq 1$ and indices $\bml=(l_1,l_2,\cdots,l_r)$, $l_1\geq l_2\geq \cdots l_{r}\geq 1$,
\begin{align}\label{e:derlln}
\cL_{\bml}(F_N)\deq\sum_{i_1,i_2, \cdots, i_{r}\geq 1}{i_1\choose l_1}{i_2\choose l_2}\cdots {i_{r}\choose l_r}\left.\frac{\del^{r} \ln F_N}{\del p_{i_1}\del p_{i_2}\cdots\del p_{i_{r}}}\right|_{\bmp=1^N},
\end{align}
has an $N$-degree at most $1$ if $r=1$, has an $N$-degree at most $0$ if $r=2$, and has an $N$-degree less than $0$ if $r\geq 3$. In the following we will view $\bml$ as a Young diagram, with partial order such that $\bml'\preceq \bml$ if $|\bml'|=|\bml|$ and $l'_1+l'_2+\cdots l'_i\leq l_1+l_2+\cdots+l_i$ for all $i$. When $|\bml|=0$, the above statement is trivial. We assume that the above statement holds for $|\bml|<k$ and prove the statement for $|\bml|=k$. There are three cases either $r=1$, $r=2$ or $r\geq 3$.

For $r=1$, from Theorem \ref{t:decompose}, we have that $\cF_{(k+1)}/N^{k+1}$
is a sum over $\cE(\cK,\cS, \cD, \cP)$ 
\begin{align}\label{e:sumlln}
\frac{N^{|\{t: \cS_t=+, t\not\in \cD\}|}}{N^{k+1}}\sum_{\{j_t\in \bZ_{>0}: \cS_t=-,t\not\in \cD\}}Q_{\cK,\cS, \cD}((j_t: \cS_t=-, t\not\in \cD))\prod_{V\in \cP}\left.\left(\prod_{t\in V}\del p_{j_t}\right)(\ln F_N)\right|_{\bmp=1^N},
\end{align}
where $\cK=(k+1)$, $\cS\in \cS(\cK)$ is a sign pattern, $\cD\in \cD(\cS)$ is a pairing, $\cP\in \cP(\cS, \cD)$ is a partition, $Q_{\cK,\cS, \cD}$ is a polynomial over $\{j_t: \cS_t=-, t\not\in \cD\}$ with degree $k$. From Theorem \ref{t:decompose}, the polynomial $Q_{\cK,\cS, \cD}$ has the factor $\prod_{t:\cS_t=-, t\not\in \cD}j_t$. If $\cP=\{V_1, V_2,\cdots, V_s\}$, 
the polynomial $Q_{\cK,\cS, \cD}$ decomposes into a sum of product of polynomials in variables $\{j_t: t\in V_u\}$ for $u=1,2,\cdots, s$. 
%\begin{align*}
%Q_{\cK,\cS, \cD}((j_t: \cS_t=-, t\not\in \cD))
%=\sum_{\bml_1,\bml_2,\cdots,\bml_s} a_{(\bml_1,\bml_2,\cdots,\bml_s)}\prod_{u=1}^s \prod
%\end{align*}
Therefore the expression \eqref{e:sumlln} is a finite sum of terms in the following form
\begin{align}\label{e:decomposeal}
\frac{N^{|\{t: \cS_t=+, t\not\in \cD\}|}}{N^{k+1}}
a_{(\bml_1,\bml_2,\cdots, \bml_s)}
\cL_{\bml_1}(F_N)\cL_{\bml_2}(F_N)\cdots \cL_{\bml_s}(F_N),
\end{align}
where the constant $a_{(\bml_1,\bml_2,\cdots, \bml_s)}$ is independent of $N$, the length $\ell(\bml_u)=|V_u|$ for $u=1,2,\cdots, s$, and $|\bml_1|+|\bml_2|+\cdots +|\bml_s|\leq k$.
If $s\geq 2$, we have $|\bml_1|,|\bml_2|,\cdots|\bml_s|\leq k-1$, therefore by our induction assumption, the $N$-degree of $\cL_{\bml_u}(F_N)$ is at most $1$ for any $1\leq u\leq s$, and the $N$-degree of \eqref{e:decomposeal} is at most $|\{t:\cS=+,t\not\in \cD\}|+s-(k+1)\leq |\{t:\cS=+,t\not\in \cD\}|+|\{t:\cS=-,t\not\in \cD\}|-(k+1)\leq 0$. In this case \eqref{e:sumlln} has an $N$-degree at most $0$.
If $s=1$, then $\cP=\{\{t:\cS_t=-,t\not\in \cD\}\}$, and \eqref{e:decomposeal} simplifies to
\begin{align}\label{e:decomposeal2} 
\frac{N^{|\{t: \cS_t=+, t\not\in \cD\}|}}{N^{k+1}}
a_{(\bml)}
\cL_{\bml}(F_N),
\end{align}
where $\ell(\bml)=|\{t:\cS_t=-,t\not\in \cD\}|$ and $|\bml|\leq k$.
If $|\bml|\leq k-1$, the same argument as above, by our induction assumption, \eqref{e:decomposeal2} has an $N$-degree at most $0$. Therefore we have
\begin{align}\label{e:decomposeal3}
\eqref{e:sumlln}=\frac{N^{|\{t: \cS_t=+, t\not\in \cD\}|}}{N^{k}}
\sum_{|\bml|=k}a_{(\bml)}
\frac{\cL_{\bml}(F_N)}{N}
+\{\text{terms with $N$-degree at most $0$}\}.
\end{align}
The coefficient $N^{|\{t: \cS_t=+, t\not\in \cD\}|}/N^k=1$ only if $\cD=\emptyset$ and $\cS=(+,+,\cdots,+,-)$. If this is the case, from Proposition \ref{p:decompQ}, we have
\begin{align}\label{e:l=1}
\eqref{e:decomposeal3}=\frac{k}{\theta}\frac{\cL_{(k)}(F_N)}{N}
+
\sum_{|\bml|=k,\bml\prec (k)}a_{(\bml)}
\frac{\cL_{\bml}(F_N)}{N}
+\{\text{terms with $N$-degree at most $0$}\}.
\end{align}

For $r=2$, and indices $k_1\geq k_2\geq 2$, such that $k_1+k_2=k+2$, it follows from Proposition \ref{t:decompose}, we have that $\cF_{(k_1,k_2)}/N^{k_1+k_2-2}$ 
is a sum over $\cE(\cK,\cS, \cD, \cP)$ 
\begin{align}\label{e:sumcopy3}
\frac{N^{|\{t: \cS_t=+, t\not\in \cD\}|}}{N^{k_1+k_2-2}}\sum_{\{j_t\in \bZ_{>0}: \cS_t=-,t\not\in \cD\}}Q_{\cK,\cS, \cD}((j_t: \cS_t=-, t\not\in \cD))\prod_{V\in \cP}\left.\left(\prod_{t\in V}\del p_{j_t}\right)(\ln F_N)\right|_{\bmp=1^N},
\end{align}
where $\cK=(k_1,k_2)$, $\cS\in \cS(\cK)$ is a sign pattern, $\cD\in \cD(\cS)$ is a pairing, $\cP\in \cP(\cS, \cD)$ is a partition, $Q_{\cK,\cS, \cD}$ is a polynomial over $\{j_t: \cS_t=-, t\not\in \cD\}$ with degree $k_1+k_2-2=k$, and the hypergraph $\cG(\cK, \cS, \cD, \cP)$ is connected. If $\cP=\{V_1, V_2,\cdots, V_s\}$, 
the polynomial $Q_{\cK,\cS, \cD}$ decomposes into a sum of product of polynomials in variables $\{j_t: t\in V_u\}$ for $u=1,2,\cdots, s$. 
%\begin{align*}
%Q_{\cK,\cS, \cD}((j_t: \cS_t=-, t\not\in \cD))
%=\sum_{\bml_1,\bml_2,\cdots,\bml_s} a_{(\bml_1,\bml_2,\cdots,\bml_s)}\prod_{u=1}^s \prod
%\end{align*}
Therefore the expression \eqref{e:sumcopy3} is a finite sum of terms in the following form
\begin{align}\label{e:decomposeal4}
\frac{N^{|\{t: \cS_t=+, t\not\in \cD\}|}}{N^{k}}
a_{(\bml_1,\bml_2,\cdots, \bml_s)}
\cL_{\bml_1}(F_N)\cL_{\bml_2}(F_N)\cdots \cL_{\bml_s}(F_N),
\end{align}
where the constant $a_{(\bml_1,\bml_2,\cdots, \bml_s)}$ is independent of $N$, the length $\ell(\bml_u)=|V_u|$ for $u=1,2,\cdots, s$, and $|\bml_1|+|\bml_2|+\cdots +|\bml_s|\leq k$.
If $s\geq 2$, we have $|\bml_1|,|\bml_2|,\cdots|\bml_s|\leq k-1$, therefore by our induction assumption, for $1\leq u\leq s$, the $N$-degree of $\cL_{\bml_u}(F_N)$ is at most $1$ if $|V_u|=1$; is at most $0$ if $|V_u|\geq 2$. Therefore the $N$-degree of \eqref{e:decomposeal4} is at most
\begin{align*}
|\{t: \cS_t=+, t\not\in \cD\}|+\sum_{u=1}^s 1_{\{|V_u|=1\}}-k
=2-2|\cD|-\sum_{u=1}^s(|V_u|-1_{\{|V_u|=1\}})\leq 0,
\end{align*}
where we used that the hypergraph $\cG(\cK,\cS, \cD, \cP)$ is connected, so either $|\cD|\geq1$ or $|V_u|\geq 2$ for some $1\leq u\leq s$. Therefore in this case, \eqref{e:sumcopy3} has an $N$-degree at most $0$. If $s=1$, then $\cP=\{\{t:\cS_t=-,t\not\in \cD\}\}$, and \eqref{e:decomposeal4} simplifies to
\begin{align}\label{e:decomposeal5} 
\frac{N^{|\{t: \cS_t=+, t\not\in \cD\}|}}{N^{k}}
a_{(\bml)}
\cL_{\bml}(F_N),
\end{align}
where $\ell(\bml)=|\{t:\cS_t=-,t\not\in \cD\}|$ and $|\bml|\leq k$.
If $|\bml|\leq k-1$, the same argument as above, by our induction assumption, \eqref{e:decomposeal5} has an $N$-degree at most $0$. Therefore we have
\begin{align}\label{e:decomposeal6}
\eqref{e:sumcopy3}=\frac{N^{|\{t: \cS_t=+, t\not\in \cD\}|}}{N^{k}}
\sum_{|\bml|=k}a_{(\bml)}
\cL_{\bml}(F_N)
+\{\text{terms with $N$-degree at most $0$}\}.
\end{align}
The coefficient $N^{|\{t: \cS_t=+, t\not\in \cD\}|}/N^k=1$ only if $\cD=\emptyset$, $\cS=(\cS^{(1)}, \cS^{(2)})$ and $\cS^{(1)}=(+,+,\cdots,+,-)$, $\cS^{(2)}=(+,+,\cdots,+,-)$. If this is the case, the polynomial $Q_{\cK,\cS, \emptyset}$ splits
\begin{align*}
Q_{\cK,\cS, \emptyset}((j_t: \cS_t=-))
=Q_{(k_1),\cS^{(1)}, \emptyset}((j_t: \cS^{(1)}_t=-))Q_{(k_2),\cS^{(2)}, \emptyset}((j_t: \cS^{(2)}_t=-)),
\end{align*}
and Proposition \ref{p:decompQ} implies that
\begin{align}\begin{split}\label{e:l=2}
\eqref{e:decomposeal6}
&=\frac{(k_1-1)(k_2-1)}{\theta^2}\cL_{(k_1-1,k_2-1)}(F_N)\\
&+
\sum_{|\bml|=k,\bml\prec (k_1-1,k_2-1)}a_{(\bml)}
\cL_{\bml}(F_N)
+\{\text{terms with $N$-degree at most $0$}\}.
\end{split}\end{align}

For $r\geq 3$, and indices $k_1\geq k_2\geq \cdots\geq k_r\geq 2$, such that $k_1+k_2+\cdots+k_r=k+r$, it follows from Proposition \ref{t:decompose}, we have that $\cF_{(k_1,k_2,\cdots, k_r)}/N^{k_1+k_2+\cdots+k_r-r}$ 
is a sum over $\cE(\cK,\cS, \cD, \cP)$ 
\begin{align}\label{e:copy10}
\frac{N^{|\{t: \cS_t=+, t\not\in \cD\}|}}{N^{k_1+k_2+\cdots+k_r-r}}\sum_{\{j_t\in \bZ_{>0}: \cS_t=-,t\not\in \cD\}}Q_{\cK,\cS, \cD}((j_t: \cS_t=-, t\not\in \cD))\prod_{V\in \cP}\left.\left(\prod_{t\in V}\del p_{j_t}\right)(\ln F_N)\right|_{\bmp=1^N},
\end{align}
where $\cK=(k_1,k_2,\cdots,k_r)$, $\cS\in \cS(\cK)$ is a sign pattern, $\cD\in \cD(\cS)$ is a pairing, $\cP\in \cP(\cS, \cD)$ is a partition, $Q_{\cK,\cS, \cD}$ is a polynomial over $\{j_t: \cS_t=-, t\not\in \cD\}$ with degree $k_1+k_2+\cdots+k_r-r=k$, and the hypergraph $\cG(\cK, \cS, \cD, \cP)$ is connected. If $\cP=\{V_1, V_2,\cdots, V_s\}$, 
the polynomial $Q_{\cK,\cS, \cD}$ decomposes into a sum of product of polynomials in variables $\{j_t: t\in V_u\}$ for $u=1,2,\cdots, s$. 
%\begin{align*}
%Q_{\cK,\cS, \cD}((j_t: \cS_t=-, t\not\in \cD))
%=\sum_{\bml_1,\bml_2,\cdots,\bml_s} a_{(\bml_1,\bml_2,\cdots,\bml_s)}\prod_{u=1}^s \prod
%\end{align*}
Therefore the expression \eqref{e:copy10} is a finite sum of terms in the following form
\begin{align}\label{e:copy11}
\frac{N^{|\{t: \cS_t=+, t\not\in \cD\}|}}{N^{k}}
a_{(\bml_1,\bml_2,\cdots, \bml_s)}
\cL_{\bml_1}(F_N)\cL_{\bml_2}(F_N)\cdots \cL_{\bml_s}(F_N),
\end{align}
where the constant $a_{(\bml_1,\bml_2,\cdots, \bml_s)}$ is independent of $N$, the length $\ell(\bml_u)=|V_u|$ for $u=1,2,\cdots, s$, and $|\bml_1|+|\bml_2|+\cdots +|\bml_s|\leq k$.
If $s\geq 2$, we have $|\bml_1|,|\bml_2|,\cdots|\bml_s|\leq k-1$, therefore by our induction assumption, for $1\leq u\leq s$, the $N$-degree of $\cL_{\bml_u}(F_N)$ is at most $1$ if $|V_u|=1$; is at most $0$ if $|V_u|=2$; is less than $0$ if $|V_u|\geq 3$. 
If $|V_u|\geq 3$ for some $1\leq u\leq s$, then the $N$-degree of \eqref{e:copy11} is less than
\begin{align*}\begin{split}
|\{t: \cS_t=+, t\not\in \cD\}|+\sum_{u=1}^s 1_{\{|V_u|=1\}}-k
&=r-2|\cD|-\sum_{u=1}^s(|V_u|-1_{\{|V_u|=1\}})\\
&\leq r-1-|\cD|-\sum_{u=1}^s(|V_u|-1)\leq 0,
\end{split}\end{align*}
where we used Claim \ref{c:edgeineq}. Otherwise we have $|V_u|\leq 2$ of all $1\leq u\leq s$. Again by Claim \ref{c:edgeineq},
the $N$-degree of \eqref{e:copy11} is at most
\begin{align*}\begin{split}
|\{t: \cS_t=+, t\not\in \cD\}|+\sum_{u=1}^s 1_{\{|V_u|=1\}}-k
&=r-2|\cD|-\sum_{u=1}^s(|V_u|-1_{\{|V_u|=1\}})\\
&\leq r-2\left(|\cD|-\sum_{u=1}^s(|V_u|-1)\right)\leq r-2(r-1)< 0,
\end{split}\end{align*}
Therefore in this case, \eqref{e:copy10} has an $N$-degree less than $0$. If $s=1$, then $\cP=\{\{t:\cS_t=-,t\not\in \cD\}\}$, and \eqref{e:copy11} simplifies to
\begin{align}\label{e:copy12} 
\frac{N^{|\{t: \cS_t=+, t\not\in \cD\}|}}{N^{k}}
a_{(\bml)}
\cL_{\bml}(F_N),
\end{align}
where $\ell(\bml)=|\{t:\cS_t=-,t\not\in \cD\}|$ and $|\bml|\leq k$.
If $|\bml|\leq k-1$, the same argument as above, by our induction assumption, \eqref{e:copy12} has an $N$-degree at most $0$. Therefore we have
\begin{align}\label{e:copy13}
\eqref{e:copy10}=\frac{N^{|\{t: \cS_t=+, t\not\in \cD\}|}}{N^{k}}
\sum_{|\bml|=k}a_{(\bml)}
\cL_{\bml}(F_N)
+\{\text{terms with $N$-degree less than $0$}\}.
\end{align}
The coefficient $N^{|\{t: \cS_t=+, t\not\in \cD\}|}/N^k=1$ only if $\cD=\emptyset$, $\cS=(\cS^{(1)}, \cS^{(2)},\cdots,\cS^{(r)})$ and $\cS^{(i)}=(+,+,\cdots,+,-)$ for $1\leq i\leq r$. If this is the case, the polynomial $Q_{\cK,\cS, \emptyset}$ splits
\begin{align*}
Q_{\cK,\cS, \emptyset}((j_t: \cS_t=-))
=\prod_{i=1}^rQ_{(k_i),\cS^{(i)}, \emptyset}((j_t: \cS^{(i)}_t=-)),
\end{align*}
and Proposition \ref{p:decompQ} implies that
\begin{align}\begin{split}\label{e:l=3}
\eqref{e:copy13}
&=\frac{\prod_{i=1}^r(k_i-1)}{\theta^r}\cL_{(k_1-1,k_2-1,\cdots, k_r-1)}(F_N)\\
&+
\sum_{|\bml|=k,\bml\prec (k_1-1,k_2-1,\cdots,k_r-1)}a_{(\bml)}
\cL_{\bml}(F_N)
+\{\text{terms with $N$-degree less than $0$}\}.
\end{split}\end{align}

We denote the vector $v$, parametrized by Young diagrams $\bml$ with size $|\bml|=k$,
\begin{align*}
\frac{\cL_{(k)}(F_N)}{N}, \{\cL_{\bml}(F_N)\}_{|\bml|=k, \bml\neq (k)}.
\end{align*}
Then \eqref{e:l=1}, \eqref{e:l=2} and \eqref{e:l=3} together can be rewritten as the following linear equation,
\begin{align}\label{e:keylinear}
(A+B)v=o,
\end{align}
where $A$ is an upper triangular matrix, independent of $N$, with diagonal entries given by $A_{\bml\bml}=\prod_{i=1}^{\ell(\bml)} l_i/\theta^{\ell(\bml)}$, $B$ is a matrix with each entry size $\OO(1/N)$, $o$ is a vector parametrized by Young diagrams of size $k$, such that $o_{\bml}$ has an $N$-degree at most $0$ if $\ell(\bml)\leq 2$, and less than $0$ if $\ell(\bml)\geq 3$. Since $A$ is upper triangular, with diagonal entries bounded away from $0$, $A^{-1}$ is also upper triangular. For $N$ large enough, $B$ is a small perturbation, so $A+B$ is invertible and the norm of $(A+B)^{-1}$ is bounded. We can solve for $v$, using \eqref{e:keylinear}
\begin{align*}
v=(A+B)^{-1}o=A^{-1}o-(A+B)^{-1}BA^{-1}o=A^{-1}o+\OO(1/N),
\end{align*}
it follows that $v_{\bml}$ has an $N$-degree at most $0$ if $\ell(\bml)\leq 2$, and less than $0$ if $\ell(\bml)\geq 3$. This finishes the induction. We can conclude that the the Jack generating functions $F_N(\bmp;\theta)$ satisfy Assumption \ref{a:CLT}, and $\{\fc_k\}_{k\geq 1}$, $\{\fd_{k,l}\}_{k,l\geq 1}$ satisfies \eqref{e:invllnc} and \eqref{e:invcltcov}.

\end{proof}

\section{Applications}\label{s:applications}

In this section we discuss some applications of our main theorems. Our theorems lead to the proofs of the law of large numbers and central limit theorems of the Littlewood-Richardson coefficients for zonal polynomials, and for a rich family of nonintersecting random walks, we identify the fluctuations of their height functions as the pullback of the Gaussian free field on the upper half plane. The same as for continuous $\beta$-ensembles, in all our examples, the law of large numbers do not depend on the parameter $\theta$, and the covariance structure in the central limit theorems is also $\theta$ independent up to a factor $\theta$. As applications of our inverse theorems, we derive the asymptotics of the Jack characters at unity. It is worth mentioning that the asymptotics of Jack characters at unity in Theorem \ref{t:jack}, are the same as those of schur characters up to a factor $\theta$.

\subsection{Asymptotics of Jack characters}
In this section we study the asymptotics of Jack characters as an application of the inverse Theorems \ref{t:invclt}.
For any compactly supported probability measure $\mu$, its Stieltjes transform is given by
\begin{align}
m_\mu(z)=\frac{\rd \mu(x)}{z-x}.
\end{align}
Since $\lim_{z\rightarrow \infty} z m_\mu(z)=1$, $m_\mu(z)$ is invertible in a neighborhood of $z=\infty$. We denote its functional inverse by $m_\mu^{-1}(z)$, which is well defined in a punctured neighborhood of $z=0$. The $R$-transform of the measure $\mu$, is defined as
\begin{align}
R_\mu(z)=m_\mu^{-1}(z)-\frac{1}{z},
\end{align}
which is a biholomorphic in a small neighborhood of $z=0$.
%We recall the notion of \emph{regular sequence} from \cite{MR3433577}.
%A sequence of Young diagrams $\{\bmla=(\la_1,\la_2,\cdots, \la_N) \in \Y(N)\}_{N\geq 1}$ is called regular, if there exists a piecewise continuous function $f:\bR\mapsto\bR$ and a universal constant $C>0$, such that
%\begin{align}
%\lim_{N\rightarrow \infty}\frac{1}{N}\sum_{j=1}^N \left|\frac{\la_j}{N}-f\left(\frac{j}{N}\right)\right|=0,\quad \max_{1\leq j\leq N}\left|\frac{\la_j}{N}-f\left(\frac{j}{N}\right)\right|\leq C.
%\end{align}
%If $\{ \bmla \in \Y(N) \}_{N \geq 1}$ be a regular sequence of Young diagrams, the following probability measure
%\begin{align}
%\mu[\bmla]=\frac{1}{N}\sum_{i=1}^N\delta_{y_i}, \quad y_i=\frac{\la_i}{\theta N}-\frac{i-1}{N},\quad i=1,2,\cdots, N.
%\end{align}
%weakly converges to a probability measure $\mu$, with compact support.
%
For $z$ in small neighborhood of $0$, $R_\mu(z)$ is well defined. Integrating $R_\mu(z)$, we set
\begin{align*}
H_\mu(x)=\int_0^{\ln x}(R_\mu(z)+1)\rd z+\ln \left(\frac{\ln x}{x-1}\right),
\end{align*}
which is analytic for $x$ in a small neighborhood of $1$.

In \cite{HJ}, the author studied a $\beta$-analogue of the nonintersecting Poisson random walks, where the empirical density satisfies the law of large numbers and the central limit theorems in the sense of Definition \ref{def:LLNCLT}. The $\beta$-nonintersecting Poisson random walks in \cite{HJ} is a special case of the discrete Markov chain defined in \eqref{e:chain}, with Jack-Plancherel specialization, i.e. the specialization corresponding to $\gamma=1$. Thanks to Theorem \ref{t:invclt}, we can use the results in \cite{HJ} to derive  asymptotics of Jack characters. We recall the quantity $j_\bmla(\theta)$ from \eqref{def:jl2} and the Jack reproducing kernel $H(\rho_1,\rho_2;\theta)$ from \eqref{speckernel}. The following is a variation of \cite[Corollary 1.12]{HJ}
\begin{theorem}\label{t:HJ}
Fix $\fb>0$ large. Let $\{\bmla\in \bY(N)\}_{N\geq 1}$ be a sequence of Young diagrams, such that $\supp \mu[\bmla]\subset[-\fb, \fb]$ and $\mu[\bmla]$ weakly converges to a measure $\mu$ as $N$ goes to infinity,
and $\rho$  be  the Jack-Plancherel specialization corresponding to $\gamma=1$.
Then the sequence of measure $\{M_N=M_{\bmla}\}_{N\geq 1}$,
 \begin{align}\label{e:MNJP}
 M_\bmla(\bmmu)=\frac{1}{H(N\rho, 1^N;\theta)}\frac{j_{\bmla}(\theta)}{j_\bmmu(\theta)}\frac{J_{\bmmu}(1^N;\theta)}{J_\bmla(1^N;\theta)}J_{\bmmu/\bmla}(N\rho;\theta),\quad \bmmu\in \bY(N),
 \end{align}
  satisfies the law of large numbers and the central limit theorems, in the sense of Definition \ref{def:LLNCLT}, with
 \begin{align}\label{e:MNJPU}
 U_\mu(z)=\theta \left(H_{\mu}'(z)+1\right),
 \end{align}
 and
 \begin{align}\begin{split}\label{e:MNJPV}
 V_\mu(z,w)=
 \theta \del_z\del_w\ln \left(1+(z-1)(w-1)\frac{zH_{\mu}'(z)-wH_{\mu}'(z)}{z-w}\right).
 \end{split}\end{align}
\end{theorem}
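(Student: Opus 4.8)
\textbf{Proof plan for Theorem \ref{t:HJ}.}

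The plan is to identify the measure $M_\bmla$ in \eqref{e:MNJP} with the one-step distribution of the $\beta$-nonintersecting Poisson random walk studied in \cite{HJ}, transport the law of large numbers and central limit theorem proven there to our normalization, and then read off the functions $U_\mu$ and $V_\mu$ from Theorem \ref{t:invclt}. First I would recall from Section \ref{s:nrw} (and from \cite{HJ}) that the discrete Markov chain \eqref{e:chain} with the Jack--Plancherel specialization $\rho$ corresponding to $\gamma=1$ has exactly the transition kernel appearing in \eqref{e:MNJP}: by the Pieri-type branching rule for skew Jack polynomials and the reproducing-kernel identity, $g_N^{(1)}(\bmp;\theta)=H(N\rho,\bmp;\theta)$ expands in the ratios $J_\bmmu/J_\bmmu(1^N;\theta)$ with coefficients proportional to $\frac{j_\bmla(\theta)}{j_\bmmu(\theta)}\frac{J_\bmmu(1^N;\theta)}{J_\bmla(1^N;\theta)}J_{\bmmu/\bmla}(N\rho;\theta)$, and the normalizing constant is $H(N\rho,1^N;\theta)$. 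Thus $M_\bmla$ is $\fp_N^{(1)}(\bmla,\cdot)$ started from the deterministic diagram $\bmla$, i.e. $M_\bmla=\bP(\bmla^{(1)}=\cdot\,)$ in the language of Section \ref{s:mulclt} with $M_N=\delta_\bmla$.

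Next I would invoke \cite[Corollary 1.12]{HJ} (or its proof), which states that under the hypothesis $\supp\mu[\bmla]\subset[-\fb,\fb]$ and $\mu[\bmla]\to\mu$ weakly, the random measure $\mu[\bmla^{(1)}]$ satisfies the law of large numbers and the central limit theorems of Definition \ref{def:LLNCLT}; this is the input I am allowed to assume. It remains to pin down the limiting data. Here the natural route is the inverse direction: by Theorem \ref{t:invclt}, the central limit theorem for $\{M_\bmla\}$ forces its Jack generating function to satisfy Assumption \ref{a:CLT}, and the limiting cumulant data $\{\fc_k\}$, $\{\fd_{k,l}\}$ — equivalently the formal power series $U_\mu$, $V_\mu$ of \eqref{eqn:UV} — are determined by the moment and covariance relations \eqref{e:invllnc}, \eqref{e:invcltcov}. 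So I would compute the limiting moments and covariances of $\mu[\bmla^{(1)}]$ from \cite{HJ} in terms of the $R$-transform $R_\mu$, rewrite them through the substitutions $m_{\mu_{PP}}=e^{m_\mu}-1$ and \eqref{e:Deltam} (Remark \ref{r:change}) to match the right-hand sides of \eqref{e:invllnc} and \eqref{e:invcltcov}, and solve the resulting functional equations for $U_\mu$ and $V_\mu$. The answer is expected to reorganize into the closed forms \eqref{e:MNJPU}, \eqref{e:MNJPV} after recognizing that $\int_0^{\ln x}(R_\mu(z)+1)\,\rd z+\ln(\frac{\ln x}{x-1})=H_\mu(x)$, so that $\theta(H_\mu'(z)+1)$ and $\theta\,\del_z\del_w\ln\big(1+(z-1)(w-1)\frac{zH_\mu'(z)-wH_\mu'(w)}{z-w}\big)$ emerge; the factor $\theta$ is the expected discrepancy from the Schur ($\theta=1$) case noted in the introduction. (One small check: the apparent $H_\mu'(z)$ vs. $H_\mu'(w)$ in the numerator of \eqref{e:MNJPV} should read $zH_\mu'(z)-wH_\mu'(w)$, symmetric in $z,w$; I would verify this symmetry holds in the course of the computation.)

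The main obstacle is the bookkeeping in the last step: matching the probabilistic output of \cite{HJ} — which is phrased in terms of the height function / $R$-transform of the Poisson-walk model — with the algebraic output of Theorem \ref{t:invclt}, which is phrased in terms of the germ at unity of the Jack generating function and the Perelomov--Popov change of variables. Concretely, one must carefully track the $1/N$ corrections through \eqref{e:hexpect}–\eqref{e:Deltam} so that the covariance of $\xi^{(k)}[\bmla^{(1)}]$ (not of its Perelomov--Popov avatar) is what appears in \eqref{e:invcltcov}, and then invert the double-contour integral identity to extract $V_\mu$ as a genuine formal power series in $(z-1),(w-1)$. I expect no conceptual difficulty — all the hard analysis is already in \cite{HJ} and in Theorems \ref{t:clt}, \ref{t:invclt} — but the identification of the generating-function coefficients with the derivatives of $H_\mu$ requires a clean residue computation and the Lagrange-type inversion relating $R_\mu$ to $m_\mu^{-1}$.
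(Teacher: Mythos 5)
The paper does not actually prove Theorem \ref{t:HJ}. The result is presented as ``a variation of \cite[Corollary 1.12]{HJ}'' and the only additional commentary is the remark explaining that the extra regularity assumption \cite[Assumption 1.5]{HJ} is unnecessary for the fluctuation statement. So the ``paper's proof'' is a citation, and your plan --- identify $M_\bmla$ with a one-step transition $p_{\bmla\to\bmmu}(N\rho,1^N;\theta)$ of the Markov chain of Section~\ref{s:nrw}, import the LLN/CLT from \cite{HJ}, then pin down the germ data --- is a fair reconstruction of what is being deferred to. Your identification of $M_\bmla$ with the chain \eqref{e:chain} started from $\delta_\bmla$ with $g^{(1)}_N=H(N\rho,\cdot;\theta)$ is correct, as is the observation that \eqref{e:MNJPV} must read $zH_\mu'(z)-wH_\mu'(w)$ in the numerator (this is confirmed by the symmetric divided difference that appears in the proof of Theorem~\ref{t:jack}).

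The one place where your route diverges from what the paper intends, and where I would flag a risk, is in step~3. You propose to obtain $U_\mu,V_\mu$ by invoking Theorem~\ref{t:invclt} to get \emph{existence} of the germ data, then inverting the triangular relations \eqref{e:invllnc}, \eqref{e:invcltcov} against the explicit moments and covariances from \cite{HJ}, after passing through the Perelomov--Popov change of variables of Remark~\ref{r:change}. This is logically admissible --- the relations are indeed triangular in $\{\fc_k\}$ and $\{\fd_{k,l}\}$ so the inversion is well posed --- but it is going the long way around. What the paper actually does downstream (proof of Theorem~\ref{t:jack}) is to take the $U_\mu,V_\mu$ of Theorem~\ref{t:HJ} \emph{as given} and plug them directly into the output of Theorem~\ref{t:invclt}; the closed forms \eqref{e:MNJPU}, \eqref{e:MNJPV} are the transcription, into the $H_\mu$ notation, of the covariance formula already present in \cite[Corollary 1.12]{HJ}. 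The hard content (that the fluctuation data is governed by $R_\mu$ and hence by $H_\mu$) is entirely in \cite{HJ}; the step you describe as requiring ``a clean residue computation'' is precisely what you would need to carry out to turn your plan into a proof, and it is omitted. That is the real gap here: you correctly set up the functional equations and the $\mu\leftrightarrow\mu_{PP}$ dictionary, but you stop short of the Lagrange-inversion/residue calculation that actually produces $\theta(H_\mu'+1)$ and the double-log second derivative. Without that, the statement is only verified modulo trusting that the forms in \eqref{e:MNJPU}, \eqref{e:MNJPV} are the right solutions of the system --- which is exactly what the theorem is asserting.
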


\begin{remark}
Theorem \ref{t:HJ} is slightly different from \cite[Corollary 1.12]{HJ}, where the author assumed the estimate \cite[Assumption 1.5]{HJ} on the difference between $\mu[\bmla]$ and $\mu$, and used it to derive the second order asymptotics of the mean of the random measures $\mu[\bmmu]$, under $M_{\bmla}(\bmmu)$. This is not needed for the study of the fluctuation of $\mu[\bmmu]$.
\end{remark}

\begin{theorem}\label{t:jack}
Fix $\fb>0$ large. Let $\{\bmla\in \bY(N)\}_{N\geq 1}$ be a sequence of Young diagrams, such that $\supp \mu[\bmla]\subset [-\fb, \fb]$ and $\mu[\bmla]$ weakly converges to a measure $\mu$ as $N$ goes to infinity. Then for any index $i\geq 1$ and exponent $k\geq 1$, we have,
\begin{align}\label{e:firstder1}
\lim_{N\rightarrow \infty}\frac{1}{N}\del_i^k\ln\left.\left(\frac{ J_{\bmla} \left(x_1,x_2,\cdots,x_N; \theta \right) }{J_{\bmla}(1^N; \theta)} \right)\right|_{1^N}= \theta \left.\del_x^k H_\mu(x)\right|_{x=1},
\end{align}
For any distinct indices $i,j$ and any exponents $k,l\geq 1$,
\begin{align}\begin{split}\label{e:secondder2}
&\phantom{{}={}}\lim_{N\rightarrow \infty}\del_i^k\del_j^l\ln\left.\left(\frac{ J_{\bmla} \left(x_1,x_2,\cdots,x_N; \theta \right) }{J_{\bmla}(1^N; \theta)} \right)\right|_{1^N}\\
&= \theta \left.\del_{x_1}^k\del_{x_2}^{l}\ln\left(1+(x_1-1)(x_2-1)\frac{x_1 H_\mu'(x_1)-x_2H_\mu'(x_2)}{x_1-x_2}\right)\right|_{x_1=x_2=1},
\end{split}\end{align}
For any $r\geq 3$ and any indices $i_1,i_2,\cdots, i_r$ such that there are at least three distinct indices among them,
\begin{align}\label{e:thirdder3}
\lim_{N\rightarrow \infty}\del_{i_1}\del_{i_2}\cdots \del_{i_r}\ln\left.\left(\frac{ J_{\bmla} \left(x_1,x_2,\cdots,x_N; \theta \right) }{J_{\bmla}(1^N; \theta)} \right)\right|_{1^N}= 0.
\end{align}
\end{theorem}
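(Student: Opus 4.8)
\emph{Strategy.} The plan is to recognize $J_\bmla(x_1,\dots,x_N;\theta)/J_\bmla(1^N;\theta)$, viewed as a generating function in $N$ variables, as --- up to an explicit elementary factor --- the Jack generating function of the Jack--Plancherel transition measure $M_\bmla$ of Theorem \ref{t:HJ}, and then to read the asymptotics \eqref{e:firstder1}, \eqref{e:secondder2}, \eqref{e:thirdder3} off from the central limit theorem for $M_\bmla$ via the inverse Theorem \ref{t:invclt}. Concretely, the three statements will turn into the order-one, order-two, and order-$\geq 3$ parts of Assumption \ref{a:CLT} for that generating function, after converting $p$-derivatives into $x$-derivatives.

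\emph{Step 1 (factoring the generating function).} Let $\rho$ be the Jack--Plancherel specialization with $\gamma=1$, so that $p_1(N\rho)=N$ and $p_k(N\rho)=0$ for $k\geq 2$, and let $H(\,\cdot\,,\,\cdot\,;\theta)$ be the reproducing kernel for the scalar product \eqref{e:scalarP}; with these conventions $H(N\rho;\bmp;\theta)=\exp(\theta Np_1)$ and $H(N\rho;1^N;\theta)=\exp(\theta N^2)$. Applying the skew Cauchy identity for Jack functions (Section \ref{s:Jack}) to the definition \eqref{e:MNJP} of $M_\bmla$ collapses the sum, yielding for $M_N:=M_\bmla$
\begin{equation}\label{e:planFN}
F_N(\bmp;\theta)=\sum_{\bmmu\in\bY(N)}M_\bmla(\bmmu)\,\frac{J_\bmmu(\bmp;\theta)}{J_\bmmu(1^N;\theta)}
=\frac{H(N\rho;\bmp;\theta)}{H(N\rho;1^N;\theta)}\cdot\frac{J_\bmla(\bmp;\theta)}{J_\bmla(1^N;\theta)},
\end{equation}
so that $\ln F_N(\bmp;\theta)=\theta N(p_1-N)+\ln\!\big(J_\bmla(\bmp;\theta)/J_\bmla(1^N;\theta)\big)$. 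The explicit form \eqref{e:planFN} makes Assumption \ref{asup:infinite} for $F_N$ immediate: writing $J_\bmla=\sum_\bmmu b_\bmmu p_\bmmu$ (a finite sum) and expanding $e^{\theta Np_1}$, the coefficient sum in Assumption \ref{asup:infinite} is bounded by $e^{-\theta N^2}e^{\theta N^2|x|}$ times $\sum_\bmmu|b_\bmmu|N^{\ell(\bmmu)}|x|^{|\bmmu|}/J_\bmla(1^N)$, which converges for every $x$; no large-deviation estimate is needed.

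\emph{Step 2 (applying Theorems \ref{t:HJ} and \ref{t:invclt}).} By Theorem \ref{t:HJ}, $M_\bmla$ satisfies the central limit theorems of Definition \ref{def:LLNCLT} with limit functions $U_\mu,V_\mu$ given by \eqref{e:MNJPU}, \eqref{e:MNJPV}; by Theorem \ref{t:invclt} the $F_N$ then satisfy Assumption \ref{a:CLT} with $\fc_k=\partial_z^{k-1}U_\mu(z)|_{z=1}$ and $\fd_{k,l}=\partial_z^{k-1}\partial_w^{l-1}V_\mu(z,w)|_{z=w=1}$, from \eqref{eqn:UV}. Since $\ln F_N-\ln\!\big(J_\bmla/J_\bmla(1^N)\big)=\theta N(p_1-N)$ is affine in $p_1$, all mixed $p$-partials of order $\geq 2$ of the two logarithms coincide at $\bmp=1^N$, while the order-one partials differ by $\theta N$ in the $\partial_{p_1}$ direction and by $0$ otherwise; hence the quantities \eqref{e:CLT2} and \eqref{e:CLT5} are unchanged when $\ln F_N$ is replaced by $\ln\!\big(J_\bmla/J_\bmla(1^N)\big)$, and the quantity \eqref{e:CLT1} for $\ln\!\big(J_\bmla/J_\bmla(1^N)\big)$ equals $\fc_k$ for $k\geq 2$ and $\fc_1-\theta$ for $k=1$.

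\emph{Step 3 (from $p$-derivatives to $x$-derivatives), and the main obstacle.} It remains to expand $\partial_{x_{i_1}}\!\cdots\partial_{x_{i_r}}\ln\!\big(J_\bmla(x;\theta)/J_\bmla(1^N;\theta)\big)$ by Faà di Bruno in the power sums $p_1,p_2,\dots$ and to evaluate at $1^N$, using that $\prod_{j\in B}\partial_{x_{i_j}}p_m\big|_{1^N}=m!/(m-|B|)!=|B|!\binom{m}{|B|}$ when all the indices $i_j$ ($j\in B$) are equal and $0$ otherwise. For $r=1$ the chain rule is exact: dividing by $N$ gives $\frac1N\sum_m k!\binom{m}{k}\partial_{p_m}\ln(J_\bmla/J_\bmla(1^N))|_{1^N}$, whose limit by Step 2 is $\fc_k$ for $k\geq 2$ and $\fc_1-\theta$ for $k=1$; since \eqref{e:MNJPU} gives $\theta\partial_x^kH_\mu(x)|_{x=1}=\partial_z^{k-1}\big(U_\mu(z)-\theta\big)|_{z=1}$, which equals precisely these values, \eqref{e:firstder1} follows. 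For two distinct indices the only monochromatic set partition surviving the limit is the two-block one (all refinements produce products of $\geq 3$ $p$-partials, which vanish by \eqref{e:CLT5}), giving $\lim_N\partial_i^k\partial_j^l\ln(J_\bmla/J_\bmla(1^N))|_{1^N}=\fd_{k,l}=\partial_z^{k-1}\partial_w^{l-1}V_\mu(z,w)|_{z=w=1}$, which by \eqref{e:MNJPV} is the right-hand side of \eqref{e:secondder2}. If there are at least three distinct indices among $i_1,\dots,i_r$, every monochromatic partition has $\geq 3$ blocks, so only $p$-partials of order $\geq 3$ occur and all tend to $0$ by \eqref{e:CLT5}, which is \eqref{e:thirdder3}. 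The bookkeeping of exactly which Faà di Bruno terms survive at the degenerate evaluation point $1^N$ is the content of the equivalence proved in Section \ref{s:finiteJGF} and may be quoted directly; the genuine (if minor) obstacle is conventional --- one must fix the normalizations in Step 1 so that the Plancherel prefactor is exactly $e^{\theta N(p_1-N)}$, with $\theta$ and not $\theta^{-1}$ in the exponent, since this single factor of $\theta$ is what makes the shift in the order-one term agree with the $-\theta$ built into $\theta\partial_xH_\mu(x)|_{x=1}=U_\mu(1)-\theta$ --- together with the routine check that the Faà di Bruno truncation drops nothing that survives the $N\to\infty$ limit.
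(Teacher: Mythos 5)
Your plan follows the paper's own route, but there is a genuine error at the foundational step that propagates forward. The equality \eqref{e:planFN} is not an identity in $\widehat\Sym$. Applying Proposition \ref{p:Stochastic} to the definition \eqref{e:MNJP} produces $\frac{H(N\rho,\bmp;\theta)}{H(N\rho,1^N;\theta)}\frac{J_\bmla(\bmp;\theta)}{J_\bmla(1^N;\theta)}$ only after summing over \emph{all} $\bmmu\in\bY$, whereas the Jack generating function of $M_\bmla$ sums only over $\bmmu\in\bY(N)$ (where $M_\bmla$ lives). The two differ by a nonzero element of the ideal $\Sp\bigl(J_\bmeta(\bmp;\theta):\bmeta\notin\bY(N)\bigr)$: in the ratio $M_\bmla(\bmmu)J_\bmmu(\bmp;\theta)/J_\bmmu(1^N;\theta)$ the factor $J_\bmmu(1^N;\theta)$ cancels, so the terms with $\bmmu\notin\bY(N)$ do not individually vanish. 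Consequently the asserted identity $\ln F_N(\bmp;\theta)=\theta N(p_1-N)+\ln\bigl(J_\bmla(\bmp;\theta)/J_\bmla(1^N;\theta)\bigr)$ is false, and the claim in Step 2 that ``all mixed $p$-partials of order $\geq 2$ coincide'' has no justification as stated.

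The fix requires one of the observations you omitted. Either (i) note that the Nazarov--Sklyanin operators preserve $\Sp\bigl(J_\bmeta:\bmeta\notin\bY(N)\bigr)$ and that this subspace vanishes at $\bmp=1^N$, so the quantities $\cF_{(k_1,\dots,k_r)}$ — which are what drive the proof of Theorem \ref{t:invclt} and hence Assumption \ref{a:CLT} — agree for $F_N$ and for $\tilde F_N:=e^{\theta N(p_1-N)}J_\bmla(\bmp;\theta)/J_\bmla(1^N;\theta)$ (one also checks $\tilde F_N$ satisfies Assumption \ref{asup:infinite}, which is easy); or (ii) pass immediately to $N$ finite variables, where $J_\bmeta(x_1,\dots,x_N;\theta)=0$ for $\bmeta\notin\bY(N)$ and the identity $F_N(x_1,\dots,x_N;\theta)=\tilde F_N(x_1,\dots,x_N;\theta)$ becomes exact, and then invoke Theorem \ref{thm:equivalent} (which is stated for the $N$-finite generating function anyway). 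Either route restores the affine-in-$p_1$ separation you use in Step 3, and the final accounting ($\fc_1-\theta=\theta H_\mu'(1)$, $\fc_k=\theta\partial_x^kH_\mu|_{x=1}$ for $k\geq2$, $\fd_{k,l}$ matching \eqref{e:MNJPV}, vanishing of higher cumulants) is correct. But as written the proof asserts a false identity and builds on it; that is a real gap, not a matter of convention.
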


\begin{remark}
The limit \eqref{e:firstder1} can be alternatively derived from the integral representations for Jack characters in \cite[Theorems 2.2 and 2.3]{Cuenca}.
Both \eqref{e:secondder2} and \eqref{e:thirdder3} could possibly be alternatively derived from the multivariate formula \cite[Theorem 3.1]{Cuenca}, which the author calls the Pieri integral formula for Jack polynomials. However, applying this formula is a nontrivial task and requires some work.
\end{remark}

\begin{proof}
From Theorem \ref{t:HJ}, the sequence of measures $\{M_N=M_{\bmla}\}_{N\geq 1}$ satisfies the law of large numbers and central limit theorems in the sense of Definition \ref{def:LLNCLT}. Theorem \ref{t:jack} follows from applying Theorem \ref{t:invclt} to the sequence of measure $\{M_N=M_{\bmla}\}_{N\geq 1}$.

Using Proposition $\ref{p:Stochastic}$, the Jack generating function of $M_N=M_{\bmla}$ in \eqref{e:MNJP} is given by
\begin{align*}\begin{split}
F_{N}(\bmp; \theta)
&= \sum_{\bmmu\in\Y(N)}{M_{N}(\bmmu)\frac{J_{\bmmu}(\bmp; \theta)}{J_{\bmmu}(1^N; \theta)}}
= \sum_{\bmmu\in\Y(N)} 
\frac{1}{H(N\rho, 1^N;\theta)}\frac{j_{\bmla}(\theta)}{j_\bmmu(\theta)}\frac{J_{\bmmu}(\bmp;\theta)}{J_\bmla(1^N;\theta)}J_{\bmmu/\bmla}(N\rho;\theta)\\
&= \frac{H(N\rho, \bmp; \theta)}{H(N\rho, 1^N; \theta)}\frac{J_{\bmla}(\bmp;\theta)}{J_{\bmla}(1^N;\theta)}-
\sum_{\bmmu\not\in\Y(N)} 
\frac{1}{H(N\rho, 1^N;\theta)}\frac{j_{\bmla}(\theta)}{j_\bmmu(\theta)}\frac{J_{\bmmu}(\bmp;\theta)}{J_\bmla(1^N;\theta)}J_{\bmmu/\bmla}(N\rho;\theta)\\
&= \frac{H(N\rho, \bmp; \theta)}{H(N\rho, 1^N; \theta)}\frac{J_{\bmla}(\bmp;\theta)}{J_{\bmla}(1^N;\theta)}+\Sp(J_\bmmu(\bmp;\theta): \bmmu\not\in \bY(N)),
\end{split}\end{align*}
%
%\begin{align}
%F_{N}(\bmp; \theta) = \frac{H(N\rho, \bmp; \theta)}{H(N\rho, 1^N; \theta)} +\Sp(J_\bmla(\bmp;\theta): \bmla\not\in \bY(N)).
%\end{align}
where where $\Sp(J_\bmeta(\bmp;\theta): \bmeta\not\in \bY(N))$  is the linear span of Jack symmetric functions $J_\bmeta(\bmp;\theta)$ with $\bmeta\not\in \bY(N)$ in $\Sym$. 
Since the Nazarov-Sklyanin Operators preserve the subspace $\Sp(J_\bmla(\bmp;\theta): \bmla\not\in \bY(N))$, and for any $\bmla\not\in \bY(N)$, $J_\bmla(1^N;\theta)=0$, we get
\begin{align*}
\left.I^{(k_1)}I^{(k_2)}\cdots I^{(k_r)}F_N(\bmp;\theta)\right|_{\bmp=1^N}
=\left.I^{(k_1)}I^{(k_2)}\cdots I^{(k_r)}\frac{H(N\rho, \bmp; \theta)}{H(N\rho, 1^N; \theta)}\frac{J_{\bmla}(\bmp;\theta)}{J_{\bmla}(1^N;\theta)}\right|_{\bmp=1^N}.
\end{align*}
Therefore, we can instead use 
\begin{align*}
\tilde F_N(\bmp;\theta)=\frac{H(N\rho, \bmp; \theta)}{H(N\rho, 1^N; \theta)}\frac{J_{\bmla}(\bmp;\theta)}{J_{\bmla}(1^N;\theta)}
=e^{-\theta N^2}e^{\theta Np_1}\frac{J_{\bmla}(\bmp;\theta)}{J_{\bmla}(1^N;\theta)}
\end{align*}
to compute the joint moments of the random measure $\mu[\bmla]$ as defined in \eqref{def:mula}. 
Since $J_{\bmla}(\bmp;\theta)$ is a finite sum of power sum symmetric functions. We conclude that $\tilde F_N(\bmp;\theta)$ satisfies Assumption \ref{asup:infinite}. 

It follows from Theorem \ref{t:invclt} and \ref{thm:equivalent},  for any indices $i$ and any exponent $k\geq 1$, the limit exists
\begin{align}\label{e:dera1}
\fc_{k}=\lim_{N\rightarrow \infty}\left.\frac{\del_i^k \ln \tilde F_N(x_1,x_2,\cdots,x_N;\theta)}{N}\right|_{1^N}=
\theta {\bm 1}_{k=1}+\lim_{N\rightarrow \infty}\frac{1}{N}\del_i^k\ln\left.\left(\frac{ J_{\bmla} \left(x_1,x_2,\cdots,x_N; \theta \right) }{J_{\bmla}(1^N; \theta)} \right)\right|_{1^N},
\end{align}
and is given by
\begin{align}\label{e:dera2}
 \fc_k=\del_z^{k-1}U_\mu(z)|_{z=1}=
\theta({\bm1}_{k=1}+\del_x^k H_\mu(x)|_{x=1}).
\end{align}
where $U_\mu(z)$ is as in \eqref{e:MNJPU}. The claim \eqref{e:firstder1} follows from comparing \eqref{e:dera1} and \eqref{e:dera2}.

It follows from Theorem \ref{t:invclt} and \ref{thm:equivalent}, for  any distinct indices $i,j$ and any exponent $k,l\geq 1$, the limit exists
\begin{align*}
\fd_{k,l}=\lim_{N\rightarrow \infty}\left.\del_i^k\del_j^l \ln \tilde F_N(x_1,x_2,\cdots,x_N;\theta)\right|_{1^N}=
\lim_{N\rightarrow \infty}\del_i^k\del_j^l\ln\left.\left(\frac{ J_{\bmla} \left(x_1,x_2,\cdots,x_N; \theta \right) }{J_{\bmla}(1^N; \theta)} \right)\right|_{1^N},
\end{align*}
and is given by
\begin{align*}\begin{split}
\fd_{k,l}
&=\del_{z}^{k-1}\del_w^{l-1}V_\mu(z,w)|_{z=w=1}\\
&= \theta \left.\del_{x_1}^k\del_{x_2}^{l}\ln\left(1+(x_1-1)(x_2-1)\frac{x_1 H_\mu'(x_1)-x_2H_\mu'(x_2)}{x_1-x_2}\right)\right|_{x_1=x_2=1},
\end{split}\end{align*}
where $V_\mu(z,w)$ is as in \eqref{e:MNJPV}. This finishes the proof of \eqref{e:secondder2}.
For any $r\geq 3$ and any indices $i_1,i_2,\cdots, i_r$ such that there are at least three distinct indices among them, Theorem \ref{t:invclt} and \ref{thm:equivalent} conclude
\begin{align*}\begin{split}
&\phantom{{}={}}\lim_{N\rightarrow \infty}\left.\del_{i_1}\del_{i_2}\cdots\del_{i_r} \ln \tilde F_N(x_1,x_2,\cdots,x_N;\theta)\right|_{1^N}\\
&=\lim_{N\rightarrow \infty}\del_{i_1}\del_{i_2}\cdots \del_{i_r}\ln\left.\left(\frac{ J_{\bmla} \left(x_1,x_2,\cdots,x_N; \theta \right) }{J_{\bmla}(1^N; \theta)} \right)\right|_{1^N}= 0.
\end{split}\end{align*}
This finishes the proof of \eqref{e:thirdder3}.

\end{proof}

%
%The following theorem on the asymptotics of Jack characters follows from the Pieri integral formula obtained in \cite{Cuenca}.
%\begin{theorem}\label{thm:univariate}
%Let $\{ \bmla \in \Y(N) \}_{N \geq 1}$ be a regular sequence of Young diagrams, such that
%\begin{align}
%\lim_{N\rightarrow \infty}\mu[\bmla]=\mu,
%\end{align}
%in probability. Then for any $k\geq 1$, we have,
%\begin{equation}\label{jackcharacterlimit}
%\lim_{N\rightarrow \infty}\frac{1}{N}\ln\left(\frac{ J_{\bmla} \left(x_1,x_2,\cdots,x_k, 1^{N-k}; \theta \right) }{J_{\bmla}(1^N; \theta)} \right)= \theta\left(H_\mu(x_1)+H_\mu(x_2)+\cdots+H_\mu(x_k)\right),
%\end{equation}
%where the convergence is uniform over an open complex neighborhood of $(x_1,x_2,\cdots, x_k)=1^k$.
%\end{theorem}
%

\subsection{Littlewood-Richardson coefficients for the product of zonal polynomials}

In general, for a Gelfand pair $(G, K)$, with zonal spherical functions $\{\Omega_i\}_{i\in I}$. We may identify $G$ with its image in $G\times G$ under the diagonal map $x\mapsto (x,x)$. The zonal spherical functions of $(G\times G, K\times K)$ are $\{\Omega_i \times \Omega_j\}_{i,j\in I}$,  and the pointwise product $\Omega_i\cdot \Omega_j$ is the restriction of $\Omega_i \times \Omega_j$ to $G$. Thus, the restriction $\Omega_i\cdot \Omega_j$ decomposes as a non-negative linear combination of zonal spherical functions of the Gelfand pair $(G,K)$. More precisely, the coefficients $c_{ij}^k$ defined by
\begin{align}\label{e:productrule}
\Omega_i \cdot \Omega_j=\sum_{k\in I}c_{ij}^k \Omega_k,
\end{align}
are real and non-negative.

In the setting, for the Gelfand pair $(GL_N(\bC), U(N))$, its zonal spherical functions correspond to the Schur polynomials, and the coefficients in \eqref{e:productrule} are related to the Littlewood-Richardson coefficients which arise when decomposing a product of two Schur polynomials as a linear combination of other Schur polynomials. 

%In our setting, for the Gelfand pairs $(GL_N(\bR), O(N))$, $(GL_N(\bC), U(N))$ and $(GL_N(\bH), Sp(N))$, their zonal spherical functions correspond to the zonal polynomials, Schur polynomials, and symplectic zonal polynomials respectively. In the setting for 

More generally, we consider the Littlewood-Richardson coefficients for the product of Jack polynomials. More precisely, let $N\in \bZ_{>0}$ be an arbitrary positive integer.  For two Young diagrams $\bmmu,\bmeta\in \bY(N)$, there is a decomposition of the product of two Jack polynomials into a linear combination of Jack polynomials:
\begin{align*}
J_{\bmmu}(x_1,x_2,\cdots, x_N;\theta)J_{\bmeta}(x_1,x_2,\cdots, x_N;\theta)=\sum_{\bmla\in \bY(N)}c_{\bmla}^{\bmmu\bmeta}(\theta)J_{\bmla}(x_1,x_2,\cdots, x_N;\theta).
\end{align*}
The coefficients  $c_{\bmla}^{\bmmu\bmeta}(\theta)$ are called the Littlewood-Richardson coefficients for the product of Jack polynomials. When $\theta=1/2,1,2$, the Jack polynomials specialize to the zonal polynomials, Schur polynomials, and symplectic zonal polynomials, which are zonal spherical functions of the Gelfand pairs $(GL_N(\bR), O(N))$, $(GL_N(\bC), U(N))$ and $(GL_N(\bH), Sp(N))$. Therefore, the coefficients  $c_{\bmla}^{\bmmu\bmeta}(\theta)$ are non-negative. In general, it remains a conjecture of Richard Stanley  \cite{MR1014073} that those coefficients $c_{\bmla}^{\bmmu\bmeta}(\theta)$ are polynomials in $1/\theta$ with nonnegative integer coefficients.   For $\theta=1/2, 1,2$, the Littlewood-Richardson coefficients induce a probability measure on $\bY(N)$:
\begin{align}\label{e:defLRjack}
M_{\bmmu,\bmeta}(\bmla)
=\frac{J_{\bmla}(1^N;\theta)}{J_{\bmmu}(1^N;\theta)J_{\bmeta}(1^N;\theta)}c_{\bmla}^{\bmmu\bmeta}(\theta),\quad \bmla\in \bY(N).
\end{align}
%
%
%\begin{lemma}\label{l:LRjack}
%Let  $\bmmu,\bmeta\in \Y(N)$ two Young diagrams. The Jack generating function of the probability measure $M_N=M_{\bmmu, \bmeta}$ as defined in \eqref{e:defLRjack} is
%\begin{align}
%\begin{split}
%\label{e:LRjack}
%F_{N}(\bmp; \theta) 
%&=\frac{J_\bmmu(\bmp;\theta)}{J_{\bmmu}(1^N;\theta)}\frac{J_{\bmeta}(\bmp;\theta)}{J_{\bmeta}(1^N;\theta)}
%-\sum_{\bmla\not\in \Y(N)}\frac{c_{\bmla}^{\bmmu\bmeta}}{J_{\bmmu}(1^N;\theta)J_{\bmeta}(1^N;\theta)}J_{\bmla}(\bmp;\theta)\\
%&=\frac{J_\bmmu(\bmp;\theta)}{J_{\bmmu}(1^N;\theta)}\frac{J_{\bmeta}(\bmp;\theta)}{J_{\bmeta}(1^N;\theta)}+\Sp(J_\bmla(\bmp;\theta): \bmla\not\in \bY(N)),
%\end{split}\end{align}
%where $c_{\bmla}^{\bmmu\bmeta}$ are the Littlewood-Richardson coefficients of the product of two Jack functions:
%\begin{align}
%J_{\bmmu}(\bmp;\theta)J_{\bmeta}(\bmp;\theta)=\sum_{\bmla\in \bY}c_{\bmla}^{\bmmu\bmeta}J_{\bmla}(\bmp;\theta).
%\end{align}
%%The $N$-finite positive Jack generating function of $M_{\rho, 1^N}$ is
%%\begin{equation}\label{jackmeasure2}
%%F_{M_{\rho, 1^N}}(x_1, \ldots, x_N; \theta) = \prod_{i=1}^N{\frac{H(\rho, x_i; \theta)}{H(\rho, 1; \theta)}}.
%%\end{equation}
%\end{lemma}
%\begin{proof}
%The claims follow from the definition of Littlewood-Richardson coefficients for the product of Jack functions.
%\end{proof}

\begin{theorem}
Fix $\fb>0$ large. Let $\theta\in \{1/2,1,2\}$, and $\{\bmmu,\bmeta\in \bY(N)\}_{N\geq 1}$ be a sequence of Young diagrams, such that $\supp \mu[\bmmu], \supp \mu[\bmeta]\subset [-\fb, \fb]$ and 
\begin{align*}
\lim_{N\rightarrow \infty}\mu[\bmmu]=\mu_1,\quad
\lim_{N\rightarrow \infty}\mu[\bmeta]=\mu_2,
\end{align*}
 then the sequence of measures $\{M_N=M_{\bmmu,\bmeta}\}_{N\geq 1}$, as defined in \eqref{e:defLRjack}, satisfies the law of large numbers and the central limit theorems, with
 \begin{align*}
 U_\mu(z)=\theta \left(H_{\mu_1}'(z)+H_{\mu_2}'(z)\right),
 \end{align*}
 and
 \begin{align*}\begin{split}
 V_\mu(z,w)&=
 \theta \del_z\del_w\ln \left(1+(z-1)(w-1)\frac{zH_{\mu_1}'(z)-wH_{\mu_1}'(w)}{z-w}\right)\\
 &+\theta \del_z\del_w\ln\left(1+(z-1)(w-1)\frac{zH_{\mu_2}'(z)-wH_{\mu_2}'(w)}{z-w}\right).
 \end{split}\end{align*}
\end{theorem}
\begin{proof}
The Jack generating function of the measure $M_N=M_{\bmmu, \bmeta}$ as defined in \eqref{e:defLRjack} is
\begin{align}
\begin{split}
\label{e:LRjack}
F_{N}(\bmp; \theta) 
&=\frac{J_\bmmu(\bmp;\theta)}{J_{\bmmu}(1^N;\theta)}\frac{J_{\bmeta}(\bmp;\theta)}{J_{\bmeta}(1^N;\theta)}
-\sum_{\bmla\not\in \Y(N)}\frac{c_{\bmla}^{\bmmu\bmeta}(\theta)}{J_{\bmmu}(1^N;\theta)J_{\bmeta}(1^N;\theta)}J_{\bmla}(\bmp;\theta)\\
&=\frac{J_\bmmu(\bmp;\theta)}{J_{\bmmu}(1^N;\theta)}\frac{J_{\bmeta}(\bmp;\theta)}{J_{\bmeta}(1^N;\theta)}+\Sp(J_\bmla(\bmp;\theta): \bmla\not\in \bY(N)),
\end{split}\end{align}
where $c_{\bmla}^{\bmmu\bmeta}(\theta)$ are the Littlewood-Richardson coefficients of the product of two Jack symmetric functions:
\begin{align*}
J_{\bmmu}(\bmp;\theta)J_{\bmeta}(\bmp;\theta)=\sum_{\bmla\in \bY}c_{\bmla}^{\bmmu\bmeta}(\theta)J_{\bmla}(\bmp;\theta).
\end{align*}
Since the Nazarov-Sklyanin Operators preserve the subspace $\Sp(J_\bmla(\bmp;\theta): \bmla\not\in \bY(N))$, and for any $\bmla\not\in \bY(N)$, $J_\bmla(1^N;\theta)=0$, we get
\begin{align*}
\left.I^{(k_1)}I^{(k_2)}\cdots I^{(k_r)}F_N(\bmp;\theta)\right|_{\bmp=1^N}
=\left.I^{(k_1)}I^{(k_2)}\cdots I^{(k_r)}\frac{J_\bmmu(\bmp;\theta)}{J_{\bmmu}(1^N;\theta)}\frac{J_{\bmeta}(\bmp;\theta)}{J_{\bmeta}(1^N;\theta)}\right|_{\bmp=1^N}.
\end{align*}
Therefore, we can instead use 
\begin{align*}
\tilde F_N(\bmp;\theta)=\frac{J_\bmmu(\bmp;\theta)}{J_{\bmmu}(1^N;\theta)}\frac{J_{\bmeta}(\bmp;\theta)}{J_{\bmeta}(1^N;\theta)},
\end{align*}
to compute the joint moments of the random measure $\mu[\bmla]$ as defined in \eqref{def:mula}. Since $J_\bmmu(\bmp;\theta), J_{\bmeta}(\bmp;\theta)$ are finite sums of power sum symmetric functions, $\tilde F_N(\bmp;\theta)$ satisfies Assumption \ref{asup:infinite}.
Moreover, thanks to Theorem \ref{t:jack} and \ref{thm:equivalent}, for any $k\geq 0$,
\begin{align*}
 \fc_k=\lim_{N\rightarrow \infty}\left.\frac{\del_i^k \ln \tilde F_N(x_1,x_2,\cdots,x_N; \theta)}{N}\right|_{1^N}=\left.\theta (\del_x ^k H_{\mu_1}(x)+\del_x ^k H_{\mu_2}(x))\right|_{x=1}.
\end{align*}
For any distinct indices $i,j$ and any exponents $k,l\geq 1$,
\begin{align*}\begin{split}
&\phantom{{}={}}\lim_{N\rightarrow \infty}\del_i^k\del_j^l\ln\left.\tilde F_N(x_1,x_2,\cdots, x_N;\theta)\right|_{1^N}\\
&= \theta \left.\del_{x_1}^k\del_{x_2}^{l}\ln\left(1+(x_1-1)(x_2-1)\frac{x_1 H_{\mu_1}'(x_1)-x_2H_{\mu_1}'(x_2)}{x_1-x_2}\right)\right|_{x_1=x_2=1}\\
&+\theta \left.\del_{x_1}^k\del_{x_2}^{l}\ln\left(1+(x_1-1)(x_2-1)\frac{x_1 H_{\mu_2}'(x_1)-x_2H_{\mu_2}'(x_2)}{x_1-x_2}\right)\right|_{x_1=x_2=1}.
\end{split}\end{align*}
For any $r\geq 3$ and any indices $i_1,i_2,\cdots, i_r$ such that there are at least three distinct indices among them,
\begin{align*}
\lim_{N\rightarrow \infty}\del_{i_1}\del_{i_2}\cdots \del_{i_r}\ln\left.\tilde F_N(x_1,x_2,\cdots,x_N;\theta)\right|_{1^N}= 0.
\end{align*}
Thus $\tilde F_N(\bmp;\theta)$ satisfies Assumption \ref{a:LLN} and \ref{a:CLT}. The limit functions \eqref{eqn:UV} are given by
\begin{align*}
\begin{split}
U_\mu(z)
&=
\sum_{k\geq 1}\frac{\fc_k}{(k-1)!}(z-1)^{k-1}
=\sum_{k\geq 1}\frac{\theta (\del_z^kH_{\mu_1}(z)+\del_z^k H_{\mu_2}(z)|_{z=1})}{(k-1)!}(z-1)^{k-1}\\
&=\theta (H_{\mu_1}'(z)+H_{\mu_2}'(z)), 
\end{split}\end{align*}
and
\begin{align*}\begin{split}
V_\mu(z,w)
&=\sum_{k,l\geq 1}\frac{\fd_{k,l}}{(k-1!)(l-1)!}(z-1)^{k-1}(w-1)^{k-1}\\
&=
 \theta \del_z\del_w\ln \left(1+(z-1)(w-1)\frac{zH_{\mu_1}'(z)-wH_{\mu_1}'(w)}{z-w}\right)\\
 &+\theta \del_z\del_w\ln\left(1+(z-1)(w-1)\frac{zH_{\mu_2}'(z)-wH_{\mu_2}'(w)}{z-w}\right).
\end{split}\end{align*}
The claim follows from Theorem \ref{thm:LLN} and Theorem \ref{t:clt}.
\end{proof}

\subsection{Nonintersecting random walks}\label{s:nrw}

Given two Jack positive specializations $\rho_1$ and $\rho_2$.
We define a transition matrix $p_{\bmla\rightarrow \bmmu}(\rho_1,\rho_2;\theta)$ with rows and columns indexed by Young diagrams as follows
\begin{align}\label{e:defp}
p_{\bmla\rightarrow \bmmu}(\rho_1,\rho_2;\theta)=\frac{1}{H(\rho_1,\rho_2;\theta)}\frac{j_{\bmla}(\theta)}{j_{\bmmu}(\theta)}\frac{J_{\bmmu}(\rho_2;\theta)}{J_{\bmla}(\rho_2;\theta)}J_{\bmmu/\bmla}(\rho_1; \theta).
\end{align}
It follows from Proposition \ref{p:Stochastic} that the matrix $p_{\bmla\rightarrow \bmmu}(\rho_1,\rho_2;\theta)$ is a stochastic matrix. Moreover, the transition matrix $p_{\bmla\rightarrow \bmmu}(\rho_1,\rho_2;\theta)$ preserves the Jack measure as in Definition \ref{def:jackmeasure}
\begin{align*}
\sum_{\bmla}\cJ_{\rho_1\rho_2}(\bmla)p_{\bmla\rightarrow \bmmu}(\rho_3,\rho_2;\theta)
=\cJ_{(\rho_1,\rho_3)\rho_2}(\bmmu).
\end{align*}
$p_{\bmla\rightarrow \bmmu}(\rho_1,\rho_2;\theta)$ can be viewed as the transition probability of certain dynamics on the Young diagrams $\bY$. In fact, using Proposition \ref{p:chain2}, we have the following relation
\begin{align*}
\sum_{\bmmu}p_{\bmla\rightarrow \bmmu}(\rho_1,\rho_2;\theta)p_{\bmmu\rightarrow \bmeta}(\rho_1',\rho_2;\theta)
=p_{\bmla\rightarrow \bmeta}((\rho_1,\rho_1'),\rho_2;\theta).
\end{align*}

Let $N\in\bZ_{>0}$ be an arbitrary positive integer.  In this example, we are interested in the case $\rho_2 = (1^N)$, i.e. the pure alpha specialization $\alpha_1=\alpha_2=\cdots=\alpha_N=1, \alpha_{N+1}=\alpha_{N+2}=\cdots=0$. For $H(\rho_1,\rho_2,\theta)$ to be well defined, we introduce the following definition.
\begin{definition}\label{def:stable}
We say a Jack positive specialization $\rho$ is \emph{stable}, if there exists a small positive number $\varepsilon>0$, 
\begin{align*}
\sum_{n\geq 1}\frac{|\rho_n|}{n}|x|^n,
\end{align*}
converges absolutely on $\{z\in\bC:|z|<1+\varepsilon\}$. If $\rho$ is a stable Jack positive specialization, we define
\begin{align*}
W_\rho(z)=\sum_{n\geq 1}\frac{\rho_n z^n}{n}.
\end{align*}
\end{definition}

\begin{proposition}\label{p:pstable}
If $\rho$ is a stable Jack positive specialization, then $H(\rho,\bmp;\theta)$ satisfies Assumption \ref{asup:infinite}, and especially $H(\rho,1^N;\theta)$ is well defined.
\end{proposition}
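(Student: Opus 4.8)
The plan is to unwind the definitions and reduce the claim to the convergence hypothesis built into stability. Recall that the Jack reproducing kernel admits the expansion
\begin{align*}
H(\rho,\bmp;\theta)=\exp\left(\theta\sum_{n\geq 1}\frac{\rho_n p_n}{n}\right)=\exp\left(\theta\sum_{n\geq 1}\frac{\rho_n}{n}p_n\right),
\end{align*}
where I write $\rho=(\alpha_i;\beta_j;\gamma)$ for a Jack positive specialization and $\rho_n$ for the $n$-th Newton power sum of the specialization data (with the appropriate contribution from $\gamma$ only for $n=1$), so that $W_\rho(z)=\sum_{n\geq 1}\rho_n z^n/n$. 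Expanding the exponential gives $H(\rho,\bmp;\theta)=\sum_{\bmla\in\bY}a(\bmla)p_\bmla$ where, for $\bmla$ having $m_i=m_i(\bmla)$ parts equal to $i$,
\begin{align*}
a(\bmla)=\theta^{\ell(\bmla)}\prod_{i\geq 1}\frac{1}{m_i!}\left(\frac{\rho_i}{i}\right)^{m_i}.
\end{align*}
First I would substitute this into the quantity appearing in Assumption \ref{asup:infinite}, namely $\sum_{\bmla\in\bY}|a(\bmla)|N^{\ell(\bmla)}|z|^{|\bmla|}$, and recognize the resulting sum as an exponential generating-function identity.

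The key computation is that, because the coefficients $a(\bmla)$ factor over the multiplicities $m_i$, the sum factors as an infinite product:
\begin{align*}
\sum_{\bmla\in\bY}|a(\bmla)|N^{\ell(\bmla)}|z|^{|\bmla|}
=\prod_{i\geq 1}\sum_{m\geq 0}\frac{1}{m!}\left(\frac{\theta N|\rho_i|}{i}|z|^i\right)^m
=\exp\left(\theta N\sum_{i\geq 1}\frac{|\rho_i|}{i}|z|^i\right).
\end{align*}
(Here I am using that all $\rho_i\geq 0$ for a Jack positive specialization, or at least that $|\rho_i|$ is finite; if one wants to be careful about the $\gamma$-term for $i=1$ and the $\beta_j$-terms it only changes $\rho_1$ by a finite amount and does not affect convergence.) By the stability hypothesis, the series $\sum_{i\geq 1}(|\rho_i|/i)|z|^i$ converges absolutely on the disk $\{|z|<1+\varepsilon\}$, hence so does its exponential, and in particular the displayed product converges there for every fixed $N$. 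This is exactly the statement that $H(\rho,\bmp;\theta)$ satisfies Assumption \ref{asup:infinite} with the same $\varepsilon$. Setting $\bmp=1^N$ means setting $p_n=N$ for all $n$, i.e. $z=1$, which lies inside the disk, so $H(\rho,1^N;\theta)=\exp(\theta N W_\rho(1))$ is a well-defined finite number.

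The main (and essentially only) obstacle is bookkeeping: making sure the combinatorial identity relating the power-sum coefficients of $H(\rho,\bmp;\theta)$ to products over part-multiplicities is stated correctly, and that the factor $N^{\ell(\bmla)}$ and the exponent $|z|^{|\bmla|}$ distribute correctly across the product over $i$ so that the $i$-th factor is $\exp(\theta N|\rho_i||z|^i/i)$. Once that factorization is in hand, everything reduces to the defining inequality of Definition \ref{def:stable}. I would also remark that the same computation shows $H(\rho,\bmp;\theta)$ is genuinely analytic (not merely absolutely summable) in a neighborhood of $\bmp=1^N$, which is the "roughly speaking" content flagged after Assumption \ref{asup:infinite}; this follows because the product $\prod_i \exp(\theta\rho_i p_i/i)$ converges uniformly on compact subsets of the relevant polydisk. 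No deep input is needed beyond the product formula for $H$ and the stability assumption.
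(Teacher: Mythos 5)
Your proof is correct and follows essentially the same route as the paper: expand $H(\rho,\bmp;\theta)$ in the power-sum basis and bound (you in fact show equality) $\sum_{\bmla}|a(\bmla)|N^{\ell(\bmla)}|x|^{|\bmla|}$ by $\exp(\theta N\sum_{n\geq 1}|\rho_n||x|^n/n)$, which converges by the stability hypothesis. Your explicit factorization over part-multiplicities just makes visible the inequality the paper writes down directly; one small note is that for a general Jack positive specialization the signs $\rho_n$ are not all nonnegative (the $\beta$-part contributes $(-\theta)^{n-1}$), but since you work with $|\rho_n|$ throughout this does not affect anything.
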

\begin{proof}
By the Taylor expansion,
\begin{align*}
H(\rho,\bmp;\theta)=\exp\left(\theta \sum_{n\geq 1}\frac{\rho_n p_n}{n}\right)
=\sum_{\bmla\in \bY}a_H(\bmla)p_\bmla.
\end{align*} 
Then we have
\begin{align*}
\sum_{\bmla\in \bY}|a_H(\bmla)|N^{\ell(\bmla)}|x|^{|\bmla|}
\leq\exp\left(\theta N \sum_{n\geq 1}\frac{|\rho_n| |x|^n }{n}\right),
\end{align*}
which converges absolutely on the disk $\{z\in \bC:|z|<1+\varepsilon\}$.
\end{proof}

Let $\rho_1=\rho$ be a stable Jack positive specialization as defined in Definition \ref{def:stable}, and $\rho_2=(1^N)$, the above construction \eqref{e:defp} specializes to a Markov chain $\bmla^{(0)}, \bmla^{(1)},\bmla^{(2)},\cdots \in \bY(N)$, with transition probability,
\begin{align}\label{e:chain}
\bP(\bmla^{(t)}=\bmmu|\bmla^{(t-1)}=\bmla)=p_{\bmla\rightarrow\bmmu}(\rho, 1^N;\theta),\quad t=1,2,3,\cdots.
\end{align}
We define the auxiliary Markov chain $\bmx^{(t)}=(x^{(t)}_1, x^{(t)}_2,\cdots, x^{(t)}_N)$ where
\begin{align}
x^{(t)}_i=\frac{\la_i^{(t)}}{\theta N}-\frac{i-1}{N}, \quad i=1,2,\cdots, N.
\end{align}
for $t=1,2,3,\cdots$. The Markov chain $\bmx^{(0)}, \bmx^{(1)},\bmx^{(2)},\cdots$ can be viewed as a  discrete Markov chain of $N$ nonintersecting particles, whose interaction is non-local and of log-gas type. For some special choices of the Jack positive specialization $\rho$, the Markov chains $\bmx^{(0)}, \bmx^{(1)},\bmx^{(2)},\cdots$ specialize to some well-known nonintersecting random walks.
\begin{itemize}	
	\item If $\theta=1$, and $\rho$ is the Jack-Plancherel specialization  $\gamma=\delta$ for some $0<\delta$, Markov chain $\bmx^{(0)}, \bmx^{(1)},\bmx^{(2)},\cdots$ is the continuous time Poisson random walk with intensity $\delta$, conditioning on nonintersecting.
	\item If $\theta=1$, and $\rho$ is the Pure alpha specialization $\alpha_1=\delta$, $\alpha_2=\alpha_3=\cdots=0$ for some $0< \delta<1$, the Markov chain $\bmx^{(0)}, \bmx^{(1)},\bmx^{(2)},\cdots$ is the discrete time geometric random walk, where at each moment the particle jumps to the right $r$ steps with probability $\delta(1-\delta)^r$, conditioning on nonintersecting. 
	
	\item If $\theta=1$, and $\rho$ is the Pure beta specialization  $\beta_1=\delta$, $\beta_2=\beta_3=\cdots=0$ for some $0< \delta<1$, the Markov chain $\bmx^{(0)}, \bmx^{(1)},\bmx^{(2)},\cdots$ is the discrete time Bernoulli random walk, where at each moment the particle jumps to the right by one with probability $\delta$, conditioning on nonintersecting.  

\end{itemize}
The general cases with $\theta>0$ are the $\beta$-analogues of the above nonintersecting random walks. 

\begin{theorem}\label{t:nrw}
Fix $\fb>0$ large, and a stable Jack positive specialization $\rho$, as defined in Definition \ref{def:stable}. Let $\{ \bmla^{(0)} \in \Y(N) \}_{N \geq 1}$ be a sequence of Young diagrams, such that $\supp \mu[\bmla^{(0)}]\subset [-\fb,\fb]$ and
\begin{align*}
\lim_{N\rightarrow \infty}\mu[\bmla^{(0)}]=\mu^{(0)}.
\end{align*} 
Then the Markov chains $\bmla^{(0)},\bmla^{(1)}, \bmla^{(2)}, \cdots $ as defined in \eqref{e:chain} satisfy the law of large numbers and the central limit theorems in the sense of Definition \ref{def:LLNCLT} and Theorem \ref{t:mulclt}, with
%More precisely, for any times $0\leq \tau_1<\tau_2<\cdots<\tau_m$, the collection of random variables
%\begin{align}
%\xi^{(k)}[\bmla^{((\lfloor N\tau_i\rfloor))}]\deq N\left(\int x^k\rd \mu[\bmla^{((\lfloor N\tau_i\rfloor))}]-\bE\int x^k\rd \mu[\bmla^{((\lfloor N\tau_i\rfloor))}]\right),\quad 1\leq i\leq m, \quad  k=1,2,3,\cdots
%\end{align} 
%weakly converges, as $N\rightarrow\infty$, to the Gaussian vector with zero mean and covariance
%\begin{align}\begin{split}
%\lim_{N\rightarrow\infty}\cov(\xi^{(k)}[\bmla^{((\lfloor N\tau_i\rfloor))}], \xi^{(l)}[\bmla^{(\lfloor N\tau_j\rfloor)}])
%&=[z^{-1}w^{-1}]
%\left(\frac{1}{\theta}\frac{1}{(z-w)^2}+\frac{V^{(\tau_i\wedge \tau_j)}_\mu(z+1,w+1)}{\theta^2}\right)\\
%&\times\left( \frac{1}{z} +\frac{ (1 + z)}{\theta}U^{(\tau_i)}_{\mu}(1 + z) \right)^{k}
%\left( \frac{1}{w} +\frac{ (1 + w)}{\theta}U^{(\tau_j)}_{\mu}(1 + w) \right)^{l},
%\end{split}\end{align}
%where 
\begin{align*}
U_\mu^{(\tau)}(z)=\theta(H'_{\mu^{(0)}}(z)+\tau W_\rho'(z)),
\end{align*}
and
\begin{align*}
V_\mu^{(\tau)}(z)=
 \theta \del_z\del_w\ln \left(1+(z-1)(w-1)\frac{zH_{\mu^{(0)}}'(z)-wH_{\mu^{(0)}}'(w)}{z-w}\right).
\end{align*}

\end{theorem}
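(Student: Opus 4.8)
The plan is to compute the Jack generating function of the Markov chain explicitly, verify that it satisfies the running assumptions, and then read off the limit functions $U_\mu^{(\tau)}$ and $V_\mu^{(\tau)}$ from the multilevel machinery in Theorem \ref{t:mulclt}. First I would identify the single-step Jack generating functions. By definition \eqref{e:defp} with $\rho_2=(1^N)$, the one-step transition is governed by the multiplication operator $g_N^{(t)}(\bmp;\theta) = H(\rho,\bmp;\theta)/H(\rho,1^N;\theta)$: indeed, multiplying $J_\bmla(\bmp;\theta)/J_\bmla(1^N;\theta)$ by this factor and re-expanding in the Jack basis produces exactly the coefficients $p_{\bmla\to\bmmu}(\rho,1^N;\theta)$ up to terms in $\Sp(J_\bmeta:\bmeta\notin\bY(N))$, which is precisely the content of \eqref{e:deffpinfinite}. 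So all the $g_N^{(s)}$ are equal to the fixed function $H(\rho,\bmp;\theta)/H(\rho,1^N;\theta)$, and Proposition \ref{p:pstable} guarantees it satisfies Assumption \ref{asup:infinite}.

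Next I would write down $H_N^{(\lfloor N\tau\rfloor)}(\bmp;\theta)$. Following Section \ref{s:mulclt}, $H_N^{(t)}(\bmp;\theta) = F_N(\bmp;\theta)\prod_{s=1}^t g_N^{(s)}(\bmp;\theta)$, where $F_N$ is the Jack generating function of $M_N = M_{\bmla^{(0)}}$ — the delta mass at $\bmla^{(0)}$ — namely $J_{\bmla^{(0)}}(\bmp;\theta)/J_{\bmla^{(0)}}(1^N;\theta)$. Since all the $g_N^{(s)}$ coincide, we get
\begin{align*}
H_N^{(\lfloor N\tau\rfloor)}(\bmp;\theta) = \frac{J_{\bmla^{(0)}}(\bmp;\theta)}{J_{\bmla^{(0)}}(1^N;\theta)}\left(\frac{H(\rho,\bmp;\theta)}{H(\rho,1^N;\theta)}\right)^{\lfloor N\tau\rfloor}.
\end{align*}
Because $H(\rho,\bmp;\theta) = \exp(\theta\sum_{n\geq 1}\rho_n p_n/n)$, raising it to the power $\lfloor N\tau\rfloor$ just scales the exponent, so this function is again a product of an exponential in the $p_n$ with $J_{\bmla^{(0)}}(\bmp;\theta)$, a finite power-sum polynomial; hence $H_N^{(\lfloor N\tau\rfloor)}$ satisfies Assumption \ref{asup:infinite} by the same argument as Proposition \ref{p:pstable}. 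As in the proof of Theorem \ref{t:jack}, one may replace $F_N$ by the cleaner $\tilde F_N$ differing by an element of $\Sp(J_\bmeta:\bmeta\notin\bY(N))$, which does not affect the Nazarov-Sklyanin computations.

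Then I would compute the logarithmic derivatives at $\bmp=1^N$. We have
\begin{align*}
\ln H_N^{(\lfloor N\tau\rfloor)}(\bmp;\theta) = \ln\frac{J_{\bmla^{(0)}}(\bmp;\theta)}{J_{\bmla^{(0)}}(1^N;\theta)} + \theta\lfloor N\tau\rfloor\sum_{n\geq 1}\frac{\rho_n(p_n - N)}{n},
\end{align*}
so $\del_{p_n}\ln H_N^{(\lfloor N\tau\rfloor)}|_{\bmp=1^N}$ adds the term $\theta\lfloor N\tau\rfloor\rho_n/n$ to the corresponding derivative of $\ln(J_{\bmla^{(0)}}/J_{\bmla^{(0)}}(1^N))$, while all higher mixed derivatives ($r\geq 2$) of the $W_\rho$-part vanish since it is linear in the $p_n$. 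Using Theorem \ref{t:jack} (whose hypotheses hold because $\mu[\bmla^{(0)}]\to\mu^{(0)}$ with support in $[-\fb,\fb]$), the contribution of $J_{\bmla^{(0)}}$ to $\fc_k$ is $\theta\,\del_x^k H_{\mu^{(0)}}(x)|_{x=1}$, and the $W_\rho$-contribution to $\frac1N\sum_i k!\binom{i}{k}\del_{p_i}$ is $\theta\tau\,\del_x^k W_\rho(x)|_{x=1}$ (since $\lfloor N\tau\rfloor/N\to\tau$ and $W_\rho(z)=\sum\rho_n z^n/n$). Hence $U_\mu^{(\tau)}(z) = \sum_k \fc_k(z-1)^{k-1}/(k-1)! = \theta(H'_{\mu^{(0)}}(z) + \tau W'_\rho(z))$. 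Similarly, the second mixed derivatives $\fd_{k,l}$ come entirely from $J_{\bmla^{(0)}}$ (the $W_\rho$-part being linear), giving exactly the $V$ from Theorem \ref{t:jack}, so $V_\mu^{(\tau)}(z,w) = \theta\,\del_z\del_w\ln(1+(z-1)(w-1)\frac{zH'_{\mu^{(0)}}(z)-wH'_{\mu^{(0)}}(w)}{z-w})$, independent of $\tau$; and all $r$-th mixed derivatives for $r\geq 3$ vanish in the limit. Finally, Theorem \ref{t:mulclt} applies verbatim to conclude the multilevel law of large numbers and central limit theorems with these $U_\mu^{(\tau)}$, $V_\mu^{(\tau)}$.

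The main obstacle is not analytic but bookkeeping: one must carefully check that the $N$-dependent exponent $\theta\lfloor N\tau\rfloor$ interacts correctly with the $\frac1N$-scaling in Assumptions \ref{a:LLN} and \ref{a:CLT} — that is, confirming the $W_\rho$-term lands at the right order (degree $1$ in $N$ for the first derivative, so that it survives in $\fc_k$ after dividing by $N$; degree $0$ for higher derivatives, so it does \emph{not} contribute to $\fd_{k,l}$) — and that Assumption \ref{asup:infinite} genuinely holds uniformly in $N$ for $H_N^{(\lfloor N\tau\rfloor)}$ with the growing power. The stability hypothesis on $\rho$ (Definition \ref{def:stable}) is exactly what makes $\theta N\sum_n|\rho_n||x|^n/n$ — and hence, after the factor $\lfloor N\tau\rfloor$, $\theta\lfloor N\tau\rfloor\sum_n|\rho_n||x|^n/n$, which is still $\OO(N)$ — converge on $\{|z|<1+\varepsilon\}$, so the same estimate as in Proposition \ref{p:pstable} goes through; one just has to state it for the product with $J_{\bmla^{(0)}}$. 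Beyond that, everything reduces to invoking Theorem \ref{t:jack} and Theorem \ref{t:mulclt}.
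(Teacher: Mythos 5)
Your proposal follows essentially the same route as the paper: you identify the one-step generating function as $H(\rho,\bmp;\theta)/H(\rho,1^N;\theta)$ up to terms in $\Sp(J_\bmeta:\bmeta\notin\bY(N))$, replace the exact generating functions by their cleaner versions (which is harmless since the Nazarov--Sklyanin operators preserve that ideal), observe that $H_N^{(\lfloor N\tau\rfloor)}$ reduces to $\frac{J_{\bmla^{(0)}}(\bmp;\theta)}{J_{\bmla^{(0)}}(1^N;\theta)}\cdot H(\lfloor N\tau\rfloor\rho,\bmp;\theta)/H(\lfloor N\tau\rfloor\rho,1^N;\theta)$, check Assumption \ref{asup:infinite} via Proposition \ref{p:pstable} plus the fact that $J_{\bmla^{(0)}}$ is a finite power-sum polynomial, and read off the derivatives from Theorem \ref{t:jack} (the $J_{\bmla^{(0)}}$-part) and from the explicit linear-in-$p_n$ form of the $W_\rho$-part. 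This is the paper's proof; the order-in-$N$ bookkeeping you flag (first $p_n$-derivative of the $W_\rho$-part scales like $N$, higher derivatives vanish) is exactly the observation the paper makes implicitly when it writes down $\fc_k^{(\tau)}$ and $\fd_{k,l}^{(\tau)}$ before applying Theorem \ref{thm:equivalent} and Theorem \ref{t:mulclt}.
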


\begin{proof}
In the notations of Section \ref{s:mulclt}, we take $M_N=\delta_{\bmla^{(0)}}$, and $\fm_N^{(1)}=\fm_N^{(2)}=\cdots=\cJ_{\rho,1^N}$ the Jack measure as defined in Definition \ref{def:jackmeasure}. Then their Jack generating functions are given by 
\begin{align*}
F_N(\bmp;\theta)=\frac{J_{\bmla^{(0)}}(\bmp;\theta)}{J_{\bmla^{(0)}}(1^N;\theta)},
\end{align*}
and 
\begin{align*}\begin{split}
&\phantom{{}={}}g_N^{(1)}(\bmp;\theta)=g_N^{(2)}(\bmp;\theta)=\cdots=
\sum_{\bflambda\in\Y(N)}{\cJ_{\rho,1^N}(\bmla)\frac{J_{\bflambda}(\bmp; \theta)}{J_{\bflambda}(1^N; \theta)}}
= \sum_{\bflambda\in\Y(N)}{ \frac{J_{\bflambda}(\rho; \theta)J_{\bflambda}(1^N; \theta)}{j_\bmla(\theta)H(\rho, 1^N; \theta)} \frac{J_{\bflambda}(\bmp; \theta)}{J_{\bflambda}(1^N; \theta)}}\\
&= \sum_{\bflambda\in\Y}\frac{{J_{\bflambda}(\rho; \theta)J_{\bflambda}(\bmp; \theta)}}{j_\bmla(\theta)H(\rho, 1^N; \theta)}
-\sum_{\bmla\not\in \bY(N)}
\frac{J_\bmla(\rho;\theta)}{j_\bmla(\theta)H(\rho,1^N;\theta)}
J_{\bmla}(\bmp;\theta)\\
&= \frac{H(\rho, \bmp; \theta)}{H(\rho, 1^N; \theta)}-\sum_{\bmla\not\in \bY(N)}
\frac{J_\bmla(\rho;\theta)}{j_\bmla(\theta)H(\rho,1^N;\theta)}
J_{\bmla}(\bmp;\theta)\\
&= \frac{H(\rho, \bmp; \theta)}{H(\rho, 1^N; \theta)} +\Sp(J_\bmla(\bmp;\theta): \bmla\not\in \bY(N)).
\end{split}\end{align*}
%{
%\cor
%\begin{align}\begin{split}\label{e:JGFjackmeasure}
%F_{N}(\bmp; \theta)
%&= \sum_{\bflambda\in\Y(N)}{\cJ_{\rho,1^N}(\bmla)\frac{J_{\bflambda}(\bmp; \theta)}{J_{\bflambda}(1^N; \theta)}}
%= \sum_{\bflambda\in\Y(N)}{ \frac{J_{\bflambda}(N\rho; \theta)J_{\bflambda}(1^N; \theta)}{j_\bmla(\theta)H(N\rho, 1^N; \theta)} \frac{J_{\bflambda}(\bmp; \theta)}{J_{\bflambda}(1^N; \theta)}}\\
%&= \sum_{\bflambda\in\Y}\frac{{J_{\bflambda}(N\rho; \theta)J_{\bflambda}(\bmp; \theta)}}{j_\bmla(\theta)H(N\rho, 1^N; \theta)}
%-\sum_{\bmla\not\in \bY(N)}
%\frac{J_\bmla(N\rho;\theta)}{j_\bmla(\theta)H(N\rho,1^N;\theta)}
%J_{\bmla}(\bmp;\theta)\\
%&= \frac{H(N\rho, \bmp; \theta)}{H(N\rho, 1^N; \theta)}-\sum_{\bmla\not\in \bY(N)}
%\frac{J_\bmla(N\rho;\theta)}{j_\bmla(\theta)H(N\rho,1^N;\theta)}
%J_{\bmla}(\bmp;\theta)\\
%&= \frac{H(N\rho, \bmp; \theta)}{H(N\rho, 1^N; \theta)} +\Sp(J_\bmla(\bmp;\theta): \bmla\not\in \bY(N)).
%\end{split}\end{align}
%}
where $\Sp(J_\bmeta(\bmp;\theta): \bmeta\not\in \bY(N))$  is the linear span of Jack symmetric functions $J_\bmeta(\bmp;\theta)$ with $\bmeta\not\in \bY(N)$ in $\Sym$. We notice that
\begin{align*}
g_N^{(t)}(x_1,x_2,\cdots, x_N;\theta)=\frac{H(\rho, x_1,x_2,\cdots,x_N; \theta)}{H(\rho, 1^N; \theta)},\quad t=1,2,3,\cdots,
\end{align*}
and thus by Proposition \eqref{p:Stochastic}
\begin{align*}
g_N^{(t)}(x_1,x_2,\cdots, x_N;\theta)\frac{J_\bmla(x_1,x_2,\cdots, x_N;\theta)}{J_\bmla(1^N;\theta)}
=\sum_{\bmmu\in \Y(N)}p_{\bmla\rightarrow \bmmu}(\rho,1^N;\theta)
\frac{J_\bmmu(x_1,x_2,\cdots, x_N;\theta)}{J_\bmmu(1^N;\theta)}.
\end{align*}

We notice that $\Sp(J_\bmla(\bmp;\theta): \bmla\not\in \bY(N))$ is an ideal of $\Sym$, the algebra of symmetric functions in infinitely many variables, and the Nazarov-Sklyanin Operators preserve the subspace $\Sp(J_\bmla(\bmp;\theta): \bmla\not\in \bY(N))$, and for any $\bmla\not\in \bY(N)$, $J_\bmla(1^N;\theta)=0$. For any times $0\leq t_1\leq t_2\leq \cdots\leq t_r$, we have
\begin{align*}\begin{split}
&\phantom{{}={}}\left(\frac{I^{(k_r)}}{N^{k_r}}+\frac{I^{(k_r+1)}}{N^{k_r+1}}\right)
\left(\prod_{s=t_{r-1}+1}^{t_r} g_N^{(s)}\right)\cdots
\left.\left(\frac{I^{(k_{1})}}{N^{k_{1}}}+\frac{I^{(k_{1}+1)}}{N^{k_{1}+1}}\right) \left(\prod_{s=1}^{t_{1}} g_N^{(s)}\right)
F_N\right|_{\bmp=1^N}\\
&=\left(\frac{I^{(k_r)}}{N^{k_r}}+\frac{I^{(k_r+1)}}{N^{k_r+1}}\right)
\left(\prod_{s=t_{r-1}+1}^{t_r} \tilde g_N^{(s)}\right)\cdots
\left.\left(\frac{I^{(k_{1})}}{N^{k_{1}}}+\frac{I^{(k_{1}+1)}}{N^{k_{1}+1}}\right) \left(\prod_{s=1}^{t_{1}} \tilde g_N^{(s)}\right)
F_N\right|_{\bmp=1^N},
\end{split}\end{align*}
where 
\begin{align*}
\tilde g_N^{(1)}(\bmp;\theta)=\tilde g_N^{(2)}(\bmp;\theta)=\cdots=\frac{H(\rho, \bmp; \theta)}{H(\rho, 1^N; \theta)}
\end{align*}
Therefore, we can instead use $\tilde g_N^{(1)}(\bmp;\theta), \tilde g_N^{(2)}(\bmp;\theta), \tilde g_N^{(3)}(\bmp;\theta),\cdots$
to compute the joint moments of the random measures $\mu[\bmla^{(0)}], \mu[\bmla^{(1)}], \mu[\bmla^{(2)}], \cdots$ as defined in \eqref{def:mula}. In the following we check
\begin{align*}
\tilde H^{(t)}(\bmp;\theta)=F_N(\bmp;\theta)\prod_{s=1}^{t}\tilde g_N^{(s)}(\bmp;\theta)
=\frac{J_{\bmla^{(0)}}(\bmp;\theta)}{J_{\bmla^{(0)}}(1^N;\theta)}
\frac{H(t\rho, \bmp; \theta)}{H(t\rho, 1^N; \theta)}, 
\end{align*}
for $t=\lfloor N\tau\rfloor$ satisfies Assumption \ref{asup:infinite}, \ref{a:LLN} and \ref{a:CLT}. Since $J_{\bmla^{(0)}}(\bmp;\theta)$ is a finite sum of power sum symmetric functions, and from Proposition \ref{p:pstable}, $H(t\rho,\bmp;\theta)$ satisfies Assumption \ref{asup:infinite} for any $t\geq 1$, we conclude that $\tilde H^{(t)}(\bmp;\theta)$ satisfies Assumption \ref{asup:infinite} for $t=\lfloor N\tau\rfloor$.
Moreover, thanks to Theorem \ref{t:jack} and Theorem \ref{thm:equivalent}, for any $k\geq 0$,
\begin{align*}
 \fc^{(\tau)}_k=\lim_{N\rightarrow \infty}\left.\frac{\del_i^k \ln \tilde H^{(\lfloor N\tau\rfloor)}_N(x_1,x_2,\cdots,x_N; \theta)}{N}\right|_{1^N}=\theta\left(\tau\sum_{i\geq 1}\frac{k!}{i}{i\choose k} \rho_i+\left.\del_x ^k H_{\mu^{(0)}}(x)\right|_{x=1}\right).
\end{align*}
For any distinct indices $i,j$ and any exponents $k,l\geq 1$,
\begin{align*}\begin{split}
\fd_{k,l}^{(\tau)}&=\lim_{N\rightarrow \infty}\del_i^k\del_j^l\ln\left.\tilde H_N^{(\lfloor N\tau\rfloor)}(x_1,x_2,\cdots, x_N;\theta)\right|_{1^N}\\
&= \theta \left.\del_{x_1}^k\del_{x_2}^{l}\ln\left(1+(x_1-1)(x_2-1)\frac{x_1 H_{\mu^{(0)}}'(x_1)-x_2H_{\mu^{(0)}}'(x_2)}{x_1-x_2}\right)\right|_{x_1=x_2=1}.
\end{split}\end{align*}
For any $r\geq 3$ and any indices $i_1,i_2,\cdots, i_r$ such that there are at least three distinct indices among them,
\begin{align*}
\lim_{N\rightarrow \infty}\del_{i_1}\del_{i_2}\cdots \del_{i_r}\ln\left.\tilde H_N^{(\lfloor N\tau\rfloor)}(x_1,x_2,\cdots,x_N;\theta)\right|_{1^N}= 0.
\end{align*}
Thus $\tilde H_N^{(\lfloor N\tau\rfloor)}(\bmp;\theta)$ satisfies Assumptions \ref{a:LLN} and \ref{a:CLT}. The limit functions \eqref{eqn:UV} are given by
\begin{align*}\begin{split}
U^{(\tau)}_\mu(z)=
\sum_{k\geq 1}\frac{\fc_k^{(\tau)}}{(k-1)!}(z-1)^{k-1}
&=\sum_{k\geq 1}\frac{\theta (\tau\del_z^kW_\rho(z)+\del_z^k H_{\mu^{(0)}}(z)|_{z=1})}{(k-1)!}(z-1)^{k-1}\\
&=\theta (\tau W_\rho'(z)+H_{\mu^{(0)}}'(z)), 
\end{split}\end{align*}
and
\begin{align*}\begin{split}
V^{(\tau)}_\mu(z,w)
&=\sum_{k,l\geq 1}\frac{\fd_{k,l}^{(\tau)}}{(k-1!)(l-1)!}(z-1)^{k-1}(w-1)^{k-1}\\
&=
 \theta \del_z\del_w\ln \left(1+(z-1)(w-1)\frac{zH_{\mu^{(0)}}'(z)-wH_{\mu^{(0)}}'(w)}{z-w}\right).
\end{split}\end{align*}
The claim follows from Theorem \ref{thm:LLN} and Theorem \ref{t:mulclt}.
\end{proof}

We denote the limit empirical density 
\begin{align}\label{e:limited}
\lim_{N\rightarrow \infty} \mu[\bmla^{(\lfloor N\tau\rfloor)}]=\mu^{(\tau)}.
\end{align}
It turns out that the limit density $\mu^{(\tau)}$ can be characterized by a Burger's type equation.
\begin{corollary}\label{c:LLN}
Fix $\fb>0$ large, and a stable Jack positive specialization $\rho$ as defined in Definition \ref{def:stable}. Let $\{ \bmla^{(0)} \in \Y(N) \}_{N \geq 1}$ be a sequence of Young diagrams, such that $\supp \mu[\bmla^{(0)}]\subset [-\fb, \fb]$ and
\begin{align*}
\lim_{N\rightarrow \infty}\mu[\bmla^{(0)}]=\mu^{(0)}.
\end{align*} 
 The limit empirical density $\mu^{(\tau)}$, as defined in \eqref{e:limited} is characterized by the following equation,
\begin{align}\label{e:defmtau}
m_{\mu^{(\tau)}}(z+\tau T_\rho(e^{m_{\mu^{(0)}}(z)}))=m_{\mu^{(0)}}(z),
\end{align}
where $m_{\mu^{(0)}}(z)$ and $m_{\mu^{(\tau)}}(z)$ are the Stieltjes transforms of $\mu^{(0)}$ and $\mu^{(\tau)}$ respectively, and  $\rho=(\gamma, \bm \alpha, \bm \beta)$, 
\begin{align*}
T_\rho(z)
=\gamma z+\sum_{i\geq 1}\frac{\alpha_i z}{1-\alpha_i z}+\sum_{i\geq 1}\frac{\beta_i z}{1+\theta \beta_i z}.
\end{align*}
\end{corollary}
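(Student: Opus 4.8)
The plan is to combine the law of large numbers of Theorem~\ref{t:nrw} with a functional‑inverse identity for the limiting Stieltjes transform that is implicit in the proof of Theorem~\ref{thm:LLN}.

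First I would isolate the following statement, valid for any law‑of‑large‑numbers limit $\mu$ produced by our machinery with limiting data $U_\mu$: writing $T_\mu(w)=\frac{w}{\theta}U_\mu(w)+\frac{1}{w-1}$ as in \eqref{def:TMN}--\eqref{e:UMNlimit}, one has $T_\mu\!\big(e^{m_\mu(z)}\big)=z$ as a formal power series near $z=\infty$; equivalently, $e^{m_\mu(z)}$ is the branch near $w=1$ of the functional inverse of $z=T_\mu(w)$. This is essentially \eqref{e:formalsum} together with Lemma~\ref{l:invertT}: from $e^{m_\mu(z)}=\lim_N (z+1)(z-T_{F_N})^{-1}_{00}=(z+1)\exp\!\big(-\frac{1}{2\pi i}\oint\ln(z-T_\mu(w))\frac{dw}{w}\big)$ one differentiates in $z$ and evaluates the residues of $\frac{1}{w(z-T_\mu(w))}$: the pole at $w=0$ contributes $\frac{1}{z+1}$ (since $T_\mu(0)=-1$), while the root $w_0(z)\to 1$ of $z=T_\mu(w)$ contributes $-\frac{1}{w_0T_\mu'(w_0)}=-\frac{d}{dz}\ln w_0(z)$. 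Hence $\frac{d}{dz}\log\big((z+1)^{-1}e^{m_\mu(z)}\big)=\frac{d}{dz}\log\big((z+1)^{-1}w_0(z)\big)$, and since both sides of $(z+1)^{-1}e^{m_\mu(z)}$ and $(z+1)^{-1}w_0(z)$ tend to the same limit as $z\to\infty$, we get $e^{m_\mu(z)}=w_0(z)$.

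Next I would apply Theorem~\ref{t:nrw}, which gives $U^{(\tau)}_\mu(z)=\theta\big(H'_{\mu^{(0)}}(z)+\tau W_\rho'(z)\big)$, so that
\[
T_{\mu^{(\tau)}}(w)=w\,H'_{\mu^{(0)}}(w)+\tau\,w\,W_\rho'(w)+\frac{1}{w-1}.
\]
Two elementary computations identify the two pieces. Differentiating the explicit formula for $H_\mu$ and using $R_\mu(z)=m_\mu^{-1}(z)-\frac1z$ yields $w H'_{\mu^{(0)}}(w)+\frac{1}{w-1}=R_{\mu^{(0)}}(\ln w)+\frac{1}{\ln w}=m_{\mu^{(0)}}^{-1}(\ln w)$. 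Expanding $W_\rho(z)=\sum_{n\ge1}\rho_nz^n/n$ gives $w W_\rho'(w)=\sum_{n\ge1}\rho_nw^n$, which equals $T_\rho(w)$ since $\rho_n$ is the $n$‑th power sum of the specialization and $T_\rho(z)=\sum_{n\ge1}p_n(\rho)z^n$ by a direct expansion of its definition. Therefore $T_{\mu^{(\tau)}}(w)=m_{\mu^{(0)}}^{-1}(\ln w)+\tau\,T_\rho(w)$.

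Finally I would substitute $w=e^{m_{\mu^{(\tau)}}(z')}$ into the identity $T_{\mu^{(\tau)}}(w)=z'$ from the first step applied to $\mu^{(\tau)}$: setting $z:=m_{\mu^{(0)}}^{-1}(\ln w)$, so that $m_{\mu^{(0)}}(z)=\ln w=m_{\mu^{(\tau)}}(z')$ and $w=e^{m_{\mu^{(0)}}(z)}$, the identity becomes $z'=z+\tau\,T_\rho\!\big(e^{m_{\mu^{(0)}}(z)}\big)$, whence $m_{\mu^{(\tau)}}\big(z+\tau\,T_\rho(e^{m_{\mu^{(0)}}(z)})\big)=m_{\mu^{(0)}}(z)$, which is \eqref{e:defmtau}. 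The main obstacle is the first step: extracting the clean inverse‑function identity from \eqref{e:formalsum}/Lemma~\ref{l:invertT} requires care with the formal‑power‑series‑in‑$1/z$ bookkeeping, with the choice of branch of $T_\mu^{-1}$ near $w=1$, and with the behavior of the contour and the pole at $w=0$; once this is in place the rest is a change of variables, using $\supp\mu[\bmla^{(0)}]\subset[-\fb,\fb]$ (so $m_{\mu^{(0)}}$, $R_{\mu^{(0)}}$, $H_{\mu^{(0)}}$ are defined and invertible on the relevant neighborhoods) and the stability of $\rho$ (so $W_\rho$ and $T_\rho$ converge).
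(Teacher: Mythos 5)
Your argument is essentially correct, and the underlying computation coincides with the paper's: the algebraic identities $wH'_{\mu^{(0)}}(w)+\tfrac{1}{w-1}=m_{\mu^{(0)}}^{-1}(\ln w)$ and $wW_\rho'(w)=T_\rho(w)$ are exactly the paper's \eqref{e:factor1} and \eqref{e:W'T}, and your ``inverse-function identity'' $T_\mu(e^{m_\mu(z)})=z$ is a clean repackaging of the formal moment formula \eqref{eqn:moments} that the paper extracts from \eqref{e:formalsum} and Lemma \ref{l:invertT} by the same residue calculus. Where you genuinely diverge is in the logical structure: the paper first \emph{constructs} a candidate probability measure $\mu^{(\tau)}$ from the implicit equation \eqref{e:defmtau}, invoking the Akhiezer--Krein characterization of Stieltjes transforms and a conformality/univalence result (\cite[Lemma 4]{MR1488333}) for the map $z\mapsto z+\tau T_\rho(e^{m_{\mu^{(0)}}(z)})$ on $\Omega_\tau$, and only then verifies that this measure has the correct moments; you instead start from the LLN limit (which is already a probability measure by the tightness step in Theorem \ref{thm:LLN}) and derive that its Stieltjes transform satisfies \eqref{e:defmtau} by inverting $T_{\mu^{(\tau)}}$. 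Your route is more direct and avoids the conformal-map machinery for proving this corollary, but two caveats are worth making explicit. First, as you note, making the $w_0(z)=e^{m_\mu(z)}$ step airtight requires justifying that the root of $T_{\mu^{(\tau)}}(w)=z$ near $w=1$ is the unique pole of $\frac{1}{w(z-T_{\mu^{(\tau)}}(w))}$ inside $|w|=1+\varepsilon/2$ besides $w=0$; this is available here because $\supp\mu^{(0)}\subset[-\fb,\fb]$ and $\rho$ is stable, so $U^{(\tau)}_\mu$, and hence $T_{\mu^{(\tau)}}$, is genuinely analytic (not merely formal) near $w=1$. Second, your proof shows that $m_{\mu^{(\tau)}}$ \emph{satisfies} \eqref{e:defmtau}, but not by itself that the equation \emph{characterizes} $\mu^{(\tau)}$, since one must still know that \eqref{e:defmtau} has a unique Stieltjes-transform solution; the paper's conformality argument supplies exactly this, and is in any case reused in the proof of Corollary \ref{c:GFF}, so it is not wasted effort. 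If you state that $z\mapsto z+\tau T_\rho(e^{m_{\mu^{(0)}}(z)})$ is a biholomorphism onto a neighborhood of $\infty$ in $\bC_+$ (which follows from its asymptotics $z+\tau\sum_i\rho_i+O(1/z)$) and invoke the uniqueness of Stieltjes transforms, the gap closes.
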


%It turns out that the limit density $\mu^{(\tau)}$ can be characterized by the following equation,
%
%\begin{align}
%m_{\mu^{(\tau)}}(z+\tau e^{m_{\mu^{(0)}}(z)} W_\rho'(e^{m_{\mu^{(0)}}(z)}))=m_{\mu^{(0)}}(z),
%\end{align}
%where $m_{\mu^{(0)}}(z)$ and $m_{\mu^{(\tau)}}(z)$ are the Stieltjes transform of $\mu^{(0)}$ and $\mu^{(\tau)}$ respectively.
\begin{proof}
We first prove that the relation \eqref{e:defmtau} defines a probability measure $\mu^{(\tau)}$, and check that the moments of the probability measure $\mu^{(\tau)}$ match with those of $\mu[\bmla^{(\lfloor N\tau\rfloor)}]$, i.e.,
\begin{align}\label{e:equalmoment}
\lim_{N\rightarrow\infty}\int x^k \rd \mu[\bmla^{(\lfloor N\tau\rfloor)}]
=\int x^k \rd \mu^{(\tau)}(x).
\end{align}

Since $\mu^{(0)}$ is a probability measure, with density bounded by $1$ with respect to the Lebesgue measure, we have
\begin{align*}
e^{m_{\mu^{(0)}}(z)}\sim 1+\frac{1}{z},\quad z\rightarrow \infty,
\end{align*}
and for any $z\in \bC_+$,
\begin{align*}
-\pi< \Im[m_{\mu^{(0)}}(z)]< 0, \quad \Im[e^{m_{\mu^{(0)}}(z)}]<0.
\end{align*}
Therefore, we have as $z\rightarrow \infty$,
\begin{align*}
T_\rho(e^{m_{\mu^{(0)}}(z)})-\sum_{i\geq 1}\rho_i
&= \gamma e^{m_{\mu^{(0)}}(z)}+\sum_{i\geq 1}\frac{\alpha_i e^{m_{\mu^{(0)}}(z)}}{1-\alpha_i e^{m_{\mu^{(0)}}(z)}}+\sum_{i\geq 1}\frac{\beta_i e^{m_{\mu^{(0)}}(z)}}{1+\theta \beta_i e^{m_{\mu^{(0)}}(z)}}-\sum_{i\geq 1}\rho_i\\
&\sim\left(\gamma+\frac{\alpha_i}{(1-\alpha_i)^2}+\frac{\beta_i}{(1+\theta\beta_i)^2}\right)\frac{1}{z},
\end{align*}
and for any $z\in \bC_+$,
\begin{align*}
\Im\left[T_\rho(e^{m_{\mu^{(0)}}(z)})-\sum_{i\geq 1}\rho_i\right]
= \gamma \Im[ e^{m_{\mu^{(0)}}(z)}]+\sum_{i\geq 1}\frac{\alpha_i \Im[e^{m_{\mu^{(0)}}(z)}]}{|1-\alpha_i e^{m_{\mu^{(0)}}(z)}|^2}+\sum_{i\geq 1}\frac{\beta_i \Im[e^{m_{\mu^{(0)}}(z)}]}{|1+\theta \beta_i e^{m_{\mu^{(0)}}(z)}|^2}
<0.
\end{align*}
It follows from the classical Akhiezer-Krein theorem \cite[Chapter 3]{MR0184042}, there exists a positive measure $\mu_\rho^{(0)}$ such that
\begin{align}\label{e:defmrho0}
m_{\mu_\rho^{(0)}}(z)=T_\rho(e^{m_{\mu^{(0)}}(z)})-\sum_{i\geq 1}\rho_i,
\end{align}
where $m_{\mu^{(0)}}(z)$ and $m_{\mu_\rho^{(0)}}(z)$ are Stieltjes transforms of $\mu^{(0)}$ and $\mu_\rho^{(0)}$ respectively.   Therefore, we have
\begin{align*}
z+\tau T_\rho(e^{m_{\mu^{(0)}}(z)})=z+\tau\sum_{i\geq 1} \rho_i+\tau m_{\mu_\rho^{(0)}}(z).
\end{align*}
It follows from \cite[Lemma 4]{MR1488333}, that $z+\tau T_\rho(e^{m_{\mu^{(0)}}(z)})$ is conformal from 
\begin{align} \label{e:defOmega}
\Omega_\tau\deq\left\{z\in \bC\setminus \bR: \int \frac{\rd \mu_\rho^{(0)}(x)}{|x-z|^2}<\frac{1}{\tau}\right\},
\end{align}
to $\bC\setminus \bR$, and is a homeomorphism from the closure of $\Omega_\tau\cap \bC_+$ to $\bC_+\cup \bR$, and from the closure of $\Omega_\tau\cap \bC_-$ to $\bC_-\cup \bR$. Therefore there exists a well defined function $m_{\mu^{(\tau)}}(w)$ on $\bC_+$, such that \eqref{e:defmtau} holds. Moreover, $\Im[m_{\mu^{(\tau)}}(w)]<0$ for any $w\in \bC_+$ and $m_{\mu^{(\tau)}}(w)\sim 1/w$ as $w\rightarrow \infty$. Again, from the Akhiezer-Krein theorem \cite[Chapter 3]{MR0184042}, $m_{\mu^{(\tau)}}(w)$ is the Stieltjes transform of a probability measure $\mu^{(\tau)}$.

In the following we prove \eqref{e:equalmoment}. Both $m_{\mu^{(0)}}(z)$ and $m_{\mu^{(\tau)}}(z)$ are invertible in a small neighborhood of $z=\infty$. \eqref{e:defmtau} implies that in a small neighborhood of $z=0$,
\begin{align*}
m_{\mu^{(\tau)}}^{-1}(z)=\tau T_\rho(e^{z})+m_{\mu^{(0)}}^{-1}(z).
\end{align*}
We also notice the for the Jack positive specialization $\rho=(\gamma, \bm\alpha,\bm \beta)$, we have
\begin{align}\label{e:W'T}
W_\rho(z)=\sum_{n\geq 1}\frac{\rho_n z^n}{n}
=\gamma z -\sum_{i\geq 1}\ln(1-\alpha_iz)+\frac{1}{\theta}\sum_{i\geq 1}\ln(1+\theta \beta_i z),\quad zW_\rho'(z)=T_\rho(z).
\end{align}
Therefore, in a small neighborhood of $z=0$,
\begin{align}\begin{split}\label{e:factor1}
&\phantom{{}={}}\frac{1}{z}+\frac{(z+1)U_\mu^{(\tau)}(z+1)}{\theta}
=\frac{1}{z}+(z+1)(\tau W_\rho'(z+1)+H_{\mu^{(0)}}'(z+1))\\
&=\frac{1}{z}+\tau T_{\rho}(z+1)+(z+1)\left(\frac{R_{\mu^{(0)}}(\ln(z+1))+1}{z+1}+\frac{1}{(z+1)\ln(z+1)}-\frac{1}{z}\right)\\
&=\tau T_{\rho}(z+1)+R_{\mu^{(0)}}(\ln(z+1))+\frac{1}{\ln(z+1)}\\
&=\tau T_{\rho}(z+1)+m_{\mu^{(0)}}^{-1}(\ln (z+1))
=m_{\mu^{(\tau)}}^{-1}(\ln (z+1)).
\end{split}\end{align}
We can rewrite the expression \eqref{eqn:moments} as a contour integral and use \eqref{e:factor1},
\begin{align*}\begin{split}
&\phantom{{}={}}\lim_{N\rightarrow\infty}\int x^k \rd \mu[\bmla^{(\lfloor N\tau\rfloor)}]
=\frac{1}{2\pi \ri(k+1)}\oint_{|z|=\varepsilon} \left(\frac{1}{z}+\frac{(z+1)U_\mu^{(\tau)}(z+1)}{\theta}\right)^{k+1}\frac{\rd z}{z+1}\\
&=\frac{1}{2\pi \ri(k+1)}\oint_{|z|=\varepsilon} \left(m_{\mu^{(\tau)}}^{-1}(\ln (z+1))\right)^{k+1}\frac{\rd z}{z+1}
=\frac{1}{2\pi \ri(k+1)}\oint_{|z|=\varepsilon} \left(m_{\mu^{(\tau)}}^{-1}(z)\right)^{k+1}\frac{\rd e^z}{e^z}\\
&=\frac{1}{2\pi \ri(k+1)}\oint_{|z|=1/\varepsilon} z^{k+1}\rd m_{\mu^{(\tau)}}(z)=\int x^k \rd \mu^{(\tau)}(x).
\end{split}\end{align*}
This finishes the proof of Corollary \ref{c:LLN}.
\end{proof}

Next, we identify the fluctuation of the rescaled empirical density $\{N(\mu[\bmla^{(t)}]-\bE[\mu[\bmla^{(t)}]])\}_{t\geq 0}$ with Gaussian Free Field.   
Let $\bH=\{z\in \bC: \Im[z]>0\}$ be the upper half plane. The Gaussian Free Field $\fG$ on $\bH$ with zero boundary conditions, see e.g. \cite{MR2322706}, is a probability measure on a suitable class of generalized functions on $\bH$, and  can be characterized as follows. We take any sequence $\{\phi_k\}_{k\geq 1}$ of compactly supported test functions, the pairings
\begin{align*}
\int_\bH \phi_k(z)\fG(z)|\rd z|^2\deq \fG(\phi_k),\quad k\geq 1,
\end{align*}
form a sequence of mean zero Gaussian random variables with covariance matrix
\begin{align*}
\bE\left[\fG(\phi_k), \fG(\phi_l)\right]
=\int_{\bH}|\rd z_1|^2 |\rd z_2|^2 \phi_k(z_1)\phi_l(z_2) G(z_1,z_2),
\end{align*}
where 
\begin{align*}
G(z,w)=-\frac{1}{2\pi}\ln \left|\frac{z-w}{z-\bar w}\right|.
\end{align*}
is the Green function of the Laplacian on $\bH$ with Dirichlet boundary conditions.  One can make sense of the integrals
$\int
f(z)\fG(z)\rd z$ over finite contours in $\bH$ with continuous functions $f(z)$.

Given a nonintersecting random walk $\bmx^{(0)}, \bmx^{(1)},\bmx^{(2)},\cdots$ as defined in \eqref{e:chain}, we introduce the height function $H_N:  \bR\times \bR_{\geq 0}\mapsto \bZ_{\geq 0}$:
\begin{align*}
H_N(x,\tau)=|\{1\leq i\leq N: x_i^{(\lfloor N\tau\rfloor)}\geq  x\}|.
\end{align*}
We define the map $z\mapsto x(z)+\ri \tau(z)$ from $\bH$ to $\bH$,
\begin{align*}
 (x(z),\tau(z))=\left(\frac{zT_\rho(e^{m_{\mu^{(0)}}(\bar z)})-\bar zT_\rho(e^{m_{\mu^{(0)}}( z)})}{T_\rho(e^{m_{\mu^{(0)}}(\bar z)})-T_\rho(e^{m_{\mu^{(0)}}(z)})}, \frac{z-\bar z}{T_\rho(e^{m_{\mu^{(0)}}(\bar z)})-T_\rho(e^{m_{\mu^{(0)}}(z)})}\right).
\end{align*}
We note that the expressions for $x(z)$ and $\tau(z)$ are invariant with respect to complex conjugate, so $x(z)$ and $\tau(z)$ are indeed real for any $z\in \bH$. Since $x(z)=z+\tau(z)T_\rho(e^{m_{\mu^{(0)}}(z)})\in \bR$, the map $z\mapsto (x(z),\tau(z))$ is in fact a diffeomorphism from $\bH$ to its image. We define the pull back height function on $\bH$,
\begin{align}\label{e:defHn}
H_N(z)\deq H_N(x(z), \tau(z)),\quad z\in \bH.
\end{align}
One might worry that some information is lost in this transformation, as the image of
the map $z\mapsto (x(z),\tau(z))$ is smaller than $\bH$, yet the height function $H_N(x,\tau)$ is actually
frozen outside this image and there are no fluctuations to study. Next corollary states that the height function $H_N(z)$ converges to the Gaussian Free Field on $\bH$ with zero boundary conditions.

\begin{corollary}\label{c:GFF}
Fix $\fb>0$ large, and a stable Jack positive specialization $\rho$ as defined in Definition \ref{def:stable}. Let $\{ \bmla^{(0)} \in \Y(N) \}_{N \geq 1}$ be a sequence of Young diagrams, such that $\supp \mu[\bmla^{(0)}]\subset [-\fb, \fb]$ and
\begin{align*}
\lim_{N\rightarrow \infty}\mu[\bmla^{(0)}]=\mu.
\end{align*} 
 Let $H_N(z)$ be the random height function on $\bH$ as defined in \eqref{e:defHn}, then as $N$ goes to infinity,
\begin{align*}
\sqrt{\pi\theta}\left(H_N(z)-\bE[H_N(z)]\right)\rightarrow \fG(z).
\end{align*}
In more details, the collection of random variables,
\begin{align*}
\sqrt{\pi\theta}\int_\bR x^k (H_N(x, \tau)-\bE[H_N(x,\tau)])\rd x, \quad  k=1,2,3,\cdots,\quad \tau\geq 0,
\end{align*}
converges as $N$ goes to infinity, in the sense of moments to the corresponding moments of Gaussian Free Field,
\begin{align*}
\int_{z\in \bH, \tau(z)=\tau} x^k(z)\fG(z) \rd x(z),\quad k=1,2,3,\cdots,\quad \tau\geq 0.
\end{align*}
\end{corollary}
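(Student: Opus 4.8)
\textbf{Proof proposal for Corollary \ref{c:GFF}.}

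The plan is to reduce the statement to the multilevel central limit theorem, Theorem \ref{t:mulclt}, applied to the nonintersecting random walk of Theorem \ref{t:nrw}, and then to perform the change of variables $z\mapsto(x(z),\tau(z))$ explicitly on both sides of the covariance identity. First I would record the elementary bookkeeping relating the height function to the empirical density: for fixed $\tau$, integration by parts gives
\begin{align*}
\int_\bR x^k\bigl(H_N(x,\tau)-\bE[H_N(x,\tau)]\bigr)\,\rd x
=-\frac{1}{k+1}\Bigl(\sum_{i=1}^N \bigl(x_i^{(\lfloor N\tau\rfloor)}\bigr)^{k+1}-\bE\bigl[\sum_{i=1}^N \bigl(x_i^{(\lfloor N\tau\rfloor)}\bigr)^{k+1}\bigr]\Bigr),
\end{align*}
so that $\sqrt{\pi\theta}$ times the left side equals $-\frac{\sqrt{\pi\theta}}{k+1}\,\xi^{(k+1)}[\bmla^{(\lfloor N\tau\rfloor)}]$ in the notation of Theorem \ref{t:mulclt}. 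Hence the claimed convergence in the sense of moments is equivalent to the assertion that the Gaussian family $\{\xi^{(k)}[\bmla^{(\lfloor N\tau\rfloor)}]\}$ produced by Theorem \ref{t:nrw} has, after this rescaling, exactly the covariance of the pullback $\fG(x(z)+\ri\tau(z))$. So the proof is: (i) invoke Theorem \ref{t:nrw} to get asymptotic Gaussianity with the explicit $U_\mu^{(\tau)}$, $V_\mu^{(\tau)}$; (ii) plug these into the covariance formula \eqref{e:mulcltcov} of Theorem \ref{t:mulclt}; (iii) match the resulting double contour integral with the Green's function $G(z,w)=-\frac1{2\pi}\ln\bigl|\frac{z-w}{z-\bar w}\bigr|$ transported through the map $z\mapsto(x(z),\tau(z))$.

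For step (iii) I would work at the level of generating functions of moments, i.e.\ with the Stieltjes-type transforms $\Delta m[\bmla^{(\tau)}](z)$ of Remark \ref{r:change}, rather than with individual moments. From \eqref{e:mulcltcov} together with the identities $U_\mu^{(\tau)}(1+z)=\theta(H_{\mu^{(0)}}'(1+z)+\tau W_\rho'(1+z))$ and the relation $m_{\mu^{(\tau)}}^{-1}(\ln(1+z))=\frac1z+\frac{(1+z)}{\theta}U_\mu^{(\tau)}(1+z)$ established in \eqref{e:factor1}, the factor $\bigl(\frac1z+\frac{(1+z)}{\theta}U_\mu^{(\tau)}(1+z)\bigr)^k$ is recognized as $\bigl(m_{\mu^{(\tau)}}^{-1}(\ln(1+z))\bigr)^k$; summing the generating series in $k$ then rewrites the right side of \eqref{e:mulcltcov} as a double contour integral in the variables $\zeta_1=m_{\mu^{(\tau)}}^{-1}(\ln(1+z))$, $\zeta_2=m_{\mu^{(\sigma)}}^{-1}(\ln(1+w))$. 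In these uniformizing coordinates the leading kernel $\sum_{a\ge1}\frac{a}{\theta}z^{a-1}w^{-a-1}=\frac1\theta(z-w)^{-2}$ plus the $V_\mu$-term should collapse, after using \eqref{e:defmtau} and the definition of $x(z),\tau(z)$, to $\frac1{\theta}\,\partial_{z}\partial_{w}\bigl(-\ln(z-w)\bigr)$ on the diagonal slices $\tau(z)=\tau$, $\tau(w)=\sigma$, with the $\bar w$-reflection coming from the fact that $\mu^{(\tau)}$ is real (so $m_{\mu^{(\tau)}}(\bar z)=\overline{m_{\mu^{(\tau)}}(z)}$). This is precisely $-\frac1{2\pi\theta}\cdot 2\pi\,\partial_z\partial_w G(z,w)$, which after undoing the integration by parts from step (i) gives the $\pi\theta$ in the normalization and the stated Green's function. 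The time-mixing structure $V_\mu^{(\tau\wedge\sigma)}$ in \eqref{e:mulcltcov} is exactly what produces the single-component (rather than product) dependence needed for a genuine two-dimensional field.

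The main obstacle I expect is step (iii): verifying that the algebraic identity
\begin{align*}
[z^{-1}w^{-1}]\Bigl(\tfrac1\theta\tfrac1{(z-w)^2}+\tfrac1{\theta^2}V_\mu^{(\tau\wedge\sigma)}(z+1,w+1)\Bigr)\,\zeta_1(z)^{k}\zeta_2(w)^{l}
\end{align*}
really reorganizes, under the substitution $x=x(z)$, $\tau=\tau(z)$, into the pullback of $-\frac1{2\pi}\ln\bigl|\frac{z-w}{z-\bar w}\bigr|$ — i.e.\ that the map $z\mapsto(x(z),\tau(z))$ is precisely the conformal change of variables that trivializes the covariance. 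Concretely one must check that $\partial_z\partial_w\ln\bigl(1+(z-1)(w-1)\frac{zH'(z)-wH'(w)}{z-w}\bigr)$, when combined with the $\tau W_\rho'$ contributions hidden in $\zeta_1,\zeta_2$ and re-expressed through \eqref{e:defmrho0}, yields exactly $\partial_z\partial_w\ln\bigl(\frac{\zeta_1-\zeta_2}{\zeta_1-\bar\zeta_2}\bigr)$ type terms; this is the same computation that underlies the Gaussian-free-field identifications in \cite{MR3361772,BG1}, and I would follow that template, deforming contours to $|z|=|w|=\infty$ and tracking only the residues at coincident points. Tightness / existence of the limiting field as a genuine generalized function is not needed here since, as in the cited works, we only claim convergence of the finite collection of contour-integral moments against fixed polynomial test functions, which is already furnished by Theorem \ref{t:mulclt}.
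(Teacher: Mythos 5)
Your proposal follows essentially the same route as the paper's proof: (i) integrate by parts to express the test-function pairings of the height function in terms of the rescaled empirical measures $\xi^{(k+1)}[\bmla^{(\lfloor N\tau\rfloor)}]$; (ii) invoke Theorem \ref{t:nrw} together with the covariance formula \eqref{e:mulcltcov}; (iii) uniformize via $\tilde z = m_{\mu^{(0)}}^{-1}(\ln(1+z))$ (equivalently \eqref{e:factor1}), rewrite the covariance as a double contour integral with a $\partial_z\partial_w\ln(z-w)$ kernel, and deform the contours to the level curves $\gamma_\tau, \bar\gamma_\tau$ where the target variable $z+\tau T_\rho(e^{m_{\mu^{(0)}}(z)})=x(z)$ is real; the Green's function then emerges from the identity $2\log\bigl|\tfrac{z-w}{z-\bar w}\bigr|=\log\tfrac{(z-w)(\bar z-\bar w)}{(z-\bar w)(\bar z-w)}$. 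This is exactly how the paper proceeds. One small slip: your integration by parts carries a spurious minus sign (the paper's \eqref{e:defHn}-to-moments step produces $+\tfrac{N\sqrt{\pi\theta}}{k+1}\int x^{k+1}(\rd\mu-\bE\rd\mu)$, not the negative), but since only the covariance is used this does not affect the conclusion.
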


\begin{proof}
We perform an integration by part, for $k=1,2,3,\cdots$ and $\tau\geq 0$, 
\begin{align*}
\sqrt{\pi\theta}\int_\bR x^k (H_N(x,\tau)-\bE[H_N(x,\tau)])\rd x
=\frac{N\sqrt{\pi\theta}}{k+1}\int_{\bR} x^{k+1}\left(\rd \mu[\bmla^{(\lfloor N\tau\rfloor)}]-\bE[\rd\mu[\bmla^{(\lfloor N\tau\rfloor)}]]\right),
\end{align*}
which converges as $N$ goes to infinity, in the sense of moments, to a collection of Gaussian random variables, by Theorem \ref{t:nrw}. In the following, we identify the covariance structure. 
By Theorem \ref{t:nrw}, for any $\tau\leq\sigma$, the covariance 
\begin{align}\begin{split}\label{e:covcc}
&\phantom{{}={}}\cov\left[\frac{N\sqrt{\pi\theta}}{k+1}\int_{\bR} x^{k+1}\left(\rd \mu[\bmla^{(\lfloor N\tau\rfloor)}]-\bE[\rd\mu[\bmla^{(\lfloor N\tau\rfloor)}]]\right), \frac{N\sqrt{\pi\theta}}{l+1}\int_{\bR} x^{l+1}\left(\rd \mu[\bmla^{(\lfloor N\sigma\rfloor)}]-\bE[\rd\mu[\bmla^{(\lfloor N\sigma\rfloor)}]]\right)\right]\\
&=-\frac{1}{4\pi(k+1)(l+1)}\oint_{|z|=\varepsilon/2\atop |w|=\varepsilon/3}\rd z\rd w
\del_z\del_w\ln \left(\frac{1}{z}+(z+1)H_{\mu^{(0)}}'(z+1)-\frac{1}{w}-(w+1)H_{\mu^{(0)}}'(w+1)\right)
\\
&\times\left(\frac{1}{z}+(z+1)(\tau W_\rho'(z+1)+H_{\mu^{(0)}}'(z+1)) \right)^{k+1}
\left(\frac{1}{w}+(w+1)(\tau W_\rho'(w+1)+H_{\mu^{(0)}}'(w+1))\right)^{l+1},
\end{split}\end{align}
where we used
\begin{align*}\begin{split}
&\phantom{{}={}}\left(\frac{1}{\theta}\frac{1}{(z-w)^2}+\frac{V^{(\tau)}_\mu(z+1,w+1)}{\theta^2}\right)\\
&=\frac{1}{\theta}\frac{1}{(z-w)^2}+ \frac{1}{\theta}\del_z\del_w\ln \left(1+zw\frac{(z+1)H_{\mu^{(0)}}'(z+1)-(w+1)H_{\mu^{(0)}}'(w+1)}{z-w}\right)\\
&=\frac{1}{\theta}\del_z\del_w\ln \left(\frac{1}{z}+(z+1)H_{\mu^{(0)}}'(z+1)-\frac{1}{w}-(w+1)H_{\mu^{(0)}}'(w+1)\right).
\end{split}\end{align*}
We make a change of variables in \eqref{e:covcc},
\begin{align}\begin{split}\label{e:change1}
\tilde z&=\frac{1}{z}+(z+1)H_{\mu^{(0)}}'(z+1)=m^{-1}_{\mu^{(0)}}(\ln(z+1)),\\
\tilde w&=\frac{1}{w}+(w+1)H_{\mu^{(0)}}'(w+1)=m^{-1}_{\mu^{(0)}}(\ln(w+1)),
\end{split}\end{align}
this change of variables is well-defined since we are dealing with analytic functions in a neighborhood of the origin. Moreover, with the change of variables, we have
\begin{align}\begin{split}\label{e:change2}
\frac{1}{z}+(z+1)(\tau W_\rho'(z+1)+H_{\mu^{(0)}}'(z+1))
&=\tilde z+\tau T_\rho(e^{m_{\mu^{(0)}}(\tilde z)}),\\
\frac{1}{w}+(w+1)(\sigma W_\rho'(w+1)+H_{\mu^{(0)}}'(w+1))
&=\tilde w+\sigma T_\rho(e^{m_{\mu^{(0)}}(\tilde w)}),
\end{split}\end{align}
where we used \eqref{e:W'T}.
By plugging \eqref{e:change1} and \eqref{e:change2} into \eqref{e:covcc}, we get
\begin{align}\begin{split}\label{e:covcc2}
&\phantom{{}={}}\cov\left[\frac{N\sqrt{\pi\theta}}{k+1}\int_{\bR} x^{k+1}\left(\rd \mu[\bmla^{(\lfloor N\tau\rfloor)}]-\bE[\rd\mu[\bmla^{(\lfloor N\tau\rfloor)}]]\right), \frac{N\sqrt{\pi\theta}}{l+1}\int_{\bR} x^{l+1}\left(\rd \mu[\bmla^{(\lfloor N\sigma\rfloor)}]-\bE[\rd\mu[\bmla^{(\lfloor N\sigma\rfloor)}]]\right)\right]\\
&=-\frac{1}{4\pi}\oint_{|z|=1/\varepsilon\atop|w|=2/\varepsilon}\frac{\rd z\rd w
\del_z\del_w\ln(z-w)}{(k+1)(l+1)}\left(z+\tau T_\rho(e^{m_{\mu^{(0)}}(z)}) \right)^{k+1}
\left(w+\sigma T_\rho(e^{m_{\mu^{(0)}}(w)}) \right)^{l+1}.
\end{split}\end{align}

We recall the measure $\mu_\rho^{(\tau)}$ from the proof of Corollary \ref{c:LLN}, and  $z+\tau T_\rho(e^{-m_0(z)})$ is conformal from $\Omega_\tau$ as defined in \eqref{e:defOmega} to $\bC\setminus \bR$, and is a homeomorphism from the closure of $\Omega_\tau\cap \bC_+$ to $\bC_+\cup \bR$, and from the closure of $\Omega_\tau \cup \bC_-$ to $\bC_-\cup \bR$.
We denote $\gamma_\tau=\del \Omega_\tau \cap \bC_+$, the boundary of $\Omega_\tau$ in the upper half plane. From \eqref{e:defOmega}, $\gamma_\tau$ is explicitly given by
\begin{align*}
\gamma_\tau \deq\left\{z\in \bC_+: \int \frac{\rd \mu_\rho^{(0)}(x)}{|x-z|^2}=\frac{1}{\tau}\right\}=\{z\in \bC_+: \tau(z)=\tau\},
\end{align*}
and for $z\in \gamma_t$, $ z+\tau T_\rho(e^{m_{\mu^{(0)}}( z)})\in \bR$.
We can deform the contours in \eqref{e:covcc2}, and perform an integration by part
\begin{align}\begin{split}\label{e:covcc3}
&\phantom{{}={}}\cov\left[\frac{N\sqrt{\pi\theta}}{k+1}\int_{\bR} x^{k+1}\left(\rd \mu[\bmla^{(\lfloor N\tau\rfloor)}]-\bE[\rd\mu[\bmla^{(\lfloor N\tau\rfloor)}]]\right), \frac{N\sqrt{\pi\theta}}{l+1}\int_{\bR} x^{l+1}\left(\rd \mu[\bmla^{(\lfloor N\sigma\rfloor)}]-\bE[\rd\mu[\bmla^{(\lfloor N\sigma\rfloor)}]]\right)\right]\\
&=-\frac{1}{4\pi}\oint_{|z|=1/\varepsilon\atop|w|=2/\varepsilon}\frac{\rd z\rd w
\del_z\del_w\ln(z-w)}{(k+1)(l+1)}\left(z+\tau T_\rho(e^{m_{\mu^{(0)}}(z)}) \right)^{k+1}
\left(w+\sigma T_\rho(e^{m_{\mu^{(0)}}(w)}) \right)^{l+1}\\
&=-\frac{1}{4\pi}\oint_{z\in\gamma_\tau\cup\bar \gamma_\tau\atop
w\in\gamma_\sigma\cup\bar \gamma_\sigma}\frac{\rd z\rd w
\del_z\del_w\ln(z-w)}{(k+1)(l+1)}\left(z+\tau T_\rho(e^{m_{\mu^{(0)}}(z)}) \right)^{k+1}
\left(w+\sigma T_\rho(e^{m_{\mu^{(0)}}(w)}) \right)^{l+1}\\
&=-\frac{1}{4\pi}\oint_{z\in\gamma_\tau\cup\bar \gamma_\tau\atop
w\in\gamma_\sigma\cup\bar \gamma_\sigma}
\ln(z-w)\left(z+\tau T_\rho(e^{m_{\mu^{(0)}}(z)}) \right)^{k}
\left(w+\sigma T_\rho(e^{m_{\mu^{(0)}}(w)}) \right)^{l}\\
&\phantom{{}=-\frac{1}{4\pi}\oint_{z\in\gamma_\tau\cup\bar \gamma_\tau\atop
w\in\gamma_\sigma\cup\bar \gamma_\sigma}{}}
\times\rd \left(z+\tau T_\rho(e^{m_{\mu^{(0)}}(z)}) \right)
\rd \left(w+\sigma T_\rho(e^{m_{\mu^{(0)}}(w)}) \right)
\end{split}\end{align}
We notice that on those contours $z\in \gamma_\tau\cup\bar \gamma_\tau$, or $w\in \gamma_\sigma\cup\bar \gamma_\sigma$, $z+\tau T_\rho(e^{m_{\mu^{(0)}}(z)})=x(z)$ and $z+\sigma T_\rho(e^{m_{\mu^{(0)}}(z)})=x(w)$ are real. Using this fact and the equality
\begin{align*}
2\log \left|\frac{z-w}{z-\bar w}\right|=\log \frac{(z-w)(\bar z-\bar w)}{(z-\bar w)(\bar z-w)},
\end{align*}
we can rewrite \eqref{e:covcc3}
\begin{align*}
&\phantom{{}={}}\cov\left[\frac{N\sqrt{\pi\theta}}{k+1}\int_{\bR} x^{k+1}\left(\rd \mu[\bmla^{(\lfloor N\tau\rfloor)}]-\bE[\rd\mu[\bmla^{(\lfloor N\tau\rfloor)}]]\right), \frac{N\sqrt{\pi\theta}}{l+1}\int_{\bR} x^{l+1}\left(\rd \mu[\bmla^{(\lfloor N\sigma\rfloor)}]-\bE[\rd\mu[\bmla^{(\lfloor N\sigma\rfloor)}]]\right)\right]\\
&=\int_{z\in \bH, \tau(z)=\tau}\int_{w\in\bH,\tau(w)=\sigma}G(z,w) x(z)^kx(w)^l\rd x(z)\rd x(w)\\
&=\cov\left[\int_{z\in \bH, \tau(z)=\tau} x(z)^k\fG(z) \rd x(z), \int_{z\in \bH, \tau(z)=\sigma} x(z)^l\fG(z) \rd x(z)\right].
\end{align*}
This finishes the proof of Corollary \ref{c:GFF}.

\end{proof}

\appendix
\section{Basics of Jack Symmetric functions}
\label{s:Jack}

In this section we collect some basic properties of Jack symmetric functions. Our main references are \cite{MR3443860} and \cite{MR1014073}.

Given a Young diagram $\bmla$, a box $\Box\in \bmla$ is a pair of integers, 
\begin{align*}
\Box=(i,j)\in {\bm\lambda}, \text{ if and only if } 1\leq i\leq \ell(\bm\lambda), 1\leq j\leq \lambda_i.
\end{align*} 
We denote $\bmla'$ the transposed diagram  of $\bm\lambda$, defined by 
\begin{align*}
\lambda_j'=|\{i: 1\leq j\leq \lambda_i\}|, \quad 1\leq j\leq \la_1.
\end{align*} 
For a box $\Box=(i,j)\in \bmla$, its arm $a_\Box$ and leg $l_\Box$ are\begin{align*}
a_\Box=\lambda_i-j,\quad l_\Box=\lambda_j'-i.
\end{align*}
We recall the scalar product on $\Sym$ as defined in \eqref{e:scalarP}, and introduce the following notation,
\begin{align}\label{e:jl}
j_{\bmla}(\theta)=\langle J_{\bmla}(\bmx;\theta), J_{\bmla}(\bmx;\theta)\rangle.
\end{align}
The number $j_{\bmla}(\theta)$ is explicitly given by
\begin{align}\label{def:jl2}
j_{\bmla}=\prod_{\Box\in \bmla}\frac{a_{\Box}+\theta l_{\Box}+1} {a_{\Box}+\theta l_{\Box}+\theta}.
\end{align}
See \cite[Theorem 5.8]{MR1014073} for a proof.

\begin{theorem}\label{t:J1N}
We have 
\begin{align*}
J_{\bmla}(1^N;\theta)=\prod_{(i,j)\in \bmla}(N-(i-1)+(j-1)/\theta).
\end{align*}
\end{theorem}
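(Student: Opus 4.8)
\textbf{Proof proposal for Theorem \ref{t:J1N}.}

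The plan is to use the standard principal specialization formula for Jack polynomials, which I would prove by combining a known expansion of $J_\bmla$ in the power-sum basis with a generating function identity, or alternatively by the Pieri/branching rule. The cleanest route is through the following classical fact (Stanley, \cite{MR1014073}): for any $N$,
\begin{align*}
J_\bmla(1^N;\theta) = \prod_{\Box=(i,j)\in\bmla}\bigl(N\theta + \theta(j-1) - (i-1)\bigr)\Big/ \,(\text{a product depending only on }\bmla),
\end{align*}
but in fact the normalization in the theorem corresponds exactly to the integral form $J_\bmla$ fixed by the three conditions (orthogonality, triangularity, leading term $x_1^{\la_1}\cdots x_{\ell}^{\la_\ell}$) stated earlier. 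So first I would recall that with this normalization the principal specialization has the product form $J_\bmla(1^N;\theta)=\prod_{(i,j)\in\bmla}\bigl((N-(i-1))+(j-1)/\theta\bigr)\cdot c$ for a constant $c$, and then pin down $c=1$ by checking the leading coefficient against condition (3).

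Concretely, here are the steps I would carry out. First, establish the specialization formula at the level of the $\bmx$-variables: the operator identity is that $J_\bmla(1^N;\theta)$ equals the result of applying the algebra homomorphism $p_k\mapsto N$ to $J_\bmla(\bmp;\theta)$. Second, invoke the known evaluation (e.g.\ \cite[Theorem 5.4 or its corollaries]{MR1014073}, or Macdonald's book for the $\alpha=1/\theta$ convention): the value $J_\bmla(1^N;\theta)$ is a product over the boxes of $\bmla$ of linear factors in $N$. The content of a box $(i,j)$ in the $\theta$-deformed setting is $(j-1) - \theta^{-1}\cdot 0$... more precisely the relevant factor is $N + \theta^{-1}(j-1) - (i-1)$, matching the statement. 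Third, verify the boundary/degenerate cases to fix constants: set $\bmla=(1)$, where $J_{(1)}(x_1,\dots,x_N;\theta)=x_1+\cdots+x_N$, so $J_{(1)}(1^N;\theta)=N$, which agrees with $\prod_{(i,j)\in(1)}(N-(i-1)+(j-1)/\theta)=N$. Then set $\bmla=(2)$ and $\bmla=(1,1)$ and check directly using $J_{(2)}$ and $J_{(1,1)}$ (the degree-$2$ Jack polynomials, which have explicit expansions in $m_{(2)}, m_{(1,1)}$ with the known coefficients involving $\theta$) that the product formula reproduces $N(N+2/\theta)$ and $N(N-1)$ respectively.

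For a self-contained argument I would prefer an \emph{induction on $|\bmla|$ using the Pieri rule}: $J_{(1)}\cdot J_\bmmu = \sum_{\bmla:\bmla/\bmmu\text{ a box}} \psi'_{\bmla/\bmmu}(\theta)\, J_\bmla$, where the Pieri coefficients $\psi'$ are explicit products over rows; specializing to $\bmp=1^N$ gives $N\cdot J_\bmmu(1^N;\theta) = \sum_\bmla \psi'_{\bmla/\bmmu}(\theta) J_\bmla(1^N;\theta)$, and one checks that the conjectured product formula satisfies this recursion together with the base case $J_\bmemptyset=1$. The main obstacle is bookkeeping: verifying that the telescoping/partial-fraction identity among the arm-leg Pieri coefficients and the box-content linear factors holds, i.e.\ that $\sum_{\bmla/\bmmu=\Box}\psi'_{\bmla/\bmmu}(\theta)\prod_{(i,j)\in\bmla}(N-(i-1)+(j-1)/\theta) = N\prod_{(i,j)\in\bmmu}(N-(i-1)+(j-1)/\theta)$; this is a finite rational-function identity in $N$ that reduces to a residue computation, but it is the only nontrivial computational point. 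Since the theorem is quoted from \cite{MR1014073}, I would in practice simply cite it, but the induction above is the argument I would write out if a proof were required.
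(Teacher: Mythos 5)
The paper's proof of this theorem is simply a citation to Stanley's evaluation formula (\cite[Theorem~5.4]{MR1014073}), and you reach the same conclusion at the end of your proposal, so the primary route is the same. Your supplementary sketch of a self-contained proof by induction on $|\bmla|$ via the degree-one Pieri rule is a reasonable alternative, but as you yourself acknowledge, it leaves the crucial step unverified: the rational-function identity $\sum_{\bmla/\bmmu=\Box}\psi'_{\bmla/\bmmu}(\theta)\prod_{(i,j)\in\bmla}\bigl(N-(i-1)+(j-1)/\theta\bigr) = N\prod_{(i,j)\in\bmmu}\bigl(N-(i-1)+(j-1)/\theta\bigr)$. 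That identity is the whole content of the theorem in the Pieri approach, so the sketch is not a proof but a reduction to another nontrivial claim. There is also a small arithmetic slip in your base-case check: for $\bmla=(2)$ the boxes are $(1,1)$ and $(1,2)$, so the product formula yields $N\bigl(N+1/\theta\bigr)$, not $N\bigl(N+2/\theta\bigr)$; the $\bmla=(1,1)$ value $N(N-1)$ is correct. Finally, you should be aware that the product formula as stated is the evaluation of Stanley's \emph{integral form} $J_\bmla$ (where the coefficient of $m_{(1^n)}$ is $n!$), whereas the normalization condition (3) stated earlier in this paper (leading coefficient of $m_\bmla$ equal to $1$) and the formula for $j_\bmla$ in \eqref{def:jl2} both point to the monic $P$-normalization, whose principal specialization carries an additional denominator $\prod_\Box(a_\Box/\theta+l_\Box+1)$; if you wanted to make the inductive argument airtight you would need to commit to one normalization consistently, since the Pieri coefficients $\psi'$ also depend on it.
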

\begin{proof}
See \cite[Theorem 5.4]{MR1014073}.
\end{proof}

\subsection{The Cauchy identity}

One of the most important property of Jack symmetric functions is the following Cauchy identity,
\begin{proposition}\label{p:Cauchy}
Let $\bmx=(x_1,x_2,\cdots)$ and $\bmy=(y_1,y_2,\cdots)$ be two sets of indeterminates. Then
\begin{align*}
\sum_{\bmla} J_{\bmla}(\bmx;\theta)J_{\bmla}(\bmy;\theta)j_{\bmla}^{-1}(\theta)=\prod_{ij}(1-x_iy_j)^{-\theta}.
\end{align*}
\end{proposition}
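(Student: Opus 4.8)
\textbf{Proof plan for the Cauchy identity (Proposition \ref{p:Cauchy}).}
The plan is to establish the identity by a bilinear-form argument, working in the ring of symmetric functions over $\bQ(\theta)$ and using the scalar product $\langle\cdot,\cdot\rangle$ defined in \eqref{e:scalarP}. First I would record the power-sum expansion of the right-hand side. Since $\log(1-t)^{-1}=\sum_{n\geq 1}t^n/n$, one has
\begin{align*}
\prod_{i,j}(1-x_iy_j)^{-\theta}
=\exp\left(\theta\sum_{n\geq 1}\frac{1}{n}p_n(\bmx)p_n(\bmy)\right)
=\sum_{\bmla}\frac{\theta^{\ell(\bmla)}}{z_{\bmla}}\,p_{\bmla}(\bmx)\,p_{\bmla}(\bmy),
\end{align*}
where the last equality is the standard multinomial expansion, with $z_{\bmla}$ as defined before \eqref{e:scalarP}. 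The key observation is then that, by the very definition \eqref{e:scalarP} of the scalar product, $\{z_{\bmla}^{-1}\theta^{\ell(\bmla)}p_{\bmla}\}$ and $\{p_{\bmla}\}$ are \emph{dual bases} of $\Sym\otimes\bQ(\theta)$: indeed $\langle p_{\bmla}, z_{\bmmu}^{-1}\theta^{\ell(\bmmu)}p_{\bmmu}\rangle = z_{\bmla}^{-1}\theta^{\ell(\bmla)}\cdot \delta_{\bmla\bmmu}z_{\bmla}\theta^{-\ell(\bmla)}=\delta_{\bmla\bmmu}$. Hence the right-hand side is precisely the reproducing kernel of the form $\langle\cdot,\cdot\rangle$: for any two dual bases $\{u_{\bmla}\},\{v_{\bmla}\}$ one has $\sum_{\bmla}u_{\bmla}(\bmx)v_{\bmla}(\bmy)=\prod_{i,j}(1-x_iy_j)^{-\theta}$.

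The second and main step is to verify that $\{J_{\bmla}(\bmx;\theta)\}$ and $\{j_{\bmla}^{-1}(\theta)J_{\bmla}(\bmx;\theta)\}$ are also dual bases, and then to invoke the elementary fact that the reproducing kernel is independent of the choice of dual bases. Orthogonality of the $J_{\bmla}$ is condition (1) of Macdonald's characterization theorem (stated in the excerpt), so $\langle J_{\bmla}, j_{\bmmu}^{-1}J_{\bmmu}\rangle = 0$ for $\bmla\neq\bmmu$; for $\bmla=\bmmu$ it equals $j_{\bmla}^{-1}\langle J_{\bmla},J_{\bmla}\rangle = j_{\bmla}^{-1}\cdot j_{\bmla}(\theta) = 1$ by the definition \eqref{e:jl} of $j_{\bmla}(\theta)$. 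To conclude, I would change basis: write $p_{\bmmu}=\sum_{\bmla}X_{\bmmu\bmla}J_{\bmla}$ and $z_{\bmmu}^{-1}\theta^{\ell(\bmmu)}p_{\bmmu}=\sum_{\bmla}Y_{\bmmu\bmla}(j_{\bmla}^{-1}J_{\bmla})$; duality of the $p$-bases forces $X^{T}Y=\Id$ in the transition matrices, and a short computation gives
\begin{align*}
\sum_{\bmmu}p_{\bmmu}(\bmx)\,\frac{\theta^{\ell(\bmmu)}}{z_{\bmmu}}p_{\bmmu}(\bmy)
=\sum_{\bmla,\bmnu}\Big(\sum_{\bmmu}X_{\bmmu\bmla}Y_{\bmmu\bmnu}\Big)J_{\bmla}(\bmx;\theta)\,j_{\bmnu}^{-1}(\theta)J_{\bmnu}(\bmy;\theta)
=\sum_{\bmla}J_{\bmla}(\bmx;\theta)\,j_{\bmla}^{-1}(\theta)J_{\bmla}(\bmy;\theta),
\end{align*}
which is exactly the asserted identity.

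A small technical point worth addressing is the meaning of the infinite sums: the identity is an equality of formal power series in the $x_i$ and $y_j$ (equivalently, it holds degree by degree in the graded algebra $\Sym\otimes\bQ(\theta)$), so all the manipulations above are legitimate once restricted to a fixed total degree, where every sum is finite. The only genuine input beyond bookkeeping is the orthogonality of Jack polynomials together with the evaluation $\langle J_{\bmla},J_{\bmla}\rangle=j_{\bmla}(\theta)$, both of which are available from the excerpt (Macdonald's theorem and \eqref{e:jl}); I do not expect to need the explicit product formula \eqref{def:jl2} for $j_{\bmla}(\theta)$ at all. The main conceptual obstacle, such as it is, is simply making the "reproducing kernel is basis-independent" lemma precise in this graded/formal setting, which is routine; there is no hard analytic estimate here.
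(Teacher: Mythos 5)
Your proof is correct. The paper does not prove this proposition itself but simply cites Stanley \cite{MR1014073}; the argument you give—expand the right-hand side in power sums to exhibit $\{p_{\bmla}\}$ and $\{z_{\bmla}^{-1}\theta^{\ell(\bmla)}p_{\bmla}\}$ as dual bases, then invoke basis-independence of the reproducing kernel and pass to the Jack basis via orthogonality and $\langle J_{\bmla},J_{\bmla}\rangle=j_{\bmla}(\theta)$—is exactly the standard Macdonald/Stanley proof used in that reference, so there is nothing genuinely different to compare. One small bookkeeping point worth tightening: the duality computation $\langle p_{\bmmu},z_{\bmnu}^{-1}\theta^{\ell(\bmnu)}p_{\bmnu}\rangle=\delta_{\bmmu\bmnu}$ directly yields $XY^{T}=\Id$ (not $X^{T}Y=\Id$); since within each fixed degree $X,Y$ are finite square invertible matrices this implies $X^{T}Y=\Id$, which is the relation actually used in your final display, but that one-line inference should be made explicit.
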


\begin{proof}
See \cite[Proposition 2.1]{MR1014073}.
%We first show that the power-sum symmetric function $p_{\bmla}$ satisfies the identity,
%\begin{align}\begin{split}
%\prod_{ij}(1-x_iy_j)^{-\theta}
%&=\exp\left\{\sum_{ij}\sum_k \frac{\theta x_i^k y_j^k}{k}\right\}
%=\exp\left\{\sum_k \frac{\theta p_k(\bmx)p_k(\bmy)}{k}\right\}=\sum_{\bmla} z_{\bmla}^{-1}\theta^{\ell_{\bmla}}p_{\bmla}(\bmx)p_{\bmla}(\bmy).
%\end{split}\end{align}
%Since $\{j_{\bmla}^{-1}J_{\bmla}\}_{\bmla}$ and $\{z_{\bmla}^{-1/2}\theta^{\ell_{\bmla}/2}p_{\bmla}\}_{\bmla}$ are both orthonormal bases of scalar product \eqref{e:scalarP}, it follows that 
%\begin{align}
%\sum_{\bmla} z_{\bmla}^{-1}\theta^{\ell_{\bmla}}p_{\bmla}(\bmx)p_{\bmla}(\bmy)=\sum_{\bmla} J_{\bmla}(\bmx;\theta)J_{\bmla}(\bmy;\theta)j_{\bmla}^{-1}.
%\end{align}
\end{proof}
The kernel appears in the Cauchy identity
\begin{align}\label{e:jrk}
H(\bmx,\bmy;\theta)=\exp\left(\theta \sum_{n\geq 1}\frac{p_n(\bmx)p_n(\bmy)}{n}\right)=\prod_{ij}(1-x_iy_j)^{-\theta}
\end{align}
is called the Jack reproducing kernel.

\subsection{Skew Jack symmetric functions}
The skew Jack symmetric functions $J_{\bmla/\bmmu}(\bmx;\theta)$ can be defined as follows. Take two infinite sets of
variables $\bmx$ and $\bmy$ and consider a Jack symmetric function $J_{\bmla}(\bmx,\bmy;\theta)$. In particular, the later is a symmetric
function in the $\bmy$ variables. The coefficients $J_{\bmla/\bmmu}(\bmx;\theta)$ in its decomposition in the linear basis of Jack symmetric functions in the $\bmy$ variables are symmetric functions in the $\bmx$ variables and are referred
to as skew Jack symmetric functions:
\begin{align}\label{e:defskewJ}
J_{\bmla}(\bmx,\bmy;\theta)
=\sum_{\bmmu}J_{\bmla/\bmmu}(\bmx;\theta)J_{\bmmu}(\bmy;\theta).
\end{align}
There's another way to characterize these skew Jack symmetric functions $J_{\bmla/\bmmu}(\bmx;\theta)$.
\begin{proposition}\label{p:innerP}
The skew Jack symmetric functions $J_{\bmla/\bmmu}(\bmx;\theta)$ satisfy
\begin{align}
\langle J_{\bmla/\bmmu}(\bmx;\theta), J_{\bmnu}(\bmx;\theta)\rangle=j_{\bmmu}^{-1}(\theta)\langle J_{\bmla}(\bmx;\theta), J_{\bmmu}(\bmx;\theta)J_{\bmnu}(\bmx;\theta)\rangle. 
\end{align}
\end{proposition}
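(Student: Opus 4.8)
The plan is to prove the adjointness identity
\[
\langle J_{\bmla/\bmmu}(\bmx;\theta), J_{\bmnu}(\bmx;\theta)\rangle
= j_{\bmmu}^{-1}(\theta)\langle J_{\bmla}(\bmx;\theta), J_{\bmmu}(\bmx;\theta)J_{\bmnu}(\bmx;\theta)\rangle
\]
by expanding both sides in the Jack basis and comparing coefficients, using the defining relation \eqref{e:defskewJ} of the skew Jack symmetric functions together with the orthogonality \eqref{t:J1N}-free characterization of Jack symmetric functions (Macdonald's theorem) and the normalization $j_\bmla(\theta) = \langle J_\bmla, J_\bmla\rangle$ from \eqref{e:jl}. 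First I would introduce the structure constants $f^{\bmla}_{\bmmu\bmnu}(\theta)$ of the multiplication, i.e. write $J_{\bmmu}(\bmx;\theta)J_{\bmnu}(\bmx;\theta) = \sum_{\bmla} f^{\bmla}_{\bmmu\bmnu}(\theta) J_{\bmla}(\bmx;\theta)$, so that the right-hand side becomes $j_{\bmmu}^{-1}(\theta)\, f^{\bmla}_{\bmmu\bmnu}(\theta)\, j_{\bmla}(\theta)$ by orthogonality.

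The key computation is to identify the same quantity on the left. Starting from \eqref{e:defskewJ} applied with the roles of $\bmx$ and $\bmy$ both present, one takes the scalar product (in the $\bmx$ variables) of $J_{\bmla/\bmmu}(\bmx;\theta)$ against $J_{\bmnu}(\bmx;\theta)$; by orthogonality in the $\bmx$ variables the only surviving term is the $\bmnu$-component, so $\langle J_{\bmla/\bmmu}(\bmx;\theta), J_{\bmnu}(\bmx;\theta)\rangle = c^{\bmla}_{\bmmu\bmnu}(\theta)\, j_{\bmnu}(\theta)$, where $c^{\bmla}_{\bmmu\bmnu}(\theta)$ is the coefficient of $J_{\bmmu}(\bmy;\theta)J_{\bmnu}(\bmx;\theta)$ in $J_{\bmla}(\bmx,\bmy;\theta)$. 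The heart of the matter is then the symmetry relation $c^{\bmla}_{\bmmu\bmnu}(\theta)\, j_{\bmnu}(\theta) = j_{\bmmu}^{-1}(\theta)\, f^{\bmla}_{\bmmu\bmnu}(\theta)\, j_{\bmla}(\theta)$. To get this I would use the Cauchy identity (Proposition \ref{p:Cauchy}): write the reproducing kernel $H(\bmx,\bmz;\theta)H(\bmy,\bmz;\theta)$ in a third set of variables $\bmz$ two ways — once by expanding each kernel separately via Cauchy and multiplying the resulting Jack expansions in the $\bmz$ variables (which brings in $f^{\bmla}_{\bmmu\bmnu}$), and once by viewing $(\bmx,\bmy)$ as a single alphabet, applying Cauchy to $H(\bmx,\bmy;\bmz;\theta)$, and then expanding $J_{\bmla}(\bmx,\bmy;\theta)$ via \eqref{e:defskewJ} (which brings in $c^{\bmla}_{\bmmu\bmnu}$). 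Matching the coefficient of $J_{\bmmu}(\bmx';\theta)J_{\bmnu}(\bmx'';\theta)$-type terms with the appropriate $j$-weights yields precisely the desired identity.

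The main obstacle I anticipate is bookkeeping the normalization factors $j_\bmla(\theta)$, $j_\bmmu(\theta)$, $j_\bmnu(\theta)$ correctly: the Cauchy identity carries a factor $j_\bmla^{-1}(\theta)$ on each Jack pair, so three intertwined Cauchy expansions produce a product of such factors, and one must track exactly which $j$'s cancel. A cleaner alternative — which I would adopt if the direct kernel manipulation gets unwieldy — is to argue purely algebraically: the scalar product $\langle\,\cdot\,, J_\bmmu \cdot J_\bmnu\rangle$ defines a linear functional on $\Sym\otimes\bQ(\theta)$, and by general nonsense about adjoints with respect to a nondegenerate bilinear form, the operator ``multiplication by $J_\bmmu$'' has an adjoint which is (up to the scalar $j_\bmmu^{-1}(\theta)$) exactly the skew operation $J_\bmla \mapsto J_{\bmla/\bmmu}$; one then only needs to check this on the basis $\{J_\bmnu\}$, reducing everything to the single coefficient comparison described above. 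Either way the proof is short once the definition \eqref{e:defskewJ} and Proposition \ref{p:Cauchy} are in hand; the routine part is the index juggling, which I would not spell out in full.
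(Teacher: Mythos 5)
Your argument is correct. The paper gives no proof of this proposition beyond citing Stanley's paper, so there is nothing in the source to compare against; but your triple-kernel computation is the standard derivation. Concretely, expanding $H((\bmx,\bmy),\bmz;\theta)=H(\bmx,\bmz;\theta)H(\bmy,\bmz;\theta)$ in Jack functions on both sides and matching the coefficient of $J_{\bmnu}(\bmx)J_{\bmmu}(\bmy)J_{\bmla}(\bmz)$ gives exactly $j_{\bmla}^{-1}c^{\bmla}_{\bmmu\bmnu}=j_{\bmmu}^{-1}j_{\bmnu}^{-1}f^{\bmla}_{\bmmu\bmnu}$, which is the identity you need; the $j$-bookkeeping you were worried about does come out cleanly. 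Your alternative route (adjointness of multiplication and skewing) is closer in spirit to how Stanley actually sets things up — he essentially takes the adjointness as the definition and derives the Cauchy-type expansion as a consequence, whereas this paper (and your main argument) goes the other way around. Both are fine; the kernel argument has the advantage of being self-contained given only Proposition \ref{p:Cauchy} and the definition \eqref{e:defskewJ}, while the adjointness framing is shorter once you accept the general fact that multiplication by $J_{\bmmu}$ has an adjoint in any nondegenerate inner-product space with a distinguished orthogonal basis.
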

\begin{proof}
See \cite[Section 4]{MR1014073}.
\end{proof}

\begin{proposition}\label{p:chain2}
The skew Jack symmetric functions $J_{\bmla/\bmnu}$ satisfy
\begin{align}\label{e:chainequal}
J_{\bmla/\bmnu}(\bmx,\bmy;\theta)=\sum_{\bmmu}J_{\bmla/\bmmu}(\bmx;\theta)J_{\bmmu/\bmnu}(\bmy;\theta).
\end{align}
The defining relation \eqref{e:defskewJ} of the skew Jack symmetric functions can be viewed as a special case of \eqref{e:chainequal} with $\bmnu=\emptyset$.
\end{proposition}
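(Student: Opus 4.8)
The statement to be proved is Proposition~\ref{p:chain2}: the identity
$J_{\bmla/\bmnu}(\bmx,\bmy;\theta)=\sum_{\bmmu}J_{\bmla/\bmmu}(\bmx;\theta)J_{\bmmu/\bmnu}(\bmy;\theta)$, together with the remark that the defining relation \eqref{e:defskewJ} is the special case $\bmnu=\emptyset$. The plan is to reduce everything to the characterization of skew Jack functions via the scalar product given in Proposition~\ref{p:innerP}, i.e. the adjunction
$\langle J_{\bmla/\bmmu},J_{\bmnu}\rangle = j_{\bmmu}^{-1}\langle J_{\bmla},J_{\bmmu}J_{\bmnu}\rangle$. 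Since $\{J_{\bmmu}(\bmy;\theta):\bmmu\in\bY\}$ is an orthogonal basis of $\Sym\otimes\bQ(\theta)$ with $\langle J_{\bmmu},J_{\bmmu}\rangle=j_{\bmmu}(\theta)$, it suffices to check that both sides of \eqref{e:chainequal}, viewed as symmetric functions in $\bmy$ with coefficients in the $\bmx$-variables, have the same inner product against every $J_{\bmnu}(\bmy;\theta)$.

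The key computation is the following. First I would introduce a third set of variables $\bmz$ and expand $J_{\bmla}(\bmx,\bmy,\bmz;\theta)$ in two ways. Grouping $(\bmy,\bmz)$ together, the definition \eqref{e:defskewJ} gives
$J_{\bmla}(\bmx,\bmy,\bmz;\theta)=\sum_{\bmnu}J_{\bmla/\bmnu}(\bmx;\theta)J_{\bmnu}(\bmy,\bmz;\theta)$, and then applying \eqref{e:defskewJ} once more to $J_{\bmnu}(\bmy,\bmz;\theta)$ in the pair $(\bmy,\bmz)$ yields
$J_{\bmla}(\bmx,\bmy,\bmz;\theta)=\sum_{\bmnu,\bmmu}J_{\bmla/\bmnu}(\bmx;\theta)J_{\bmnu/\bmmu}(\bmy;\theta)J_{\bmmu}(\bmz;\theta)$. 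On the other hand, grouping $(\bmx,\bmy)$ together first gives
$J_{\bmla}(\bmx,\bmy,\bmz;\theta)=\sum_{\bmmu}J_{\bmla/\bmmu}(\bmx,\bmy;\theta)J_{\bmmu}(\bmz;\theta)$. Since $\{J_{\bmmu}(\bmz;\theta)\}$ is a basis, comparing the coefficient of $J_{\bmmu}(\bmz;\theta)$ in the two expansions gives exactly
$J_{\bmla/\bmmu}(\bmx,\bmy;\theta)=\sum_{\bmnu}J_{\bmla/\bmnu}(\bmx;\theta)J_{\bmnu/\bmmu}(\bmy;\theta)$, which is \eqref{e:chainequal} after renaming the summation index. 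Setting $\bmmu=\bmnu=\emptyset$ (with $J_{\bmla/\emptyset}=J_{\bmla}$) recovers \eqref{e:defskewJ} and establishes the final remark.

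The one point that needs care — and the main (mild) obstacle — is justifying that the coefficient extraction "compare coefficients of $J_{\bmmu}(\bmz;\theta)$" is legitimate: the sums over partitions are infinite, and $J_{\bmla}(\bmx,\bmy,\bmz;\theta)$ is a symmetric function in infinitely many variables, so one must argue in the completed graded algebra $\widehat\Sym$ (in the $\bmz$-variables) where these expansions converge coefficientwise in each fixed degree. Because $J_{\bmla}$ is homogeneous of degree $|\bmla|$, for each $\bmmu$ only finitely many $\bmnu$ contribute to the coefficient of $J_{\bmmu}(\bmz;\theta)$ (namely those with $|\bmnu|\le|\bmla|$, and the skew functions vanish unless $\bmmu\subseteq\bmnu$), so the identity is really an identity of honest symmetric functions in $\bmx,\bmy$ of bounded degree; no analytic subtlety remains. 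I would also note that an alternative, equivalent route is purely via Proposition~\ref{p:innerP}: pair both sides of the claimed identity against $J_{\bmnu}(\bmy;\theta)$ in the $\bmy$-variables, use bilinearity of the scalar product and the adjunction twice, and invoke the associativity of multiplication in $\Sym$ together with $\langle J_{\bmla},J_{\bmmu}J_{\bmnu}\rangle$ being symmetric in the roles played — this gives the same conclusion and can be used as a cross-check. I expect to present the triple-variable-expansion argument as the main proof since it is shortest.
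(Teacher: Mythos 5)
Your coassociativity argument is correct and complete. Note that the paper itself does not prove this proposition: it simply cites \cite[Proposition 4.2]{MR1014073} (Stanley's 1989 paper), so there is no in-paper proof to compare against. Your triple-variable expansion of $J_{\bmla}(\bmx,\bmy,\bmz;\theta)$ — grouping once as $(\bmy,\bmz)$ and once as $(\bmx,\bmy)$, then matching coefficients of $J_{\bmmu}(\bmz;\theta)$ — is the standard proof of this branching/coassociativity identity and is essentially what the cited reference does; the point you flag about finiteness (the sums over $\bmnu$ with $\bmnu\subseteq\bmla$ and $\bmmu\subseteq\bmnu$ are finite because $J_{\bmla/\bmnu}$ and $J_{\bmnu/\bmmu}$ vanish outside these ranges) is exactly what legitimizes the coefficient comparison, and you handle it correctly. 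Your alternative route via the adjunction in Proposition~\ref{p:innerP} is also sound as a cross-check. One cosmetic remark: in your intermediate conclusion you arrive at $J_{\bmla/\bmmu}(\bmx,\bmy;\theta)=\sum_{\bmnu}J_{\bmla/\bmnu}(\bmx;\theta)J_{\bmnu/\bmmu}(\bmy;\theta)$, which becomes \eqref{e:chainequal} after the index swap $\bmmu\leftrightarrow\bmnu$; you note the renaming but it would be cleaner to do the swap explicitly so the final display matches the statement verbatim.
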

\begin{proof}
See \cite[Proposition 4.2]{MR1014073}.
\end{proof}

%\begin{proof}
%We have two ways to write $J_{\bmla}(\bmx,\bmy,\bmz;\theta)$,
%\begin{align}\begin{split}
%J_{\bmla}(\bmx,\bmy,\bmz;\theta)
%&=\sum_{\bmnu}J_{\bmnu}(\bmz;\theta)J_{\bmla/\bmnu}(\bmx,\bmy;\theta)\\
%&=\sum_{\bmmu}J_{\bmmu}(\bmy,\bmz;\theta)J_{\bmla/\bmmu}(\bmx;\theta)
%=\sum_{\bmmu,\bmnu}J_{\bmnu}(\bmz;\theta)J_{\bmmu/\bmnu}(\bmy;\theta)J_{\bmla/\bmmu}(\bmx;\theta).
%\end{split}\end{align}
%The statement follows by comparing the coefficients of $J_{\bmnu}(\bmz;\theta)$.
%\end{proof}

\begin{proposition}\label{p:Stochastic}
We have the following identity
\begin{align}\label{e:id1}
\sum_{\bmla}j_{\bmla}^{-1}(\theta)J_{\bmla}(\bmx;\theta)J_{\bmla/\bmmu}(\bmy;\theta)
=j_{\bmmu}^{-1}(\theta)J_{\bmmu}(\bmx;\theta)\prod_{ij}(1-x_iy_j)^{-\theta}
\end{align}
\end{proposition}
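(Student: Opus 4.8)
\textbf{Proof proposal for Proposition \ref{p:Stochastic}.}

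The plan is to derive the identity \eqref{e:id1} from the Cauchy identity (Proposition \ref{p:Cauchy}) together with the defining relation \eqref{e:defskewJ} of the skew Jack symmetric functions. First I would introduce a third set of variables $\bm z=(z_1,z_2,\dots)$ and apply the Cauchy identity in the combined variable set $(\bm y,\bm z)$ against $\bm x$:
\begin{align*}
\prod_{i,j}(1-x_iy_j)^{-\theta}\prod_{i,j}(1-x_iz_j)^{-\theta}
=\sum_{\bmla} j_{\bmla}^{-1}(\theta)\, J_{\bmla}(\bm x;\theta)\, J_{\bmla}(\bm y,\bm z;\theta).
\end{align*}
The factor $\prod_{i,j}(1-x_iz_j)^{-\theta}$ is the Jack reproducing kernel $H(\bm x,\bm z;\theta)$, which by Proposition \ref{p:Cauchy} again equals $\sum_{\bmmu} j_{\bmmu}^{-1}(\theta) J_{\bmmu}(\bm x;\theta) J_{\bmmu}(\bm z;\theta)$.

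Next I would expand $J_{\bmla}(\bm y,\bm z;\theta)$ on the left-hand side using \eqref{e:defskewJ}, writing it as $\sum_{\bmmu} J_{\bmla/\bmmu}(\bm y;\theta) J_{\bmmu}(\bm z;\theta)$. Substituting this in gives
\begin{align*}
\sum_{\bmmu} J_{\bmmu}(\bm z;\theta)\left(\sum_{\bmla} j_{\bmla}^{-1}(\theta)\, J_{\bmla}(\bm x;\theta)\, J_{\bmla/\bmmu}(\bm y;\theta)\right)
=\prod_{i,j}(1-x_iy_j)^{-\theta}\sum_{\bmmu} j_{\bmmu}^{-1}(\theta)\, J_{\bmmu}(\bm x;\theta)\, J_{\bmmu}(\bm z;\theta).
\end{align*}
Since the Jack symmetric functions $\{J_{\bmmu}(\bm z;\theta)\}_{\bmmu}$ are linearly independent over the ring of symmetric functions in the remaining variables (they form a basis of $\Sym\otimes\bQ(\theta)$), I can compare the coefficients of $J_{\bmmu}(\bm z;\theta)$ on both sides. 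This immediately yields
\begin{align*}
\sum_{\bmla} j_{\bmla}^{-1}(\theta)\, J_{\bmla}(\bm x;\theta)\, J_{\bmla/\bmmu}(\bm y;\theta)
= j_{\bmmu}^{-1}(\theta)\, J_{\bmmu}(\bm x;\theta)\prod_{i,j}(1-x_iy_j)^{-\theta},
\end{align*}
which is exactly \eqref{e:id1}.

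The main thing to be careful about is the legitimacy of comparing coefficients of $J_{\bmmu}(\bm z;\theta)$: one must work in an appropriate completion of the algebra of symmetric functions so that these infinite sums are well-defined and the basis property still allows term-by-term identification. This is a standard manipulation in the theory (and is implicit in Macdonald's treatment), so I expect it to be routine rather than a genuine obstacle; the grading by degree in the $\bm z$ variables makes every coefficient a finite sum. An alternative route, should one prefer to avoid the third variable set, is to pair both sides against an arbitrary $J_{\bmnu}(\bm y;\theta)$ using the scalar product and invoke Proposition \ref{p:innerP} relating $\langle J_{\bmla/\bmmu},J_{\bmnu}\rangle$ to $\langle J_{\bmla}, J_{\bmmu}J_{\bmnu}\rangle$ together with the reproducing property of the Cauchy kernel; but the three-variable argument above is cleaner.
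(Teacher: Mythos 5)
Your proof is correct. The paper itself does not supply an argument for Proposition \ref{p:Stochastic}, deferring instead to Stanley \cite[Section 4]{MR1014073}, and the derivation you give — applying the Cauchy identity to $\bm x$ against the union $(\bm y,\bm z)$, expanding $J_{\bmla}(\bm y,\bm z;\theta)$ via the skew definition \eqref{e:defskewJ}, re-expanding $\prod_{i,j}(1-x_iz_j)^{-\theta}$ by Cauchy, and matching coefficients of $J_{\bmmu}(\bm z;\theta)$ — is precisely the standard route (and is, in effect, the argument in Stanley's source). Your remark about the legitimacy of coefficient comparison is well taken and correctly resolved: grading by degree in $\bm z$ reduces each coefficient to a finite sum, so term-by-term identification against the basis $\{J_{\bmmu}(\bm z;\theta)\}$ is valid.
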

\begin{proof}
See \cite[Section 4]{MR1014073}.
\end{proof}

%\begin{proof}
%The equation \eqref{e:id1} is equivalent to
%\begin{align}
%\sum_{\bmla}j_{\bmla}^{-1}(\theta)J_{\bmla}(\bmx;\theta)J_{\bmla/\bmmu}(\bmy;\theta)
%=\sum_{\bmla}j_{\bmmu}^{-1}(\theta)J_{\bmmu}(\bmx;\theta)\frac{J_{\bmla}(\bmx;\theta)J_{\bmla}(\bmy;\theta)}{j_{\bmla}(\theta)},
%\end{align}
%which follows by taking scalar product of both sides with $J_{\bmla}(\bmx;\theta)J_{\bmnu}(\bmy;\theta)$, and using Proposition \ref{p:innerP}.
%\end{proof}

\subsection{Jack positive speicialization and Jack measure}

A specialization $\rho$ is an algebra homomorphism from $\Sym$ to the set of complex numbers. A specialization $\rho$ is called Jack-positive if its values on all (skew) Jack symmetric functions with a fixed $\theta$ are real and non-negative, i.e.,
\begin{align*}
J_\bmla(\rho;\theta)=\rho(J_\bmla(\bmx;\theta))\geq 0, 
\end{align*}
for all $\bmla\in \bY$. The set of Jack positive specializations are classified by 
Kerov, Okounkov and Olshanski in
\cite{MR1609628}.
\begin{theorem}
For any fixed $\theta>0$, Jack positive specializations can be parameterized by triplets $(\bm\alpha, \bm\beta,\gamma)$, where $\bm\alpha, \bm\beta$ are sequences of real numbers with 
\begin{align*}
\alpha_1\geq \alpha_2\geq \cdots \geq 0, \quad
\beta_1\geq \beta_2\geq \cdots\geq 0,\quad
\sum_i (\alpha_i+\beta_i)<\infty,
\end{align*}
and $\gamma$ is a non-negative real number. The specialization corresponding to a triplet $(\bm\alpha, \bm \beta, \gamma)$ is given by its values on Newton power sums $p_k$,
\begin{align*}\begin{split}
&p_1\mapsto p_1(\bm\alpha,\bm \beta,\gamma)=\gamma+\sum_{i}(\alpha_i+\beta_i),\\
&p_k\mapsto p_k(\bm\alpha, \bm\beta, \gamma)=\sum_{i}\alpha_i^k +(-\theta)^{k-1} \sum_i \beta_i^k, \quad k\geq 2.
\end{split}\end{align*}
\end{theorem}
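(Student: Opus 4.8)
The statement to prove is the classification of Jack-positive specializations via triplets $(\bm\alpha,\bm\beta,\gamma)$. Since the paper cites \cite{MR1609628} for this result, the proof proposal is essentially an exposition of the Kerov--Okounkov--Olshanski argument, adapted to the Jack setting. The plan is to proceed in two directions: first show that every triplet $(\bm\alpha,\bm\beta,\gamma)$ satisfying the stated constraints gives a Jack-positive specialization, and then show that these exhaust all of them. The ``easy'' direction is positivity of the listed specializations: I would use the Cauchy identity (Proposition \ref{p:Cauchy}) together with the known Jack-positivity of the dual Cauchy kernel. Concretely, for a single variable $\alpha$ the map $p_k\mapsto \alpha^k$ is positive because $J_\bmla(\alpha;\theta)=\alpha^{|\bmla|}\prod_{(i,j)\in\bmla}\bigl(1+(j-1)/\theta\bigr)\cdot\bm1_{\ell(\bmla)\le 1}$ up to a positive constant (single-row specialization), and for a ``beta'' variable $\beta$, $p_k\mapsto(-\theta)^{k-1}\beta^k$ is positive because this is the single-column specialization, under which $J_\bmla$ vanishes unless $\bmla$ is a single column and otherwise equals a positive multiple of $\beta^{|\bmla|}$. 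The full triplet is then an infinite ``product'' of such elementary specializations together with a Plancherel ($\gamma$) part; positivity is preserved under such combinations by the branching rule $J_\bmla(\bmx,\bmy;\theta)=\sum_\bmmu J_{\bmla/\bmmu}(\bmx;\theta)J_\bmmu(\bmy;\theta)$ of \eqref{e:defskewJ} and the nonnegativity of the skew Jack functions (which themselves are nonnegative on positive specializations by Proposition \ref{p:innerP} together with the nonnegativity of the Littlewood--Richardson coefficients in the relevant range, or more robustly by the Pieri-type positivity built into the combinatorial formula for Jack functions).

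\textbf{Main steps for the hard direction (exhaustiveness).} The substance is showing there are no other Jack-positive specializations. First I would reformulate: a specialization $\rho$ is an algebra homomorphism $\Sym\to\bR$, so it is determined by the numbers $\rho(p_k)=:\pi_k$, and Jack-positivity says $J_\bmla(\rho;\theta)\ge 0$ for all $\bmla$. Using the Jack analogue of the fact that $J_{(k)}$ and $J_{(1^k)}$ expand positively and negatively in the $p$-basis respectively, I would extract the constraint that the generating function $E(t)=\sum_k e_k(\rho)t^k$ (image of the elementary symmetric functions) and $H(t)=\sum_k h_k(\rho)t^k$ (complete homogeneous) are both entire functions with controlled growth; positivity of all $J_\bmla(\rho;\theta)$ forces, via a determinantal / totally-positive structure, that $E(t)$ and $H(t)$ factor as infinite products. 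More precisely, the key classical input is a Edrei--Thoma type theorem: a formal series $\sum h_k t^k$ with $1+\sum_{k\ge1}h_kt^k = e^{\gamma t}\prod_i (1+\alpha_i t)/\prod_i(1-\beta_i t)$ iff a certain Toeplitz-matrix positivity holds. The Jack-deformation replaces the naive Toeplitz positivity by a $\theta$-deformed version, but because $J_\bmla(\rho;\theta)$ for rectangular $\bmla$ can be written as a polynomial in the $\pi_k$ with coefficients that degenerate (as $N\to\infty$ along rows/columns) to the classical minors, one can transfer the classical Edrei--Thoma classification. The bookkeeping of which power sums $p_k\mapsto \alpha_i^k+(-\theta)^{k-1}\beta_i^k$ comes from matching the single-row and single-column degenerations computed above with the factors of the infinite product.

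\textbf{Where the difficulty lies.} The genuinely hard part is the analytic classification step: extracting from ``$J_\bmla(\rho;\theta)\ge0$ for all $\bmla$'' the conclusion that the relevant generating functions are of exponential-times-infinite-product form with the prescribed constraints $\alpha_1\ge\alpha_2\ge\cdots\ge0$, $\beta_1\ge\beta_2\ge\cdots\ge0$, $\sum(\alpha_i+\beta_i)<\infty$, $\gamma\ge0$. In the Schur case ($\theta=1$) this is exactly Thoma's theorem and its proof via Edrei's theorem on totally positive Toeplitz sequences; for general $\theta$ one needs the observation of \cite{MR1609628} that Jack-positivity still controls the same Toeplitz-type positivity after suitable normalization, essentially because the ``large partition'' asymptotics of Jack functions in finitely many variables (Theorem \ref{t:J1N} and its refinements) are governed by the same Schur-like leading behavior up to explicit $\theta$-factors. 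I would therefore present the argument as: (i) reduce to the classification of Jack-positive \emph{extreme} specializations by a Choquet-type argument on the compact convex set of such specializations; (ii) show extreme ones are multiplicative in an appropriate sense and hence come from a single ``frequency'' $\alpha$, $\beta$, or the Plancherel direction $\gamma$; (iii) assemble general positive specializations as integrals/products over extremes, recovering the sequences $\bm\alpha,\bm\beta$ and the number $\gamma$. Steps (i) and (iii) are soft; step (ii), identifying the extreme rays, is the crux and is where I would invoke the $\theta$-deformed Edrei--Thoma input, citing \cite{MR1609628} for the full details rather than reproving it.
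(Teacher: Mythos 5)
The paper does not prove this theorem; it states it and cites \cite{MR1609628} (Kerov--Okounkov--Olshanski), so there is no ``paper's own proof'' to compare against. Judged on its own terms, your proposal is a plausible high-level reconstruction of the KOO strategy, and you are upfront that the analytic crux---extracting the exponential-times-infinite-product form from Jack-positivity via a $\theta$-deformed Edrei--Thoma/Toeplitz total-positivity argument---is deferred to the reference. That deferral is entirely appropriate given that the paper itself treats the result as a citation.

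Two points worth flagging. First, in the easy direction you invoke ``nonnegativity of the Littlewood--Richardson coefficients in the relevant range'' to get positivity of skew Jack functions on the elementary specializations. Taken literally this is a red flag: Jack LR positivity is precisely Stanley's conjecture, which the paper explicitly records as open. What is actually needed (and is known) is the much weaker \emph{Pieri} positivity, i.e.\ nonnegativity of the coefficients $\psi_{\bmla/\bmmu}(\theta)$ arising when you restrict a skew Jack function to one $\alpha$-variable (horizontal strips), one $\beta$-variable (vertical strips, by the $\omega_\theta$-duality), or the Plancherel direction (hook-length formula). You do hedge with ``or more robustly by the Pieri-type positivity,'' and that is the right ingredient; I would make that the primary argument and drop the LR appeal. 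Also note that the definition of Jack-positive in the paper is phrased over \emph{skew} Jack functions, so once Pieri positivity for the elementary pieces is in hand, closure under the branching/union operation \eqref{e:defskewJ} is immediate with no circularity.

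Second, a small formula slip: in the paper's normalization (leading monomial coefficient $1$), a single-row Jack polynomial in one variable is simply $J_{(k)}(\alpha;\theta)=\alpha^k$; the extra factor $\prod_{(i,j)}(1+(j-1)/\theta)$ you wrote belongs to a different normalization and should be omitted. This does not affect the positivity conclusion but would confuse a reader checking against Theorem \ref{t:J1N}. The overall Choquet/extreme-ray structure you lay out for the hard direction matches how such boundary classifications are typically organized, and is consistent with the cited source.
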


\begin{definition}
Given two specializations $\rho,\rho'$, we define their union $(\rho,\rho')$ through the formula:
\begin{align*}
p_k(\rho,\rho')=p_k(\rho)+p_k(\rho'),\quad k\geq 1.
\end{align*}
\end{definition}

Jack measures are examples of probability measures with closed-form product formulas for their Jack generating functions, which is a natural generalization of Okounkov's Schur measures \cite{MR1856553}.
They were first defined in \cite{MR3418747}, as degenerations of the more general Macdonald measures of Borodin-Corwin \cite{MR3152785}.

\begin{definition}\label{def:jackmeasure}
Let $\rho_1, \rho_2 : \Sym \rightarrow \R$ be two Jack-positive specializations.
The \textit{Jack measure associated to $(\rho_1, \rho_2)$} is the probability measure $\cJ_{\rho_1 \rho_2}$ on $\Y$ given by
\begin{equation*}\label{e:defMrr}
\cJ_{\rho_1 \rho_2}(\bflambda) := \frac{ J_{\bflambda}(\rho_1; \theta)J_{\bflambda}(\rho_2; \theta)}{j_{\bflambda}(\theta) H(\rho_1, \rho_2; \theta)},\quad \bflambda\in\Y.
\end{equation*}
In the formula above, $H(\bfx, \bfy; \theta)$ is the Jack reproducing kernel $(\ref{e:jrk})$, and so
\begin{equation}\label{speckernel}
H(\rho_1, \rho_2; \theta) = \exp\left( \theta\cdot\sum_{n\geq 1}{\frac{p_n(\rho_1)p_n(\rho_2)}{n}} \right).
\end{equation}
\end{definition}
%From the Cauchy identity Proposition \eqref{p:Cauchy}, $\cJ_{\rho_1\rho_2}$ is a probability measure on $\bY$ if $H(\rho_1,\rho_2;\theta)$ is well defined. The law of large numbers and central limit theorems for the Jack measure with $\rho_1=\rho_2$ has been proven by Moll in \cite{Moll} by using the Nazarov-Sklyanin operators. In this example, we are interested in the case $\rho_2 = (1^N)$, i.e. the pure alpha specialization $\alpha_1=\alpha_2=\cdots=\alpha_N=1, \alpha_{N+1}=\alpha_{N+2}=\cdots=0$. In this case, the measure $\cJ_{\rho_1, \rho_2}$ is supported on $\Y(N)$.

\section{$N$-finite Jack generating functions}
\label{s:finiteJGF}
In a series of papers \cite{MR3361772,BG1,BG2} by Bufetov and Gorin, Schur generating functions are introduced to study the global behavior of stochastic discrete particle systems. The Schur generating functions in \cite{MR3361772,BG1,BG2} are defined using Schur polynomials in $N$ variables. However, the Jack generating functions \eqref{e:gener} in this paper are defined using Jack symmetric functions in infinitely many variables. In this section, we define Jack generating functions using Jack polynomials in $N$ variables, and prove that under Assumption \ref{asup:infinite}, our Assumptions \ref{a:LLN} and \ref{a:CLT} are equivalent to their counterparts in \cite{BG1}.

Before we define Jack generating functions using Jack polynomials in $N$ variables, we need more notations. For each $N\in\bZ_{>0}$, the algebra of symmetric polynomials in $N$ variables $x_1,x_2,\cdots,x_N$ is denoted $\Sym(N)$. 
A homogeneous basis of $\Sym(N)$ is the monomial symmetric polynomials $\{m_{\bflambda}(x_1, \ldots, x_N) : \bflambda \in \Y(N) \}$. %, and the Schur polynomials $\{s_{\bflambda}(x_1, \ldots, x_N) : \bflambda\in\Y(N)\}$.
The polynomial $m_{\bflambda}(x_1, \ldots, x_N)$ can be obtained from $m_{\bmla}(\bmx)$ by taking $x_{N+1}=x_{N+2}=\cdots=0$. %The Newton power sums $p_n = p_n(\bfx) = x_1^n + x_2^n + \dots$, $n \geq 1$, are free generators of $\Sym$, meaning that $\Sym = \R[p_1, p_2, \dots]$. The set $\{p_{\bflambda} := \prod_{i\geq 1}{p_{\lambda_i}} : \bflambda\in\Y \}$ is another homogeneous basis of $\Sym$.

Let  $N\in\bZ_{>0}$ and $M_N$ be a probability measure on $\Y(N)$.
The \textit{$N$-finite Jack generating function of $M_N$} is defined as
\begin{equation}\label{finitegener}
F_N(x_1, \ldots, x_N; \theta) := \sum_{\bflambda\in\Y(N)}
M_N(\bflambda)\frac{J_{\bflambda}(x_1, \ldots, x_N; \theta)}{J_{\bflambda}(1^N; \theta)}.
\end{equation}
 We observe that the sum in $(\ref{finitegener})$ is infinite; thus $F_N(x_1, \ldots, x_N; \theta)$ is not an element of $\Sym(N)$. We obtain the $N$-finite Jack generating function \eqref{finitegener} from Jack generating function \eqref{e:gener} by taking $x_{N+1}=x_{N+2}=\cdots=x_N=0$. Let $G(x_1,x_2,\cdots,x_N)$ be a symmetric polynomial, and $\bmeta\in \bY(N)$, we write $\del_\bmeta G(x_1,x_2,\cdots, x_N)|_{1^N}=\del_{i_1}^{\eta_1}\del_{i_2}^{\eta_2}\cdots \del_{i_{\ell(\bmeta)}}^{\eta_{\ell(\bmeta)}}G(x_1,x_2,\cdots, x_N)|_{1^N}$, where $i_1,i_2,\cdots, i_{\ell(\bmeta)}$ are distinct indices. We write $\bmla\subset \bmeta$, if there exist indices $i_1<i_2<\cdots<i_{\ell(\bmla)}\leq \ell(\bmeta)$ such that $\bmla=(\eta_{i_1},\eta_{i_2},\cdots, \eta_{i_{\ell(\bmla)}})$.

\begin{theorem}\label{thm:equivalent}
Let $\{M_N\}_{N\geq 1}$ be a sequence of probability measures on $\bY(N)$ with Jack generating function $F_N(\bmp;\theta)$ as defined in \eqref{e:gener}. We assume that the functions $F_N(\bmp;\theta)$ satisfy Assumption \ref{asup:infinite}. Then 
 Assumption \ref{a:LLN} is equivalent to that, the $N$-finite Jack generating function $F_N(x_1,x_2,\cdots,x_N;\theta)$ as defined in \eqref{finitegener} satisfies
\begin{enumerate}
\item For any index $i\geq 1$ and any exponent $k\geq 1$,
\begin{align}\label{e:deri1}
\lim_{N\rightarrow \infty}\left.\frac{\del_i^k \ln F_N(x_1,x_2,\cdots,x_N;\theta)}{N}\right|_{1^N}= \fc_k.
\end{align}
\item 
For any $r\geq 2$ and any indices $i_1,i_2,\cdots, i_r$ such that there are at least two distinct indices among them,
\begin{align}\label{e:deri2}
\lim_{N\rightarrow \infty}\left.\frac{\del_{i_1}\del_{i_2}\cdots\del_{i_r} \ln F_N(x_1,x_2,\cdots,x_N;\theta)}{N}\right|_{1^N}= 0.
\end{align}
\end{enumerate}
And Assumption \ref{a:CLT} is equivalent to that the Jack generating function $F_N(x_1,x_2,\cdots,x_N;\theta)$ as defined in \eqref{finitegener} satisfies
\begin{enumerate}
\item For any index $i\geq 1$ and any exponent $k\geq 1$,
\begin{align}\label{e:deri1copy}
\lim_{N\rightarrow \infty}\left.\frac{\del_i^k \ln F_N(x_1,x_2,\cdots,x_N;\theta)}{N}\right|_{1^N}= \fc_k.
\end{align}
\item 
For  any distinct indices $i,j$ and any exponent $k,l\geq 1$,
\begin{align}\label{e:deri3}
\lim_{N\rightarrow \infty}\left.\del_i^k\del_j^l \ln F_N(x_1,x_2,\cdots,x_N;\theta)\right|_{1^N}= \fd_{k,l}.
\end{align}
\item
For any $r\geq 3$ and any indices $i_1,i_2,\cdots, i_r$ such that there are at least three distinct indices among them,
\begin{align}\label{e:deri4}
\lim_{N\rightarrow \infty}\left.\del_{i_1}\del_{i_2}\cdots\del_{i_r} \ln F_N(x_1,x_2,\cdots,x_N;\theta)\right|_{1^N}= 0.
\end{align}
\end{enumerate}
\end{theorem}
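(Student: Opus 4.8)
\textbf{Proof proposal for Theorem \ref{thm:equivalent}.}

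The plan is to translate between derivatives in the $p_i$-variables and derivatives in the $x_i$-variables at the point $\bmp = 1^N$, i.e.\ $x_1 = \cdots = x_N = 1$, $x_{N+1} = \cdots = 0$. The key observation is that since $p_n = x_1^n + x_2^n + \cdots$, we have $\del x_i = \sum_{n \geq 1} n x_i^{n-1} \del p_n$, and more generally a derivative $\del_{i}^{k}$ evaluated at $1^N$ produces, via the Fa\`a di Bruno / chain rule, a finite linear combination of the operators $\del p_{j_1} \cdots \del p_{j_s}$ (for $s \leq k$) with coefficients that are the elementary symmetric combinatorial factors coming from $\del_x^k(x^n)|_{x=1} = k!\binom{n}{k}$ (when a single derivative variable is used $k$ times) and products thereof (when several distinct variables are used). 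Concretely, for a single index $i$ used with exponent $k$,
\begin{align*}
\del_i^k \ln F_N(x_1,\dots,x_N;\theta)\big|_{1^N}
= \sum_{s=1}^{k} \sum_{\substack{j_1,\dots,j_s \geq 1}} C^{(k)}_{j_1,\dots,j_s} \frac{\del^s \ln F_N(\bmp;\theta)}{\del p_{j_1}\cdots\del p_{j_s}}\Big|_{\bmp=1^N},
\end{align*}
where the top term ($s=1$) has coefficient $C^{(k)}_{j} = k!\binom{j}{k}$ and the lower terms ($s<k$) involve coefficients that are themselves products of binomials $\binom{j_a}{k_a}$ with $k_1+\cdots+k_s = $ something $\leq k$ coming from set-partition data. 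Dividing by $N$, the term $\frac1N \sum_j k!\binom{j}{k}\del p_j \ln F_N|_{1^N}$ is precisely the left side of \eqref{e:LLN1}, and the lower-order terms are exactly the expressions controlled by \eqref{e:LLN3} (with smaller $r$ but $r\geq 1$; the $r=1$ lower terms with $s<k$ but still a single $p$-derivative are, after summing, of the form $\frac1N\sum_j k'!\binom{j}{k'}\del p_j\ln F_N|_{1^N}\to \fc_{k'}$, which is fine, but one checks that the leading coefficient isolates $\fc_k$). By an induction on $k$ — using Assumption \ref{a:LLN} (both parts) and Proposition \ref{p:summable} to guarantee absolute convergence of all the infinite sums involved so that the limits may be taken termwise — one deduces \eqref{e:deri1}, and conversely, inverting the triangular (in $k$) relation between the families $\{\del_i^k\ln F_N/N\}$ and $\{\frac1N\sum_j k!\binom{j}{k}\del p_j\ln F_N\}$ recovers \eqref{e:LLN1} from \eqref{e:deri1}. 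The mixed-index statements \eqref{e:deri2}, \eqref{e:deri3}, \eqref{e:deri4} are handled the same way: a derivative $\del_{i_1}^{k_1}\cdots\del_{i_m}^{k_m}$ in $m$ distinct $x$-indices expands into $p$-derivatives of order $\geq m$ (never fewer, because distinct $x$-variables cannot be merged), with the leading term being $\sum_{j_1,\dots,j_m}\prod_a k_a!\binom{j_a}{k_a}\,\del p_{j_1}\cdots\del p_{j_m}\ln F_N|_{1^N}$ and lower-order corrections again of the controlled form; the normalization ($1/N$ for $m=1$ in the LLN case, no $1/N$ for $m\geq 2$ in the CLT case) matches because Assumptions \ref{a:LLN} and \ref{a:CLT} precisely assign $N$-degree $1$ to the single-$p$-derivative sums and $N$-degree $0$ (resp.\ $<0$) to the two-fold (resp.\ $\geq 3$-fold) sums.

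The key steps in order: (1) Record the chain-rule expansion of $\del_x^k(x^n)|_{x=1}=k!\binom{n}{k}$ and its multivariate analogue, and use the Fa\`a di Bruno formula to write each $\del_{i_1}\cdots\del_{i_r}\ln F_N|_{1^N}$ as an explicit finite sum over set partitions of the $p$-derivative expressions \eqref{e:dptermscopy}, identifying the leading set partition (all blocks singletons, giving the highest-order $p$-derivative) versus coarser ones. (2) Invoke Proposition \ref{p:summable} so that every infinite sum over $j$-indices converges absolutely on the disk $\{|z|<1+\veps\}$, hence limits and sums interchange. (3) For the forward direction, assume Assumption \ref{a:LLN} (resp.\ \ref{a:CLT}) and run an induction on the total order $k_1+\cdots+k_m$: the leading term converges to the stated limit by hypothesis, while the lower-order terms either vanish after normalization or converge to already-known constants, and one checks the leading coefficient does not vanish so that no cancellation destroys the limit. (4) For the converse, observe the transition matrix between the two families of quantities (indexed by the multi-exponent $(k_1,\dots,k_m)$, ordered by refinement / dominance) is unitriangular with constant, nonzero diagonal (the diagonal entry is a product $\prod_a k_a!$ type factor — really the coefficient of the all-singletons leading term), so it is invertible and the inverse recovers Assumption \ref{a:LLN} (resp.\ \ref{a:CLT}). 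This inversion is essentially the same mechanism used in the proof of Proposition \ref{p:Fdegreeclt} (the linear system $(A+B)v=o$ there), so that argument can be cited or imitated.

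The main obstacle is bookkeeping: making the Fa\`a di Bruno expansion of $\del_{i_1}^{k_1}\cdots\del_{i_m}^{k_m}\log F_N|_{1^N}$ precise enough to see that (a) the leading term is exactly the $p$-derivative expression appearing in Assumptions \ref{a:LLN}/\ref{a:CLT} with the correct binomial coefficients, and (b) every lower-order correction term has strictly smaller ``complexity'' (fewer distinct $x$-indices contributing, or a strictly smaller total $p$-derivative order after merging) so that the induction closes and no term of the wrong $N$-degree appears. A secondary subtlety is the boundary case where distinct $x$-indices are used but with low exponents — one must verify that using $m$ distinct indices always forces at least $m$-fold $p$-derivatives (no collapse), which follows because the Jacobian $\del x_i/\del p_n = n x_i^{n-1}$ couples the index $i$ nontrivially; this is what makes the ``at least two distinct indices'' hypothesis in \eqref{e:deri2} line up with the ``$r\geq 2$'' hypothesis in \eqref{e:LLN3}. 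Once the combinatorics is set up cleanly, the analytic input (absolute convergence from Proposition \ref{p:summable}) and the linear-algebra input (unitriangular inversion) are routine. The proof itself should then be short: state the expansion lemma, note convergence, and run the two-directional induction.
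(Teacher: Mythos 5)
Your proposal is correct and takes essentially the same approach as the paper: the paper's Proposition~\ref{p:changeDer} is precisely the ``expansion lemma'' you sketch in step~(1), realizing the triangular chain-rule relation between $x$-derivatives and $p$-derivatives via the change of basis between power-sum and monomial symmetric polynomials (the coefficients $c_{\bmla\bmmu}$, $\tilde c_{\bmla\bmmu}$ with the constraint $\ell(\bmla)\geq\ell(\bmmu)$ encoding exactly your ``$m$ distinct $x$-indices force $\geq m$ $p$-derivatives'' observation), and the forward/backward directions follow as you describe by selecting the dominant-in-$N$-degree block and inverting the unitriangular transition. One small caution in filling in your sketch: your labeling of the $s=1$ term as the ``top term'' should be read as ``dominant in $N$-degree after dividing by $N$'' rather than ``highest $p$-derivative order,'' and the Jacobian is $\del p_n/\del x_i = n x_i^{n-1}$ (not $\del x_i/\del p_n$); neither affects the argument.
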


\begin{proposition}\label{p:changeDer}
Let $G(\bmp)=\sum_{\bmla\in \bY}a(\bmla)p_\bmla\in \widehat\Sym$ satisfy Assumption \ref{asup:infinite} with $G(\bmp)|_{\bmp=1^N}=1$. Then for any fixed $r\geq 1$, the infinite series
\begin{align}\label{e:summable}
\sum_{j_1,j_2,\cdots,j_r\geq 1}\left|\left.\frac{\del^{r}\ln G(\bmp)}{\del p_{j_1}\del p_{j_2}\cdots \del p_{j_{r}}}\right|_{\bmp=1^N}\right||x|^{j_1+j_2+\cdots+j_r},
\end{align}
converges absolutely on $\{z\in\bC: |z|<1+\varepsilon\}$.

Let $G(x_1,x_2,\cdots, x_N)$ be obtained from $G(\bmp)$ by taking $p_n=x_1^n+x_2^n+\cdots+x_N^n$ for $n\geq 1$. Then $G(x_1,x_2,\cdots, x_N)$ is analytic on
\begin{align}\label{e:domain}
\{(z_1,z_2,\cdots,z_N)\in \bC^{N}: |z_i|<1+\varepsilon,\quad i=1,2,\cdots,N\}.
\end{align}
We recall the coefficients $c_{\bmla\bmeta}$ and $\tilde c_{\bmla\bmeta}$ from \eqref{def:c} and \eqref{def:tc}. For any partition $\bmeta\in \bY(N)$, we have 
\begin{align}\begin{split}\label{eq:derG1}
&\phantom{{}={}}
\del_{\bmeta}\ln G(x_1,x_2,\cdots, x_N)|_{1^N}\\&={\prod_{i=1}^{\ell(\bmeta)}\eta_i!}\sum_{\bmmu\in\bY:
|\bmmu|=|\bmeta|\atop\ell(\bmmu)\geq\ell(\bmeta)}\frac{c_{\bmmu\bmeta}}{\prod_{i\geq 1}m_i(\bmmu)!}\sum_{\la_1,\la_2,\cdots\la_{\ell(\bmmu)}\geq 1}  \prod_{i=1}^{\ell(\bmmu)}{\la_i\choose\mu_{i}} \left.\frac{\del^{\ell(\bmmu)}\ln G(\bmp)}{\del p_{\la_1}\del p_{\la_2}\cdots \del p_{\la_{\ell(\bmmu)}}}\right|_{\bmp=1^N}.
\end{split}\end{align}
For any partition $\bmmu\in \bY(N)$, we have
\begin{align}\begin{split}\label{eq:derG2}
&\phantom{{}={}}\sum_{\la_1,\la_2,\cdots\la_{\ell(\bmmu)}\geq 1}  \prod_{i=1}^{\ell(\bmmu)}{\la_i\choose\mu_{i}} \left.\frac{\del^{\ell(\bmmu)}\ln G(\bmp)}{\del p_{\la_1}\del p_{\la_2}\cdots \del p_{\la_{\ell(\bmmu)}}}\right|_{\bmp=1^N}\\
&=\prod_{i=1}^{\mu_1}m_i(\bmmu)!\sum_{\bmeta\in \bY(N):|\bmeta|=|\bmmu|\atop \ell(\bmeta)\geq\ell(\bmmu)}\tilde c_{\bmeta\bmmu}\frac{\del_{\bmeta}\ln G(x_1,x_2,\cdots,x_N)|_{1^N}}{\prod_{i=1}^{\ell(\bmeta)}\eta_i!}.
\end{split}\end{align}
%\begin{align}\begin{split}\label{eq:derG1}
%\phantom{{}={}}&\left.\del_{i_1}\del_{i_2}\cdots\del_{i_s}\ln G(x_1,x_2,\cdots,x_N)\right|_{1^N}
%%&=\left.\del_{i_1}\del_{i_2}\cdots\del_{i_s}G(\bmp)\right|_{x_1=\cdots=x_N=1, x_{N+1}=x_{N+2}=\cdots=0}\\
%\\
%=&
%\left.\sum_{(k_1,k_2,\cdots,k_t)\in \cI}q_{j_1j_2\cdots j_t}\sum_{j_1,j_2,\cdots,j_{t}\geq 1} {j_1\choose k_1}{j_2\choose k_2}\cdots {j_t\choose k_t}\frac{\del^{t}\ln G(\bmp)}{\del p_{j_1}\del p_{j_2}\cdots \del p_{j_{t}}}\right|_{\bmp=1^N}.
%\end{split}\end{align}
%For any $1\leq t\leq N$ and indices $k_1,k_2,\cdots, k_t\geq 0$, there exists a finite set  of tuples $\cI$ such that
%\begin{align}\begin{split}\label{eq:derG2}
%&\phantom{{}={}}\left.\sum_{j_1,j_2,\cdots,j_{t}\geq 1} {j_1\choose k_1}{j_2\choose k_2}\cdots {j_t\choose k_t}\frac{\del^{t}\ln G(\bmp)}{\del p_{j_1}\del p_{j_2}\cdots \del p_{j_{t}}}\right|_{\bmp=1^N}\\
%%&=\sum_{(i_1,i_2,\cdots, i_s)\in \cK}q_{i_1i_2\cdots i_s}\left.\del_{i_1}\del_{i_2}\cdots\del_{i_s}G(\bmp)\right|_{x_1=\cdots=x_N=1, x_{N+1}=x_{N+2}=\cdots=0}\\
%&=\sum_{(i_1,i_2,\cdots, i_s)\in \cI}q_{i_1i_2\cdots i_s}\left.\del_{i_1}\del_{i_2}\cdots\del_{i_s}\ln G(x_1,x_2,\cdots, x_N)\right|_{1^N}.
%\end{split}\end{align}
\end{proposition}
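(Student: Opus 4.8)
The plan is to prove the two analytic assertions first — the normal convergence of $G$ on a suitable polydisc, which simultaneously gives the summability of its logarithmic $p$-derivatives and the analyticity of $G(x_1,\dots,x_N)$ — and then to derive the change-of-variable identities \eqref{eq:derG1}--\eqref{eq:derG2}, which become pure bookkeeping once the summability is available. Fix $0<\varepsilon'<\varepsilon$; by Assumption \ref{asup:infinite}, $A(\varepsilon')\deq\sum_{\bmla}|a(\bmla)|N^{\ell(\bmla)}(1+\varepsilon')^{|\bmla|}<\infty$. Since $|p_{\bmla}|\le N^{\ell(\bmla)}(1+\varepsilon')^{|\bmla|}$ whenever $|p_i|\le N(1+\varepsilon')^{i}$ for all $i$, the series $\sum_{\bmla}a(\bmla)p_{\bmla}$ converges normally on the polydisc $\Omega_{\varepsilon'}\deq\{|p_i|<N(1+\varepsilon')^{i}:i\ge 1\}$, so $G$ is analytic there with $\sup_{\Omega_{\varepsilon'}}|G|\le A(\varepsilon')$. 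Specializing $p_i=x_1^i+\cdots+x_N^i$ with $|x_j|<1+\varepsilon$ yields at once the second assertion of the proposition, that $G(x_1,\dots,x_N)$ is analytic on $\{|z_i|<1+\varepsilon\}$; in particular $\ln G(x_1,\dots,x_N)$ is analytic near $1^N$, because $G(1^N)=1$.

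For the summability of the $p$-derivatives — which for $G=F_N$ is precisely Proposition \ref{p:summable} — I would first show that $|G|\ge\tfrac12$ on the polydisc $P=\prod_i\{|p_i-N|\le\rho_i\}$ with $\rho_i=\delta N(1+\varepsilon')^{i}$, for a small $\delta=\delta(\varepsilon')>0$. The key point is the telescoping bound, valid on $P$:
\begin{align*}
|p_{\bmla}-N^{\ell(\bmla)}|\le\sum_{t}\Big(\prod_{s<t}|p_{\la_s}|\Big)|p_{\la_t}-N|\,N^{\ell(\bmla)-t}\le N^{\ell(\bmla)-1}(1+\varepsilon')^{|\bmla|}\sum_t(1+\varepsilon')^{-\la_t}\rho_{\la_t}=\delta\,\ell(\bmla)\,N^{\ell(\bmla)}(1+\varepsilon')^{|\bmla|},
\end{align*}
using $\sum_{s<t}\la_s\le|\bmla|-\la_t$. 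Hence $|G-1|\le\delta\sum_{\bmla}|a(\bmla)|\ell(\bmla)N^{\ell(\bmla)}(1+\varepsilon')^{|\bmla|}$, a convergent series (bound $\ell(\bmla)(1+\varepsilon')^{|\bmla|}\le C(1+\varepsilon'')^{|\bmla|}$ for $\varepsilon'<\varepsilon''<\varepsilon$), so it is $\le\tfrac12$ once $\delta$ is small enough. On $P$ the logarithm $\ln G$ is analytic and bounded, and Cauchy's estimate applied on each finite coordinate slice gives $\big|\frac{\del^{r}\ln G}{\del p_{j_1}\cdots\del p_{j_r}}\big|_{\bmp=1^N}\big|\le r!\,C\,(\rho_{j_1}\cdots\rho_{j_r})^{-1}$. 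Therefore $\sum_{j_1,\dots,j_r\ge 1}\big|\frac{\del^{r}\ln G}{\del p_{j_1}\cdots\del p_{j_r}}\big|_{\bmp=1^N}\big|\,|x|^{j_1+\cdots+j_r}\le r!\,C\,\big(\tfrac1{\delta N}\sum_{j\ge 1}|x|^{j}(1+\varepsilon')^{-j}\big)^{r}<\infty$ for $|x|<1+\varepsilon'$, and letting $\varepsilon'\uparrow\varepsilon$ gives convergence on $\{|z|<1+\varepsilon\}$.

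For the two identities I would pass to the local coordinates $y_i=x_i-1$, in which $\del_i$ is unchanged and $p_j(\bmx)-N=\sum_{k\ge 1}\binom{j}{k}p_k(\bmy)$. Writing $c_{j_1\cdots j_r}=\frac{\del^{r}\ln G}{\del p_{j_1}\cdots\del p_{j_r}}|_{\bmp=1^N}$, the Taylor expansion $\ln G=\sum_{r\ge 1}\tfrac1{r!}\sum_{j_1,\dots,j_r}c_{j_1\cdots j_r}\prod_l(p_{j_l}-N)$ about $1^N$ becomes, after the substitution, interchange of the sums (absolutely convergent by the preceding step), and regrouping of ordered tuples $(k_1,\dots,k_r)$ into partitions $\bmmu$,
\begin{align*}
\ln G=\sum_{\bmmu}\frac{C_{\bmmu}}{\prod_i m_i(\bmmu)!}\,p_{\bmmu}(\bmy),\qquad C_{\bmmu}=\sum_{\la_1,\dots,\la_{\ell(\bmmu)}\ge 1}\ \prod_t\binom{\la_t}{\mu_t}\,\frac{\del^{\ell(\bmmu)}\ln G}{\del p_{\la_1}\cdots\del p_{\la_{\ell(\bmmu)}}}\Big|_{\bmp=1^N},
\end{align*}
where $C_{\bmmu}$ does not depend on the ordering of the $k$'s by symmetry of $c$. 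Expanding $p_{\bmmu}(\bmy)=\sum_{\bmeta}c_{\bmmu\bmeta}m_{\bmeta}(\bmy)$ via \eqref{def:c} and using the elementary fact $\del_{\bmeta}m_{\bmnu}(\bmy)|_{\bmy=0}=(\prod_s\eta_s!)\,\mathbf{1}_{\bmnu=\bmeta}$ (the only monomial of $m_{\bmnu}$ surviving $\del_{i_1}^{\eta_1}\cdots\del_{i_{\ell(\bmeta)}}^{\eta_{\ell(\bmeta)}}$ at the origin is $y_{i_1}^{\eta_1}\cdots y_{i_{\ell(\bmeta)}}^{\eta_{\ell(\bmeta)}}$, present exactly when $\bmnu=\bmeta$) produces \eqref{eq:derG1}, the sums being finite since $c_{\bmmu\bmeta}\ne 0$ forces $|\bmmu|=|\bmeta|$. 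Identity \eqref{eq:derG2} is the inversion: expanding $m_{\bmeta}(\bmy)=\sum_{\bmnu}\tilde c_{\bmeta\bmnu}p_{\bmnu}(\bmy)$ via \eqref{def:tc} and comparing the $p_{\bmmu}(\bmy)$-coefficient of $\ln G$ computed directly ($C_{\bmmu}/\prod_i m_i(\bmmu)!$) with the one assembled from the monomial coefficients ($\del_{\bmeta}\ln G|_{1^N}/\prod_s\eta_s!$) gives \eqref{eq:derG2}, because $[c_{\bmmu\bmeta}]$ and $[\tilde c_{\bmeta\bmmu}]$ are mutually inverse transition matrices between the power-sum and monomial bases of $\Sym$.

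The step I expect to be the main obstacle is the analytic estimate of the second paragraph: the radii $\rho_i$ must be chosen large enough that $\sum_j|x|^j\rho_j^{-1}$ still converges on the full disc $\{|z|<1+\varepsilon\}$, yet small enough that $|G|$ remains bounded away from $0$ on $P$, and it is precisely the sharp form of the telescoping bound — retaining the factor $(1+\varepsilon')^{-\la_t}$ coming from $\sum_{s<t}\la_s\le|\bmla|-\la_t$ rather than discarding it — that reconciles these two requirements (a cruder estimate would only give convergence on a strictly smaller disc). Once the summability of the $p$-derivatives is secured, \eqref{eq:derG1}--\eqref{eq:derG2} reduce to routine manipulations with the power-sum/monomial change of basis and the derivative rule for $m_{\bmnu}$ at the origin, together with the finiteness of all the partition sums involved.
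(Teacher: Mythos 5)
Your proof is correct, and for the first claim \eqref{e:summable} it takes a genuinely different route from the paper's. The paper's argument is purely combinatorial: it writes $\ln G(\bmp)=\sum_{\bmla}\tilde c(\bmla)(\Delta p)_{\bmla}$ with $\Delta p_n=p_n-N$, expands $\tilde c(\bmla)$ as an alternating sum over ways to decompose $\bmla$ as a union $\bmla_1\cup\cdots\cup\bmla_k$, and bounds the resulting series directly against the data in Assumption \ref{asup:infinite}. You instead prove a lower bound $|G|\geq 1/2$ on a polydisc $P$ centered at $\bmp=1^N$ with radii $\rho_i=\delta N(1+\varepsilon')^i$, via a telescoping estimate, and then apply Cauchy's inequality on finite coordinate slices of $P$. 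The payoffs differ: the paper's argument is elementary and self-contained, while yours is shorter and reveals why the series converges on a disc of the same radius (you correctly identify that keeping the factor $(1+\varepsilon')^{-\la_t}$ in the telescoping bound, rather than discarding it, is what lets $\rho_j$ grow geometrically with $j$ and hence keeps $\sum_j|x|^j\rho_j^{-1}$ convergent up to radius $1+\varepsilon'$). One step you should make explicit: the bound $|p_{\la_s}|\leq N(1+\varepsilon')^{\la_s}$ on $P$, which you use silently in the telescoping chain, holds only when $\delta\leq\varepsilon'/(1+\varepsilon')$; this is compatible with your requirement that $\delta$ be small, but the constraint should be stated since without it the chain of inequalities breaks. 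For the analyticity of $G(x_1,\dots,x_N)$ and the identities \eqref{eq:derG1}--\eqref{eq:derG2}, your argument (pass to $y_i=x_i-1$, substitute $p_j-N=\sum_k\binom{j}{k}p_k(\bmy)$ into the Taylor expansion of $\ln G$ at $1^N$, regroup ordered tuples into partitions $\bmmu$, expand $p_{\bmmu}(\bmy)$ in the monomial basis, and read off derivatives via $\del_{\bmeta}m_{\bmnu}(\bmy)|_{0}=(\prod_s\eta_s!)\mathbf{1}_{\bmnu=\bmeta}$) is essentially identical to the paper's; the inversion to get \eqref{eq:derG2} also matches, though the paper is slightly more careful in noting that $c_{\bmmu\bmeta}\neq 0$ forces $\ell(\bmeta)\leq\ell(\bmmu)$, so the restriction of $[c]$ and $[\tilde c]$ to $\bY(N)$ still yields $\sum_{\bmeta\in\bY(N)}c_{\bmmu\bmeta}\tilde c_{\bmeta\bmmu}=1$ when $\bmmu\in\bY(N)$.
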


\begin{proof}
We denote $\Delta p_n=p_n-N$ and $\Delta x_n=x_n-1$ for $n=1,2,3,\cdots$. We can rewrite $G(\bmp)$ as
\begin{align}\label{e:Gmp}
G(\bmp)=1+\sum_{\bmla\in \bY\setminus \emptyset}c(\bmla) (\Delta p)_\bmla, \quad (\Delta p)_\bmla=\Delta p_{\la_1}\Delta p_{\la_2}\cdots\Delta p_{\la_{\ell(\bmla)}},
\end{align}
where  for any $\bmla\in \bY\setminus \emptyset$,
\begin{align*}
c(\bmla)=\sum_{\bmmu\in \bY:\bmla\subset \bmmu}a(\bmmu)N^{\ell(\bmmu)-\ell(\bmla)}.%= \frac{1}{\prod_{i\geq 1}m_i(\bmla)!}\left.\frac{\del^{\ell(\bmla)}G(\bmp)}{\del{p_{\la_1}}\del{p_{\la_2}}\cdots \del{p_{\la_{\ell(\bmla)}}}}\right|_{\bmp=1^N}.
\end{align*}
Formally, we have
\begin{align}\label{e:defcla}
\ln G(\bmp)=\sum_{k\geq 1}\frac{(-1)^k}{k}\left(\sum_{\bmla\in \bY\setminus \emptyset}c(\bmla) (\Delta p)_\bmla\right)^k
=\sum_{\bmla\in \bY}\tilde c(\bmla) (\Delta p)_\bmla,
\end{align}
where 
\begin{align}\label{e:defcla2}
\tilde c(\bmla)=\sum_{k\geq 1}\frac{(-1)^k}{k}\sum_{\bmla_1,\bmla_2,\cdots,\bmla_k\in \bY\setminus \emptyset,\atop\bmla_1\cup\bmla_2\cup\cdots\cup \bmla_k=\bmla}c(\bmla_1)c(\bmla_2)\cdots c(\bmla_k)=\frac{1}{\prod_{i\geq 1}m_i(\bmla)!}\left.\frac{\del^{\ell(\bmla)}\ln G(\bmp)}{\del{p_{\la_1}}\del{p_{\la_2}}\cdots \del{p_{\la_{\ell(\bmla)}}}}\right|_{\bmp=1^N}.
\end{align}
For any $r\geq 1$, and $1\leq |x|<1+\varepsilon$ we have
\begin{align*}\begin{split}
&\phantom{{}={}}\sum_{j_1,j_2,\cdots,j_r\geq 1}\left|\left.\frac{\del^{r}\ln G(\bmp)}{\del p_{j_1}\del p_{j_2}\cdots \del p_{j_{r}}}\right|_{\bmp=1^N}\right||x|^{j_1+j_2+\cdots+j_r}
= r!\sum_{\bmla\in \bY:\ell(\bmla)=r}|\tilde c(\bmla)||x|^{|\bmla|}\\
&\leq  r!\sum_{k\geq 1}\frac{1}{k}\sum_{\bmla_1,\bmla_2,\cdots,\bmla_k\in\bY\setminus\emptyset,\atop\ell(\bmla_1)+\ell(\bmla_2)+\cdots+\ell(\bmla_k)=r}\prod_{i=1}^k |c(\bmla_i)||x|^{|\bmla_i|}\\
&\leq  r!\sum_{k\geq 1}\frac{1}{k}\sum_{\bmla_1,\bmla_2,\cdots,\bmla_k\in\bY\setminus\emptyset,\atop\ell(\bmla_1)+\ell(\bmla_2)+\cdots+\ell(\bmla_k)=r}N^{-r}\prod_{i=1}^k\sum_{\bmmu_i\in\bY:\bmla_i\subset \bmmu_i}|a(\bmmu_i)|N^{\ell(\bmmu_i)}|x|^{|\bmla_i|}\\
&\leq r!N^{-r}\sum_{k\geq 1}\frac{1}{k}\sum_{\bmmu_1,\bmmu_2,\cdots,\bmmu_k\in\bY}\prod_{i=1}^k|a(\bmmu_i)|N^{\ell(\bmmu_i)}\sum_{\bmla_i\in\bY\setminus\emptyset:\bmla_i\subset\bmmu_i\atop \ell(\bmla_1)+\ell(\bmla_2)+\cdots+\ell(\bmla_k)=r}|x|^{|\bmla_1|+|\bmla_2|+\cdots+|\bmla_k|}\\
&\leq r!N^{-r}\sum_{r_1,r_2,\cdots,r_k\geq 1, \atop r_1+r_2+\cdots+r_k=r}\frac{1}{k}\prod_{i=1}^k\sum_{\bmmu_i\in \bY}|a(\bmmu_i)|N^{\ell(\bmmu_i)}{\ell(\bmmu_i)\choose r_i}|x|^{|\bmmu_i|},
\end{split}\end{align*}
which converges absolutely by Assumption \ref{asup:infinite}. This finishes the proof of claim \eqref{e:summable}.

%Since $G(\bmp)$ satisfies Assumption \ref{asup:infinite}, we have  that
%\begin{align}
%\sum_{\bmla\in \bY}|a(\bmla)|N^{\ell(\bmla)}|x|^{|\bmla|},
%\end{align}
%converges absolutely on $x\in \{z\in \bC: |z|\leq 1+\varepsilon\}$. 
From \eqref{def:c}, we have $p_\bmla=\sum_{\bmmu\in \bY}c_{\bmla\bmmu}m_{\bmmu}$. If we take $x_{N+1}=x_{N+2}=\cdots=0$, we get
$p_\bmla(x_1,x_2,\cdots,x_N) = \sum_{\bmmu\in \bY(N)}c_{\bmla\bmmu}m_{\bmmu}(x_1,x_2,\cdots, x_N)$.
Thus we can write $G(x_1,x_2,\cdots, x_N)$ as
\begin{align}\label{e:TaylorG}
G(x_1,x_2,\cdots, x_N)=\sum_{\bmla\in \bY}\sum_{\bmmu\in \bY(N)}a(\bmla)c_{\bmla\bmmu}m_{\bmmu}(x_1,x_2,\cdots, x_N)
=\sum_{\bmmu\in\bY(N)}\tilde a(\bmmu)m_{\bmmu}(x_1,x_2,\cdots, x_N),
\end{align}
where 
\begin{align*}
\tilde a(\bmmu)=\sum_{\bmla\in\bY}a(\bmla)c_{\bmla\bmmu}.
\end{align*}
We view \eqref{e:TaylorG} as a formal power series of $G(x_1,x_2,\cdots,x_N)$, it converges absolutely on \eqref{e:domain}. In fact, since $c_{\bmla\bmmu}\geq 0$ for any $\bmla,\bmmu\in \bY$, we have
\begin{align*}\begin{split}
&\phantom{{}={}}\sum_{\bmmu\in\bY(N)}|\tilde a(\bmmu)|m_{\bmmu}(|x|,|x|,\cdots, |x|)
%=
%\sum_{\bmmu\in \bY(N)}|\tilde a(\bmmu)|\frac{|\bmmu|!}{\prod_{i\geq 1}m_i(\bmmu)!}|x|^{|\bmmu|}\\
%&\leq \sum_{\bmmu\in \bY(N)}\sum_{\bmla\in\bY}|a(\bmla)|c_{\bmla\bmmu}\frac{|\bmmu|!}{\prod_{i\geq 1}m_i(\bmmu)!}|x|^{|\bmmu|}
\leq\sum_{\bmmu\in \bY(N)}\sum_{\bmla\in \bY}|a(\bmla)|c_{\bmla\bmmu}m_{\bmmu}(|x|,|x|,\cdots, |x|)\\
&=\sum_{\bmla\in\bY}|a(\bmla)|p_{\bmla}(|x|,|x|,\cdots, |x|)
=\sum_{\bmla\in\bY}|a(\bmla)|N^{\ell(\bmla)}|x|^{|\bmla|},
\end{split}\end{align*}
converges absolutely on $\{z\in \bC: |z|\leq 1+\varepsilon\}$.  Since the coefficients of the polynomials $m_\bmmu(x_1,x_2,\cdots,x_N)$ are all nonnegative, we conclude $G(x_1,x_2,\cdots, x_N)$ is analytic on
\begin{align*}
\{(z_1,z_2,\cdots,z_N)\in \bC^{N}: |z_i|<1+\varepsilon,\quad i=1,2,\cdots,N\}.
\end{align*}

The function $G(x_1,x_2,\cdots,x_N)$ is obtained from $G(\bmp)$ by setting $p_n=x_1^n+x_2^n+\cdots+x_N^n$ for $n\geq 1$. Since we are interested in the derivative of $\ln G(x_1,x_2,\cdots, x_N)$ at $x_1=x_2=\cdots=x_N=1$, we further take 
\begin{align*}
\Delta p_n =p_n-N=\sum_{i=1}^N (1+\Delta x_i)^n-N=\sum_{k\geq 1}{n\choose k}\sum_{i=1}^N(\Delta x_i)^k=\sum_{k\geq 1}{n\choose k}\tilde p_k, \quad n=1,2,3,\cdots,
\end{align*}
where 
\begin{align*}
\tilde p_k=p_k(\Delta x_1, \Delta x_2,\cdots, \Delta x_N), \quad {\tilde p}_{\bmla}=\prod_{i=1}^{\ell(\bmla)}\tilde p_{\la_i}.
\end{align*}
In this way we can rewrite \eqref{e:defcla} as 
\begin{align}\begin{split}\label{e:Gmp2}
\ln G(x_1,x_2,\cdots,x_N)
&=\sum_{\bmla\in \bY}\tilde c(\bmla) \prod_{i=1}^{\ell(\bmla)}\sum_{\mu=1}^{\la_i}{\la_i\choose \mu_i}\tilde p_{\mu_i}\\
&=\sum_{\bmla\in \bY}\tilde c(\bmla) \sum_{\bmmu\in \bY:\ell(\bmmu)=\ell(\bmla)}\frac{1}{\prod_{i\geq 1}m_i(\bmmu)!}\left(\sum_{\sigma\in S_{\ell(\bmla)}}\prod_{i=1}^{\ell(\bmla)}{\la_i\choose\mu_{\sigma(i)}}\right)\tilde p_{\bmmu}.
\end{split}\end{align}
Since $\tilde p_{\bmla}=p_\bmla(\Delta x_1,\Delta x_2,\cdots, \Delta x_N)=\sum_{\bmmu\in \bY(N)}c_{\bmla\bmmu}m_{\bmmu}(\Delta x_1, \Delta x_2,\cdots, \Delta x_N)$, we can rewrite \eqref{e:Gmp2} in terms of the monomial symmetric polynomials,
\begin{align*}\begin{split}
&\phantom{{}={}}\ln G(x_1,x_2,\cdots,x_N)
=\sum_{\bmla\in \bY}\tilde c(\bmla) \sum_{\bmmu\in \bY:\ell(\bmmu)=\ell(\bmla)}\frac{1}{\prod_{i\geq 1}m_i(\bmmu)!}\left(\sum_{\sigma\in S_{\ell(\bmla)}}\prod_{i=1}^{\ell(\bmla)}{\la_i\choose\mu_{\sigma(i)}}\right)\tilde p_{\bmmu}\\
&=\sum_{\bmla\in\bY}\tilde c(\bmla) \sum_{\bmmu\in\bY:\ell(\bmmu)=\ell(\bmla)}\frac{1}{\prod_{i\geq 1}m_i(\bmmu)!}\left(\sum_{\sigma\in S_{\ell(\bmla)}}\prod_{i=1}^{\ell(\bmla)}{\la_i\choose\mu_{\sigma(i)}}\right)\sum_{\bmeta\in \bY(N)}c_{\bmmu\bmeta}m_\bmeta(\Delta x_1, \Delta x_2,\cdots, \Delta x_N)\\
&=\sum_{\bmeta\in \bY(N)}\sum_{\bmmu\in\bY}\frac{c_{\bmmu\bmeta}}{\prod_{i\geq 1}m_i(\bmmu)!}\sum_{\bmla\in\bY:\ell(\bmmu)=\ell(\bmla)} \tilde c(\bmla) \left(\sum_{\sigma\in S_{\ell(\bmla)}}\prod_{i=1}^{\ell(\bmla)}{\la_i\choose\mu_{\sigma(i)}}\right)m_\bmeta(\Delta x_1, \Delta x_2,\cdots, \Delta x_N).
\end{split}\end{align*}
Thus the claim \eqref{eq:derG1} follows, for any $\bmeta\in \bY(N)$,
\begin{align}\label{e:derG3}
\frac{\del_{\bmeta}\ln G(x_1,x_2,\cdots, x_N)|_{1^N}}{\prod_{i=1}^{\ell(\bmeta)}\eta_i!}=\sum_{\bmmu\in \bY}\frac{c_{\bmmu\bmeta}}{\prod_{i\geq 1}m_i(\bmmu)!}\sum_{\la_1,\la_2,\cdots\la_{\ell(\bmmu)}\geq 1}  \prod_{i=1}^{\ell(\bmmu)}{\la_i\choose\mu_{i}} \left.\frac{\del^{\ell(\bmmu)}\ln G(\bmp)}{\del p_{\la_1}\del p_{\la_2}\cdots \del p_{\la_{\ell(\bmmu)}}}\right|_{\bmp=1^N},
\end{align}
where we used \eqref{e:defcla} and \eqref{e:defcla2}. This finishes the proof of \eqref{eq:derG1}, since $c_{\bmmu\bmeta}$ is nonzero for partitions $\bmmu$ with $|\bmmu|=|\bmeta|$ and $\ell(\bmmu)\geq \ell(\bmeta)$.

In the following we prove \eqref{eq:derG2}, which follows from inverting \eqref{e:derG3}. The monomial symmetric functions can be written in terms of the power-sum symmetric functions, $m_\bmla=\sum_{\bmmu}\tilde c_{\bmla\bmmu} p_{\bmmu}$. Then 
$
\sum_{\bmeta}c_{\bmmu\bmeta}\tilde c_{\bmeta\bmla}=\delta_{\bmmu\bmla}.
$
Moreover, we notice that $c_{\bmmu\bmeta}\neq 0$ only if $\ell(\bmeta)\leq \ell(\bmmu)$. Especially, if $\bmmu\in \bY(N)$, we have
$
\sum_{\bmeta\in \bY(N)}c_{\bmmu\bmeta}\tilde c_{\bmeta\bmmu}=1.
$
Thus, we can multiply $\tilde c_{\bmeta\bmmu}$ on both sides of \eqref{e:derG3}, and sum over $\bmeta\in \bY(N)$,
\begin{align*}
\sum_{\la_1,\la_2,\cdots\la_{\ell(\bmmu)}\geq 1}  \prod_{i=1}^{\ell(\bmmu)}{\la_i\choose\mu_{i}} \left.\frac{\del^{\ell(\bmmu)}\ln G(\bmp)}{\del p_{\la_1}\del p_{\la_2}\cdots \del p_{\la_{\ell(\bmmu)}}}\right|_{\bmp=1^N}=\prod_{i\geq 1}m_i(\bmmu)!\sum_{\bmeta\in \bY(N)}\tilde c_{\bmeta\bmmu}\frac{\del_{\bmeta}\ln G(x_1,x_2,\cdots,x_N)|_{1^N}}{\prod_{i=1}^{\ell(\bmeta)}\eta_i!}.
\end{align*}
This finishes the proof of \eqref{eq:derG2}, since for any fixed $\bmmu\in \bY(N)$, $\tilde c_{\bmeta\bmmu}$ is nonzero for partitions $\bmeta$ with $|\bmeta|=|\bmmu|$ and $\ell(\bmeta)\geq \ell(\bmmu)$.

\end{proof}

\begin{proof}[Proof of Theorem \ref{thm:equivalent}]
We first prove that Assumption \ref{a:LLN} is equivalent to \eqref{e:deri1} and \eqref{e:deri2}.
We assume that Assumption \ref{a:LLN} holds. We take $\bmeta=(k)$ in \eqref{eq:derG1} and let $N$ go to infinity,
\begin{align}\begin{split}\label{e:comptck}
&\phantom{{}={}}\lim_{N\rightarrow \infty}\left.\frac{\del_i^k \ln F_N(x_1, x_2,\cdots, x_N;\theta)}{N}\right|_{1^N}\\
&=\lim_{N\rightarrow \infty}k!\sum_{{\bmmu\in\bY(N):
|\bmmu|=k\atop\ell(\bmmu)\geq1}}\frac{c_{\bmmu (k)}}{\prod_{i\geq 1}m_i(\bmmu)!}\frac{1}{N}\sum_{\la_1,\la_2,\cdots\la_{\ell(\bmmu)}\geq 1}  \prod_{i=1}^{\ell(\bmmu)}{\la_i\choose\mu_{i}} \left.\frac{\del^{\ell(\bmmu)}\ln F_N(\bmp;\theta)}{\del p_{\la_1}\del p_{\la_2}\cdots \del p_{\la_{\ell(\bmmu)}}}\right|_{\bmp=1^N}.
\end{split}\end{align}
From Assumption \eqref{a:LLN}, the only term on the righthand side of \eqref{e:comptck} which will contribute are those with $\ell(\bmmu)=1$. The only such term is $\bmmu=(k)$, and we have
\begin{align}\label{e:derlnG-1}
\lim_{N\rightarrow \infty}\left.\frac{\del_i^k \ln F_N(x_1, x_2,\cdots, x_N;\theta)}{N}\right|_{1^N}
&=\lim_{N\rightarrow \infty}\frac{k!}{N}\sum_{\la\geq 1} {\la\choose k} \left.\frac{\del \ln F_N(\bmp;\theta)}{\del p_{\la}}\right|_{\bmp=1^N}=\fc_k.
\end{align}
This finishes the proof of \eqref{e:deri1}. If $\ell(\bmeta)\geq 2$, similarly by taking $N$ to infinity in \eqref{eq:derG1}, we get
\begin{align}\begin{split}\label{e:derlnG0}
&\phantom{{}={}}
\lim_{N\rightarrow\infty}\frac{\del_{\bmeta}\ln F_N(x_1,x_2,\cdots, x_N;\theta)|_{1^N}}{N}
\\&=\lim_{N\rightarrow\infty}\frac{1}{N}{\prod_{i=1}^{\ell(\bmeta)}\eta_i!}\sum_{\bmmu\in\bY:
|\bmmu|=|\bmeta|\atop\ell(\bmmu)\geq\ell(\bmeta)}\frac{c_{\bmmu\bmeta}}{\prod_{i\geq 1}m_i(\bmmu)!}\sum_{\la_1,\la_2,\cdots\la_{\ell(\bmmu)}\geq 1}  \prod_{i=1}^{\ell(\bmmu)}{\la_i\choose\mu_{i}} \left.\frac{\del^{\ell(\bmmu)}\ln F_N(\bmp;\theta)}{\del p_{\la_1}\del p_{\la_2}\cdots \del p_{\la_{\ell(\bmmu)}}}\right|_{\bmp=1^N}.
\end{split}\end{align}
All the terms on the righthand side of \eqref{e:derlnG0} correspond to some $\bmmu$ with $\ell(\bmmu)\geq 2$. Thus by Assumption \ref{a:LLN}, the righthand side of \eqref{e:derlnG0} vanishes, and this finishes the proof of \eqref{e:deri2}.

We assume that \eqref{e:deri1} and \eqref{e:deri2} holds. We take $\bmmu=(k)$ and let $N$ go to infinity in \eqref{eq:derG2}, 
\begin{align}\begin{split}\label{e:derlnG1}
&\phantom{{}={}}\lim_{N\rightarrow \infty}\frac{k!}{N}\sum_{\la\geq 1}  {\la\choose k} \left.\frac{\del \ln F_N(\bmp;\theta)}{\del p_{\la}}\right|_{\bmp=1^N}\\
&=\lim_{N\rightarrow\infty}\frac{k!}{N}\prod_{i\geq 1}m_i(\bmmu)!\sum_{\bmeta\in \bY(N):|\bmeta|=k\atop \ell(\bmeta)\geq 1}\tilde c_{\bmeta\bmmu}\frac{\del_{\bmeta}\ln F_N(x_1,x_2,\cdots,x_N;\theta)|_{1^N}}{\prod_{i=1}^{\ell(\bmeta)}\eta_i!}.
\end{split}\end{align}
From \eqref{e:deri1} and \eqref{e:deri2}, the only terms on the righthand side of \eqref{e:derlnG1} which will contribute are those with $\ell(\bmeta)=1$. The only such term is $\bmeta=(k)$, and we have
\begin{align}\label{e:derlnG-2}
\lim_{N\rightarrow \infty}\frac{k!}{N}\sum_{\la\geq 1} {\la\choose k} \left.\frac{\del \ln F_N(\bmp;\theta)}{\del p_{\la}}\right|_{\bmp=1^N}=\lim_{N\rightarrow \infty}\left.\frac{\del_i^k \ln F_N(x_1, x_2,\cdots, x_N;\theta)}{N}\right|_{1^N}
=\fc_k.
\end{align}
This finishes the proof of \eqref{e:LLN1}. If $\ell(\bmmu)\geq 2$, similarly by taking $N$ to infinity in \eqref{eq:derG2}, 
\begin{align}\begin{split}\label{e:derlnG2}
&\phantom{{}={}}\lim_{N\rightarrow\infty}\frac{1}{N}\sum_{\la_1,\la_2,\cdots\la_{\ell(\bmmu)}\geq 1}  \prod_{i=1}^{\ell(\bmmu)}{\la_i\choose\mu_{i}} \left.\frac{\del^{\ell(\bmmu)}\ln F_N(\bmp;\theta)}{\del p_{\la_1}\del p_{\la_2}\cdots \del p_{\la_{\ell(\bmmu)}}}\right|_{\bmp=1^N}\\
&=\lim_{N\rightarrow\infty}\frac{1}{N}\prod_{i\geq 1}m_i(\bmmu)!\sum_{\bmeta\in \bY(N):|\bmeta|=|\bmmu|\atop \ell(\bmeta)\geq\ell(\bmmu)}\tilde c_{\bmeta\bmmu}\frac{\del_{\bmeta}\ln F_N(x_1,x_2,\cdots,x_N;\theta)|_{1^N}}{\prod_{i=1}^{\ell(\bmeta)}\eta_i!}.
\end{split}\end{align}
All the terms on the righthand side of \eqref{e:derlnG2} correspond to some $\bmeta$ with $\ell(\bmeta)\geq 2$. Thus by \eqref{e:deri2}, the righthand side of \eqref{e:derlnG2} vanishes, and this finishes the proof of \eqref{e:LLN3}.

In the following we prove that Assumption \ref{a:CLT} is equivalent to \eqref{e:deri1copy}, \eqref{e:deri3}, and \eqref{e:deri4}. We assume that Assumption \ref{a:CLT} holds. \eqref{e:deri1copy} follows from \eqref{e:derlnG-1}. For \eqref{e:deri3}, we take $\bmeta=(k,l)$ in \eqref{eq:derG1} and let $N$ go to infinity,
\begin{align}\begin{split}\label{e:compdkl}
&\phantom{{}={}}\lim_{N\rightarrow \infty}\left.\del_i^k\del_j^l \ln F_N(x_1,x_2,\cdots, x_N;\theta)\right|_{1^N}\\
&=\lim_{N\rightarrow \infty}k!l!\sum_{\bmmu\in \bY(N): |\bmmu|=k+l,\atop \ell(\bmmu)\geq 2}\frac{c_{\bmmu (k,l)}}{\prod_{i\geq 1}m_i(\bmmu)!}\sum_{\la_1,\la_2,\cdots\la_{\ell(\bmmu)}\geq 1}  \prod_{i=1}^{\ell(\bmmu)}{\la_i\choose\mu_{i}} \left.\frac{\del^{\ell(\bmmu)}\ln F_N(\bmp;\theta)}{\del p_{\la_1}\del p_{\la_2}\cdots \del p_{\la_{\ell(\bmmu)}}}\right|_{\bmp=1^N}.
\end{split}\end{align}
From Assumption \eqref{a:CLT}, the only terms on the righthand side of \eqref{e:compdkl} which will contribute are those with $\ell(\bmmu)=2$. The only such term is $\bmmu=(k,l)$. If $k\neq l$, $c_{\bmmu(k,l)}=1$; if $k=l$, $c_{\bmmu(k,l)}=2$. In both cases $c_{\bmmu(k,l)}=\prod_{i\geq 1} m_i(\bmmu)!$. Thus we have
\begin{align*}\begin{split}
\lim_{N\rightarrow \infty}\left.\del_i^k\del_j^l \ln F_N(x_1,x_2,\cdots, x_N;\theta)\right|_{1^N}=\lim_{N\rightarrow \infty}k!l!\sum_{\la_1,\la_2\geq 1}  {\la_1\choose k}{\la_2\choose l} \left.\frac{\del^2 \ln F_N(\bmp;\theta)}{\del p_{\la_1}\del p_{\la_2}}\right|_{\bmp=1^N}=\fd_{k,l}.
\end{split}\end{align*}
This finishes the proof of \eqref{e:deri3}. If $\ell(\bmeta)\geq 3$, similarly by taking $N$ to infinity in \eqref{eq:derG1}, we get
\begin{align}\begin{split}\label{e:derlnG3}
&\phantom{{}={}}
\lim_{N\rightarrow\infty}\del_{\bmeta}\ln F_N(x_1,x_2,\cdots, x_N)|_{1^N}
\\&=\lim_{N\rightarrow\infty}{\prod_{i=1}^{\ell(\bmeta)}\eta_i!}\sum_{\bmmu\in\bY(N):
|\bmmu|=|\bmeta|\atop\ell(\bmmu)\geq\ell(\bmeta)}\frac{c_{\bmmu\bmeta}}{\prod_{i\geq 1}m_i(\bmmu)!}\sum_{\la_1,\la_2,\cdots\la_{\ell(\bmmu)}\geq 1}  \prod_{i=1}^{\ell(\bmmu)}{\la_i\choose\mu_{i}} \left.\frac{\del^{\ell(\bmmu)}\ln F_N(\bmp;\theta)}{\del p_{\la_1}\del p_{\la_2}\cdots \del p_{\la_{\ell(\bmmu)}}}\right|_{\bmp=1^N}.
\end{split}\end{align}
All the terms on the righthand side of \eqref{e:derlnG3} correspond to some $\bmmu$ with $\ell(\bmmu)\geq 3$. Thus by Assumption \ref{a:LLN}, the righthand side of \eqref{e:derlnG3} vanishes, and this finishes the proof of \eqref{e:deri4}.

We assume that \eqref{e:deri1copy}, \eqref{e:deri3}, and \eqref{e:deri4} hold. \eqref{e:CLT1} follows from \eqref{e:derlnG-2}. For \eqref{e:CLT2}, we take $\bmmu=(k,l)$ in \eqref{eq:derG2},
\begin{align}\begin{split}\label{e:derlnG4}
&\phantom{{}={}}\lim_{N\rightarrow\infty}k!l!\sum_{\la_1,\la_2\geq 1}  {\la_1\choose k} {\la_2\choose l}\left.\frac{\del^{2}\ln F_N(\bmp;\theta)}{\del p_{\la_1}\del p_{\la_2}}\right|_{\bmp=1^N}\\
&=\lim_{N\rightarrow\infty}k!l!\prod_{i\geq 1}m_i(\bmmu)!\sum_{\bmeta\in \bY(N):|\bmeta|=k+l\atop \ell(\bmeta)\geq2}\tilde c_{\bmeta\bmmu}\frac{\del_{\bmeta}\ln F_N(x_1,x_2,\cdots,x_N;\theta)|_{1^N}}{\prod_{i=1}^{\ell(\bmeta)}\eta_i!}.
\end{split}\end{align}
From \eqref{e:deri4}, the only terms on the righthand side of \eqref{e:compdkl} which will contribute are those with $\ell(\bmeta)=2$. The only such term is $\bmeta=(k,l)$. If $k\neq l$, $\tilde c_{\bmeta(k,l)}=1$; if $k=l$, $\tilde c_{\bmeta(k,l)}=1/2$. In both cases $\tilde c_{\bmeta(k,l)}\prod_{i\geq 1} m_i(\bmmu)!=1$. Thus we have
\begin{align*}\begin{split}
\lim_{N\rightarrow \infty}k!l!\sum_{\la_1,\la_2\geq 1}  {\la_1\choose k}{\la_2\choose l} \left.\frac{\del^2 \ln F_N(\bmp;\theta)}{\del p_{\la_1}\del p_{\la_2}}\right|_{\bmp=1^N}=\lim_{N\rightarrow \infty}\left.\del_i^k\del_j^l \ln F_N(x_1,x_2,\cdots, x_N;\theta)\right|_{1^N}=\fd_{k,l}.
\end{split}\end{align*}
This finishes the proof of \eqref{e:CLT2}.
If $\ell(\bmmu)\geq 3$, similarly by taking $N$ to infinity in \eqref{eq:derG2}, 
\begin{align}\begin{split}\label{e:derlnG5}
&\phantom{{}={}}\lim_{N\rightarrow\infty}\sum_{\la_1,\la_2,\cdots\la_{\ell(\bmmu)}\geq 1}  \prod_{i=1}^{\ell(\bmmu)}{\la_i\choose\mu_{i}} \left.\frac{\del^{\ell(\bmmu)}\ln F_N(\bmp;\theta)}{\del p_{\la_1}\del p_{\la_2}\cdots \del p_{\la_{\ell(\bmmu)}}}\right|_{\bmp=1^N}\\
&=\lim_{N\rightarrow\infty}\prod_{i\geq 1}m_i(\bmmu)!\sum_{\bmeta\in \bY(N):|\bmeta|=|\bmmu|\atop \ell(\bmeta)\geq\ell(\bmmu)}\tilde c_{\bmeta\bmmu}\frac{\del_{\bmeta}\ln F_N(x_1,x_2,\cdots,x_N;\theta)|_{1^N}}{\prod_{i=1}^{\ell(\bmeta)}\eta_i!}.
\end{split}\end{align}
All the terms on the righthand side of \eqref{e:derlnG5} correspond to some $\bmeta$ with $\ell(\bmeta)\geq 3$. Thus by \eqref{e:deri4}, the righthand side of \eqref{e:derlnG5} vanishes, and this finishes the proof of \eqref{e:CLT5}.

\end{proof}

\bibliography{References}{}
\bibliographystyle{plain}

\end{document}